\newcommand{\N}{\mathbb{N}}
\newcommand{\Z}{\mathbb{Z}}
\newcommand{\Q}{\mathbb{Q}}
\newcommand{\R}{\mathbb{R}}
\renewcommand{\leq}{\leqslant}
\renewcommand{\geq}{\geqslant}
\newcommand{\seq}{\subseteq}
\newcommand{\df}{\coloneqq}
\newcommand{\ind}{\mathbbm{1}}
\newcommand{\RSu}{\sigma\mathbb{RS}_u}
\newcommand{\Gt}{\sigma\ell\mathbb{G}_t}
\newcommand{\RSt}{\sigma\mathbb{RS}_t}
\newcommand{\lpmu}{\mathcal{L}^p(\mu)}
\newcommand{\truncsup}{\bigcurlyvee}
\newcommand{\trunc}{\overline{\;\cdot\;}}
\newcommand{\RXI}{\frac{\R^X}{\mathcal{I}}}
\let\originalleft\left
\let\originalright\right
\renewcommand{\left}{\mathopen{}\mathclose\bgroup\originalleft}
\renewcommand{\right}{\aftergroup\egroup\originalright}
\let\originallvert\lvert
\renewcommand{\lvert}{\left\originallvert}
\let\originalrvert\rvert
\renewcommand{\rvert}{\right\originalrvert}
\newcommand{\Cyl}{\mathrm{Cyl}}
\newcommand{\Leb}{\mathrm{Leb}}
\newtheorem{theorem}{Theorem}[section]
\newtheorem{lemma}[theorem]{Lemma}
\newtheorem{corollary}[theorem]{Corollary}
\newtheorem{proposition}[theorem]{Proposition}
\theoremstyle{definition}
\newtheorem{definition}[theorem]{Definition}
\newtheorem{question}[theorem]{Question}
\theoremstyle{remark}
\newtheorem{remark}[theorem]{Remark}
\newtheorem{example}[theorem]{Example}
\title{Operations that preserve integrability, and truncated Riesz spaces}
\author[Marco Abbadini]{Marco Abbadini}
\email{marco.abbadini@unimi.it}
\address{Dipartimento di Matematica {\sl Federigo Enriques}, Universit\`a degli Studi di Milano, via Cesare Saldini 50, 20133 Milano, Italy.}
\thanks{2010 {\it Mathematics Subject Classification.
	}
	Primary: 06F20. Secondary: 03C05, 08A65, 08B20, 46E30.}
\keywords{integrable functions, $L^p$, Riesz space, vector lattice, $\sigma$-completeness; weak unit; infinitary variety, equational classes; axiomatisation; free algebra; generation.}
\begin{document}

\maketitle
\begin{abstract}
	For any real number $p\in [1,+\infty)$, we characterise the operations $\R^I\to \R$ that preserve $p$-integrability, i.e., the operations under which, for every measure $\mu$, the set $\mathcal{L}^p(\mu)$ is closed.
	We investigate the infinitary variety of algebras whose operations are exactly such functions.
	It turns out that this variety coincides with the category of Dedekind $\sigma$-complete truncated Riesz spaces, where truncation is meant in the sense of R.\ N.\ Ball.
	We also prove that $\R$ generates this variety.
	From this, we exhibit a concrete model of the free Dedekind $\sigma$-complete truncated Riesz spaces.
	
	Analogous results are obtained for operations that preserve $p$-integrability over finite measure spaces: the corresponding variety is shown to coincide with the much studied category of Dedekind $\sigma$-complete Riesz spaces with weak unit, $\R$ is proved to generate this variety, and a concrete model of the free Dedekind $\sigma$-complete Riesz spaces with weak unit is exhibited.
\end{abstract}
%

%%%%%%%%%%%%%%%%%%%%%%%%%%%%%%%%%%%%%%%%%%%%%%%%%%%%%%%%%%%%%%%%%%%%%%%%%%%%%%%%%%%%
%SECTION
\section{Introduction}

%===================================================================================
%SUBSECTION
\subsection{Operations that preserve integrability}

In this work we investigate the operations which are somehow implicit in the theory of integration by addressing the following question: which operations preserve integrability, in the sense that they return integrable functions when applied to integrable functions?
%%
%\begin{enumerate} [label=(Q\arabic*), ref = Q\arabic*]
%%
%	\item \label{i:quest-preserve} What are the operations which preserve integrability, in the sense that they return integrable functions when applied to integrable functions?
%	%
%%	\item \label{i:quest-generating} Assuming an answer to \eqref{i:quest-preserve}, what is a simple generating set for the operations in \eqref{i:quest-preserve}?
%%	%
%%	\item \label{i:quest-axioms} Assuming an answer to \eqref{i:quest-generating}, what are the basic axioms satisfied by the operations in \eqref{i:quest-generating}?
%	%
%\end{enumerate}
%
%In Part 1 of this paper we provide an answer to \eqref{i:quest-preserve} and \eqref{i:quest-generating}, whereas in Part 2, which is more algebraic, we address \eqref{i:quest-axioms}.

Let us clarify the question by recalling some definitions.

For $(\Omega,\mathcal{F},\mu)$ a measure space (with the range of $\mu$ in $[0,+\infty]$) and $p\in [1,+\infty)$, we adopt the notation
$\lpmu\df \{f\colon \Omega\to \R\mid f\text{ is }\mathcal{F}\text{-measurable and }$ $\int_{\Omega}\lvert f\rvert ^p\,d\mu<\infty \}.$
It is well known that, for $f,g\in \lpmu$, we have $f+g\in \lpmu$, that is, $\lpmu$ is closed under the pointwise addition induced by addition of real numbers $+\colon \R^2\to \R$.
More generally, consider a set $I$ and a function $\tau\colon \R^I\to \R$, which we shall call an \emph{operation of arity $\lvert I\rvert$}.
We say \emph{$\lpmu$ is closed under $\tau$} if $\tau$ returns functions in $\lpmu$ when applied to functions in $\lpmu$, that is, for every $(f_i)_{i\in I}\seq \lpmu$, the function $\tau((f_i)_{i\in I})\colon \Omega\to \R$ given by $x\in \Omega\mapsto \tau((f_i (x))_{i\in I})$ belongs to $\lpmu$.
If $\lpmu$ is closed under $\tau$, we also say that $\tau$ \emph{preserves $p$-integrability over $(\Omega,\mathcal{F},\mu)$}.
Finally, we say that $\tau$ \emph{preserves $p$-integrability} if $\tau$ preserves $p$-integrability over every measure space.

In Part \ref{part:op} of this paper we characterise those operations that preserve integrability.
Indeed, the first question we address is the following.
\begin{question}\label{q:main}
	Under which operations $\R^I\to \R$ are $\mathcal{L}^p$ spaces closed?
	Equivalently, which operations preserve $p$-integrability?
\end{question}
\noindent Examples of such operations are the constant $0$, the addition $+$, the binary supremum $\lor$ and infimum $\land$, and, for  $\lambda \in \R$,  the  scalar multiplication $\lambda ({\;\cdot\;})$ by $\lambda$.
 A further example is the operation of countably infinite arity $\truncsup$ defined as 
\[
	\truncsup(y,x_0,x_1,\dots)\df\sup_{n\in \omega }\{x_n\land y \}.
\]
Yet another example is the unary operation 
\begin{align*}
	\overline{\;\cdot\;} \colon	\R	& \longrightarrow	\R \\
								x	& \longmapsto		\overline{x} \df x \land 1,
\end{align*}
called \emph{truncation}.
Here, although the constant function $1\in\lpmu$ if, and only if, $\mu$ is finite, it is always the case that $f\in \lpmu$ implies $\overline{f}\in\lpmu$.

It turns out that, for any given $p$, the operations that preserve $p$-integrability are essentially just $0$, $+$, $\lor$, $\lambda ({\;\cdot\;})$ (for each $\lambda \in \R$),  $\truncsup$ and  $\trunc$, in the sense that every operation that preserves $p$-integrability may be obtained from these by composition.
This we prove in Theorem~\ref{t:generation}.

We also have an explicit characterisation of the operations that preserve $p$-integrability.
Denoting with $\R^+$ the set $\{\lambda \in \R \mid \lambda \geq 0\}$, for $n\in\omega$ and $\tau\colon \R^n\to \R$, we will prove that $\tau$ preserves $p$-integrability precisely when $\tau$ is Borel measurable and there exist $\lambda_0,\dots,\lambda_{n-1}\in \R^+$ such that, for every $x\in \R^n$, we have
\[
	\lvert \tau(x) \rvert \leq \sum_{i=0}^{n-1}\lambda_i \lvert x_i \rvert.
\]
Theorem~\ref{t:MAIN} tackles the general case of arbitrary arity, settling Question \ref{q:main}.

In Part I we also address a variation of Question \ref{q:main} where we restrict attention to finite measures.
Recall that a measure $\mu$ on a measurable space $(\Omega, \mathcal{F})$ is \emph{finite} if $\mu(\Omega)<\infty$.
The question becomes
\begin{question}\label{q:main finite}
	Under which operations $\R^I\to \R$ are $\mathcal{L}^p$ spaces of finite measure closed? Equivalently, which operations preserve $p$-integrability over finite measure spaces?
\end{question}
\noindent As mentioned, the function constantly equal to $1$ belongs to $\lpmu$ for every finite measure $\mu$.	We prove in Theorem~\ref{t:generation finite} that, for any given $p \in [1, +\infty)$, the operations that preserve $p$-integrability over finite measure spaces are essentially just $0$, $+$, $\lor$, $\lambda ({\;\cdot\;})$ (for each $\lambda \in \R$),  $\truncsup$ and  $1$, in the same sense as in the above.

Theorem~\ref{t:MAIN finite} provides an explicit characterisation of the operations that preserve $p$-integrability over finite measure spaces.
In particular, for $n\in\omega$ and $\tau\colon \R^n\to \R$, $\tau$ preserves $p$-integrability over finite measure spaces precisely when $\tau$ is Borel measurable and there exist $\lambda_0,\dots,\lambda_{n-1}, k\in \R^+$ such that, for every $x\in \R^n$, we have
\[
	\lvert \tau(x) \rvert \leq k + \sum_{i = 0}^{n-1}\lambda_i \lvert x_i \rvert.
\]
%

%===================================================================================
%SUBSECTION
\subsection{Truncated Riesz spaces and weak units}

In Part~\ref{part:variety} of this paper we investigate the equational laws satisfied by the operations that preserve $p$-integrability.
(As it is shown by Theorems~\ref{t:MAIN} and~\ref{t:MAIN finite}, the fact that an operation preserves $p$-integrability -- over arbitrary and finite measure spaces, respectively -- does not depend on the choice of $p$.
Hence, we say that the operation \emph{preserves integrability}.)
We therefore work in the setting of varieties of algebras \cite{Burris_Sanka}.
In this paper, under the term \emph{variety} we include also infinitary varieties, i.e.\ varieties admitting primitive operations of infinite arity.
For background please see \cite{Slominski}.

We assume familiarity with the basic theory of Riesz spaces, also known as vector lattices.
All needed background can be found, for example, in the standard reference \cite{LuxZaan}.
As usual, for a Riesz space $G$, we set $G^+\df\{x\in G\mid x\geq 0\}$.

A \emph{truncated Riesz space} is a Riesz space $G$ endowed with a function $\overline{\;\cdot\;} \colon G^+ \to G^+$, called \emph{truncation}, which satisfies the following properties for all $f,g\in G^+$.
\begin{enumerate}[label = (B\arabic*)]
	\item $f\land \overline{g}\leq \overline{f}\leq f$.
	\item If $\overline{f}=0$, then $f=0$.
	\item If $nf =\overline{nf}$ for every $n\in \omega$, then $f=0$.
\end{enumerate}
The notion of truncation is due to R.\ N.\ Ball \cite{Ball1}, who introduced it in the context of lattice-ordered groups.
Please see Section~\ref{s:defn truncated} for further details.
	
Let us say that a partially ordered set $B$ is \emph{Dedekind $\sigma$-complete} if every nonempty countable subset $A\seq B$ that admits an upper bound admits a supremum.
Theorem~\ref{t:variety is truncated} proves that the category of Dedekind $\sigma$-complete truncated Riesz spaces is a variety generated by $\R$.
This variety can be presented as having operations of finite arity only, together with the single operation $\truncsup$ of countably infinite arity.
Moreover, we prove that the variety is finitely axiomatisable by equations over the theory of Riesz spaces.
One consequence (Corollary~\ref{c:main theorem 2}) is that the free Dedekind $\sigma$-complete truncated Riesz space over a set $I$ (exists, and) is 
\[
	\mathrm{F_t}(I)\df\left\{f\colon \R^I\to \R\mid f\text{ preserves integrability} \right\}.
\]

We prove results analogous to the foregoing for operations that preserve integrability over finite measure spaces.
An element $1$ of a Riesz space $G$ is a \emph{weak \textup{(}order\textup{)} unit} if $1\geq0$ and, for all $f\in G$, $f\land 1 =0$ implies $ f=0$.
Theorem~\ref{t:variety is truncated weak} shows that the category of Dedekind $\sigma$-complete Riesz spaces with weak unit is a variety generated by $\R$, again with primitive operations of countable arity.
It, too, is finitely axiomatisable by equations over the theory of Riesz spaces.
By Corollary~\ref{c:main theorem 1}, the free Dedekind $\sigma$-complete Riesz space with weak unit over a set $I$ (exists, and) is 
\[
	\mathrm{F_u}(I)\df\left\{f\colon \R^I\to \R\mid f\text{ preserves integrability over finite measure spaces} \right\}.
\]

The varietal presentation of Dedekind $\sigma$-complete Riesz spaces with weak unit was already obtained in \cite{Abba}.
Here we add the representation theorem for free algebras, and we establish the relationship between Dedekind $\sigma$-complete Riesz spaces with weak unit and operations that preserve integrability.
The proofs in the present paper are independent of \cite{Abba}.
On the other hand, the results in this paper do depend on a version of the Loomis-Sikorski Theorem for Riesz spaces, namely Theorem~\ref{t:Nepalese} below.
A proof can be found  in \cite{Nepalese},  and can also be recovered  from the combination of \cite{LoomisRevisited} and \cite{SmallRieszSpaces}.
The theorem and its variants have a long history: for a fuller bibliographic account please see \cite{LoomisRevisited}.

%===================================================================================
%SUBSECTION
\subsection{Outline}

In Part~\ref{part:op} we characterise the operations that preserve integrability, and we provide a simple set of operations that generate them.
Specifically, we characterise the operations that preserve measurability, integrability, and integrability over finite measure spaces, respectively in Sections~\ref{s:meas},~\ref{s:lp}, and~\ref{s:lp finite}.
In Section~\ref{s:generation} we show that the operations $0$, $+$, $\lor$, $\lambda ({\;\cdot\;})$ (for each $\lambda \in \R$),  $\truncsup$ and  $\trunc$ generate the operations that preserve integrability, and that $0$, $+$, $\lor$, $\lambda ({\;\cdot\;})$ (for each $\lambda \in \R$),  $\truncsup$ and  $1$ generate the operations that preserve integrability over finite measure spaces.

In Part~\ref{part:variety} we prove that the categories of Dedekind $\sigma$-complete truncated Riesz spaces and Dedekind $\sigma$-complete Riesz spaces with weak unit are varieties generated by $\R$.
In more deatail,  in Section~\ref{s:truncsup} we define the operation $\truncsup$,  in Section~\ref{s:defn truncated} we define truncated lattice-ordered abelian groups, in Section~\ref{s:LS} we prove a version of the Loomis-Sikorski Theorem for truncated $\ell$-groups, in Section~\ref{s:truncated are variety} we show the category of Dedekind $\sigma$-complete truncated Riesz spaces to be generated by $\R$, in Section~\ref{s:LS weak} we prove a version of the Loomis-Sikorski Theorem for $\ell$-groups with weak unit, in Section~\ref{s:weak generated R} we show the category of Dedekind $\sigma$-complete Riesz spaces with weak unit to be generated by $\R$.

Finally, as an additional result, in the  Appendix  we provide an explicit characterisation of the operations that preserve $\infty$-integrability.

%===================================================================================
%SUBSECTION
\subsection*{Notation}

We let $\omega$ denote the set $\{0,1,2,\dots \}$.

%We set $\R^+ \df \{\lambda \in \R \mid \lambda \geq 0\}$.
%More generally, for a Riesz space $G$, we set $G^+ \df \{x \in G \mid x \geq 0\}$.
%
%When we regard $\R$ as a measurable space, we always do so with respect to the Borel $\sigma$-algebra, denoted by $\mathcal{B}_\R$.

%===================================================================================
%SUBSECTION
\subsection*{Acknowledgements}

The author is deeply grateful to his Ph.D.\ advisor prof.\ Vincenzo Marra for the many helpful discussions.

%&&&&&&&&&&&&&&&&&&&&&&&&&&&&&&&&&&&&&&&&&&&&&&&&&&&&&&&&&&&&&&&&&&&&&&&&&&&&&&&&&&&
%PART
\part{Operations that preserve integrability}\label{part:op}

%%%%%%%%%%%%%%%%%%%%%%%%%%%%%%%%%%%%%%%%%%%%%%%%%%%%%%%%%%%%%%%%%%%%%%%%%%%%%%%%%%%%
%SECTION
\section{Main results of Part 1}

In this section we state the main results of Part 1, together with the needed definitions.
The first two main results (Theorems~\ref{t:MAIN} and~\ref{t:MAIN finite} below) are a characterisation of the operations that preserve $p$-integrability over arbitrary and finite measure spaces, respectively.
The other two main results (Theorems~\ref{t:generation} and~\ref{t:generation finite}) provide a set of generators for these operations.
To state the theorems, we introduce a little piece of terminology.

For $I$ set, and $i\in I$, we let $\pi_i\colon \R^I\to \R$ denote the projection onto the $i$-th coordinate.
The \emph{cylinder $\sigma$-algebra on $\R^I$} (notation: $\Cyl\left(\R^I\right)$) is the smallest $\sigma$-algebra which makes each projection function $\pi_i\colon \R^I\to \R$ measurable.
If $\lvert I\rvert\leq \lvert \omega\rvert$, the cylinder $\sigma$-algebra on $\R^I$ coincides with the Borel $\sigma$-algebra (see \cite{Kallenberg}, Lemma~1.2).
\begin{theorem}\label{t:MAIN}
 	Let $I$ be a set, $\tau\colon \R^I\to \R$ and $p\in [1,+\infty)$.
	The following conditions are equivalent.
 	\begin{enumerate}
 		\item $\tau$ preserves $p$-integrability.
 		\item $\tau$ is $\Cyl\left(\R^I\right)$-measurable and there exist a finite subset of indices $J \seq I$ and nonnegative real numbers $(\lambda_j)_{j \in J}$ such that, for every $v\in \R^I$, we have 
 		\[
 			\lvert\tau(v)\rvert\leq \sum_{j \in J} \lambda_{j} \lvert v_j\rvert.
 		\]
 	\end{enumerate}
\end{theorem}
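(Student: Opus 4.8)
The plan is to prove the two implications separately, with essentially all of the work in $(1)\Rightarrow(2)$. For $(2)\Rightarrow(1)$ I would argue directly: given $(f_i)_{i\in I}\seq\lpmu$ over a measure space $(\Omega,\mathcal F,\mu)$, the assignment $x\mapsto(f_i(x))_{i\in I}$ is a measurable map $\Omega\to\R^I$ with respect to $\Cyl(\R^I)$, since the cylinder $\sigma$-algebra is generated by the projections and each $f_i$ is $\mathcal F$-measurable. Composing with the $\Cyl(\R^I)$-measurable $\tau$ shows that $\tau((f_i)_i)$ is $\mathcal F$-measurable, while the domination $\lvert\tau(v)\rvert\leq\sum_{j\in J}\lambda_j\lvert v_j\rvert$ yields the pointwise bound $\lvert\tau((f_i)_i)\rvert\leq\sum_{j\in J}\lambda_j\lvert f_j\rvert$, whose right-hand side lies in $\lpmu$ as a finite linear combination of elements of $\lpmu$. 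Hence $\tau((f_i)_i)\in\lpmu$, and since the measure space was arbitrary, $\tau$ preserves $p$-integrability.

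For $(1)\Rightarrow(2)$ I would first extract measurability and the reduction to countably many coordinates. Fix a Borel probability measure $\nu$ on $\R$ with $\int_\R\lvert t\rvert^p\,d\nu<\infty$ (for instance a Gaussian) and put $\mu\df\bigotimes_{i\in I}\nu$ on $(\R^I,\Cyl(\R^I))$. Each projection $\pi_i$ then lies in $\lpmu$, so preservation forces $\tau=\tau((\pi_i)_i)\in\lpmu$; as membership in $\lpmu$ entails $\Cyl(\R^I)$-measurability, $\tau$ is measurable. I would then invoke the standard fact that every $\Cyl(\R^I)$-measurable real function factors through countably many coordinates: there is a countable $I_0\seq I$ on which $\tau$ depends, and I may take all witnesses below to vanish off $I_0$ without altering $\tau$.

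The heart of the matter is producing the finite $J$ together with the bound, and this is the step I expect to be the main obstacle. I would argue by contradiction: assuming no finite $J$ admits such a bound, fix an enumeration $I_0=\{i_0,i_1,\dots\}$, set $J_k=\{i_0,\dots,i_{k-1}\}$, and for each $k$ choose $v^{(k)}\in\R^I$ supported on $I_0$ with $\lvert\tau(v^{(k)})\rvert>2^k\sum_{l<k}\lvert v^{(k)}_{i_l}\rvert$. I would then build a purely atomic measure space on $\Omega=\{a_k:k\in\omega\}$ with masses $\mu(\{a_k\})\df\lvert\tau(v^{(k)})\rvert^{-p}$ and functions $f_i(a_k)\df v^{(k)}_i$. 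The decisive feature of the estimate is that it controls the low coordinates by the very size of $\tau$: for a fixed $i_m$ and every $k>m$ one has $\lvert v^{(k)}_{i_m}\rvert<2^{-k}\lvert\tau(v^{(k)})\rvert$, whence $\mu(\{a_k\})\,\lvert v^{(k)}_{i_m}\rvert^p<2^{-kp}$, so $\sum_k\mu(\{a_k\})\lvert f_i(a_k)\rvert^p<\infty$ for every $i$ (the finitely many remaining terms being harmless, and the coordinates off $I_0$ being zero). Thus every $f_i\in\lpmu$, whereas $\sum_k\mu(\{a_k\})\lvert\tau(v^{(k)})\rvert^p=\sum_k 1=\infty$ gives $\tau((f_i)_i)\notin\lpmu$, contradicting preservation. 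Consequently the bound must hold for some $J_k$, which is exactly the desired conclusion; note it forces $\tau(v)=0$ whenever the $J$-coordinates vanish, so no separate treatment of $\tau(0)$ is needed.

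I would close by observing that the same atomic construction specialises to the finite-arity statement quoted in the introduction, and that the only genuinely infinitary input is the measurability-driven passage to a countable $I_0$. The delicate point throughout is that the witnesses $v^{(k)}$ cannot be chosen with pairwise disjoint supports, because $\tau$ is not symmetric in its arguments and relocating support would change $\tau(v^{(k)})$; it is precisely the self-referential control of the low coordinates by $\lvert\tau(v^{(k)})\rvert$, combined with the rapidly decaying weights $2^{-kp}$, that lets a single sequence of atoms simultaneously keep each $f_i$ in $\lpmu$ and drive $\tau((f_i)_i)$ out of it.
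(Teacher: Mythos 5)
Your proof is correct, and its core coincides with the paper's: the $(2)\Rightarrow(1)$ direction matches Lemma~\ref{l:easy implication}, and the decisive $(1)\Rightarrow(2)$ counterexample --- witnesses $v^{(k)}$ violating every candidate bound with coefficients $2^k$, atoms of mass $\lvert\tau(v^{(k)})\rvert^{-p}$, and the self-referential control of the low coordinates by $\lvert\tau(v^{(k)})\rvert$ itself --- is exactly the construction in the paper's Lemma~\ref{l:l2} (there with coefficients $2^{n/p}$; both choices work since $p\geq 1$, and in both cases the strict inequality forces $\tau(v^{(k)})\neq 0$, so the masses are well defined). You deviate in two places, both sound. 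First, you extract $\Cyl\left(\R^I\right)$-measurability from an infinite product of Gaussians on $\left(\R^I,\Cyl\left(\R^I\right)\right)$, whereas the paper uses the null measure (Remark~\ref{r:null-meas}): $\mathcal{L}^p$ of the null measure is the set of \emph{all} measurable functions, so preservation of $p$-integrability immediately gives preservation of measurability, and then Theorem~\ref{t:pres measurable} applies; the paper's route is more elementary, since yours invokes the existence of a product of uncountably many probability measures (standard, but a genuine theorem). Second, you build the offending measure space as a bespoke countable atomic space, while the paper abstracts the required property into the notion of a \emph{partitionable} space and notes that $(\R,\mathcal{B}_\R,\Leb)$ is one; your version proves the theorem just as well, but the paper's packaging additionally yields Propositions~\ref{p:R is enough} and~\ref{p:discrete is enough}, i.e.\ that failure of preservation is already witnessed over Lebesgue measure on $\R$ or over a single discrete space, which your construction alone does not give.
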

\begin{theorem}\label{t:MAIN finite}
  	Let $I$ be a set, $\tau\colon \R^I\to \R$ and $p\in [1,+\infty)$.
	The following conditions are equivalent.
  	\begin{enumerate}
  		\item $\tau$ preserves $p$-integrability over every finite measure space.
  		\item $\tau$ is $\Cyl\left(\R^I\right)$-measurable and there exist a finite subset of indices $J \seq I$ and nonnegative real numbers $(\lambda_j)_{j \in J}$ and $k$ such that, for every $v\in \R^I$, we have 
  			\[
  			\lvert\tau(v)\rvert\leq k + \sum_{j \in J} \lambda_{j} \lvert v_j\rvert.
  			\]
  	\end{enumerate}
\end{theorem}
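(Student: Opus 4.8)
The implication (2) $\Rightarrow$ (1) is the routine direction. Assume $\tau$ is $\Cyl(\R^I)$-measurable and that $\lvert\tau(v)\rvert\leq k+\sum_{j\in J}\lambda_j\lvert v_j\rvert$ for some finite $J\seq I$ and nonnegative reals $k,(\lambda_j)_{j\in J}$. Fix a finite measure space $(\Omega,\mathcal F,\mu)$ and $(f_i)_{i\in I}\seq \mathcal L^p(\mu)$. The tuple $F\df(f_i)_{i\in I}\colon\Omega\to\R^I$ is $(\mathcal F,\Cyl(\R^I))$-measurable because each coordinate $f_i$ is, so $\tau((f_i)_{i\in I})=\tau\circ F$ is $\mathcal F$-measurable. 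For integrability, the pointwise bound gives $\lvert\tau((f_i)_i)\rvert\leq k+\sum_{j\in J}\lambda_j\lvert f_j\rvert$; the right-hand side lies in $\mathcal L^p(\mu)$ since each $\lambda_j\lvert f_j\rvert$ does and, crucially, the constant function $k$ does too -- here the finiteness of $\mu$ enters, as $\int_\Omega k^p\,d\mu=k^p\mu(\Omega)<\infty$. Domination then yields $\tau((f_i)_i)\in\mathcal L^p(\mu)$.

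For (1) $\Rightarrow$ (2) I would first dispatch measurability and then reduce the bound to Theorem~\ref{t:MAIN}. Measurability is immediate from a single well-chosen instance: take $\Omega=\R^I$, $\mathcal F=\Cyl(\R^I)$, and let $\mu$ be the product of standard Gaussians, a probability (hence finite) measure on $(\R^I,\Cyl(\R^I))$ for which every projection $\pi_j$ has finite $p$-th moment. Applying (1) with $f_i=\pi_i$ shows $\tau=\tau((\pi_i)_i)\in\mathcal L^p(\mu)$, and membership in $\mathcal L^p(\mu)$ forces $\tau$ to be genuinely $\Cyl(\R^I)$-measurable.

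To obtain the affine bound, I would homogenize $\tau$ by adjoining one coordinate. Put $I^\star\df I\sqcup\{\star\}$ and define $\sigma\colon\R^{I^\star}\to\R$ by $\sigma(v,t)\df\lvert t\rvert\,\tau(v/\lvert t\rvert)$ for $t\neq0$ and $\sigma(v,0)\df 0$. Then $\sigma$ is $\Cyl(\R^{I^\star})$-measurable and $\sigma(v,1)=\tau(v)$. The claim is that $\sigma$ preserves $p$-integrability over \emph{all} measure spaces. Indeed, given an arbitrary $(\Omega,\mathcal G,\kappa)$ and $(h_j)_{j\in I^\star}\seq\mathcal L^p(\kappa)$, write $s\df h_\star$; on $\{s\neq0\}$ we have $\sigma((h_j)_j)=\lvert s\rvert\,\tau((h_i/\lvert s\rvert)_{i\in I})$. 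I would introduce the measure $\rho$ on $(\R^I,\Cyl(\R^I))$ obtained by pushing forward $\lvert s\rvert^p\,\ind_{\{s\neq0\}}\,d\kappa$ along $x\mapsto(h_i(x)/\lvert s(x)\rvert)_{i\in I}$. A change of variables gives three facts: $\rho(\R^I)=\int_{\{s\neq0\}}\lvert s\rvert^p\,d\kappa\leq\lVert h_\star\rVert_p^p<\infty$, so $\rho$ is finite; $\int\lvert u_j\rvert^p\,d\rho=\int_{\{s\neq0\}}\lvert h_j\rvert^p\,d\kappa<\infty$ for each $j\in I$, so every projection is $p$-integrable for $\rho$; and $\int\lvert\tau\rvert^p\,d\rho=\int_\Omega\lvert\sigma((h_j)_j)\rvert^p\,d\kappa$. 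Applying (1) to the finite measure space $(\R^I,\Cyl(\R^I),\rho)$ yields $\tau\in\mathcal L^p(\rho)$, whence the last integral is finite and $\sigma((h_j)_j)\in\mathcal L^p(\kappa)$.

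With $\sigma$ preserving $p$-integrability over all measure spaces, Theorem~\ref{t:MAIN} supplies a finite $J'\seq I^\star$ and nonnegative reals $(\mu_j)_{j\in J'}$ with $\lvert\sigma(w)\rvert\leq\sum_{j\in J'}\mu_j\lvert w_j\rvert$. Evaluating at $w=(v,1)$ and using $\sigma(v,1)=\tau(v)$ gives $\lvert\tau(v)\rvert\leq\mu_\star+\sum_{j\in J'\cap I}\mu_j\lvert v_j\rvert$, which is precisely the desired inequality with $k=\mu_\star$ (or $k=0$ if $\star\notin J'$) and $J=J'\cap I$. I expect the main obstacle to be verifying that the auxiliary measure $\rho$ is genuinely finite and carries $p$-integrable projections -- that is, checking that the homogenization trades the additive constant $k$ for the weight contributed by the new coordinate $\star$. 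This is exactly the mechanism by which the finiteness of the measure (equivalently, the availability of an integrable unit) produces the extra summand $k$ absent from Theorem~\ref{t:MAIN}.
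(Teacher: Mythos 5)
Your proof is correct, but it takes a genuinely different route from the paper's. The paper proves $(1)\Rightarrow(2)$ by re-running, in the finite-measure setting, the same direct construction it used for Theorem~\ref{t:MAIN}: it introduces \emph{conditionally partitionable} measure spaces (Definition~\ref{d:partitionable-finite}, with $([0,1],\mathcal{B}_{[0,1]},\Leb)$ as witness, Remark~\ref{r:[0,1] is partbl}) and, in Lemma~\ref{l:l2-finite}, argues contrapositively by choosing $v^n$ with $\lvert\tau(v^n)\rvert>\left(\frac{1}{b_n}\right)^{\frac{1}{p}}+\sum_{j=0}^{n-1}2^{\frac{n}{p}}\lvert v^n_j\rvert$ and assembling functions supported on disjoint sets of measure $\frac{1}{\lvert\tau(v^n)\rvert^p}$ which are $p$-integrable while $\tau$ of them is not. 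You instead \emph{reduce} Theorem~\ref{t:MAIN finite} to Theorem~\ref{t:MAIN} by a homogenization trick: adjoin a slack coordinate $\star$, set $\sigma(v,t)=\lvert t\rvert\,\tau(v/\lvert t\rvert)$, and observe that the hypothesis on $\tau$ -- which concerns only finite measures -- upgrades to preservation over \emph{arbitrary} measures for $\sigma$, because the pushforward measure $\rho$ you construct is weighted by $\lvert h_\star\rvert^p\in\mathcal{L}^1(\kappa)$ and is therefore automatically finite; the constant $k$ then emerges as the coefficient of the adjoined coordinate upon specializing $t=1$. Both arguments are sound, and there is no circularity since the paper proves Theorem~\ref{t:MAIN} independently and first. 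Your reduction is shorter given Theorem~\ref{t:MAIN} and explains conceptually why finiteness of the measure is equivalent to the presence of the additive constant; the paper's parallel construction is self-contained and yields finer by-products that your argument does not: because your applications of hypothesis (1) are to measures on $(\R^I,\Cyl\left(\R^I\right))$ that vary with the family $(h_j)_{j\in I^\star}$, you do not recover the fixed-single-space refinements (Lemma~\ref{l:one measure integrability finite}, Propositions~\ref{p:[0,1] is enough} and~\ref{p:discrete is enough finite}) asserting that preservation over $([0,1],\mathcal{B}_{[0,1]},\Leb)$ alone already suffices. Two points you leave implicit are routine but should be recorded: the existence of the product Gaussian probability measure on $(\R^I,\Cyl\left(\R^I\right))$ for arbitrary, possibly uncountable, $I$ (standard; though the paper's null-measure observation in Remark~\ref{r:null-meas} gives measurability with no machinery at all), and the $\mathcal{G}$-measurability of $\sigma((h_j)_{j\in I^\star})$, which is needed before one can conclude it lies in $\mathcal{L}^p(\kappa)$ and which follows from the $\Cyl\left(\R^{I^\star}\right)$-measurability of $\sigma$ together with Lemma~\ref{l:product} and Theorem~\ref{t:pres measurable}.
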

%
%\noindent Note that it does not really matter if the indices $i_0, \dots, i_{n-1}$ in Theorems~\ref{t:MAIN} and~\ref{t:MAIN finite} are required to be distinct or not.
\noindent Theorems~\ref{t:MAIN} and~\ref{t:MAIN finite} show that the fact that an operation preserves $p$-integrability -- over arbitrary and finite measure spaces, respectively -- does not depend on the choice of $p$.
Hence, once Theorems~\ref{t:MAIN} and~\ref{t:MAIN finite} will be settled, we will simply say that the operation \emph{preserves integrability}.

The other two main results of Part I (Theorems~\ref{t:generation} and~\ref{t:generation finite} below) provide a set of generators for the operations that preserve integrability over arbitrary and finite measure spaces, respectively. To state the theorems, we start by defining, for any set $\mathcal{C}$ of operations $\tau\colon \R^{J_\tau}\to \R$, what we mean by \emph{operations generated by $\mathcal{C}$}.
Given two sets $\Omega$ and $I$, a subset $S\seq \R^\Omega$, and a function $\tau\colon \R^I\to \R$, we say that $S$ is \emph{closed under $\tau$} if, for  every family $(f_i)_{i\in I}$ of elements of $S$, we have that $\tau((f_i)_{i\in I})$ (which is the function from $\Omega$ to $\R$, which maps $\omega$ to $\tau((f_i(\omega))_{i\in I})$) belongs to $S$.
Consider a set $\mathcal{C}$ of functions $\tau\colon \R^{J_\tau}\to \R$, where the set $J_\tau$ depends on $\tau$.
We say that a function $f\colon \R^I\to \R$ is \emph{generated by $\mathcal{C}$} if $f$ belongs to the smallest subset of $\R^{\R^I}$ which contains, for each $i\in I$, the projection function $\pi_i\colon \R^I\to \R$, and which is closed under each element of  $\mathcal{C}$.
\begin{theorem}\label{t:generation}
  	For every set $I$, the operations $\R^I\to \R$ that preserve integrability are exactly those generated by the operations $0$, $+$, $\lor$, $\lambda ({\;\cdot\;})$ (for each $\lambda \in \R$),  $\truncsup$, and  $\trunc$.
\end{theorem}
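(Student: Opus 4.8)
The plan is to prove the two inclusions separately, using Theorem~\ref{t:MAIN} as the explicit description of the operations on the ``preserves integrability'' side. For the easy inclusion I would consider the set $P$ of all operations $\R^I\to\R$ that preserve integrability. It contains every projection $\pi_i$, and it is closed under each listed primitive, because substituting integrability-preserving operations into an integrability-preserving operation yields one again: for a family $(f_k)_k\seq\lpmu$, each inner operation returns a function in $\lpmu$, to which the outer operation then applies. That each primitive itself lies in $P$ follows from Theorem~\ref{t:MAIN}: for $0,+,\lor$ and $\lambda(\;\cdot\;)$ the bound in clause~(2) is immediate, for $\trunc$ one uses $\lvert\overline{x}\rvert\le\lvert x\rvert$, and for $\truncsup$ one notes $x_0\land y\le\truncsup(y,x_0,x_1,\dots)\le y$, whence it is $\Cyl$-measurable and $\lvert\truncsup\rvert\le\lvert x_0\rvert+\lvert y\rvert$. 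By minimality of the generated set, every generated operation preserves integrability.

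For the reverse inclusion, let $\tau$ preserve integrability; by Theorem~\ref{t:MAIN} it is $\Cyl(\R^I)$-measurable and $\lvert\tau(v)\rvert\le N(v)\df\sum_{j\in J}\lambda_j\lvert v_j\rvert$ for a finite $J\seq I$ and nonnegative $\lambda_j$. I would first normalise. A $\Cyl(\R^I)$-measurable function depends on only countably many coordinates, so $\tau$ factors as $\tilde\tau\circ\pi_C$ for a countable $C\seq I$ with $J\seq C$; as the projections are generators, it suffices to generate $\tilde\tau$, and hence I may assume $I$ countable, so that $\Cyl(\R^I)$ is the Borel $\sigma$-algebra. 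Writing $\tau=(\tau\lor0)-\bigl((-\tau)\lor0\bigr)$ further reduces to generating a single Borel $\tau$ with $0\le\tau\le N$, where $N$ is itself generated (a finite combination of $\lvert v_j\rvert=v_j\lor(-v_j)$, sums, and positive scalings).

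The heart of the matter is a $\sigma$-completeness-plus-approximation scheme. Let $G$ be the generated set. I would first show that $G$ is closed under countable suprema bounded above by an already-generated function: if $g_0,g_1,\dots\in G$ and $h\in G$ with $g_n\le h$, then $\sup_n g_n=\truncsup(h,g_0,g_1,\dots)\in G$; dually (via $\lambda=-1$), $G$ is closed under countable infima bounded below by a generated function, and therefore, through $\limsup_n g_n=\inf_m\sup_{n\ge m}g_n$ and the matching $\liminf$, under arbitrary pointwise limits of sequences confined to $[0,N]$. Besides the homogeneous continuous piecewise-linear functions produced by $0,+,\lor,\lambda(\;\cdot\;)$ (with $\land$ and subtraction definable from these), $G$ contains the truncations $v\mapsto v_j\land c$ for $c>0$, obtained as $\lambda(\;\cdot\;)$ applied to $\trunc$; these continuous functions suffice to approximate pointwise every continuous function valued in $[0,N]$ (truncating each approximant into $[0,N]$ by $(\;\cdot\;\lor0)\land N\in G$). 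A Baire-hierarchy induction then shows that the smallest family containing these continuous functions and closed under the bounded countable sups and infs just established exhausts all Borel functions valued in $[0,N]$. In particular $\tau\in G$.

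The step I expect to be the main obstacle is this final induction: one must check that every stage of the Baire hierarchy can be produced while keeping all auxiliary functions dominated by the \emph{single} generated ceiling $N$, so that $\truncsup$ is always applicable. This is precisely the point at which the absence of the constant $1$ (only $\trunc={}\cdot\land1$ is available) matters, forcing every construction to be carried out relative to $N$ rather than to an absolute bound — consistent with the fact that integrability-preserving operations must vanish at the origin.
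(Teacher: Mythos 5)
Your proposal is correct in outline, but the hard inclusion takes a genuinely different route from the paper's. After Theorem~\ref{t:MAIN}, the paper stays entirely inside measure theory and works at arbitrary arity: it approximates $\tau^{+}$ and $\tau^{-}$ from below by simple functions and absorbs the increasing limit with $\truncsup$ under the generated ceiling $N=\sum_{j\in J}\lambda_j\lvert\pi_j\rvert$, so the whole burden falls on indicator functions; these are handled by a relativised ``good sets'' argument (Lemmas~\ref{l:restriction}, \ref{l:sigmaG}, \ref{l:simple} and \ref{l:f under g}), in which the collection of subsets whose dominated indicators are generated is shown to be a $\sigma$-algebra containing the sets $g^{-1}((\lambda,+\infty))$, and the truncation $\trunc$ enters only to manufacture $\ind_{g>\lambda}$ and $\ind_{g\geq\lambda}$ (Lemma~\ref{l: than lambda}). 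You replace simple functions by the Baire hierarchy and the indicator bookkeeping by a lattice Stone--Weierstrass argument; the shared engine of the two proofs is exactly your closure observation, that $\truncsup$ with ceiling $N$ makes the generated set, intersected with $\{f\mid 0\leq f\leq N\}$, closed under pointwise sequential limits (the paper only ever needs increasing limits). The cost of your route is threefold: you must first reduce to countable $I$, which the paper's argument never needs; you must invoke the Lebesgue--Hausdorff theorem that every Borel function on a Polish space lies in some Baire class, a transfinite induction the paper avoids entirely; and your base step hides real work --- the Kakutani--Stone two-point interpolation must be carried out in a vector sublattice all of whose members vanish at the origin (harmless, since $0\leq f\leq N$ forces $f(0)=0$, but pairs of points on a common ray through $0$ cannot be separated by linear functionals and need the one-variable truncations $v_j\land c$, which is precisely where $\trunc$ and the loss of homogeneity enter), and pointwise approximation of continuous functions on $\R^{\omega}$ cannot proceed by compact exhaustion ($\R^{\omega}$ is not $\sigma$-compact), so one must first approximate $f$ by $f(v_0,\dots,v_{m-1},0,0,\dots)$ and then treat the finite-dimensional pieces. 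All of these points are fixable by standard arguments, so your plan is a valid alternative rather than a gapped one; what the paper's proof buys is self-containedness and uniformity in the arity, while yours buys familiarity for readers at home with Baire classes and Stone--Weierstrass, at the price of heavier imported machinery.
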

\begin{theorem}\label{t:generation finite}
  	For every set $I$, the operations $\R^I\to \R$ that preserve integrability over every finite measure space are exactly those generated by the operations $0$, $+$, $\lor$, $\lambda ({\;\cdot\;})$ (for each $\lambda \in \R$),  $\truncsup$, and $1$.
\end{theorem}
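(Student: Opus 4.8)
The plan is to establish the two inclusions separately. For the inclusion ``$\supseteq$'' (soundness), I would first note that the operations preserving $p$-integrability over every finite measure space contain all projections and are closed under composition: if $\sigma$ and each $\sigma_i$ return $\lpmu$-functions when fed $\lpmu$-functions, then so does the composite $\sigma((\sigma_i)_i)$, for every finite $\mu$. It then suffices to check that each generator $0,+,\lor,\lambda({\;\cdot\;}),\truncsup,1$ lies in this class. For the first five this is immediate from Theorem~\ref{t:generation}, since each is among its generators and hence preserves integrability over arbitrary — a fortiori finite — measure spaces; for the constant $1$ it is the elementary fact that the constant function $1$ lies in $\lpmu$ exactly when $\mu$ is finite. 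Consequently the smallest class containing the projections and closed under the six generators is contained in the class of operations preserving integrability over finite measure spaces.

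For the reverse inclusion (completeness) I would reduce to Theorem~\ref{t:generation} by trading the additive constant of Theorem~\ref{t:MAIN finite} for a fresh variable. First observe that $\trunc = ({\;\cdot\;})\land 1$ is generated by $\{0,+,\lor,\lambda({\;\cdot\;}),\truncsup,1\}$, since $x\land 1 = -\bigl((-x)\lor(-1)\bigr)$ and $-1 = (-1)\cdot 1$; hence everything generated by the six operations of Theorem~\ref{t:generation} is a fortiori generated by the six operations considered here. Now let $\tau\colon\R^I\to\R$ preserve integrability over finite measure spaces. By Theorem~\ref{t:MAIN finite} there are a finite $J\seq I$ and nonnegative reals $(\lambda_j)_{j\in J}$ and $k$ with $\lvert\tau(v)\rvert\leq k+\sum_{j\in J}\lambda_j\lvert v_j\rvert$, and $\tau$ is $\Cyl(\R^I)$-measurable. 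Choosing a fresh index $*\notin I$ and writing $w$ for the $*$-th coordinate, set $B(v,w)\df\sum_{j\in J}\lambda_j\lvert v_j\rvert+k\lvert w\rvert$ and define $\tilde\tau\colon\R^{I\sqcup\{*\}}\to\R$ by clamping $\tau$ to $[-B,B]$, that is $\tilde\tau(v,w)\df\bigl(\tau(v)\lor(-B(v,w))\bigr)\land B(v,w)$.

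The point is that $\tilde\tau$ meets both conditions of Theorem~\ref{t:MAIN}: it is $\Cyl(\R^{I\sqcup\{*\}})$-measurable, being built by $\lor$ and $\land$ from the measurable function $B$ and from $\tau$ precomposed with the coordinate-forgetting projection; and by construction $\lvert\tilde\tau(v,w)\rvert\leq B(v,w)=\sum_{j\in J}\lambda_j\lvert v_j\rvert+k\lvert w\rvert$, a homogeneous bound supported on the finite set $J\sqcup\{*\}$. Hence $\tilde\tau$ preserves integrability over arbitrary measure spaces, so Theorem~\ref{t:generation} makes it generated by $\{0,+,\lor,\lambda({\;\cdot\;}),\truncsup,\trunc\}$ and therefore, by the observation above, by $\{0,+,\lor,\lambda({\;\cdot\;}),\truncsup,1\}$. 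Finally, since $B(v,1)=\sum_{j\in J}\lambda_j\lvert v_j\rvert+k\geq\lvert\tau(v)\rvert$, the clamp is inactive at $w=1$, so $\tilde\tau(v,1)=\tau(v)$; substituting the generator $1$ for the coordinate $*$ and using that the generated class is closed under such substitution yields $\tau$ in the class generated by $\{0,+,\lor,\lambda({\;\cdot\;}),\truncsup,1\}$.

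I expect the crux to be the construction of $\tilde\tau$: the clamping bound $B$ must be chosen simultaneously large enough to be inactive at $w=1$ (so that $\tau$ is recovered exactly by the substitution) and homogeneous in $(v,w)$ (so that the constant $k$ is absorbed into the term $k\lvert w\rvert$ and Theorem~\ref{t:MAIN} applies). The remaining step — that substituting the generated constant $1$ into a coordinate of a generated operation again produces a generated operation — is the routine closure of generated operations under composition, provable by structural induction on the generation of $\tilde\tau$.
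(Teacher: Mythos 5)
Your proof is correct, but its completeness half follows a genuinely different route from the paper's. The paper never invokes Theorem~\ref{t:generation}: it reuses the key Lemma~\ref{l:f under g} directly, applied to the enlarged generating set $S=\{\pi_i\mid i\in I\}\cup\{1\}$. Since $\sigma(S)=\Cyl\left(\R^I\right)$ and the dominating function $k+\sum_{j\in J}\lambda_j\lvert \pi_j\rvert$ lies in $\langle S\rangle$, that lemma yields $\tau\in\langle S\rangle$, and the closure $\langle S\rangle$ (which by definition allows $\trunc$) coincides with the class generated by the six operations with $1$, because $\overline{f}=f\land 1=-((-f)\lor(-1))$ becomes definable once $1$ is available. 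You instead treat Theorem~\ref{t:generation} as a black box: you trade the additive constant $k$ of Theorem~\ref{t:MAIN finite} for the homogeneous term $k\lvert w\rvert$ in a fresh coordinate, clamp $\tau$ between $-B$ and $B$ so that Theorem~\ref{t:MAIN} certifies that $\tilde\tau$ preserves integrability over arbitrary measure spaces, and recover $\tau=\tilde\tau(\,\cdot\,,1)$ because the clamp is inactive at $w=1$; your soundness half is the same as the paper's. Both arguments work. The paper's is shorter given its infrastructure and keeps the two generation theorems structurally parallel; yours buys modularity, making the finite-measure theorem a formal consequence of the arbitrary-measure one together with the two characterisation theorems, without reopening the measure-theoretic machinery behind Lemma~\ref{l:f under g}. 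One point you should make explicit: since $\truncsup$ has countably infinite arity, the final substitution step is better phrased as a minimality argument than as structural induction --- the set of operations $h\colon\R^{I\sqcup\{*\}}\to\R$ whose specialisation at $w=1$ is generated contains all projections (substituting $1$ for the $*$-th coordinate turns $\pi_*$ into the constant $1$, itself a generator) and is closed under the six operations, hence contains $\tilde\tau$. This is routine, as you say, but it is precisely the place where the argument needs $1$ to be among the generators.
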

The rest of Part~\ref{part:op} is devoted to a proof of Theorems~\ref{t:MAIN}-\ref{t:generation finite}.

%%%%%%%%%%%%%%%%%%%%%%%%%%%%%%%%%%%%%%%%%%%%%%%%%%%%%%%%%%%%%%%%%%%%%%%%%%%%%%%%%%%%
%SECTION
\section{Operations that preserve measurability}\label{s:meas}

In this section we study measurability, which is a necessary condition for integrability.
In particular, we characterise the operations that preserve measurability (Theorem~\ref{t:pres measurable}).
This result will be of use in the following sections as preservation of measurability is necessary to preservation of integrability (Lemma~\ref{l:lp implies measbl}).
Let us start by defining precisely what we mean by ``to preserve measurability''.
\begin{definition}
	Let $\tau\colon \R^I\to \R$ be a function.
	For $(\Omega,\mathcal{F})$ a measurable space, we say that $\tau$ \emph{preserves measurability over $(\Omega,\mathcal{F})$} if, for every family $(f_i)_{i\in I}$ of $\mathcal{F}$-measurable functions from $\Omega$ to $\R$, the function $\tau((f_i)_{i\in I})\colon \Omega\to \R$ is also $\mathcal{F}$-measurable.
	We say that $\tau$ \emph{preserves measurability} if $\tau$ preserves measurability over every measurable space.\qed
\end{definition}
When we regard $\R$ as a measurable space, we always do so with respect to the Borel $\sigma$-algebra, denoted by $\mathcal{B}_\R$.
\begin{lemma}\label{l:product}
	Let $(\Omega,\mathcal{F})$ be a measurable space, $I$ a set and $f\colon \Omega\to \R^I$ a function.
	Then $f$ is $\mathcal{F}$-$\Cyl\left(\R^I\right)$-measurable if, and only if, for every $i\in I$ the function $\pi_i\circ f \colon \Omega\to\R$ is $\mathcal{F}$-$\mathcal{B}_\R$-measurable.
\end{lemma}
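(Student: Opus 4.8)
The plan is to prove both implications directly from the definition of $\Cyl\left(\R^I\right)$ as the smallest $\sigma$-algebra rendering every projection $\pi_i\colon\R^I\to\R$ measurable, invoking the standard ``good sets'' argument for the nontrivial direction. The statement is in fact an instance of the universal property of the initial $\sigma$-algebra generated by a family of maps, specialised to the family of projections.

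First I would dispatch the forward implication, which is the easy one. Assuming $f$ is $\mathcal{F}$-$\Cyl\left(\R^I\right)$-measurable, the very definition of $\Cyl\left(\R^I\right)$ makes each projection $\pi_i$ a $\Cyl\left(\R^I\right)$-$\mathcal{B}_\R$-measurable map. A composite of measurable maps is measurable, so $\pi_i\circ f$ is $\mathcal{F}$-$\mathcal{B}_\R$-measurable for every $i\in I$; concretely, $(\pi_i\circ f)^{-1}(B)=f^{-1}\left(\pi_i^{-1}(B)\right)\in\mathcal{F}$ whenever $B\in\mathcal{B}_\R$.

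For the converse, suppose each $\pi_i\circ f$ is $\mathcal{F}$-$\mathcal{B}_\R$-measurable. The key step is to introduce the collection
\[
	\mathcal{A}\df\left\{A\seq\R^I\mid f^{-1}(A)\in\mathcal{F}\right\}.
\]
Since preimages under $f$ commute with complementation and with countable unions, $\mathcal{A}$ is a $\sigma$-algebra on $\R^I$. The hypothesis says exactly that $\pi_i^{-1}(B)\in\mathcal{A}$ for every $i\in I$ and every $B\in\mathcal{B}_\R$, so each $\pi_i$ is $\mathcal{A}$-$\mathcal{B}_\R$-measurable. By minimality of $\Cyl\left(\R^I\right)$ among all $\sigma$-algebras that make every projection measurable, we conclude $\Cyl\left(\R^I\right)\seq\mathcal{A}$. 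Unwinding the definition of $\mathcal{A}$ then gives $f^{-1}(A)\in\mathcal{F}$ for every $A\in\Cyl\left(\R^I\right)$, i.e.\ $f$ is $\mathcal{F}$-$\Cyl\left(\R^I\right)$-measurable.

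The only point demanding any care — and the closest thing to an obstacle — is checking that $\mathcal{A}$ is genuinely a $\sigma$-algebra and then applying the minimality of $\Cyl\left(\R^I\right)$ in the correct direction; but both reduce to the routine algebra of preimages, so I expect no real difficulty. I would flag that the argument uses nothing special about $\R$ or $\mathcal{B}_\R$, and hence would go through verbatim for the initial $\sigma$-algebra generated by any family of maps into measurable spaces.
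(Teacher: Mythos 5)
Your proof is correct and complete. Note that the paper does not actually prove this lemma at all: its ``proof'' is a one-line citation to \cite{Borelsets}, Theorem~3.1.29.(ii), where this universal property of initial $\sigma$-algebras is established. Your argument --- the easy direction by composition of measurable maps, and the converse via the pullback $\sigma$-algebra $\mathcal{A}\df\{A\seq\R^I\mid f^{-1}(A)\in\mathcal{F}\}$ together with minimality of $\Cyl\left(\R^I\right)$ --- is precisely the standard ``good sets'' proof one would find behind that citation, so it supplies a self-contained substitute for the reference; your closing observation that nothing is special about $\R$ or the projections (the argument works for the initial $\sigma$-algebra of any family of maps) is also accurate and is exactly the level of generality at which the cited theorem is stated.
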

\begin{proof}
	See \cite{Borelsets}, Theorem~3.1.29.(ii).
\end{proof}
Now we can obtain a characterisation of the operations that preserve measurability.
\begin{theorem}\label{t:pres measurable}
	Let $I$ be a set and let $\tau\colon \R^I\to \R$ be  a function.
	The following are equivalent.
	\begin{enumerate}
		\item $\tau$ preserves measurability.
		\item $\tau$ preserves measurability over $\left(\R^I,\Cyl\left(\R^I\right)\right)$.
		\item  $\tau$ is $\Cyl\left(\R^I\right)$-measurable.
	\end{enumerate}
\end{theorem}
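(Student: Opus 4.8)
The plan is to establish the cyclic chain of implications (1) $\Rightarrow$ (2) $\Rightarrow$ (3) $\Rightarrow$ (1). The implication (1) $\Rightarrow$ (2) is immediate: $\left(\R^I,\Cyl\left(\R^I\right)\right)$ is one particular measurable space, so preserving measurability over every measurable space forces in particular preservation over this one.

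For (2) $\Rightarrow$ (3), the idea is to feed $\tau$ the family of coordinate projections. By the very definition of the cylinder $\sigma$-algebra as the smallest $\sigma$-algebra making every $\pi_i\colon \R^I\to \R$ measurable, each $\pi_i$ is $\Cyl\left(\R^I\right)$-$\mathcal{B}_\R$-measurable, so $(\pi_i)_{i\in I}$ is an admissible input to which hypothesis (2) applies over the space $\left(\R^I,\Cyl\left(\R^I\right)\right)$. The key observation is that evaluating the resulting function at $v\in \R^I$ gives $\tau((\pi_i(v))_{i\in I})=\tau(v)$, so $\tau((\pi_i)_{i\in I})$ is literally $\tau$ itself. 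Hypothesis (2) then says exactly that $\tau$ is $\Cyl\left(\R^I\right)$-measurable, which is (3).

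For (3) $\Rightarrow$ (1), I would take an arbitrary measurable space $(\Omega,\mathcal{F})$ and a family $(f_i)_{i\in I}$ of $\mathcal{F}$-measurable functions $\Omega\to\R$, assemble them into the single map $f\colon \Omega\to \R^I$, $\omega\mapsto (f_i(\omega))_{i\in I}$, and note that $\pi_i\circ f = f_i$ is $\mathcal{F}$-$\mathcal{B}_\R$-measurable for each $i$. Lemma~\ref{l:product} then upgrades this to the statement that $f$ itself is $\mathcal{F}$-$\Cyl\left(\R^I\right)$-measurable. Since $\tau((f_i)_{i\in I}) = \tau\circ f$ is a composite of the $\Cyl\left(\R^I\right)$-measurable map $\tau$ with the $\mathcal{F}$-$\Cyl\left(\R^I\right)$-measurable map $f$, it is $\mathcal{F}$-measurable, which is precisely what preservation of measurability over $(\Omega,\mathcal{F})$ demands.

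There is no serious obstacle here: the argument is purely formal once the vocabulary is in place. The conceptual heart is the small trick in (2) $\Rightarrow$ (3) of recovering $\tau$ as its own image under the projection family. The only point requiring a little care is the bookkeeping in (3) $\Rightarrow$ (1), namely recognising that the pointwise application $\tau((f_i)_{i\in I})$ coincides with the composite $\tau\circ f$ through the product map $f$, so that Lemma~\ref{l:product} can be brought to bear.
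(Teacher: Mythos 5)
Your proposal is correct and follows essentially the same route as the paper's proof: the same trivial specialisation for (1) $\Rightarrow$ (2), the same trick of applying $\tau$ to the projection family and recognising the result as $\tau$ itself for (2) $\Rightarrow$ (3), and the same use of Lemma~\ref{l:product} together with composition of measurable maps for (3) $\Rightarrow$ (1).
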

\begin{proof}
	$[(1)\Rightarrow(2)]$
	Trivial.
 	
	$[(2)\Rightarrow(3)]$ For every $i\in I$, $\pi_i\colon \R^I\to \R$ is $\Cyl\left(\R^I\right)$-measurable.
	Since $\tau$ preserves measurability,  $\tau((\pi_i)_{i\in I})$ is $\Cyl\left(\R^I\right)$-measurable.
	Since $(\pi_i)_{i\in I}\colon \R^I\to \R^I$ is the identity, $\tau((\pi_i)_{i\in I})=\tau \circ (\pi_i)_{i\in I}=\tau$ is $\Cyl\left(\R^I\right)$-measurable.
 	
	$[(3)\Rightarrow(1)]$ Let us consider a measurable space $(\Omega,\mathcal{F})$ and a family $(f_i)_{i\in I}$ of measurable functions $f_i\colon \Omega\to \R$.
	Consider the function $(f_i)_{i\in I}\colon \Omega\to \R^I$, $x\mapsto (f_i(x))_{i\in I}$.
	We have $\pi_i\circ (f_i)_{i\in I}=f_i$, therefore $\pi_i\circ (f_i)_{i\in I}$ is measurable for every $i\in I$.
	Thus, by Lemma~\ref{l:product}, $(f_i)_{i\in I}$ is measurable.
	Thus $\tau((f_i)_{i\in I})=\tau\circ (f_i)_{i\in I}$ is measurable, because it is a composition of measurable functions.
\end{proof}
%

%===================================================================================
%SUBSECTION
\subsection{The operations that preserve measurability depend on countably many coordinates}

A fact that will be of use in the following sections is that the operations that preserve measurability depend on countably many coordinates.
This we show in Corollary~\ref{c:meas then count} below.
Let us start by recalling what is meant with ``to depend on countably many coordinates''.
\begin{definition}
	Given a set $I$.
	\begin{enumerate}
		\item Let $S\seq \R^I$.
		For $J\seq I$, we say that $S$ \emph{depends only on $J$} if, given any $x,y\in \R^I$ such that  $x_j=y_j$ for all $j\in J$, we have $x\in S\Leftrightarrow y\in S$.
		We say that $S$ \emph{depends on countably many coordinates} if there exists a countable subset $J\seq I$ such that $S$ depends only on $J$.
		\item Let $\tau\colon \R^I\to \R$ be a function.
		For $J\seq I$, we say that $\tau$ depends only on $J$ if, given any $x,y\in \R^I$ such that $x_j=y_j$ for all $j\in J$, we have $\tau(x)=\tau(y)$.
		We say that $\tau$ depends on countably many coordinates if there exists a countable subset $J\seq I$ such that $\tau$ depends only on $J$.\qed
	\end{enumerate}
\end{definition}
We believe that the following proposition is folklore, but we were not able to locate an appropriate reference.
\begin{proposition}\label{p:depends on countably}
	If $\tau\colon \R^I\to \R$ is $\Cyl\left(\R^I\right)$-measurable, then $\tau$ depends on countably many coordinates.
\end{proposition}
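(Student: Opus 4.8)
The plan is to reduce the statement to the single structural fact that \emph{every set in $\Cyl(\R^I)$ depends on countably many coordinates}, and then to pin down one countable set of coordinates governing all of $\tau$ by testing against a countable generating family of Borel sets.

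First I would establish the structural fact by a good-sets argument. Let $\mathcal{A}$ denote the collection of all $S\seq \R^I$ that depend on countably many coordinates; I claim $\mathcal{A}$ is a $\sigma$-algebra. It contains $\R^I$ (which depends on $\emptyset$) and is closed under complementation, since the defining biconditional $x\in S\Leftrightarrow y\in S$ is unaffected by passing to $S^c$. For countable unions, if $S_n$ depends only on the countable set $J_n$, then $\bigcup_n S_n$ depends only on $J\df\bigcup_n J_n$, which is again countable: if $x_j=y_j$ for all $j\in J$, then for each $n$ we have $x_j=y_j$ for all $j\in J_n$, whence $x\in S_n\Leftrightarrow y\in S_n$, and taking the union gives $x\in\bigcup_n S_n\Leftrightarrow y\in\bigcup_n S_n$. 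Moreover each generator $\pi_i^{-1}(B)$ (with $B\in\mathcal{B}_\R$) depends only on $\{i\}$, since $x_i=y_i$ forces $\pi_i(x)=\pi_i(y)$ and hence $x\in\pi_i^{-1}(B)\Leftrightarrow y\in\pi_i^{-1}(B)$. As $\Cyl(\R^I)$ is by definition the smallest $\sigma$-algebra containing these generators, we conclude $\Cyl(\R^I)\seq\mathcal{A}$.

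Next I would upgrade this from individual sets to the whole function $\tau$. The straightforward move of choosing a countable coordinate set for each Borel preimage $\tau^{-1}(B)$ fails, because ranging over all $B$ could accumulate uncountably many coordinates. The remedy is to test only against the countable family $\{(-\infty,q):q\in\Q\}$, which generates $\mathcal{B}_\R$. For each $q\in\Q$, measurability of $\tau$ gives $\tau^{-1}((-\infty,q))\in\Cyl(\R^I)$, so by the structural fact it depends only on some countable $J_q\seq I$. Setting $J\df\bigcup_{q\in\Q}J_q$ produces a countable subset of $I$.

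Finally I would verify that $\tau$ depends only on $J$. Take $x,y\in\R^I$ with $x_j=y_j$ for all $j\in J$. For every $q\in\Q$ we have $J_q\seq J$, so $x_j=y_j$ holds for all $j\in J_q$, and hence $x\in\tau^{-1}((-\infty,q))\Leftrightarrow y\in\tau^{-1}((-\infty,q))$, i.e.\ $\tau(x)<q\Leftrightarrow\tau(y)<q$. Since this holds for every rational $q$ and the rationals are order-dense in $\R$, it forces $\tau(x)=\tau(y)$. Thus $\tau$ depends only on the countable set $J$, as required. I expect the only delicate point to be this passage from individual preimages to the global dependence of $\tau$: everything hinges on selecting a \emph{countable} generating class for $\mathcal{B}_\R$, so that the union $\bigcup_q J_q$ remains countable.
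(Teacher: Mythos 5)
Your proof is correct and follows essentially the same route as the paper's: both establish that every set in $\Cyl\left(\R^I\right)$ depends on countably many coordinates via a good-sets/$\sigma$-subalgebra argument, then take the union of the countable coordinate sets attached to the preimages of a countable family of rational-indexed half-lines, and conclude by density of $\Q$ in $\R$ (the paper uses $(a,+\infty)$ with $a\in\Q$ and argues by contradiction, you use $(-\infty,q)$ and argue directly, which is an immaterial difference). The only divergence is that you spell out the $\sigma$-algebra verification that the paper delegates to a reference, which is a fine and complete substitute.
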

\begin{proof}
	First, every element of $\Cyl\left(\R^I\right)$ depends on countably many coordinates: indeed, the set of elements of $\Cyl\left(\R^I\right)$ which depend on countably many coordinates is a $\sigma$-subalgebra of $\Cyl\left(\R^I\right)$ which makes the projection functions measurable (see also  254M(c) in \cite{Fremlin}).
	Second, let $ \tau \colon \R^I\to \R$ be $\Cyl\left(\R^I\right)$-measurable.
	The idea that we will use is that $\tau$ is determined by the family $(\tau^{-1}((a,+\infty)))_{a\in \Q}$.
	For every $a\in \Q$, there exists a countable subset $J\seq I$ such that the measurable set $\tau^{-1}((a,+\infty))$ depends only on $J_a$.
	Then $J\df\bigcup_{a\in \Q} J_a$ has the property that, for each $b\in \Q$, $\tau^{-1}((b,+\infty))$ depends only on $J$.
	We claim that $\tau$ depends only on $J$.
	Let $x,y\in \R^I$ be such that $x_j=y_j$ for every $j\in J$.
	We shall prove $\tau(x)=\tau(y)$.
	Suppose $\tau(x)\neq \tau(y)$.
	Without loss of generality, $\tau(x)<\tau(y)$.
	Let $a\in \Q$ be such that $\tau(x)<a<\tau(y)$.
	Then $x\notin \tau^{-1}((a,+\infty))$ and $y\in \tau^{-1}((a,+\infty))$.
	This implies that it is not true that $\tau^{-1}((a,+\infty))$ depends only on $J$.
\end{proof}
\begin{corollary}\label{c:meas then count}
	Let $I$ be a set and $\tau\colon \R^I\to \R$ be a function.
	If $\tau$ preserves measurability, then $\tau$ depends on countably many coordinates.
\end{corollary}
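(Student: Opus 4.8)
The plan is to combine the two results proved immediately above: Theorem~\ref{t:pres measurable}, which tells us that preserving measurability is equivalent to $\tau$ being $\Cyl\left(\R^I\right)$-measurable, and Proposition~\ref{p:depends on countably}, which says that any $\Cyl\left(\R^I\right)$-measurable function depends on countably many coordinates. The corollary is then a one-line composition of these two facts, so the real work has already been done; what remains is simply to chain the implications.

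More precisely, I would argue as follows. Suppose $\tau\colon \R^I\to \R$ preserves measurability. By the equivalence $[(1)\Rightarrow(3)]$ in Theorem~\ref{t:pres measurable}, $\tau$ is $\Cyl\left(\R^I\right)$-measurable. Then Proposition~\ref{p:depends on countably} applies directly and yields that $\tau$ depends on countably many coordinates, which is exactly the conclusion. There are no hidden cases to check, since both invoked results hold for an arbitrary index set $I$ and an arbitrary function $\tau$.

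Since this is a corollary in the strict sense, I do not anticipate any genuine obstacle: the entire content is packaged in the preceding proposition and theorem. The only thing to be careful about is to cite the correct implication within Theorem~\ref{t:pres measurable} (namely that preservation of measurability implies $\Cyl$-measurability, which is the composite $(1)\Rightarrow(2)\Rightarrow(3)$, or more directly the fact that the three conditions are equivalent), rather than misattributing the measurability characterisation. Thus the proof is essentially the single sentence: ``By Theorem~\ref{t:pres measurable}, $\tau$ is $\Cyl\left(\R^I\right)$-measurable, and hence, by Proposition~\ref{p:depends on countably}, $\tau$ depends on countably many coordinates.''
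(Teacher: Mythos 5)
Your proposal is correct and is exactly the paper's own proof: apply Theorem~\ref{t:pres measurable} to get that $\tau$ is $\Cyl\left(\R^I\right)$-measurable, then invoke Proposition~\ref{p:depends on countably}. Nothing is missing.
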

\begin{proof}
	If $\tau$ preserves measurability, then $\tau$ is $\Cyl\left(\R^I\right)$-measurable by Theorem~\ref{t:pres measurable}.
	By Proposition~\ref{p:depends on countably}, $\tau$ depends on countably many coordinates.
\end{proof}
%

%===================================================================================
%SUBSECTION
\subsection{The case of uncountable Polish spaces}

The remaining results in this section are not used in the proofs of our main results.

One may think that, for an operation $\tau \colon\R^I\to \R$, the condition ``$\tau$ preserve measurability over \emph{every} measurable space'' is too strong because we may not be interested in all measurable spaces.
However, Proposition~\ref{p:R is quite important} shows that this condition is equivalent to ``$\tau$ preserve measurability over $(\R,\mathcal{B}_\R)$'' (if $\tau$ has countable arity).
\begin{proposition}\label{p:R is quite important}
 	For a set $I$ such that $\lvert I \rvert\leq \lvert \omega\rvert$ and a function $\tau\colon \R^I\to \R$, $\tau$ preserves measurability if, and only if, $\tau$ preserves measurability on $(\R,\mathcal{B}_\R)$.
\end{proposition}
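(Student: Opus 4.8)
The plan is to reduce both directions to a statement about the Borel measurability of $\tau$ viewed as a map $\R^I\to \R$, and then to transport the hypothesis along a Borel isomorphism $\R\cong \R^I$. The forward implication is immediate: preserving measurability over every measurable space in particular forces preservation over $(\R,\mathcal{B}_\R)$. For the converse, I would first recall that, because $\lvert I\rvert\leq \lvert \omega\rvert$, the cylinder $\sigma$-algebra $\Cyl(\R^I)$ coincides with the Borel $\sigma$-algebra of the Polish space $\R^I$; by Theorem~\ref{t:pres measurable} it therefore suffices to show that the hypothesis ``$\tau$ preserves measurability over $(\R,\mathcal{B}_\R)$'' implies that $\tau$ itself is $\Cyl(\R^I)$-measurable.

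Assume $I\neq\emptyset$ (the case $I=\emptyset$ is trivial, since then $\tau$ is essentially constant and both conditions hold vacuously). Since $\R^I$ is an uncountable Polish space, the Borel isomorphism theorem yields a Borel isomorphism $\phi\colon \R\to \R^I$; in particular both $\phi$ and $\phi^{-1}$ are Borel measurable. For each $i\in I$ set $f_i\df \pi_i\circ\phi\colon \R\to\R$; as a composite of the $\Cyl(\R^I)$-measurable projection $\pi_i$ with the Borel map $\phi$, each $f_i$ is Borel measurable. Thus $(f_i)_{i\in I}$ is a family of $\mathcal{B}_\R$-measurable functions $\R\to\R$, and the hypothesis gives that $\tau((f_i)_{i\in I})\colon \R\to\R$ is Borel measurable. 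But this function sends $x$ to $\tau((\pi_i(\phi(x)))_{i\in I})=\tau(\phi(x))$, so $\tau\circ\phi$ is Borel measurable.

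Finally I would write $\tau=(\tau\circ\phi)\circ\phi^{-1}$ and observe that this exhibits $\tau$ as a composite of the Borel map $\tau\circ\phi\colon \R\to\R$ with the Borel map $\phi^{-1}\colon \R^I\to\R$, whence $\tau$ is Borel, i.e.\ $\Cyl(\R^I)$-measurable. Theorem~\ref{t:pres measurable} then upgrades this to preservation of measurability over every measurable space, completing the converse.

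The one genuinely nontrivial ingredient is the existence of the Borel isomorphism $\phi$; this is precisely the Borel isomorphism theorem for uncountable standard Borel spaces. If one prefers to avoid invoking it, one can instead construct $\phi\colon \R\to\R^\omega$ explicitly by interleaving binary expansions (handling the usual dyadic-ambiguity nullset by hand). I expect this appeal to the Borel isomorphism theorem to be the main conceptual step; the remainder is routine composition of measurable maps together with the identification $\Cyl(\R^I)=\mathcal{B}_{\R^I}$ already recorded for countable $I$.
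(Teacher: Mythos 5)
Your proof is correct and takes essentially the same route as the paper's: both reduce the problem to Theorem~\ref{t:pres measurable} and invoke the Borel isomorphism theorem identifying $(\R^I,\Cyl\left(\R^I\right))$ with $(\R,\mathcal{B}_\R)$ as measurable spaces (the paper cites the same result, Theorem~3.3.13 of its reference on Borel sets). The only difference is presentational: the paper transports the property ``preserves measurability over this space'' abstractly along the isomorphism, whereas you unpack that transport explicitly by applying the hypothesis to the family $(\pi_i\circ\phi)_{i\in I}$ and then writing $\tau=(\tau\circ\phi)\circ\phi^{-1}$.
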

\begin{proof}
	If $I=\emptyset$, then $\tau$ is a constant function.
	Hence $\tau$ preserves measurability over every measurable space.
	Let us consider the case $I\neq \emptyset$.
	By Theorem~\ref{t:pres measurable}, $\tau$ preserves measurability if, and only if,   $\tau$ preserves measurability over $(\R,\Cyl\left(\R^I\right))$.
	Since $\R^I$ and $\R$ are uncountable Polish spaces with Borel $\sigma$-algebras $\Cyl\left(\R^I\right)$ and $\mathcal{B}_\R$ respectively, $(\R^I,\Cyl\left(\R^I\right))$ and $(\R,\mathcal{B}_\R)$ are isomorphic measurable spaces (see \cite{Borelsets}, Theorem~3.3.13).
	(Recall that an isomorphism of measurable spaces $(\Omega,\mathcal{F})$ and $(\Omega',\mathcal{F}')$ is a bijective measurable function $f\colon \Omega\to \Omega'$ such that its inverse is measurable.)
\end{proof}
\begin{remark}\label{r:polish is quite important}
	In Proposition~\ref{p:R is quite important} above, one may replace the measurable space $(\R,\mathcal{B}_\R)$ by any of its isomorphic copies.
	In particular, one may replace it with the measurable space given by any uncountable Polish space endowed with its Borel $\sigma$-algebra (see \cite{Borelsets}, Chapter 3).
\end{remark}
%

%%%%%%%%%%%%%%%%%%%%%%%%%%%%%%%%%%%%%%%%%%%%%%%%%%%%%%%%%%%%%%%%%%%%%%%%%%%%%%%%%%%%
%SECTION
\section{Operations that preserve integrability}\label{s:lp}

The goal of this section is to prove Theorem~\ref{t:MAIN}, i.e.\ to characterise the operations that preserve $p$-integrability.

\begin{remark}\label{r:null-meas}
	Let $(\Omega, \mathcal{F})$ be a measurable space, and let $\mu_0$ be the null-measure on $(\Omega, \mathcal{F})$: for each $A\in \mathcal{F}$, $\mu_0(A)=0$.
	Then $\mathcal{L}^p(\mu_0)$ is the set of $\mathcal{F}$-measurable functions from $\Omega$ to $\R$.
	Hence, preservation of  $p$-integrability over $(\Omega, \mathcal{F},\mu_0)$ is equivalent to preservation of measurability over $(\Omega, \mathcal{F})$.
\end{remark}
An immediate consequence of Remark~\ref{r:null-meas} is the following lemma.
\begin{lemma}\label{l:lp implies measbl}
	Let $I$ be a set, $\tau\colon \R^I\to \R$ and $p\in [1,+\infty)$.
	If $\tau$ preserves $p$-integrability, then $\tau$ preserves measurability.
\end{lemma}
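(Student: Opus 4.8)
The plan is to read off the claim directly from Remark~\ref{r:null-meas}; there is essentially nothing to do beyond unwinding the definitions, and no genuine obstacle to overcome. First I would fix an arbitrary measurable space $(\Omega, \mathcal{F})$, since ``preserving measurability'' is quantified over all such spaces. The task is then to show that $\tau$ preserves measurability over this particular $(\Omega, \mathcal{F})$.

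To do so, I would equip $(\Omega, \mathcal{F})$ with the null-measure $\mu_0$. Because $\tau$ preserves $p$-integrability over \emph{every} measure space by hypothesis, in particular $\tau$ preserves $p$-integrability over the measure space $(\Omega, \mathcal{F}, \mu_0)$. Now I would invoke Remark~\ref{r:null-meas}, which identifies $\mathcal{L}^p(\mu_0)$ with the set of all $\mathcal{F}$-measurable functions from $\Omega$ to $\R$ and thereby equates preservation of $p$-integrability over $(\Omega, \mathcal{F}, \mu_0)$ with preservation of measurability over $(\Omega, \mathcal{F})$. This yields that $\tau$ preserves measurability over $(\Omega, \mathcal{F})$.

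Since $(\Omega, \mathcal{F})$ was arbitrary, $\tau$ preserves measurability over every measurable space, which is exactly the assertion that $\tau$ preserves measurability. The only point that requires care is matching the quantifiers correctly: the hypothesis supplies preservation over all measure spaces (in particular all null-measure spaces), and this is precisely what is needed to obtain preservation over all measurable spaces once Remark~\ref{r:null-meas} is applied pointwise in $(\Omega, \mathcal{F})$.
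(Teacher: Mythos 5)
Your proof is correct and is exactly the paper's argument: the paper derives this lemma as ``an immediate consequence of Remark~\ref{r:null-meas}'', i.e.\ by equipping an arbitrary measurable space with the null-measure and applying the remark, which is precisely what you spell out. Nothing is missing; you have simply made the quantifier bookkeeping explicit.
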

\begin{lemma}\label{l:implies lp}
	Let $(\Omega,\mathcal{F},\mu)$ be a measure space, and let $f,g\colon \Omega\to \R$ be functions, and let $\lambda \in \R$.
	Then the following properties hold.
	\begin{enumerate}
		\item If $f\in \lpmu$, then $\lvert f\rvert \in \lpmu$.
		\item If $f \in \lpmu$, then $\lambda f \in \lpmu$.
		\item If $f,g\in \lpmu$, then $f+g\in \lpmu$.
		\item If $g\in \lpmu, \lvert f\rvert\leq \lvert g\rvert$ and $f$ is $\mathcal{F}$-measurable, then  $f\in \lpmu$.
	\end{enumerate}
\end{lemma}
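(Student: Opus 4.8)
The plan is to verify each of the four items directly from the definition of $\lpmu$, handling measurability and finiteness of the $p$-th moment separately. In every case the measurability half is immediate from standard measure theory: $\lvert f\rvert$, $\lambda f$ and $f+g$ are obtained from measurable functions by postcomposition with the continuous (hence Borel) maps $x\mapsto \lvert x\rvert$, $x\mapsto \lambda x$ and $(x,y)\mapsto x+y$ respectively, so all three are $\mathcal{F}$-measurable; in item (4) measurability of $f$ is assumed outright. The only substance therefore lies in the integral estimates, which I would obtain from monotonicity of the integral together with elementary pointwise inequalities.

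For item (1) I would note that $\lvert\,\lvert f\rvert\,\rvert^p=\lvert f\rvert^p$ pointwise, whence $\int_\Omega \lvert\,\lvert f\rvert\,\rvert^p\,d\mu=\int_\Omega \lvert f\rvert^p\,d\mu<\infty$. For item (2) I would use $\lvert \lambda f\rvert^p=\lvert\lambda\rvert^p\lvert f\rvert^p$, so that $\int_\Omega \lvert \lambda f\rvert^p\,d\mu=\lvert\lambda\rvert^p\int_\Omega \lvert f\rvert^p\,d\mu<\infty$. For item (4), since $t\mapsto t^p$ is nondecreasing on $\R^+$, the pointwise hypothesis $\lvert f\rvert\leq \lvert g\rvert$ gives $\lvert f\rvert^p\leq \lvert g\rvert^p$ pointwise, and monotonicity of the integral yields $\int_\Omega \lvert f\rvert^p\,d\mu\leq \int_\Omega \lvert g\rvert^p\,d\mu<\infty$. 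Each of these three is a one-line computation once measurability is in hand.

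The only point requiring a genuine inequality is item (3). Here I would combine the pointwise triangle inequality $\lvert f+g\rvert\leq \lvert f\rvert+\lvert g\rvert$ with convexity of $t\mapsto t^p$ on $\R^+$ (valid because $p\geq 1$), which gives $(a+b)^p\leq 2^{p-1}(a^p+b^p)$ for all $a,b\in\R^+$. Applying this with $a=\lvert f(x)\rvert$ and $b=\lvert g(x)\rvert$ and then integrating yields $\int_\Omega \lvert f+g\rvert^p\,d\mu\leq 2^{p-1}\left(\int_\Omega \lvert f\rvert^p\,d\mu+\int_\Omega \lvert g\rvert^p\,d\mu\right)<\infty$, so $f+g\in\lpmu$. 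If one prefers to avoid convexity, the cruder bound $(a+b)^p\leq 2^p\max(a,b)^p\leq 2^p(a^p+b^p)$ serves equally well. I do not expect any real obstacle: the whole lemma is a routine toolbox statement, and the convexity (or $2^p$) estimate in item (3) is the only step carrying any content.
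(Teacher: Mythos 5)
Your proof is correct, and for the only item with real content it takes a genuinely different route from the paper. The paper handles items (1), (2), (4) exactly as you do --- directly from the definition of $\lpmu$, from linearity (scaling) of the integral, and from its monotonicity --- but for item (3) it invokes the Minkowski inequality (citing Rudin): it bounds $\left(\int_{\Omega}\lvert f+g\rvert^p\,d\mu\right)^{1/p}$ by $\left(\int_{\Omega}\lvert f\rvert^p\,d\mu\right)^{1/p}+\left(\int_{\Omega}\lvert g\rvert^p\,d\mu\right)^{1/p}$ after first applying the pointwise triangle inequality. You instead use the elementary convexity estimate $(a+b)^p\leq 2^{p-1}(a^p+b^p)$ for $a,b\in\R^+$, $p\geq 1$. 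Both arguments are valid. The paper's route rests on a cited standard theorem (itself usually derived from H\"older) and yields the sharper fact that the $L^p$-seminorm is subadditive; your route is more self-contained, needing only convexity of $t\mapsto t^p$, at the price of the non-sharp constant $2^{p-1}$ --- which is harmless here, since only finiteness of the integral is at stake. Your explicit verification of the measurability half (postcomposition with continuous maps, and measurability of sums) is also slightly more careful than the paper, which leaves that part implicit in the definition of $\lpmu$.
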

\begin{proof}
	(1) is immediate by definition of $\lpmu$, (2) follows from linearity of the integration operator, (4) follows from the monotonicity of the integration operator, while (3) follows from the Minkowski inequality (see \cite{Rudin}, Theorem~3.5): 
	\[
		\left(\int_{\Omega} \lvert f + g \rvert^p \,d\mu \right)^{\frac{1}{p}} \leq 
		\left(\int_{\Omega} (\lvert f \rvert + \lvert g \rvert)^p \,d\mu\right)^{\frac{1}{p}} \stackrel{\text{Mink.}}{\leq}
		\left(\int_{\Omega} \lvert f \rvert^p \,d\mu \right)^{\frac{1}{p}} + \left(\int_{\Omega} \lvert g \rvert^p \,d\mu\right)^{\frac{1}{p}}.
	\]
\end{proof}
The next lemma settles the easiest direction of the characterisation of operations that preserve $p$-integrability, i.e.\ the implication (2) $\Rightarrow$ (1) in Theorem \ref{t:MAIN}.

\begin{lemma}\label{l:easy implication}
	Let $(\Omega, \mathcal{F},\mu)$ be a measure space, $I$ a set, $\tau\colon \R^I\to \R$ an operation that preserves measurability over $(\Omega, \mathcal{F})$ and $p\in [1,+\infty)$.
	If there exist a finite subset of indices $J \seq I$ and nonnegative real numbers $(\lambda_j)_{j \in J}$ such that, for every $v\in \R^I$, we have $\lvert\tau(v)\rvert\leq \sum_{j \in J} \lambda_{j} \lvert v_{j}\rvert$,	then $\tau$ preserves $p$-integrability over $(\Omega, \mathcal{F},\mu)$.
\end{lemma}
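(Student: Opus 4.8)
The plan is to fix a family $(f_i)_{i \in I} \subseteq \lpmu$ and show that the composite function $\tau((f_i)_{i\in I}) \colon \Omega \to \R$ again lies in $\lpmu$. Since $\tau$ preserves measurability over $(\Omega, \mathcal{F})$ by hypothesis, the function $\tau((f_i)_{i \in I})$ is automatically $\mathcal{F}$-measurable. So the only real content is to verify the integrability bound $\int_\Omega \lvert \tau((f_i)_{i\in I}) \rvert^p \, d\mu < \infty$, and then invoke part (4) of Lemma~\ref{l:implies lp} (domination by an $\lpmu$ function plus measurability implies membership in $\lpmu$).

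The key idea is to exploit the pointwise domination inequality $\lvert \tau(v) \rvert \leq \sum_{j \in J} \lambda_j \lvert v_j \rvert$ supplied by the hypothesis, evaluated at $v = (f_i(x))_{i \in I}$ for each $x \in \Omega$. This yields the pointwise estimate
\[
	\lvert \tau((f_i)_{i\in I}) \rvert \leq \sum_{j \in J} \lambda_j \lvert f_j \rvert
\]
as functions on $\Omega$. First I would note that the right-hand side is a finite sum, since $J$ is finite. Then I would argue that $g \df \sum_{j \in J} \lambda_j \lvert f_j \rvert$ belongs to $\lpmu$: each $\lvert f_j \rvert \in \lpmu$ by Lemma~\ref{l:implies lp}(1), each scalar multiple $\lambda_j \lvert f_j \rvert \in \lpmu$ by Lemma~\ref{l:implies lp}(2), and the finite sum stays in $\lpmu$ by iterating Lemma~\ref{l:implies lp}(3). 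Finally, since $\lvert \tau((f_i)_{i\in I}) \rvert \leq g$ pointwise, $g \in \lpmu$, and $\tau((f_i)_{i\in I})$ is measurable, part (4) of Lemma~\ref{l:implies lp} gives $\tau((f_i)_{i\in I}) \in \lpmu$, as desired.

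I do not anticipate a genuine obstacle here, as this is the easy direction: the entire argument is a routine assembly of the closure properties of $\lpmu$ already established in Lemma~\ref{l:implies lp}, combined with the domination hypothesis. The one point deserving slight care is the measurability of $\tau((f_i)_{i\in I})$, which is exactly where the assumption that $\tau$ preserves measurability over $(\Omega, \mathcal{F})$ is used; this ensures the candidate function is a legitimate object to test for integrability. The finiteness of $J$ is essential, since the closure of $\lpmu$ under sums (Lemma~\ref{l:implies lp}(3)) is only a finitary property and need not extend to infinite sums.
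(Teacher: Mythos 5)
Your proposal is correct and follows essentially the same route as the paper's proof: measurability from the hypothesis that $\tau$ preserves measurability over $(\Omega,\mathcal{F})$, the pointwise domination $\lvert \tau((f_i)_{i\in I})\rvert\leq \sum_{j \in J} \lambda_j \lvert f_{j}\rvert$, and then closure properties of $\lpmu$ from Lemma~\ref{l:implies lp}. The only difference is that you spell out explicitly which parts of Lemma~\ref{l:implies lp} are used and in what order, where the paper cites the lemma wholesale; this is a presentational difference, not a mathematical one.
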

\begin{proof}
	Let $(f_i)_{i\in I}$ be a family in $\lpmu$; since $\tau$ preserves measurability over $(\Omega, \mathcal{F})$,  $\tau((f_i)_{i\in I})$ is $\mathcal{F}$-measurable.
	For each $x\in \Omega$, $\lvert \tau((f_i(x))_{i\in I}) \rvert \leq \sum_{j \in J}\lambda_j \lvert f_{j}(x)\rvert$.
	 Thus $\lvert \tau((f_i)_{i\in I})\rvert\leq \sum_{j \in J} \lambda_j \lvert f_{j}\rvert$.
	Hence, by Lemma~\ref{l:implies lp}, $\tau((f_i)_{i\in I})\in \lpmu$.
\end{proof}
\noindent This shows that the condition $\lvert\tau(v)\rvert\leq \sum_{j \in J} \lambda_{j} \lvert v_{j}\rvert$ is sufficient for preservation of $p$-integrability.
We are left to prove the converse direction: when $\tau$ does not satisfy this condition, there exists a measure space over which $\tau$ does not preserve $p$-integrability.
As we shall see, at least when the arity of $\tau$ is countable, this space can always be taken to be $(\R, \mathcal{B}_{\R}, \Leb)$ where $\Leb$ is the restriction to $\mathcal{B}_{\R}$ of the Lebesgue measure, and this happens because $(\R, \mathcal{B}_{\R}, \Leb)$ is what we call a partitionable measure space.
\begin{definition}\label{d:partitionable}
	A measure space $(\Omega,\mathcal{F},\mu)$ is called \emph{partitionable} if, for every sequence $(a_n)_{n\in \omega}$ of elements of $\R^+$, there exists a sequence $(A_n)_{n\in \omega}$ of disjoint elements of $\mathcal{F}$ such that $\mu(A_n)=a_n$. \qed
\end{definition}
\begin{remark}\label{r:R is partbl}
	The measure space $(\R,\mathcal{B}_\R,\Leb)$ is partitionable.
\end{remark}
The role of partitionable measure spaces is clarified by the following result.
\begin{lemma}\label{l:l2}
	Let $(\Omega, \mathcal{F},\mu)$ be a measure space, let $p\in [1,+\infty)$, let $I$ be a set and let $\tau\colon \R^I\to \R$ be a function.
	Suppose $\lvert I\rvert \leq \lvert \omega\rvert$ and suppose $(\Omega, \mathcal{F},\mu)$ is partitionable.
	If $\tau$ preserves $p$-integrability over $(\Omega, \mathcal{F},\mu)$, then there exist a finite subset of indices $J \seq I$ and nonnegative real numbers $(\lambda_j)_{j \in J}$ such that, for every $v\in \R^I$, we have 
	\[
		\lvert\tau(v)\rvert\leq \sum_{j \in J} \lambda_{j} \lvert v_{j}\rvert.
	\]
\end{lemma}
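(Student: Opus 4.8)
The plan is to prove the contrapositive: assuming that no bound of the stated form holds, I will construct a family $(f_i)_{i\in I}$ in $\lpmu$ whose image under $\tau$ fails to lie in $\lpmu$, contradicting preservation of $p$-integrability over $(\Omega,\mathcal{F},\mu)$. First I would recast the conclusion in a form adapted to an enumeration. Since $\lvert I\rvert\leq\lvert\omega\rvert$, fix an enumeration $I=\{i_0,i_1,\dots\}$ and write $J_n\df\{i_0,\dots,i_{n-1}\}$. The existence of a finite $J$ and coefficients $(\lambda_j)_{j\in J}$ with $\lvert\tau(v)\rvert\leq\sum_{j\in J}\lambda_j\lvert v_j\rvert$ is equivalent to the existence of $n\in\omega$ and $C\in\R^+$ with $\lvert\tau(v)\rvert\leq C\sum_{k<n}\lvert v_{i_k}\rvert$ for all $v$ (one direction takes $J=J_n$ and $\lambda_j=C$; the other enlarges $J$ to some $J_n$ and sets $C=\max_{j\in J}\lambda_j$). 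Hence the negated conclusion reads: for every $n\in\omega$ and every $C\in\R^+$ there is a witness $v\in\R^I$ with $\lvert\tau(v)\rvert>C\sum_{k<n}\lvert v_{i_k}\rvert$.

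Next, for each $n$ I would apply the negation with $C=2^{n/p}$ to obtain a witness $v^{(n)}\in\R^I$, and set $t_n\df\lvert\tau(v^{(n)})\rvert$ and $s_n\df\sum_{k<n}\lvert v^{(n)}_{i_k}\rvert$, so that $t_n>2^{n/p}s_n\geq 0$, and in particular $t_n>0$. Using partitionability I then choose pairwise disjoint $A_n\in\mathcal{F}$ with $\mu(A_n)=a_n\df t_n^{-p}$, and define $f_i\df\sum_n v^{(n)}_i\,\ind_{A_n}$. On $A_n$ the family $(f_i)_{i\in I}$ is constantly equal to $v^{(n)}$, so $\tau((f_i)_{i\in I})$ equals $\tau(v^{(n)})$ there; therefore $\int_{\bigcup_n A_n}\lvert\tau((f_i)_{i\in I})\rvert^p\,d\mu=\sum_n a_n t_n^p=\sum_n 1=+\infty$, which already forces $\tau((f_i)_{i\in I})\notin\lpmu$ (the integral over a measurable subset is already infinite).

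It remains to check that each $f_i\in\lpmu$, i.e.\ that $\int\lvert f_i\rvert^p\,d\mu=\sum_n a_n\lvert v^{(n)}_i\rvert^p<\infty$, and this is the crux. The difficulty, and the main obstacle, is that a witness $v^{(n)}$ is constrained only on its first $n$ coordinates, so a fixed coordinate $i_m$ may take arbitrarily large "free" values in some $v^{(n)}$; if these were unbounded in $n$, the functions $f_i$ could escape $\lpmu$. The key observation resolving this is that, for fixed $i_m$, the coordinate $i_m$ is free in $v^{(n)}$ only for $n\leq m$ — finitely many indices, contributing a finite (hence finite-valued) sum — whereas for $n>m$ it is a constrained coordinate, so $\lvert v^{(n)}_{i_m}\rvert\leq s_n$. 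The choice $C=2^{n/p}$ gives $a_n s_n^p=t_n^{-p}s_n^p\leq 2^{-n}$, so the tail $\sum_{n>m}a_n\lvert v^{(n)}_{i_m}\rvert^p\leq\sum_n a_n s_n^p\leq\sum_n 2^{-n}<\infty$ converges, whence $f_i\in\lpmu$. This completes the contradiction.

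The whole argument thus hinges on balancing two sums simultaneously: $\sum_n a_n s_n^p$ must converge (controlling the constrained coordinates of every $f_i$) while $\sum_n a_n t_n^p$ must diverge (making $\tau((f_i)_{i\in I})$ non-integrable), and this balance is achieved precisely by coupling the witness threshold $C=2^{n/p}$ with the weights $a_n=t_n^{-p}$. The case of finite $I$ is identical but simpler, since then every coordinate is a constrained coordinate for all large $n$ and no head/tail split is needed.
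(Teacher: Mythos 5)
Your proposal is correct and follows essentially the same route as the paper's proof: the contrapositive, witnesses $v^{(n)}$ chosen against the threshold $2^{n/p}$, disjoint sets of measure $\lvert\tau(v^{(n)})\rvert^{-p}$ from partitionability, the piecewise-constant functions $f_i$, and the same head/tail split showing $\sum_n a_n t_n^p = \infty$ while each $\sum_n a_n\lvert v^{(n)}_i\rvert^p < \infty$. The only cosmetic differences are that you reduce the finite tuple $(\lambda_j)_{j\in J}$ to a single constant $C$ via an enumeration of $I$, and bound the tail terms by the full sum $s_n$ rather than by the single term $2^{n/p}\lvert v^{(n)}_i\rvert$ as the paper does; both yield the same $2^{-n}$ estimate.
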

\begin{proof}
	We give the proof for $I = \omega$.
	The case $\lvert I\rvert <\lvert \omega\rvert$ relies on an analogous argument.
	
	We suppose, contrapositively, that, for every finite subset of indices $J \seq I$ and every $J$-tuple $(\lambda_j)_{j \in J}$ of nonnegative real numbers, there exists $v\in \R^I$ such that $\lvert \tau(v) \rvert > \sum_{j \in J} \lambda_j \lvert v_{j}\rvert$; we shall prove that $\tau$ does not preserve $p$-integrability.
	For each $n\in \omega$, we let $v^n$ be an element of $\R^I$ such that $\lvert \tau(v^n) \rvert > \sum_{j = 0}^{n - 1} 2^{\frac{n}{p}} \lvert v^n_{j}\rvert$.
	Set $C\df\Omega\setminus \bigcup_{n\in \omega}A_n$.
	For each $i \in \omega$, we set
	\begin{align*}
		f_i\colon	\Omega	& \longrightarrow	\R\\
					x		& \longmapsto
												\begin{cases}
													v^n_i	& \text{if } x \in A_n; \\
													0		& \text{if } x \in C.
												\end{cases}
	\end{align*}
	Let $(A_n)_{n\in \omega}$ be a sequence of disjoint elements of $\mathcal{F}$ such that $\mu(A_n)= \frac{1}{\lvert\tau(v^n)\rvert^p}$; one such sequence exists because $(\Omega, \mathcal{F}, \mu)$ is partitionable.
	Then,
	\begin{equation} \label{e:integral-is-infty}
		\begin{split}
			\int_{\Omega} \lvert\tau( (f_i)_{i\in \omega})\rvert^p\,d\mu
				& =		\int_{C} \lvert\tau( (f_i)_{i\in \omega})\rvert^p\,d\mu + \sum_{n\in \omega} \int_{A_n} \lvert\tau( (f_i)_{i\in \omega})\rvert^p\,d\mu \geq \\
				& \geq	\sum_{n\in \omega} \lvert \tau((v_i^n)_{i \in \omega})\rvert^p \mu(A_n) = \\
				& =		\sum_{n\in \omega} \lvert \tau(v^n) \rvert^p \frac{1}{\lvert \tau(v^n) \rvert^p} = \\
				& =		\sum_{n\in \omega} 1 = \\
				& =		\infty.
		\end{split}
	\end{equation}	
	The following chain of inequalities holds.
	\begin{equation} \label{e:integral-is-finite}
		\begin{aligned}
			\int_{\Omega} \lvert f_i\rvert^p\,d\mu	
				& =		\sum_{n\in \omega}\lvert v_i^n\rvert^p\mu(A_n) = 																									& \ \ \ \ \ \ \\
				& =		\sum_{n\in \omega}\lvert v^n_{i} \rvert^p \frac{1}{\lvert\tau(v^n)\rvert^p} \leq 																	& \ \ \ \ \ \ \\
				& \leq	M + \sum_{n > i, v_{i}^n \neq 0} \lvert v^n_{i} \rvert^p \frac{1}{\lvert\tau(v^n)\rvert^p} \leq 													& \ \ \ \ \ \ \text{(for some $M\in \R^+$)} \\
				& \leq	M + \sum_{n > i, v_{i}^n \neq 0} \lvert v^n_{i} \rvert^p \frac{1}{(\sum_{j = 0}^{n - 1} 2^{\frac{n}{p}} \lvert v^n_{j}\rvert)^p} \leq	& \ \ \ \ \ \ \\
				& \leq	M + \sum_{n > i, v_{i}^n \neq 0} \lvert v^n_{i} \rvert^p \frac{1}{\left(2^{\frac{n}{p}} \lvert v^n_{i} \rvert\right)^p} \leq 			& \ \ \ \ \ \ \\
				& \le	M + \sum_{n > i, v_{i}^n \neq 0} \frac{1}{2^{n}} < 																									& \ \ \ \ \ \ \\
				& <		\infty. 																																			& \ \ \ \ \ \ 
		\end{aligned}
	\end{equation}
	The first inequality holds for some $M\in \R^+$ because with the condition $n>i$ we ignore finitely many terms of the series, while with the condition $v_{i}^n\neq 0$ we ignore some null terms.
	The third inequality holds because $n>i\Rightarrow i\in \{0,\dots,{n-1} \}$.
	
	From eqs.\ \eqref{e:integral-is-infty} and \eqref{e:integral-is-finite} we conclude that $\tau$ does not preserve $p$-integrability.
\end{proof}
\begin{lemma}\label{l:one measure integrability}
	If $I$ is a set, $\tau\colon \R^I\to \R$ a function, $p\in [1,+\infty)$ and $(\Omega,\mathcal{F},\mu)$ a partitionable measure space, then the following conditions are equivalent.
	\begin{enumerate}
		\item $\tau$ preserves $p$-integrability.
		\item $\tau$ preserves measurability, and $\tau$ preserves $p$-integrability over $(\Omega,\mathcal{F},\mu)$.
		\item $\tau$ is $\Cyl\left(\R^I\right)$-measurable and there exist a finite subset of indices $J \seq I$ and nonnegative real numbers $(\lambda_j)_{j \in J}$ such that, for every $v\in \R^I$, we have $\lvert\tau(v)\rvert\leq \sum_{j \in J} \lambda_{j} \lvert v_{j}\rvert$.
	\end{enumerate}
\end{lemma}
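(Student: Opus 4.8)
The plan is to establish the cycle of implications $(1)\Rightarrow(2)\Rightarrow(3)\Rightarrow(1)$, noting at the outset that two of the three are essentially immediate consequences of results already proved. For $(1)\Rightarrow(2)$, preservation of measurability follows from Lemma~\ref{l:lp implies measbl}, while preservation of $p$-integrability over the particular space $(\Omega,\mathcal{F},\mu)$ is a special case of preservation over every measure space. For $(3)\Rightarrow(1)$, I would first invoke Theorem~\ref{t:pres measurable} to turn $\Cyl\left(\R^I\right)$-measurability of $\tau$ into preservation of measurability over every measurable space, and then feed this, together with the domination bound $\lvert\tau(v)\rvert\leq\sum_{j\in J}\lambda_j\lvert v_j\rvert$, into Lemma~\ref{l:easy implication}, which yields preservation of $p$-integrability over every measure space.

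The substance of the proof therefore lies in $(2)\Rightarrow(3)$. The $\Cyl\left(\R^I\right)$-measurability of $\tau$ is again immediate from Theorem~\ref{t:pres measurable}, since $\tau$ preserves measurability. The difficulty is to extract the domination bound: the tool that produces such a bound from integrability over a single partitionable space is Lemma~\ref{l:l2}, but that lemma is stated only for operations of countable arity, whereas here $I$ is arbitrary. I expect this mismatch to be the main obstacle, and the way around it is Corollary~\ref{c:meas then count}: because $\tau$ preserves measurability, it depends only on some countable set of coordinates $J_0\seq I$.

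To complete $(2)\Rightarrow(3)$, I would pass to the restricted operation $\tau'\colon\R^{J_0}\to\R$ defined by $\tau'(w)\df\tau(v)$ for any $v\in\R^I$ extending $w$; this is well defined precisely because $\tau$ depends only on $J_0$. To apply Lemma~\ref{l:l2} I must check that $\tau'$ preserves $p$-integrability over $(\Omega,\mathcal{F},\mu)$. Given a family $(g_j)_{j\in J_0}$ in $\lpmu$, I would extend it to a family $(f_i)_{i\in I}$ by padding with the zero function on coordinates outside $J_0$; since the zero function lies in $\lpmu$ and $\tau$ depends only on $J_0$, we have $\tau((f_i)_{i\in I})=\tau'((g_j)_{j\in J_0})$ pointwise, and this lies in $\lpmu$ by hypothesis $(2)$. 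As $\lvert J_0\rvert\leq\lvert\omega\rvert$ and $(\Omega,\mathcal{F},\mu)$ is partitionable, Lemma~\ref{l:l2} now provides a finite $J\seq J_0$ and nonnegative reals $(\lambda_j)_{j\in J}$ with $\lvert\tau'(w)\rvert\leq\sum_{j\in J}\lambda_j\lvert w_j\rvert$. Since $\tau$ factors through the restriction to $J_0$, this bound transfers verbatim to $\tau$ on all of $\R^I$, and as $J\seq J_0\seq I$ is finite the required condition in $(3)$ holds.
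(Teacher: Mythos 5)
Your proposal is correct and takes essentially the same route as the paper: the same cycle $(1)\Rightarrow(2)\Rightarrow(3)\Rightarrow(1)$, using Lemma~\ref{l:lp implies measbl} for $(1)\Rightarrow(2)$, Theorem~\ref{t:pres measurable} plus the reduction to countably many coordinates (Proposition~\ref{p:depends on countably}/Corollary~\ref{c:meas then count}) feeding Lemma~\ref{l:l2} for $(2)\Rightarrow(3)$, and Theorem~\ref{t:pres measurable} plus Lemma~\ref{l:easy implication} for $(3)\Rightarrow(1)$. The only difference is that where the paper compresses the key step to ``hence Lemma~\ref{l:l2} applies,'' you make the implicit reduction explicit by constructing the restricted operation $\tau'$ and verifying, via zero-padding, that it inherits preservation of $p$-integrability over $(\Omega,\mathcal{F},\mu)$ --- exactly the detail the paper leaves to the reader.
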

\begin{proof}
	$[(1)\Rightarrow (2)]$ If $\tau$ preserves $p$-integrability, then, by Lemma~\ref{l:lp implies measbl}, $\tau$ preserves measurability.
	Trivially, $\tau$ preserves $p$-integrability over $(\Omega,\mathcal{F},\mu)$.
	
	$[(2)\Rightarrow (3)]$ If $\tau$ preserves measurability, then, by Theorem~\ref{t:pres measurable}, $\tau$ is $\Cyl\left(\R^I\right)$-measurable.
	By Proposition~\ref{p:depends on countably}, $\tau$ depends on countably many coordinates, hence Lemma~\ref{l:l2} applies and the proof of the implication is complete.

	$[(3)\Rightarrow (1)]$ By Theorem~\ref{t:pres measurable}, $\tau$ preserves measurability.
	By Lemma~\ref{l:easy implication}, the thesis is proved.
\end{proof}
\begin{proof}[Proof of Theorem~\ref{t:MAIN}]
	There exist partitionable measure spaces, see e.g.\ Remark~\ref{r:R is partbl}.
	Theorem~\ref{t:MAIN} is the equivalence $(1)\Leftrightarrow(3)$ in Lemma~\ref{l:one measure integrability}.
\end{proof}
%

%===================================================================================
%SUBSECTION
\subsection{Examples}

\begin{example}
	Let $n\in\omega$ and $\tau\colon \R^n\to \R$.
	Then $\tau$ preserves $p$-integrability if, and only if, $\tau$ is Borel measurable and there exist $\lambda_0, \dots, \lambda_{n-1}\in \R^+$ such that, for every $x\in \R^n$, we have
	\[
		\lvert \tau(x) \rvert \leq \sum_{j = 0}^{n-1} \lambda_i \lvert x_i \rvert.
	\]
\end{example}
\begin{example}
	A function $\tau\colon \R^\omega\to \R$ preserves $p$-integrability if, and only if, $\tau$ is Borel measurable and there exist a finite subset of indices $J \seq \omega$ and nonnegative real numbers $(\lambda_j)_{j \in J}$ and $k$ such that, for every $v\in \R^I$, we have 
	\[
		\lvert\tau(v)\rvert \leq k + \sum_{j \in J} \lambda_{j} \lvert v_j\rvert.
	\]
\end{example}
%

%===================================================================================
%SUBSECTION
\subsection{The case of $(\R,\mathcal{B}_\R, \Leb)$ and the discrete case}

The remaining results in this section are not used in the proofs of our main results.

One may think that, for an operation $\tau \colon\R^I\to \R$, the condition ``$\tau$ preserve $p$-integrability over \emph{every} measure space'' is too strong because we may not be interested in all measure spaces.
However, Proposition~\ref{p:R is enough} shows that this condition is equivalent to ``$\tau$ preserve $p$-integrability over $(\R,\mathcal{B}_\R, \Leb)$'' (if $\tau$ has countable arity), and  Proposition~\ref{p:discrete is enough} provides an analogous result for a discrete measure space.
\begin{proposition}\label{p:R is enough}
	Let $I$ be a set, $\tau\colon \R^I\to \R$, with $\lvert I\rvert\leq \lvert\omega\rvert$, and $p\in [1,+\infty)$.
	Then $\tau$ preserves $p$-integrability
	if, and only if, $\tau$ preserves $p$-integrability over $(\R,\mathcal{B}_\R,\Leb)$.
\end{proposition}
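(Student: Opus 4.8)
The forward implication is immediate, since preserving $p$-integrability over every measure space includes, in particular, preserving it over $(\R,\mathcal{B}_\R,\Leb)$. For the converse, assume $\tau$ preserves $p$-integrability over $(\R,\mathcal{B}_\R,\Leb)$. The space $(\R,\mathcal{B}_\R,\Leb)$ is partitionable (Remark~\ref{r:R is partbl}), so I would apply Lemma~\ref{l:one measure integrability} with $(\Omega,\mathcal{F},\mu)=(\R,\mathcal{B}_\R,\Leb)$: its condition~(2) asks that $\tau$ preserve measurability and preserve $p$-integrability over $(\R,\mathcal{B}_\R,\Leb)$. The second conjunct is the hypothesis, so the only missing ingredient is that $\tau$ preserves measurability, equivalently, by Theorem~\ref{t:pres measurable}, that $\tau$ is $\Cyl\left(\R^I\right)$-measurable. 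Thus the entire problem reduces to upgrading integrability over the single space $(\R,\mathcal{B}_\R,\Leb)$ to $\Cyl\left(\R^I\right)$-measurability of $\tau$.

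I will treat the case $I=\omega$; the case $\lvert I\rvert<\lvert\omega\rvert$ is analogous and simpler. The plan is to produce a genuine Borel isomorphism $\phi\colon(\R,\mathcal{B}_\R)\to(\R^\omega,\Cyl\left(\R^\omega\right))$ all of whose coordinate functions $\phi_i\df\pi_i\circ\phi$ lie in $\mathcal{L}^p(\Leb)$. Granting this, the family $(\phi_i)_{i\in\omega}$ lies in $\mathcal{L}^p(\Leb)$, so by hypothesis $\tau((\phi_i)_{i\in\omega})=\tau\circ\phi$ lies in $\mathcal{L}^p(\Leb)$ and is, in particular, Borel measurable; since $\phi$ is a genuine Borel isomorphism, $\tau=(\tau\circ\phi)\circ\phi^{-1}$ is then $\Cyl\left(\R^\omega\right)$-measurable, as required. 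To obtain $\phi$, I would first build an atomless, $\sigma$-finite measure $\nu$ on $(\R^\omega,\Cyl\left(\R^\omega\right))$ of infinite total mass such that $\int_{\R^\omega}\lvert v_i\rvert^p\,d\nu<\infty$ for every $i$, so that each projection $\pi_i$ belongs to $\mathcal{L}^p(\nu)$. A concrete choice is $\nu\df\sum_{n\ge 1}\nu_n$, where $\nu_n$ is an atomless probability measure supported on $(c_{n+1},c_n]\times[-\delta_n,\delta_n]^{\{1,2,\dots\}}$ with $c_n=\delta_n=n^{-2/p}$: the intervals $(c_{n+1},c_n]$ are pairwise disjoint, which makes $\nu$ both $\sigma$-finite and of infinite total mass, while the bounds $\sum_n c_n^p<\infty$ and $\sum_n\delta_n^p<\infty$ control the coordinate $p$-moments. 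Since $\nu$ and $\Leb$ are atomless $\sigma$-finite measures of the same (infinite) total mass on standard Borel spaces, the measure-isomorphism theorem provides a Borel isomorphism carrying one to the other, and its coordinates then satisfy $\int_\R\lvert\phi_i\rvert^p\,d\Leb=\int_{\R^\omega}\lvert v_i\rvert^p\,d\nu<\infty$.

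The delicate point, which I expect to be the main obstacle, is that the measure-isomorphism theorem yields only an isomorphism modulo null sets, whereas $\Cyl\left(\R^\omega\right)$-measurability of $\tau$, and membership in $\mathcal{L}^p$, are required everywhere rather than almost everywhere: a mod-$0$ isomorphism would establish Borel measurability of $\tau$ only on a conull subset of $\R^\omega$, leaving open its behaviour on a possibly badly non-measurable null set. To promote the mod-$0$ isomorphism to a genuine Borel bijection defined on all of $\R$ and onto all of $\R^\omega$, I would arrange the two discarded null sets to be uncountable standard Borel spaces and glue in an arbitrary Borel isomorphism between them; because this alters $\phi$ only on a $\Leb$-null set, it leaves every integral $\int_\R\lvert\phi_i\rvert^p\,d\Leb$ unchanged, so the coordinates remain in $\mathcal{L}^p(\Leb)$. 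With $\phi$ now a genuine isomorphism onto all of $\R^\omega$, the pullback argument of the previous paragraph delivers $\Cyl\left(\R^\omega\right)$-measurability of $\tau$ on all of $\R^\omega$, and Lemma~\ref{l:one measure integrability} then completes the proof.
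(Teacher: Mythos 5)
Your proof is correct, and its outer shell is the same as the paper's: both directions reduce the converse to showing that $\tau$ preserves measurability, and then finish by partitionability of $(\R,\mathcal{B}_\R,\Leb)$ (Remark~\ref{r:R is partbl}) together with $(2)\Rightarrow(1)$ of Lemma~\ref{l:one measure integrability}. Where you genuinely diverge is the measurability step. The paper disposes of it in one line by citing Proposition~\ref{p:R is quite important}; but that proposition only asserts that preservation of measurability over $(\R,\mathcal{B}_\R)$ is equivalent to preservation of measurability over every measurable space, so to invoke it one must first know that preservation of $p$-integrability over $(\R,\mathcal{B}_\R,\Leb)$ implies preservation of measurability over $(\R,\mathcal{B}_\R)$ --- i.e.\ that $\tau((f_i)_{i\in I})$ is Borel for \emph{every} Borel family $(f_i)_{i\in I}$, including families containing nonzero constants, which never lie in $\mathcal{L}^p(\Leb)$. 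That bridge is the genuinely nontrivial content, and it cannot be obtained by naively feeding Borel families into the integrability hypothesis (one cannot even decompose $\R$ into countably many Borel pieces on which an arbitrary Borel family is bounded: pull back the coordinates of a Borel isomorphism $\R\to\omega^\omega$ to see this). Your argument supplies exactly this missing bridge, and from the right end: instead of testing $\tau$ against all Borel families, you exhibit a \emph{single} $\mathcal{L}^p(\Leb)$-family $(\phi_i)_{i\in\omega}$ whose associated map $\phi\colon\R\to\R^\omega$ is a genuine Borel isomorphism, conclude that $\tau=(\tau\circ\phi)\circ\phi^{-1}$ is $\Cyl\left(\R^\omega\right)$-measurable, and then let Theorem~\ref{t:pres measurable} handle all other families at once. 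Your measure $\nu$ does what you claim (disjoint first-coordinate intervals give $\sigma$-finiteness and infinite total mass; the bounds $\sum_n c_n^p<\infty$ and $\sum_n\delta_n^p<\infty$ give finite coordinate $p$-th moments; atomlessness passes from the $\nu_n$ to $\nu$), and you are right to flag the mod-$0$ issue in the measure-isomorphism theorem: your gluing repair is the standard one and is sound, since altering $\phi$ on a $\Leb$-null set changes no integral and keeps $\phi$ Borel. In short: same skeleton, but where the paper's measurability step is an (over-)terse citation, yours is a complete argument; the price is the heavier machinery of the Borel and measure isomorphism theorems, which is what a full justification of this proposition seems to require.
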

\begin{proof}
	Trivially, if $\tau$ preserves $p$-integrability, then $\tau$ preserves $p$-integrability over $(\R,\mathcal{B}_\R,\Leb)$.
	For the converse, by Proposition~\ref{p:R is quite important}, if $\tau$ preserves $p$-integrability over $(\R,\mathcal{B}_\R,\Leb)$ then $\tau$ preserves measurability.
	By Remark~\ref{r:R is partbl}, $(\R,\mathcal{B}_\R,\Leb)$ is partitionable.
	An application of $(2)\Rightarrow(1)$ in Lemma~\ref{l:one measure integrability} concludes the proof.
\end{proof}
We next provide an analogue of Proposition~\ref{p:R is enough} for a discrete measure space.
We denote by $\mathcal{P}(X)$ the power set of a set $X$.
\begin{lemma}\label{l:exists measure}
	There exists a measure $\mu$ on $(\omega,\mathcal{P}(\omega))$ such that $(\omega,\mathcal{P}(\omega),\mu)$ is partitionable.
\end{lemma}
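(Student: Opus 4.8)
The plan is to construct a measure $\mu$ on $(\omega, \mathcal{P}(\omega))$ so that the single measurable space $(\omega, \mathcal{P}(\omega), \mu)$ becomes partitionable. The key idea is that partitionability requires, for \emph{every} sequence $(a_n)_{n \in \omega}$ of nonnegative reals, a sequence of disjoint measurable sets whose measures are exactly the $a_n$. On a discrete space the only way to realise arbitrary real values as measures of disjoint sets is to place infinitely many atoms whose masses are dense (indeed surjective) onto $\R^+$. So first I would fix a countable dense set of target values and engineer the atoms accordingly.

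Concretely, I would partition $\omega$ into countably many infinite blocks, say $\omega = \bigsqcup_{q} B_q$ indexed by the nonnegative rationals $q \in \Q^+$ (a bijection $\omega \to \Q^+ \times \omega$ exists since both sides are countably infinite). Within the block $B_q = \{b_{q,0}, b_{q,1}, \dots\}$ I would assign point masses summing to $q$; for instance, distribute $q$ across the atoms of $B_q$ via a geometric series, $\mu(\{b_{q,k}\}) \df q \cdot 2^{-(k+1)}$, so that $\mu(B_q) = q$. Since every singleton has finite mass and $\mathcal{P}(\omega)$ is generated by singletons, this uniquely determines a (possibly $\sigma$-finite, possibly infinite) measure $\mu$ on $(\omega, \mathcal{P}(\omega))$; countable additivity is automatic for a measure defined by point masses.

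It then remains to verify partitionability. Given an arbitrary sequence $(a_n)_{n \in \omega} \seq \R^+$, I must produce disjoint sets $A_n$ with $\mu(A_n) = a_n$. The strategy is greedy and uses the density of $\Q^+$ together with the reserve of infinitely many untouched blocks at each stage. To realise a single target value $a \in \R^+$ as $\mu(A)$ for some $A$ disjoint from finitely many previously used atoms, I would write $a$ as a convergent sum $a = \sum_k q_k$ of rationals and use, for each $q_k$, a fresh unused block $B_{q_k}$ (or a suitable sub-collection of atoms realising exactly $q_k$). Because each value $q \in \Q^+$ occurs as $\mu(B_q)$ for one block, and because I always have infinitely many blocks of each rational mass still available, I can keep the chosen sets disjoint across all $n$ simultaneously; a bookkeeping argument over the countable index set $\omega \times \omega$ handles the simultaneity.

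The main obstacle is precisely this simultaneous disjointness: a naive realisation of each $a_n$ might exhaust blocks needed later. I expect to resolve it by reserving, \emph{in advance}, infinitely many blocks of each rational mass for each $n$ — for example by further refining the indexing so that the blocks are labelled by $\Q^+ \times \omega \times \omega$, dedicating the third coordinate to the target index $n$, so that the construction for different $n$ never competes for the same atoms. With that reservation in place, realising each $a_n$ as a disjoint union of dedicated blocks (summing rationals up to $a_n$) is routine, and the disjointness across distinct $n$ is guaranteed by construction. The only subtlety to check is that the chosen rationals can be made to sum exactly to $a_n$, which follows from the density of $\Q^+$ in $\R^+$ and the freedom to pick a telescoping rational series with prescribed sum.
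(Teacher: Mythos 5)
Your construction is correct once the refinement in your last paragraph is adopted, but it takes a genuinely different route from the paper's. The paper works on $\omega\times\Z$ with a single level of atoms, $\mu(\{(n,z)\})\df 2^z$, and realises each target by binary expansion: every $a_n\in\R^+$ equals $\sum_{z\in K_n}2^z$ for some $K_n\seq\Z$, so $A_n\df\{n\}\times K_n$ does the job, disjointness being automatic because the $n$-th target only ever uses the $n$-th row; a bijection between $\omega\times\Z$ and $\omega$ finishes. You replace binary expansions by the fact that every $a\in\R^+$ is a telescoping sum $\sum_k q_k$ of nonnegative rationals, realised on blocks of rational mass reserved in advance for each index $n$ --- the same ``dedicated disjoint sector per index'' skeleton, but with one extra level of structure. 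Three comments. First, your reservation device (blocks indexed by $\Q^+\times\omega\times\omega$, the block $B_{q_k,k,n}$ being used for the $k$-th term of the $n$-th target) is the right move: it is what makes disjointness trivial both across distinct $n$ and across the terms of one sum (the middle coordinate also absorbs repeated values $q_k=q_{k'}$); without it you would need a delicate greedy argument that unused blocks of the required masses remain available at every stage. Second, once blocks are reserved this way you never use proper sub-collections of a block, so spreading the mass $q$ geometrically over infinitely many atoms is redundant: a single atom of mass $q$ at each point $(q,k,n)\in\Q^+\times\omega\times\omega$ suffices, i.e., $\mu(\{(q,k,n)\})\df q$ and $A_n\df\{(q^{(n)}_k,k,n)\mid k\in\omega\}$ where $a_n=\sum_k q^{(n)}_k$. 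Third, your opening intuition that the atom masses must be dense (indeed surjective) onto $\R^+$ is not quite right, and the paper's choice shows why: the masses $2^z$ ($z\in\Z$) are far from dense; what is actually needed is only that every element of $\R^+$ be an exact countable sum of available masses, and choosing the powers of $2$ collapses your two levels into one and eliminates the bookkeeping entirely.
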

\begin{proof}
	We define a measure $\mu$ on $(\omega\times\Z,\mathcal{P}(\omega\times\Z))$, by setting $\mu(\{(n,z)\})=2^z$.
	For every $n\in \omega$, there exists $K_n\seq \Z$ such that $a_n=\sum_{z\in K_n}2^z$.
	Set $A_n\df\{(n,z)\mid z\in K_n \}$.
	Then $\mu(A_n)=\sum_{z\in K_n}\mu(\{(n,z)\})=\sum_{z\in K_n}2^z=a_n$.
	Moreover, for any pair of distinct $n, m\in \omega$, the sets $A_n$ and $A_m$ are disjoint.
	The set $\omega\times\Z$ is countably infinite, hence $(\omega\times\Z,\mathcal{P}(\omega\times \Z))$ and $(\omega,\mathcal{P}(\omega))$ are isomorphic measurable spaces, which concludes the proof.
\end{proof}
\begin{proposition}\label{p:discrete is enough}
	There exists a measure $\mu$ on $(\omega,\mathcal{P}(\omega))$ such that, for every set $I$, every function $\tau\colon \R^I\to \R$ and every $p\in [1,+\infty)$, $\tau$ preserves $p$-integrability if, and only if, $\tau$ preserves measurability and $\tau$ preserves $p$-integrability over $(\omega,\mathcal{P}(\omega),\mu)$.
\end{proposition}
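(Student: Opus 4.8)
The plan is to combine the two immediately preceding lemmas. First I would invoke Lemma~\ref{l:exists measure} to fix, once and for all, a measure $\mu$ on $(\omega,\mathcal{P}(\omega))$ for which the measure space $(\omega,\mathcal{P}(\omega),\mu)$ is partitionable; this $\mu$ is exactly the witness required by the statement. With this single choice of $\mu$ in hand, the claimed equivalence should fall out by specialising Lemma~\ref{l:one measure integrability} to this partitionable space.

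Indeed, for an arbitrary set $I$, function $\tau\colon\R^I\to\R$, and $p\in[1,+\infty)$, Lemma~\ref{l:one measure integrability} asserts that, for \emph{any} partitionable $(\Omega,\mathcal{F},\mu)$, the condition ``$\tau$ preserves $p$-integrability'' (its condition (1)) is equivalent to ``$\tau$ preserves measurability and $\tau$ preserves $p$-integrability over $(\Omega,\mathcal{F},\mu)$'' (its condition (2)). Taking $(\Omega,\mathcal{F},\mu)=(\omega,\mathcal{P}(\omega),\mu)$ yields precisely the biconditional in the statement, so the proof reduces to a one-line appeal to the equivalence $(1)\Leftrightarrow(2)$ of Lemma~\ref{l:one measure integrability}.

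The point worth emphasising is that no restriction on the arity of $\tau$ is needed, even though the fixed space is supported on the countable set $\omega$. This is because Lemma~\ref{l:one measure integrability} already holds for arbitrary $I$: in its proof the passage from (2) to (3) first uses Theorem~\ref{t:pres measurable} to obtain $\Cyl(\R^I)$-measurability and then Proposition~\ref{p:depends on countably} to conclude that $\tau$ depends on only countably many coordinates, after which Lemma~\ref{l:l2} (which does require countable arity) applies to the relevant countable subfamily. Thus the reduction to countable arity is internal to Lemma~\ref{l:one measure integrability}, and I inherit it for free.

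If anything here is a subtlety rather than a genuine obstacle, it is precisely this last point: one must be confident that a measure space carried by the countable set $\omega$ can witness the failure of $p$-integrability even for operations of uncountable arity. But this is guaranteed by the ``depends on countably many coordinates'' argument already packaged inside Lemma~\ref{l:one measure integrability}, so no real difficulty remains; all the work has been done in the earlier results, and the proof is essentially a citation.
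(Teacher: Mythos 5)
Your proposal is correct and is exactly the paper's proof: fix $\mu$ via Lemma~\ref{l:exists measure} so that $(\omega,\mathcal{P}(\omega),\mu)$ is partitionable, then apply the equivalence $(1)\Leftrightarrow(2)$ of Lemma~\ref{l:one measure integrability} to that space. Your additional remarks on why no arity restriction is needed (the reduction to countably many coordinates being internal to Lemma~\ref{l:one measure integrability} via Proposition~\ref{p:depends on countably}) are accurate, though the paper leaves them implicit.
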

\begin{proof}
	By Lemma~\ref{l:exists measure}, there exists a measure $\mu$ on $(\omega,\mathcal{P}(\omega))$ such that $(\omega,\mathcal{P}(\omega),\mu)$ is partitionable.
	The thesis follows from $(1)\Leftrightarrow (2)$ in Lemma~\ref{l:one measure integrability}.
\end{proof}
%

%%%%%%%%%%%%%%%%%%%%%%%%%%%%%%%%%%%%%%%%%%%%%%%%%%%%%%%%%%%%%%%%%%%%%%%%%%%%%%%%%%%%
%SECTION
\section{Operations that preserve integrability over finite measure spaces}\label{s:lp finite}

The goal of this section is to prove Theorem~\ref{t:MAIN finite}, i.e.\ to characterise the operations that preserve $p$-integrability over finite measure spaces.
We follow the same strategy of Section \ref{s:lp}, with the appropriate adjustments.
\begin{lemma}\label{l:lp implies measbl finite}
	Let $I$ be a set, $\tau\colon \R^I\to \R$ and $p\in [1,+\infty)$.
	If $\tau$ preserves $p$-integrability over every finite measure space, then $\tau$ preserves measurability.
\end{lemma}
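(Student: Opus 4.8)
The plan is to imitate the proof of Lemma~\ref{l:lp implies measbl}, observing that the single measure used there already lies in the restricted class of measures now under consideration. Concretely, fix an arbitrary measurable space $(\Omega,\mathcal{F})$ and consider the null measure $\mu_0$ on it, as in Remark~\ref{r:null-meas}. The first thing I would note is that $\mu_0$ is a \emph{finite} measure: since $\mu_0(\Omega)=0<\infty$, it falls within the scope of the hypothesis. Therefore, if $\tau$ preserves $p$-integrability over every finite measure space, then in particular it preserves $p$-integrability over $(\Omega,\mathcal{F},\mu_0)$.

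Next I would invoke Remark~\ref{r:null-meas}, according to which $\mathcal{L}^p(\mu_0)$ coincides with the set of $\mathcal{F}$-measurable functions from $\Omega$ to $\R$. Consequently, preservation of $p$-integrability over $(\Omega,\mathcal{F},\mu_0)$ is exactly preservation of measurability over $(\Omega,\mathcal{F})$. Since $(\Omega,\mathcal{F})$ was an arbitrary measurable space, this shows that $\tau$ preserves measurability, which is the desired conclusion. The only point worth flagging is the observation that the null measure is finite; once this is in place, the argument is identical to that of Lemma~\ref{l:lp implies measbl} and there is no genuine obstacle to overcome.
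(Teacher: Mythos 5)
Your proposal is correct and is precisely the paper's argument: the paper's proof is simply ``By Remark~\ref{r:null-meas}'', with the implicit point being exactly what you flag, namely that the null measure is finite and hence falls within the hypothesis. You have merely spelled out the details that the paper leaves to the reader.
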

\begin{proof}
	By Remark~\ref{r:null-meas}.
\end{proof}
\begin{lemma}\label{l:easy implication-finite}
	Let $(\Omega, \mathcal{F},\mu)$ be a finite measure space, $I$ a set, $\tau\colon \R^I\to \R$ an operation that preserves measurability over $(\Omega, \mathcal{F})$ and $p\in [1,+\infty)$.
	If there exist a finite subset of indices $J \seq I$ and nonnegative real numbers $(\lambda_j)_{j \in J}$ and $k$ such that, for every $v\in \R^I$, we have 
	$\lvert\tau(v)\rvert\leq k + \sum_{j \in J} \lambda_{j} \lvert v_j\rvert$, then $\tau$ preserves $p$-integrability over $(\Omega, \mathcal{F},\mu)$.
\end{lemma}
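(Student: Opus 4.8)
The plan is to mirror the argument for Lemma~\ref{l:easy implication}, adapting it to the finite measure setting where the presence of the additive constant $k$ is now harmless. First I would take an arbitrary family $(f_i)_{i\in I}$ of functions in $\lpmu$. Since $\tau$ preserves measurability over $(\Omega, \mathcal{F})$, the composite function $\tau((f_i)_{i\in I})\colon \Omega \to \R$ is $\mathcal{F}$-measurable, which disposes of the measurability half of membership in $\lpmu$. It then remains to verify the integrability bound $\int_\Omega \lvert \tau((f_i)_{i\in I})\rvert^p \, d\mu < \infty$.

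For the integrability estimate, I would evaluate the hypothesised pointwise inequality at $v = (f_i(x))_{i\in I}$ for each $x \in \Omega$, obtaining
\[
	\lvert \tau((f_i(x))_{i\in I}) \rvert \leq k + \sum_{j\in J}\lambda_j \lvert f_j(x)\rvert,
\]
which at the level of functions reads $\lvert \tau((f_i)_{i\in I})\rvert \leq k\cdot \ind_\Omega + \sum_{j\in J}\lambda_j \lvert f_j\rvert$, where $k\cdot\ind_\Omega$ denotes the function constantly equal to $k$ on $\Omega$. The crucial new point, and the only substantive difference from the arbitrary-measure case, is that the constant function $k\cdot\ind_\Omega$ lies in $\lpmu$ precisely because $\mu$ is finite: indeed $\int_\Omega \lvert k \rvert^p \, d\mu = k^p\, \mu(\Omega) < \infty$. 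This is the step I expect to carry the weight of the proof, though it is itself quite short; the rest is bookkeeping via the closure properties already established.

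With this in hand I would invoke Lemma~\ref{l:implies lp}: parts (1) and (2) give $\lambda_j \lvert f_j\rvert \in \lpmu$ for each $j\in J$, the constant $k\cdot\ind_\Omega \in \lpmu$ by finiteness of $\mu$, and since $J$ is finite, repeated application of part (3) yields $k\cdot\ind_\Omega + \sum_{j\in J}\lambda_j\lvert f_j\rvert \in \lpmu$. Finally, part (4) of Lemma~\ref{l:implies lp}, applied with the measurable function $\tau((f_i)_{i\in I})$ dominated in absolute value by this element of $\lpmu$, concludes that $\tau((f_i)_{i\in I}) \in \lpmu$. Hence $\lpmu$ is closed under $\tau$, i.e.\ $\tau$ preserves $p$-integrability over $(\Omega, \mathcal{F},\mu)$. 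I do not anticipate any genuine obstacle here: the finiteness of $\mu$ is exactly the hypothesis that converts the extra constant $k$ into an $\lpmu$ function, and everything else is an immediate transcription of the proof of Lemma~\ref{l:easy implication}.
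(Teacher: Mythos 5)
Your proposal is correct and follows essentially the same route as the paper's own proof: establish measurability from the preservation hypothesis, derive the pointwise bound $\lvert \tau((f_i)_{i\in I})\rvert \leq k + \sum_{j \in J} \lambda_j \lvert f_{j}\rvert$, note that the constant function $k$ lies in $\lpmu$ precisely because $\mu$ is finite, and conclude via the closure properties of Lemma~\ref{l:implies lp}. Your version merely spells out which parts of Lemma~\ref{l:implies lp} are invoked, which the paper leaves implicit.
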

\begin{proof}
	Let $(f_i)_{i\in I}$ be a family in $\lpmu$; since $\tau$ preserves measurability over $(\Omega, \mathcal{F})$, we have that $\tau((f_i)_{i\in I})$ is $\mathcal{F}$-measurable.
	For each $x\in \Omega$, we have $\lvert\tau((f_i(x))_{i\in I})\rvert\leq k + \sum_{j \in J} \lambda_j \lvert f_{j}(x)\rvert$.
	Thus $\lvert \tau((f_i)_{i\in I})\rvert \leq k + \sum_{j \in J} \lambda_j \lvert f_{j}\rvert$.
	Note that the function $k\colon \Omega\to \R, x\mapsto k$ belongs to $\lpmu$, because $\mu$ is finite.
	Hence, by Lemma~\ref{l:implies lp},  $\tau((f_i)_{i\in I})\in \lpmu$.
\end{proof}
It is not difficult to see that no finite measure space is partitionable:
thus we replace the concept of partitionability with a slightly different one.
\begin{definition}\label{d:partitionable-finite}
	A measure space $(\Omega,\mathcal{F},\mu)$ is called \emph{conditionally partitionable} if there exists a sequence $(b_n)_{n \in \omega}$ of strictly positive real numbers such that, for every sequence $(a_n)_{n \in \omega}$ of elements of $\R^+$ satisfying $a_n \leq b_n$ for every $n \in \omega$, there exists a sequence $(A_n)_{n\in \omega}$ of disjoint elements of $\mathcal{F}$ such that $\mu(A_n)=a_n$.
	\qed
\end{definition}
\begin{remark}\label{r:[0,1] is partbl}
	The measure space $([0,1],\mathcal{B}_{[0,1]},\Leb)$, where $\Leb$ is the Lebesgue measure, is conditionally partitionable (take $b_n = \frac{1}{2^{n+1}}$).
\end{remark}

\begin{lemma}\label{l:l2-finite}
	Let $(\Omega, \mathcal{F},\mu)$ be a measure space, let $p\in [1,+\infty)$, let $I$ be a set and let $\tau\colon \R^I\to \R$ be a function.
	Suppose that $\lvert I\rvert \leq \lvert \omega\rvert$ and that $(\Omega, \mathcal{F},\mu)$ is conditionally partitionable.
	If $\tau$ preserves $p$-integrability over $(\Omega, \mathcal{F},\mu)$, then there exist a finite subset of indices $J \seq I$ and nonnegative real numbers $(\lambda_j)_{j \in J}$ and $k$ such that, for every $v\in \R^I$, we have 
	\[
		\lvert\tau(v)\rvert\leq k + \sum_{j \in J} \lambda_{j} \lvert v_{j}\rvert.
	\]
\end{lemma}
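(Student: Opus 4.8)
The plan is to mirror the proof of Lemma~\ref{l:l2}, arguing contrapositively, with the single essential adjustment dictated by the weaker hypothesis of conditional partitionability: whereas partitionability lets us realise a block of \emph{any} prescribed measure, here we may only realise a sequence of measures $(a_n)$ bounded termwise by the fixed sequence $(b_n)$ supplied by Definition~\ref{d:partitionable-finite}. I would treat the case $I=\omega$ explicitly, the case $\lvert I\rvert<\lvert\omega\rvert$ being an immediate simplification.

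Assume, contrapositively, that for every finite $J\seq I$, every tuple $(\lambda_j)_{j\in J}$ of nonnegative reals, and every $k\in\R^+$, there is some $v\in\R^I$ with $\lvert\tau(v)\rvert>k+\sum_{j\in J}\lambda_j\lvert v_j\rvert$. The goal is to produce a family $(f_i)_{i\in\omega}\seq\lpmu$ with $\tau((f_i)_{i\in\omega})\notin\lpmu$. Let $(b_n)_{n\in\omega}$ be the sequence of strictly positive reals witnessing conditional partitionability. For each $n$ I would apply the standing assumption with $J\df\{0,\dots,n-1\}$, with $\lambda_j\df 2^{n/p}$ for each such $j$, and crucially with $k\df b_n^{-1/p}$, obtaining $v^n\in\R^I$ such that
\[
	\lvert\tau(v^n)\rvert>b_n^{-1/p}+\sum_{j=0}^{n-1}2^{n/p}\lvert v^n_j\rvert.
\]
This single inequality does double duty: on one hand $\lvert\tau(v^n)\rvert>\sum_{j=0}^{n-1}2^{n/p}\lvert v^n_j\rvert$, which is exactly the estimate used in Lemma~\ref{l:l2}; on the other hand $\lvert\tau(v^n)\rvert>b_n^{-1/p}$, whence $a_n\df 1/\lvert\tau(v^n)\rvert^p<b_n$, so the sequence $(a_n)_{n\in\omega}$ is admissible for the conditional-partitionability hypothesis. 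This is the crux of the adaptation, and the only place where the additive constant $k$ is genuinely needed.

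Having secured $(a_n)_{n\in\omega}$ with $a_n\leq b_n$, conditional partitionability yields disjoint $A_n\in\mathcal{F}$ with $\mu(A_n)=a_n=1/\lvert\tau(v^n)\rvert^p$. Setting $C\df\Omega\setminus\bigcup_{n\in\omega}A_n$, I would define $f_i$ exactly as in Lemma~\ref{l:l2}, namely $f_i(x)=v^n_i$ for $x\in A_n$ and $f_i(x)=0$ for $x\in C$. The two computations then go through verbatim: the lower bound $\int_\Omega\lvert\tau((f_i)_{i\in\omega})\rvert^p\,d\mu\geq\sum_{n\in\omega}\lvert\tau(v^n)\rvert^p\mu(A_n)=\sum_{n\in\omega}1=\infty$ shows $\tau((f_i)_{i\in\omega})\notin\lpmu$, while the estimate $\int_\Omega\lvert f_i\rvert^p\,d\mu=\sum_{n\in\omega}\lvert v^n_i\rvert^p/\lvert\tau(v^n)\rvert^p\leq M+\sum_{n>i}2^{-n}<\infty$ (using $\lvert\tau(v^n)\rvert>2^{n/p}\lvert v^n_i\rvert$ whenever $n>i$, and absorbing the finitely many remaining terms into a constant $M\in\R^+$) shows each $f_i\in\lpmu$. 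Together these contradict preservation of $p$-integrability over $(\Omega,\mathcal{F},\mu)$.

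I do not anticipate a serious obstacle beyond the measure-boundedness issue already resolved by the choice $k=b_n^{-1/p}$. The one point that must be handled with care is the logical order of the construction: the finiteness constant $b_n^{-1/p}$ has to be built into the defining inequality for $v^n$ \emph{before} the measures $a_n$ are chosen, precisely so that each $a_n$ falls below the corresponding threshold $b_n$; stating the construction in this order is what makes the appeal to conditional partitionability legitimate.
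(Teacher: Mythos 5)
Your proposal is correct and follows essentially the same route as the paper's own proof: a contrapositive argument that builds $v^n$ from the inequality $\lvert\tau(v^n)\rvert > b_n^{-1/p} + \sum_{j=0}^{n-1}2^{n/p}\lvert v^n_j\rvert$, uses the summand $b_n^{-1/p}$ exactly to guarantee $1/\lvert\tau(v^n)\rvert^p \leq b_n$ so that conditional partitionability applies, and then reuses the two integral estimates from Lemma~\ref{l:l2} verbatim. Your remark about the logical order of the construction is also consistent with (and in fact slightly cleaner than) the paper's presentation.
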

\begin{proof}
	We give the proof for $I = \omega$.
	The case $\lvert I\rvert <\lvert \omega\rvert$ relies on an analogous argument.
	
	We suppose, contrapositively, that, for every finite subset of indices $J \seq I$, every $J$-tuple $(\lambda_j)_{j \in J}$ of nonnegative real numbers and every $k \in \R^+$, there exists $v\in \R^I$ such that $\lvert\tau(v)\rvert > k + \sum_{j \in J} \lambda_{j} \lvert v_{j}\rvert$; we shall prove that $\tau$ does not preserve $p$-integrability.
	Since $(\Omega, \mathcal{F}, \mu)$ is conditionally partitionable, there exists a sequence $(b_n)_{n \in \omega}$ of strictly positive real numbers such that, for every sequence $(a_n)_{n \in \omega}$ of elements of $\R^+$ satisfying $a_n \leq b_n$ for every $n \in \omega$, there exists a sequence $(A_n)_{n\in \omega}$ of disjoint elements of $\mathcal{F}$ such that $\mu(A_n)=a_n$. 
	
	For each $n\in \omega$, we let $v^n$ be an element of $\R^I$ such that $\lvert \tau(v^n) \rvert > \left(\frac{1}{b_n}\right)^{\frac{1}{p}} + \sum_{j = 0}^{n - 1} 2^{\frac{n}{p}} \lvert v^n_{j}\rvert$.
	Then we have
	\[
		\frac{1}{\lvert \tau(v^n) \rvert^p} < \frac{1}{\left(\left(\frac{1}{b_n}\right)^{\frac{1}{p}} + \sum_{j = 0}^{n - 1} 2^{\frac{n}{p}} \lvert v^n_{j}\rvert\right)^p} \leq \frac{1}{\left(\left(\frac{1}{b_n}\right)^{\frac{1}{p}}\right)^p} = b_n.
	\]
	Therefore, there exists a sequence $(A_n)_{n\in \omega}$ of disjoint elements of $\mathcal{F}$ such that $\mu(A_n)=\frac{1}{\lvert \tau(v^n) \rvert^p}$.
	Since $\lvert \tau(v^n) \rvert > \left(\frac{1}{b_n}\right)^{\frac{1}{p}} + \sum_{j = 0}^{n - 1} 2^{\frac{n}{p}} \lvert v^n_{j}\rvert > \sum_{j = 0}^{n - 1} 2^{\frac{n}{p}} \lvert v^n_{j}\rvert$, the remaining part of the proof runs as for Lemma \ref{l:one measure integrability}.
\end{proof}
\begin{lemma}\label{l:one measure integrability finite}
	Let $I$ be a set, $\tau\colon \R^I\to \R$ a function, $p\in [1,+\infty)$ and $(\Omega,\mathcal{F},\mu)$ a conditionally partitionable finite measure space.
	The following conditions are equivalent.
	\begin{enumerate}
		\item $\tau$ preserves $p$-integrability over every finite measure space.
		\item $\tau$ preserves measurability, and $\tau$ preserves $p$-integrability over $(\Omega,\mathcal{F},\mu)$.
		\item $\tau$ is $\Cyl\left(\R^I\right)$-measurable and there exist a finite subset of indices $J \seq I$ and nonnegative real numbers $(\lambda_j)_{j \in J}$ and $k$ such that, for every $v\in \R^I$, we have $\lvert\tau(v)\rvert\leq k + \sum_{j \in J} \lambda_{j} \lvert v_j\rvert$.
	\end{enumerate}
\end{lemma}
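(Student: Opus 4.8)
The plan is to prove the cyclic chain $(1)\Rightarrow(2)\Rightarrow(3)\Rightarrow(1)$, transcribing the proof of Lemma~\ref{l:one measure integrability} with every tool replaced by its finite-measure counterpart from the present section. The two easy implications go through exactly as before. For $(1)\Rightarrow(2)$: assuming $\tau$ preserves $p$-integrability over every finite measure space, Lemma~\ref{l:lp implies measbl finite} yields that $\tau$ preserves measurability, and since $(\Omega,\mathcal{F},\mu)$ is itself a finite measure space, preservation of $p$-integrability over it is immediate. For $(3)\Rightarrow(1)$: $\Cyl(\R^I)$-measurability of $\tau$ gives preservation of measurability by Theorem~\ref{t:pres measurable}, whereupon the affine domination $\lvert\tau(v)\rvert\leq k+\sum_{j\in J}\lambda_j\lvert v_j\rvert$ together with Lemma~\ref{l:easy implication-finite} delivers preservation of $p$-integrability over every finite measure space.

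The substantive implication is $(2)\Rightarrow(3)$. The measurability half of $(3)$ is immediate, since Theorem~\ref{t:pres measurable} converts preservation of measurability into $\Cyl(\R^I)$-measurability of $\tau$. For the affine bound I want to invoke Lemma~\ref{l:l2-finite}, which, however, requires countable arity, whereas $I$ is arbitrary. This is the point at which the finite-arity theory must be bridged to the general case: by Proposition~\ref{p:depends on countably}, a $\Cyl(\R^I)$-measurable $\tau$ depends on countably many coordinates, say on a countable $J\seq I$, so that $\tau$ factors as $\tau=\tilde\tau\circ\pi_J$ for a uniquely determined $\tilde\tau\colon\R^J\to\R$, where $\pi_J\colon\R^I\to\R^J$ denotes the restriction map. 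One then checks that $\tilde\tau$ preserves $p$-integrability over $(\Omega,\mathcal{F},\mu)$: given an $\lpmu$-family indexed by $J$, extend it to an $I$-indexed family by padding with zeros outside $J$, and use that $\tau$ preserves $p$-integrability over $(\Omega,\mathcal{F},\mu)$ together with the fact that $\tau(v)$ depends only on the coordinates of $v$ in $J$. Since $J$ is countable and $(\Omega,\mathcal{F},\mu)$ is conditionally partitionable, Lemma~\ref{l:l2-finite} applies to $\tilde\tau$ and supplies a finite $J'\seq J$, nonnegative reals $(\lambda_j)_{j\in J'}$ and $k$ with $\lvert\tilde\tau(w)\rvert\leq k+\sum_{j\in J'}\lambda_j\lvert w_j\rvert$ for all $w\in\R^J$; pulling this inequality back along $\pi_J$ yields precisely the bound for $\tau$ required in $(3)$.

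The only step demanding genuine care is this descent to countable arity in $(2)\Rightarrow(3)$, which the proof of the unrestricted-measure analogue, Lemma~\ref{l:one measure integrability}, compresses into the single clause ``hence Lemma~\ref{l:l2} applies''. Everything else is a routine transcription of that argument, now carrying the additive constant $k$ along throughout.
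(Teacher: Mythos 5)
Your proof is correct and takes essentially the same route as the paper: the same cyclic structure, with Lemma~\ref{l:lp implies measbl finite}, Theorem~\ref{t:pres measurable}, Proposition~\ref{p:depends on countably}, Lemma~\ref{l:l2-finite} and Lemma~\ref{l:easy implication-finite} playing identical roles. The factorisation $\tau=\tilde\tau\circ\pi_J$ and the zero-padding argument you give in $(2)\Rightarrow(3)$ is a valid expansion of exactly the step the paper compresses into ``hence Lemma~\ref{l:l2-finite} applies''.
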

\begin{proof}	
	$[(1)\Rightarrow (2)]$ If $\tau$ preserves $p$-integrability over every finite measure space, then, by Lemma~\ref{l:lp implies measbl finite}, $\tau$ preserves measurability.
	Trivially, $\tau$ preserves $p$-integrability over $(\Omega,\mathcal{F},\mu)$.
	
	$[(2)\Rightarrow (3)]$ If $\tau$ preserves measurability, then, by Theorem~\ref{t:pres measurable}, $\tau$ is $\Cyl\left(\R^I\right)$-measurable.
	By Proposition~\ref{p:depends on countably}, $\tau$ depends on countably many coordinates, hence Lemma~\ref{l:l2-finite} applies and the proof of the implication is complete.
	
	$[(3)\Rightarrow (1)]$ By Theorem~\ref{t:pres measurable}, $\tau$ preserves measurability.
	By Lemma~\ref{l:easy implication-finite}, the thesis is proved.
\end{proof}
\begin{proof}[Proof of Theorem~\ref{t:MAIN finite}]
	There exist conditionally partitionable finite measure spaces, see e.g.\ Remark~\ref{r:[0,1] is partbl}.
	Theorem~\ref{t:MAIN finite} is the equivalence $(1)\Leftrightarrow(3)$ in Lemma~\ref{l:one measure integrability finite}.
\end{proof}
%

%===================================================================================
%SUBSECTION
\subsection{Examples}

\begin{example}
	Let $n\in\omega$ and $\tau\colon \R^n\to \R$.
	Then $\tau$ preserves $p$-integrability over every finite measure space if, and only if, $\tau$ is Borel measurable and there exist $\lambda_0, \dots, \lambda_{n-1}, k \in \R^+$ such that, for every $x\in \R^n$, we have
	\[
		\lvert \tau(x)\rvert \leq k + \sum_{j = 0}^{n - 1}\lambda_j\lvert x_j\rvert.
	\]
\end{example}
\begin{example}
	A function $\tau\colon \R^\omega\to \R$ preserves $p$-integrability over every finite measure space if, and only if, $\tau$ is Borel measurable and there exist a finite subset of indices $J \seq \omega$ and nonnegative real numbers $(\lambda_j)_{j \in J}$ and $k$ such that, for every $v\in \R^I$, we have 
	\[
		\lvert\tau(v)\rvert\leq k + \sum_{j \in J} \lambda_{j} \lvert v_j\rvert.
	\]
\end{example}
%

%===================================================================================
%SUBSECTION
\subsection{The case of $([0,1],\mathcal{B}_{[0,1]}, \Leb)$ and the discrete case}

The remaining results in this section are not used in the proofs of our main results.

One may think that, for an operation $\tau \colon\R^I\to \R$, the condition ``$\tau$ preserve $p$-integrability over \emph{every} finite measure space'' is too strong because we may not be interested in all finite measure spaces.
However, Proposition~\ref{p:[0,1] is enough} shows that this condition is equivalent to ``$\tau$ preserve $p$-integrability over $([0,1],\mathcal{B}_{[0,1]},\Leb)$'' (at least when $\tau$ has countable arity), and  Proposition~\ref{p:discrete is enough finite} provides an analogous result for a discrete finite measure space.
\begin{proposition}\label{p:[0,1] is enough}
	Let $I$ be a set, $\tau\colon \R^I\to \R$, with $\lvert I\rvert\leq \lvert\omega\rvert$, and $p\in [1,+\infty)$.
	Then $\tau$ preserves $p$-integrability over every finite measure space if, and only if, $\tau$ preserves $p$-integrability over $([0,1],\mathcal{B}_{[0,1]},\Leb)$.
\end{proposition}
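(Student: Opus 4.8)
The plan is to mirror the proof of Proposition~\ref{p:R is enough}, replacing $(\R,\mathcal{B}_\R,\Leb)$ by $([0,1],\mathcal{B}_{[0,1]},\Leb)$ and partitionability by conditional partitionability. The forward implication is immediate, since $([0,1],\mathcal{B}_{[0,1]},\Leb)$ is itself a finite measure space. For the converse I assume that $\tau$ preserves $p$-integrability over $([0,1],\mathcal{B}_{[0,1]},\Leb)$ and aim to apply the implication $(2)\Rightarrow(1)$ of Lemma~\ref{l:one measure integrability finite}, instantiated at the conditionally partitionable finite measure space $([0,1],\mathcal{B}_{[0,1]},\Leb)$ (conditionally partitionable by Remark~\ref{r:[0,1] is partbl}). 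That implication requires two inputs: that $\tau$ preserves $p$-integrability over $([0,1],\mathcal{B}_{[0,1]},\Leb)$, which is exactly the hypothesis, and that $\tau$ preserves measurability. Thus the whole argument reduces to deriving preservation of measurability from the hypothesis.

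For this crux, by Theorem~\ref{t:pres measurable} it suffices to show that $\tau$ is $\Cyl(\R^I)$-measurable. The key device is a Borel isomorphism $\psi\colon [0,1]\to \R^I$ all of whose coordinate functions $\psi_i\df \pi_i\circ\psi$ lie in $\mathcal{L}^p([0,1],\Leb)$. Granting such a $\psi$: the family $(\psi_i)_{i\in I}$ is a family in $\mathcal{L}^p([0,1],\Leb)$, so by hypothesis $\tau((\psi_i)_{i\in I})=\tau\circ\psi$ belongs to $\mathcal{L}^p([0,1],\Leb)$ and is in particular $\mathcal{B}_{[0,1]}$-measurable; since $\psi$ is a bijective Borel map between standard Borel spaces (here $\lvert I\rvert\leq\lvert\omega\rvert$ guarantees $\Cyl(\R^I)$ is the Borel $\sigma$-algebra of the Polish space $\R^I$), its inverse $\psi^{-1}\colon \R^I\to[0,1]$ is Borel by the Lusin--Souslin theorem; hence $\tau=(\tau\circ\psi)\circ\psi^{-1}$ is $\Cyl(\R^I)$-measurable, as desired.

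The main obstacle is the construction of $\psi$, and specifically the integrability of its coordinates: because $\psi$ is onto $\R^I$, no coordinate can be bounded, so one must reconcile an unbounded range with $p$-integrability on the finite space $[0,1]$. The case $I=\emptyset$ is trivial ($\tau$ is constant), so I assume $1\leq\lvert I\rvert\leq\lvert\omega\rvert$ and fix a partition of $\omega$ into $\lvert I\rvert$-many infinite blocks $(S_i)_{i\in I}$. Writing the binary expansion $x=\sum_{n\in\omega}d_n(x)2^{-n-1}$ yields, off the countable set of dyadic rationals, a Borel bijection $x\mapsto(d_n(x))_{n}$ carrying $\Leb$ to the product Bernoulli measure; reassembling the bits indexed by $S_i$ into $y_i\in[0,1)$ produces coordinates each uniformly distributed on $[0,1)$ which jointly form a Borel bijection $[0,1)\to[0,1)^I$ off a countable set. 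I then set $\psi_i\df g\circ y_i$, where $g(t)\df\log\frac{t}{1-t}$ is the logit, a Borel isomorphism $(0,1)\to\R$. Since $y_i$ is uniform, $\int_{[0,1]}\lvert\psi_i\rvert^p\,d\Leb=\int_0^1\lvert\log\frac{t}{1-t}\rvert^p\,dt$, which is finite because $\lvert\log t\rvert$ is $p$-integrable near $0$ and near $1$; hence each $\psi_i\in\mathcal{L}^p([0,1],\Leb)$. The resulting map $\psi=(\psi_i)_{i\in I}$ is a Borel bijection $[0,1]\to\R^I$ off a countable (hence $\Leb$-null) set, and a routine redefinition on these countable exceptional sets upgrades it to a genuine Borel isomorphism without affecting the integrability of the coordinates.

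I expect the construction of $\psi$ --- in particular the simultaneous $p$-integrability of the (necessarily unbounded) coordinates together with the requirement that $\psi$ be a Borel isomorphism onto \emph{all} of $\R^I$ --- to be the only nonroutine point; the reduction through Lemma~\ref{l:one measure integrability finite} and the concluding composition argument are then immediate.
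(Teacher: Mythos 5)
Your proposal has the same skeleton as the paper's proof: the forward direction is trivial, and the converse reduces, via Remark~\ref{r:[0,1] is partbl} and the implication $(2)\Rightarrow(1)$ of Lemma~\ref{l:one measure integrability finite}, to showing that $\tau$ preserves measurability. Where you genuinely diverge is at that crux. The paper dispatches it by citing Proposition~\ref{p:R is quite important} and Remark~\ref{r:polish is quite important}; but those results concern preservation of \emph{measurability} over $([0,1],\mathcal{B}_{[0,1]})$, and passing from the actual hypothesis --- preservation of $p$-\emph{integrability} over $([0,1],\mathcal{B}_{[0,1]},\Leb)$ --- to preservation of measurability is precisely the point needing an argument; the paper leaves it compressed. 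Your construction of a Borel isomorphism $\psi\colon[0,1]\to\R^I$ with every coordinate in $\mathcal{L}^p(\Leb)$, followed by $\tau=(\tau\circ\psi)\circ\psi^{-1}$ and Lusin--Souslin, supplies exactly such an argument: the hypothesis is applied to the single family $(\psi_i)_{i\in I}$, yielding measurability of $\tau\circ\psi$ and hence of $\tau$, after which Theorem~\ref{t:pres measurable} gives full preservation of measurability. So your route is more self-contained than the paper's one-line citation, at the price of a descriptive-set-theoretic construction. (A shorter filling of the same step, closer in spirit to the paper's appeal to isomorphisms: given arbitrary Borel $f_i$ on $[0,1]$, pick $K_i$ with $\Leb\{\lvert f_i\rvert>K_i\}\leq 2^{-i-2}$, let $S$ be the complement of the union, and let $\sigma$ be a Borel automorphism of $[0,1]$ pushing $\Leb$ onto $\Leb\vert_S/\Leb(S)$; then each $f_i\circ\sigma\in\mathcal{L}^p(\Leb)$, so $\tau((f_i))=\tau((f_i\circ\sigma))\circ\sigma^{-1}$ is Borel.)

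One claim in your construction is wrong as stated, though reparable: for $\lvert I\rvert\geq 2$ the exceptional sets are \emph{not} countable. The set of $x$ whose binary digits along some block $S_i$ are eventually constant (equivalently, for which some $y_i(x)$ is dyadic, which is where injectivity and surjectivity fail) has the cardinality of the continuum: all $x$ whose digits along $S_0$ vanish already form a Cantor-like set, because the digits along the remaining blocks are unconstrained. The same holds on the range side for $\{v\in\R^I\mid v_i\in g(\text{dyadics})\text{ for some }i\}$. Hence ``a routine redefinition on these countable exceptional sets'' does not literally apply. The repair is standard but needs more than countability: both exceptional sets are Borel; the domain-side one is $\Leb$-null (for each $i$, almost every $x$ has digits along $S_i$ not eventually constant, and one takes a countable union); and for $\lvert I\rvert\geq 2$ both sets are uncountable Borel subsets of standard Borel spaces, hence Borel-isomorphic by the Borel isomorphism theorem (for $\lvert I\rvert=1$ both are countably infinite and any bijection between them is Borel). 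Patching $\psi$ by such an isomorphism produces a genuine Borel isomorphism $[0,1]\to\R^I$, and the coordinates remain in $\mathcal{L}^p$ because the modification occurs on a $\Leb$-null set. With this correction your proof is complete.
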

\begin{proof}
	Trivially, if $\tau$ preserves $p$-integrability, then $\tau$ preserves $p$-integrability over the measure space $([0,1],\mathcal{B}_{[0,1]},\Leb)$.
	For the converse, by Proposition~\ref{p:R is quite important} and Remark~\ref{r:polish is quite important}, if $\tau$ preserves $p$-integrability over $([0,1],\mathcal{B}_{[0,1]},\Leb)$ then $\tau$ preserves measurability.
	By Remark~\ref{r:[0,1] is partbl}, the measure space $([0,1],\mathcal{B}_{[0,1]},\Leb)$ is conditionally partitionable.
	An application of $(2)\Rightarrow(1)$ in Lemma~\ref{l:one measure integrability finite} concludes the proof.
\end{proof}
Similarly to the case of arbitrary measure, we next provide an analogue of Proposition~\ref{p:[0,1] is enough} for a discrete finite measure space.
\begin{lemma}\label{l:exists measure finite}
	There exists a probability measure $\mu$ on $(\omega,\mathcal{P}(\omega))$ such that the measure space $(\omega,\mathcal{P}(\omega),\mu)$ is conditionally partitionable.
\end{lemma}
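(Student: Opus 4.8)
The plan is to adapt the construction of Lemma~\ref{l:exists measure} so as to obtain a \emph{finite} measure, replacing the doubly-infinite scale of atoms $(2^z)_{z\in\Z}$ by a summable one. The key observation is that the subset sums of the sequence $(2^{-(k+1)})_{k\in\omega}=(2^{-1},2^{-2},2^{-3},\dots)$ fill the entire interval $[0,1]$: every $t\in[0,1]$ has a binary expansion $t=\sum_{k\in\omega}\epsilon_k2^{-(k+1)}$ with $\epsilon_k\in\{0,1\}$. Hence a single ``block'' of atoms with these masses realizes, as the measure of a suitable subset, any prescribed value in $[0,1]$, while carrying only finite total mass $1$.

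First I would work on the countable set $\omega\times\omega$ with its full power-set $\sigma$-algebra and set $\mu(\{(n,k)\})\df 2^{-(n+1)}2^{-(k+1)}$. Summing over $k$ and then over $n$ gives total mass $\left(\sum_{n\in\omega}2^{-(n+1)}\right)\left(\sum_{k\in\omega}2^{-(k+1)}\right)=1$, so $\mu$ is a probability measure. I would take as witnessing bounds $b_n\df 2^{-(n+1)}$, which are strictly positive.

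To check conditional partitionability, given any sequence $(a_n)_{n\in\omega}$ in $\R^+$ with $a_n\leq b_n$, I note that $a_n/b_n\in[0,1]$, so by the binary-expansion remark there is $K_n\seq\omega$ with $a_n/b_n=\sum_{k\in K_n}2^{-(k+1)}$. Setting $A_n\df\{(n,k)\mid k\in K_n\}$, the $A_n$ are pairwise disjoint (they have distinct first coordinates) and
\[
	\mu(A_n)=2^{-(n+1)}\sum_{k\in K_n}2^{-(k+1)}=2^{-(n+1)}\cdot\frac{a_n}{b_n}=a_n.
\]
Finally, since $\omega\times\omega$ is countably infinite, it is in measurable bijection with $(\omega,\mathcal{P}(\omega))$, and transporting $\mu$ along such a bijection gives the sought probability measure on $(\omega,\mathcal{P}(\omega))$.

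I do not anticipate any real obstacle: the only delicate point is to keep the total mass finite while still being able to hit every target $a_n\leq b_n$, and this is exactly balanced by the geometrically decreasing block weights $b_n=2^{-(n+1)}$ together with the interval-filling property of dyadic subset sums. The rest is the same bookkeeping as in Lemma~\ref{l:exists measure}.
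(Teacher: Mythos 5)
Your proof is correct and follows essentially the same strategy as the paper's: dyadic atoms arranged in blocks indexed by $n$, binary expansions used to realise any target $a_n \leq b_n$ as a subset sum of the atoms in block $n$, disjointness guaranteed by the first coordinate, and a final transfer along a bijection to $(\omega,\mathcal{P}(\omega))$. The only difference is cosmetic: your product weights $2^{-(n+1)}2^{-(k+1)}$ make the total mass exactly $1$ from the start, whereas the paper works on the triangle $\{(n,m) \mid m \geq n\}$ with weights $2^{-m}$, obtains total mass $4$, and then rescales by $\frac{1}{4}$.
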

\begin{proof}
	Let $X \df \{(n, m) \in \omega \times \omega \mid m \geq n \}$.
	We let $\nu$ be the unique measure on $(X,\mathcal{P}(X))$ such that, for every $(n,m) \in X$, we have $\nu(\{(n,m)\})=\frac{1}{2^{m}}$.
	Then,
	\begin{equation*}
		\begin{split}
			\sum_{(n,m)\in X}\nu(\{(n,m)\})	& = \sum_{n\in \omega} \sum_{m \in \omega, m \geq n} \nu(\{(n,m)\}) = \\
											& = \sum_{n\in \omega} \sum_{m \in \omega, m \geq n} \frac{1}{2^{m}} = \\
											& = \sum_{n\in \omega} \frac{2}{2^{n}} = \\
											& = 4. \\
		\end{split}
	\end{equation*}
	Hence, $\nu$ is a finite measure.
	
	We prove that $(X, \mathcal{P}(X), \nu)$ is conditionally partitionable.
	For $n \in \omega$, let $b_n \df \frac{1}{2^{n-1}}$.
	Let $(a_n)_{n \in \omega}$ be a sequence of elements of $\R^+$  satisfying $a_n \leq b_n$ for every $n \in \omega$.
	For every $n \in \N$, since $0 \leq a_n \leq \frac{1}{2^{n-1}}$, there exists a subset $K_n$ of $\{k \in \omega \mid k \geq n\}$ such that $a_n = \sum_{k \in K_n} \frac{1}{2^k}$.
	Set $A_n\df\{(n,m)\mid m\in K_n \}$.
	Note that $A_n \seq X$.
	Then $\mu(A_n)=\sum_{m\in K_n}\mu(\{(n,m)\})=\sum_{m\in K_n}\frac{1}{2^m}=a_n$.
	Moreover, for any pair of distinct $n, m \in \omega$, the sets $A_n$ and $A_m$ are disjoint.
	This proves that $(X, \mathcal{P}(X), \nu)$ is conditionally partitionable.
	
	Define the measure $\frac{\nu}{4}$ on $(X, \mathcal{P}(X))$ by setting $\frac{\nu}{4}(A)=\frac{\nu(A)}{4}$. 
	Using the fact that $(X, \mathcal{P}(X), \nu)$ is a conditionally partitionable measure space, it is not difficult to see that $(X, \mathcal{P}(X), \frac{\nu}{4})$ is a conditionally partitionable measure space, too.
	We have $\frac{\nu}{4}(X) = \frac{\nu(X)}{4}=\frac{4}{4}$; thus $\frac{\nu}{4}$ is a probability measure.
			
	The set $X$ is countably infinite, hence $(X,\mathcal{P}(X))$ and $(\omega,\mathcal{P}(\omega))$ are isomorphic measurable spaces, which concludes the proof.
\end{proof}
\begin{proposition}\label{p:discrete is enough finite}
	There exists a probability  measure $\mu$ on $(\omega,\mathcal{P}(\omega))$ such that, for every set $I$, every function $\tau\colon \R^I\to \R$ and every $p\in [1,+\infty)$, $\tau$ preserves $p$-integrability over every finite measure space if, and only if, $\tau$ preserves measurability and $\tau$ preserves $p$-integrability over $(\omega,\mathcal{P}(\omega),\mu)$.
\end{proposition}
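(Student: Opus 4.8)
The plan is to mirror the proof of Proposition~\ref{p:discrete is enough}, simply replacing partitionability by conditional partitionability and Lemma~\ref{l:exists measure} by its finite counterpart Lemma~\ref{l:exists measure finite}. First I would invoke Lemma~\ref{l:exists measure finite} to obtain a probability measure $\mu$ on $(\omega,\mathcal{P}(\omega))$ for which $(\omega,\mathcal{P}(\omega),\mu)$ is conditionally partitionable. Since every probability measure is in particular finite, this produces exactly the kind of object that the hypotheses of Lemma~\ref{l:one measure integrability finite} demand: a conditionally partitionable finite measure space. The key point is that this $\mu$ is fixed once and for all, independently of the later data $I$, $\tau$, and $p$, so it can serve simultaneously for every instance of the universally quantified statement.

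Having fixed $\mu$ as above, the equivalence to be proved is precisely the equivalence $(1)\Leftrightarrow(2)$ of Lemma~\ref{l:one measure integrability finite}, instantiated at $(\Omega,\mathcal{F},\mu)=(\omega,\mathcal{P}(\omega),\mu)$. Indeed, condition (1) there reads ``$\tau$ preserves $p$-integrability over every finite measure space,'' and condition (2) reads ``$\tau$ preserves measurability, and $\tau$ preserves $p$-integrability over $(\Omega,\mathcal{F},\mu)$''; these match verbatim the two sides of the biconditional in the present statement. Thus, once $\mu$ is in hand, the proof is a one-line appeal to that lemma, applied uniformly for each $I$, $\tau$, and $p$.

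There is essentially no obstacle to overcome here, since all of the genuine work has already been done upstream: the construction of a conditionally partitionable probability measure on a countable space is carried out in Lemma~\ref{l:exists measure finite}, and the reduction from ``preservation over all finite measure spaces'' to ``preservation over a single conditionally partitionable finite space'' is the content of Lemma~\ref{l:l2-finite} and Lemma~\ref{l:one measure integrability finite}. The only detail worth flagging is that the statement asks specifically for a \emph{probability} measure rather than merely a finite one; this is exactly why one appeals to Lemma~\ref{l:exists measure finite}, which delivers the normalised measure $\frac{\nu}{4}$ of total mass $1$, instead of an arbitrary finite witness.
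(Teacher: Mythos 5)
Your proposal is correct and follows exactly the paper's own proof: invoke Lemma~\ref{l:exists measure finite} to fix a conditionally partitionable probability measure on $(\omega,\mathcal{P}(\omega))$, then apply the equivalence $(1)\Leftrightarrow(2)$ of Lemma~\ref{l:one measure integrability finite}. Your additional remarks (that a probability measure is in particular finite, and that $\mu$ is chosen uniformly before quantifying over $I$, $\tau$, $p$) are accurate and only make explicit what the paper leaves implicit.
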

\begin{proof}
	By Lemma~\ref{l:exists measure finite}, there exists a probability measure $\mu$ on $(\omega,\mathcal{P}(\omega))$ such that $(\omega,\mathcal{P}(\omega),\mu)$ is conditionally partitionable.
	The thesis follows from $(1)\Leftrightarrow (2)$ in Lemma~\ref{l:one measure integrability finite}.
\end{proof}
%

%%%%%%%%%%%%%%%%%%%%%%%%%%%%%%%%%%%%%%%%%%%%%%%%%%%%%%%%%%%%%%%%%%%%%%%%%%%%%%%%%%%%
%SECTION
\section{Generation}\label{s:generation}

The goal of this section is to prove Theorems~\ref{t:generation} and~\ref{t:generation finite}, which exhibit a generating set for the class of operations that preserve integrability over arbitrary and finite measure spaces, respectively.

As it is shown by Theorems~\ref{t:MAIN} and~\ref{t:MAIN finite}, the fact that an operation preserves $p$-integrability -- over arbitrary and finite measure spaces, respectively -- does not depend on the choice of $p$.
Hence, we say that the operation \emph{preserves integrability}.

Recall from the introduction the operation 
\[
	\truncsup(y,x_0,x_1,\dots)\df\sup_{n\in \omega }\{x_n\land y \}.
\]
We adopt the notation 
\[
	\truncsup_{n \in \omega  }^y x_n\df\truncsup(y,x_0,x_1,\dots).
\]
From the operations $0$, $+$, $\lor$ and $\lambda ({\;\cdot\;})$ (for each $\lambda \in \R$) we generate the operations 
\begin{equation*}
	\begin{split}
		f\land g		& \df -((-f)\lor (-g)), \\
		f^+				& \df f\lor 0, \\
		f^-				& \df - (f\land 0), \\
		\lvert f \rvert	& \df f^+-f^-.
	\end{split}
\end{equation*}
Additionally, using $\truncsup$, we generate 
\[
	\bigcurlywedge\limits_{n\in \omega }^g f_n\df\inf_{n\in \omega }\{f_n\lor g \}=-\truncsup_{n\in\omega}^{-g} -f_n.
\]
Let $\Omega$ be a set and let $S\seq \R^\Omega$.
We let $\sigma (S)$ denote the smallest $\sigma$-algebra $\mathcal{F}$ of subsets of $\Omega$ such that every $s\in S$ is $\mathcal{F}$-measurable.
\begin{lemma}\label{l:generation2}
	Let $\Omega$ be a set and $S\seq \R^\Omega$.
	Then $\sigma(S)$ is the $\sigma$-algebra of subsets of $\Omega$ generated by $\left\{g^{-1}((\lambda,+\infty))\mid  g\in S,\lambda \in \R\right\}$.
\end{lemma}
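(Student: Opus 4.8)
The plan is to prove the two inclusions separately, writing $\mathcal{G} \df \left\{g^{-1}((\lambda,+\infty))\mid g\in S,\lambda \in \R\right\}$ for the generating family and $\sigma(\mathcal{G})$ for the $\sigma$-algebra it generates; the goal is then $\sigma(S)=\sigma(\mathcal{G})$.

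First I would show $\sigma(\mathcal{G})\seq\sigma(S)$. By the very definition of $\sigma(S)$, every $g\in S$ is $\sigma(S)$-$\mathcal{B}_\R$-measurable. Since each ray $(\lambda,+\infty)$ is a Borel subset of $\R$, it follows that $g^{-1}((\lambda,+\infty))\in\sigma(S)$ for all $g\in S$ and all $\lambda\in\R$; that is, $\mathcal{G}\seq\sigma(S)$. As $\sigma(S)$ is a $\sigma$-algebra and $\sigma(\mathcal{G})$ is the smallest $\sigma$-algebra containing $\mathcal{G}$, we conclude $\sigma(\mathcal{G})\seq\sigma(S)$.

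For the reverse inclusion $\sigma(S)\seq\sigma(\mathcal{G})$, I would exploit the minimality in the definition of $\sigma(S)$: it suffices to verify that every $g\in S$ is $\sigma(\mathcal{G})$-$\mathcal{B}_\R$-measurable, for then $\sigma(\mathcal{G})$ is one of the $\sigma$-algebras making all elements of $S$ measurable, whence $\sigma(S)\seq\sigma(\mathcal{G})$. Fix $g\in S$. The key step is the standard observation that the collection $\mathcal{D}\df\{B\seq\R\mid g^{-1}(B)\in\sigma(\mathcal{G})\}$ is a $\sigma$-algebra of subsets of $\R$, because preimages commute with complements and countable unions. By construction of $\mathcal{G}$ we have $g^{-1}((\lambda,+\infty))\in\mathcal{G}\seq\sigma(\mathcal{G})$ for every $\lambda\in\R$, so every ray $(\lambda,+\infty)$ lies in $\mathcal{D}$. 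Since the rays $(\lambda,+\infty)$, $\lambda\in\R$, generate $\mathcal{B}_\R$, we obtain $\mathcal{B}_\R\seq\mathcal{D}$, i.e.\ $g^{-1}(B)\in\sigma(\mathcal{G})$ for every Borel set $B$; thus $g$ is $\sigma(\mathcal{G})$-$\mathcal{B}_\R$-measurable.

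There is no genuine obstacle here: the argument is a routine application of the ``good-sets'' principle together with the fact that the rays generate $\mathcal{B}_\R$. The only point deserving care is the transport-of-measurability step in the second inclusion, namely that it is enough to pull back a generating family of the Borel $\sigma$-algebra rather than all Borel sets; this is exactly what the $\sigma$-algebra property of $\mathcal{D}$ delivers.
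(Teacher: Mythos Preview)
Your argument is correct and is the standard proof of this fact. The paper does not give its own argument here: it simply cites Proposition~2.3 in Folland's \emph{Real Analysis}, so there is nothing substantive to compare---you have supplied a self-contained proof where the paper defers to the literature.
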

\begin{proof}
	See Proposition~2.3 in \cite{Folland}.
\end{proof}
\begin{lemma}\label{l:restriction}
	Let $\Omega$ be a set, let $\mathcal{A}\seq \mathcal{P}(\Omega)$, let $K$ be an element of the $\sigma$-algebra of subsets of $\Omega$ generated by $\mathcal{A}$, and let $K\seq Y\seq \Omega$.
	Then $K$ belongs to any $\sigma$-algebra $\mathcal{G}$ of subsets of $Y$ such that $A\cap Y\in \mathcal{G}$ for each $A\in \mathcal{A}$.
\end{lemma}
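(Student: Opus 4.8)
The plan is to apply the standard ``good sets principle''. Write $\mathcal{B}$ for the $\sigma$-algebra of subsets of $\Omega$ generated by $\mathcal{A}$, so that the hypothesis reads $K\in\mathcal{B}$ and $K\seq Y\seq\Omega$. Fix an arbitrary $\sigma$-algebra $\mathcal{G}$ of subsets of $Y$ with $A\cap Y\in\mathcal{G}$ for all $A\in\mathcal{A}$. I would then introduce the collection of sets whose trace on $Y$ falls into $\mathcal{G}$, namely $\mathcal{D}\df\{B\seq\Omega\mid B\cap Y\in\mathcal{G}\}$, and show that $\mathcal{B}\seq\mathcal{D}$ by verifying that $\mathcal{D}$ is a $\sigma$-algebra of subsets of $\Omega$ containing $\mathcal{A}$.

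The containment $\mathcal{A}\seq\mathcal{D}$ is exactly the hypothesis on $\mathcal{G}$. To see that $\mathcal{D}$ is a $\sigma$-algebra on $\Omega$, first note $\Omega\in\mathcal{D}$ because $\Omega\cap Y=Y\in\mathcal{G}$ (here $Y$ is the top element of $\mathcal{G}$). Closure under countable unions follows from the distributivity identity $\left(\bigcup_{n\in\omega}B_n\right)\cap Y=\bigcup_{n\in\omega}(B_n\cap Y)$, whose right-hand side lies in $\mathcal{G}$ whenever each $B_n\in\mathcal{D}$.

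The step I expect to require the most care is closure under complementation, since complements of members of $\mathcal{D}$ are taken relative to $\Omega$, whereas $\mathcal{G}$ only provides complements relative to $Y$. The reconciling observation is the identity $(\Omega\setminus B)\cap Y=Y\setminus(B\cap Y)$: if $B\in\mathcal{D}$, then $B\cap Y\in\mathcal{G}$, and because $\mathcal{G}$ is a $\sigma$-algebra on $Y$ its complement $Y\setminus(B\cap Y)$ again lies in $\mathcal{G}$, so $\Omega\setminus B\in\mathcal{D}$. This confirms that $\mathcal{D}$ is a $\sigma$-algebra.

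Having established that $\mathcal{D}$ is a $\sigma$-algebra of subsets of $\Omega$ containing $\mathcal{A}$, minimality of $\mathcal{B}$ yields $\mathcal{B}\seq\mathcal{D}$. Applying this to $K\in\mathcal{B}$ gives $K\cap Y\in\mathcal{G}$, and since $K\seq Y$ forces $K\cap Y=K$, I conclude $K\in\mathcal{G}$, as required.
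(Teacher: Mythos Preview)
Your proof is correct and follows essentially the same approach as the paper: both define the collection $\{S\seq\Omega\mid S\cap Y\in\mathcal{G}\}$, verify it is a $\sigma$-algebra on $\Omega$ containing $\mathcal{A}$, and conclude via minimality that $K$ lies in it, hence $K=K\cap Y\in\mathcal{G}$. The only difference is that you spell out the verification of the $\sigma$-algebra axioms (particularly the complementation step) that the paper leaves as a ``straightforward verification''.
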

\begin{proof}
	Let $\Sigma\df \{S\seq \Omega\mid S\cap Y\in \mathcal{G} \}$.
	A straightforward verification shows that $\Sigma$ is a $\sigma$-algebra of subsets of $\Omega$.
	Moreover, $\mathcal{A}\seq \Sigma$.
	Therefore, by definition of $\mathcal{F}$, $\mathcal{F}\seq \Sigma$.
	Hence, $K\in \Sigma$, which means $K=K\cap Y\in \mathcal{G}$.
\end{proof}
Given $S\seq \R^\Omega$, we denote by $\langle S\rangle$ the closure of $S$ under $0$, $+$, $\lor$, $\lambda ({\;\cdot\;})$ (for each $\lambda \in \R$),  $\truncsup$ and $\trunc$.
Given $A\seq \Omega$, we write $\ind_A$ for the characteristic function of $A$ in $\Omega$.
\begin{lemma}\label{l:sigmaG}
	Let $\Omega$ be a set, let $S\seq \R^\Omega$, let $K\in \sigma(S)$ and let $K\seq Y\seq\Omega$ be such that $\ind_Y\in \langle S\rangle$.
	Then $\ind_{K}\in \langle S\rangle$.
\end{lemma}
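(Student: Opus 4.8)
The plan is to exhibit a $\sigma$-algebra of subsets of $Y$ to which Lemma~\ref{l:restriction} applies. I would set
\[
	\mathcal{G} \df \left\{A \seq Y \mid \ind_A \in \langle S\rangle\right\},
\]
where $\ind_A$ denotes the characteristic function of $A$ \emph{in $\Omega$}. First I would check that $\mathcal{G}$ is a $\sigma$-algebra of subsets of $Y$; then I would show that $g^{-1}((\lambda, +\infty)) \cap Y \in \mathcal{G}$ for every $g \in S$ and $\lambda \in \R$. Since by Lemma~\ref{l:generation2} the family $\mathcal{A} \df \{g^{-1}((\lambda, +\infty)) \mid g \in S,\ \lambda \in \R\}$ generates the $\sigma$-algebra $\sigma(S)$, which contains $K$, and since $K \seq Y \seq \Omega$, Lemma~\ref{l:restriction} would then yield $K \in \mathcal{G}$, that is, $\ind_K \in \langle S\rangle$, as desired.

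That $\mathcal{G}$ is a $\sigma$-algebra of subsets of $Y$ follows from elementary manipulations, using throughout that $\ind_Y \in \langle S\rangle$ and that $A \seq Y$ forces $\ind_A \leq \ind_Y$. Indeed, $Y \in \mathcal{G}$ by hypothesis; if $A \in \mathcal{G}$ then $\ind_{Y \setminus A} = \ind_Y + (-1)\ind_A \in \langle S\rangle$, so $\mathcal{G}$ is closed under relative complementation; and if $(A_m)_{m \in \omega}$ is a sequence in $\mathcal{G}$, then $\ind_{A_m} \land \ind_Y = \ind_{A_m}$ pointwise, whence
\[
	\truncsup_{m \in \omega}^{\ind_Y} \ind_{A_m} = \sup_{m \in \omega}\{\ind_{A_m} \land \ind_Y\} = \sup_{m \in \omega}\ind_{A_m} = \ind_{\bigcup_{m \in \omega} A_m}
\]
belongs to $\langle S\rangle$, so $\mathcal{G}$ is closed under countable unions.

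The crux is the second point: producing $\ind_{g^{-1}((\lambda, +\infty)) \cap Y}$ inside $\langle S\rangle$. The device is to use $\ind_Y$ as a \emph{local unit}. Since $\lambda\ind_Y \in \langle S\rangle$, we get $h \df g + (-\lambda)\ind_Y \in \langle S\rangle$, a function agreeing with $g - \lambda$ on $Y$; each $(nh)^+ = (nh) \lor 0 \in \langle S\rangle$; and I claim
\[
	\truncsup_{n \in \omega}^{\ind_Y} (nh)^+ = \ind_{g^{-1}((\lambda, +\infty)) \cap Y}.
\]
This I would verify pointwise. The truncation against $\ind_Y$ built into $\truncsup$ annihilates every point outside $Y$ (there $(nh)^+ \geq 0$ while $\ind_Y = 0$) and keeps all values $\leq 1$, so the supremum is a well-defined element of $\R^\Omega$; on $Y$ the quantity $(n(g - \lambda))^+ \land 1$ tends to $1$ where $g > \lambda$ and is constantly $0$ where $g \leq \lambda$, so the supremum over $n$ returns precisely the characteristic function of $\{x \in Y \mid g(x) > \lambda\}$. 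Hence this set lies in $\mathcal{G}$.

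The main obstacle is exactly this second step: $\langle S\rangle$ need not contain any nonzero constant function, so the constant $\lambda$ cannot be used directly, and a naive countable supremum need not be finite. Replacing the constant $\lambda$ by $\lambda\ind_Y$ and confining the supremum to $Y$ by truncating against $\ind_Y$ through $\truncsup$ is what keeps the whole construction inside $\langle S\rangle$. Once the two pointwise identities above are confirmed, the remainder is the routine $\sigma$-algebra verification together with the invocations of Lemmas~\ref{l:generation2} and~\ref{l:restriction}.
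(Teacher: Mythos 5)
Your proof is correct and follows essentially the same route as the paper's: the same $\sigma$-algebra $\mathcal{G}$ of subsets of $Y$, the same identity $\ind_{g^{-1}((\lambda,+\infty))\cap Y}=\truncsup_{n\in\omega}^{\ind_Y} n(g-\lambda\ind_Y)^+$ (your $(nh)^+$ with $h=g-\lambda\ind_Y$ is the same expression), and the same final appeal to Lemmas~\ref{l:generation2} and~\ref{l:restriction}. Your closing remark about why $\lambda\ind_Y$ must replace the constant $\lambda$ correctly identifies the key device, which the paper uses without comment.
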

\begin{proof}
	Set $\mathcal{G}\df \{C\seq Y\mid \ind_C\in \langle S\rangle \}$.
	Note that $\mathcal{G}$ is a $\sigma$-algebra of subsets of $Y$.
	Indeed, $\ind_Y\in \langle S\rangle$, and, for $C_0,C_1\seq Y$, we have $\ind_{C_0\cap C_1}=\ind_{C_0}\land\ind_{C_1}$ and $\ind_{Y\setminus C_0}=\ind_Y- \ind_{C_0}$.
	Further, let $(C_n)_{n\in \omega}$ be a family with $C_n\seq Y$.
	The characteristic function of $\bigcup\limits_{n\in\omega}C_n$ is $\truncsup_{n\in\omega}^{\ind_Y}\ind_{C_n}$.
	
	By Lemma~\ref{l:generation2}, the $\sigma$-algebra $\sigma(S)$ is generated by $\mathcal{A}\df\{g^{-1}((\lambda,+\infty))\mid g\in S,\lambda\in \R \}$.
	Let $A\in \mathcal{A}$, and write $A=g^{-1}((\lambda,+\infty))$ for some $g\in S$ and some $\lambda\in \R^+$.
	We have
	\begin{equation}\label{e:ind a cap y}
		\ind_{A\cap Y}\coloneqq \truncsup_{n\in\omega}^{\ind_Y} n(g- \lambda\ind_Y)^+.
	\end{equation}
	Indeed, for $x\in A\cap Y$, we have $g(x)>\lambda$ and $\ind_Y(x)=1$, hence 
	\[
		\truncsup_{n\in\omega}^{\ind_Y(x)} n(g(x)- \lambda \ind_Y(x) )^+=\truncsup_{n\in\omega}^{1} n(\underbrace{g(x)- \lambda}_{>0})^+=1.
	\]
	For $x\in \Omega\setminus Y$, we have $\ind_Y(x)=0$, and therefore 
	\[
		\truncsup_{n\in\omega}^{\ind_Y(x)} n(g(x)- \lambda \ind_Y (x) )^+=\truncsup_{n\in\omega}^{0} n(g(x))^+=0.
	\]
	For $x\in Y\setminus A$, we have $g(x)\leq \lambda$ and $\ind_Y(x)=1$, hence
	\[
		\truncsup_{n \in \omega  }^{\ind_Y(x)}n(g(x)-\lambda\ind_Y(x))^+=\truncsup_{n \in \omega  }^1n(g(x)-\lambda)^+=\truncsup_{n \in \omega }^1 0=0.
	\]
	Given equation \eqref{e:ind a cap y}, we have $\ind_{A\cap Y}\in \langle S\rangle$, which means $A\cap Y\in \mathcal{G}$.
	By Lemma~\ref{l:restriction}, $K\in\mathcal{G}$.
\end{proof}
The truncation operation $\overline{\;\cdot\;}$ comes into play in the following lemma.
\begin{lemma}\label{l: than lambda}
	Let $\lambda\in \R^+\setminus \{0\}$.
	The operations 
	\[
		\ind_{{\;\cdot\;}> \lambda}\colon\R\longrightarrow \R,\ \ x\longmapsto
																				\begin{cases}
																					1&\text{if }x> \lambda;\\
																					0&\text{otherwise,} 
																				\end{cases}
	\]
	and 
	\[
		\ind_{{\;\cdot\;} \geq \lambda} \colon \R \longrightarrow \R,\ \ x \longmapsto
																						\begin{cases}
																							1&\text{if }x\geq \lambda;\\
																							0&\text{otherwise,} 
																						\end{cases}
	\]
	are generated by the operations $0$, $+$, $\lor$, $\lambda ({\;\cdot\;})$ (for each $\lambda \in \R$), $\truncsup$, $\trunc$.
\end{lemma}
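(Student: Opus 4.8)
The plan is to build both indicators explicitly, starting from the identity/projection $\pi_0\colon\R\to\R$, $x\mapsto x$, and using only the listed generators together with the derived operations $\land$, $(\cdot)^+$ and $\bigcurlywedge$. Throughout set $z\df\frac{1}{\lambda}\,x$, a scalar multiple and hence generated; since $\lambda>0$, the conditions $x>\lambda$ and $x\geq\lambda$ become $z>1$ and $z\geq1$.

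First I treat $\ind_{\cdot>\lambda}$. The key generated ingredient is $w\df z-\trunc(z)$: as $\trunc(z)=z\land1$, the lattice identity $a-(a\land b)=(a-b)^+$ gives $w=(z-1)^+$, so $w\geq0$, with $w=0$ exactly on $\{x\leq\lambda\}$ and $w>0$ on $\{x>\lambda\}$. Intuitively $\sup_n nw$ is $+\infty$ on $\{x>\lambda\}$ and $0$ elsewhere, so capping it at $1$ yields the indicator. The delicate point is that $\truncsup$ caps the supremum at the value of its ceiling argument, so the ceiling must equal \emph{exactly} $1$ on $\{x>\lambda\}$: an arbitrarily large ceiling would reproduce that larger value, and the global constant $1$ is not available among the present generators. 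I resolve this with the ramp $r\df\trunc(z^+)$ (where $z^+\df z\lor0$), which is generated, satisfies $r\geq0$ everywhere, and equals $1$ on $\{x\geq\lambda\}\supseteq\{x>\lambda\}$. Then I claim
\[
	\ind_{\cdot>\lambda}=\truncsup_{n\in\omega}^{\,r}(nw)=\sup_{n\in\omega}\{nw\land r\},
\]
verified pointwise: on $\{x\leq\lambda\}$ each term is $0\land r=0$, while on $\{x>\lambda\}$ we have $r=1$ and $nw\to+\infty$, so the supremum is $1$.

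For $\ind_{\cdot\geq\lambda}$ I obtain the closed condition from the open one by an infimum. Choosing thresholds $\lambda_n\df\lambda\,\frac{n+1}{n+2}\in(0,\lambda)$, which are strictly positive and increase to $\lambda$, each $\ind_{\cdot>\lambda_n}$ is generated by the previous paragraph. I then claim
\[
	\ind_{\cdot\geq\lambda}=\bigcurlywedge_{n\in\omega}^{0}\ind_{\cdot>\lambda_n}=\inf_{n\in\omega}\{\ind_{\cdot>\lambda_n}\},
\]
using that each $\ind_{\cdot>\lambda_n}\geq0$, so the $\bigcurlywedge$ over $g=0$ reduces to the pointwise infimum. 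This is again a two-region check: if $x\geq\lambda$ then $x>\lambda_n$ for every $n$, so all terms are $1$; if $x<\lambda$ then $\lambda_n>x$ for all sufficiently large $n$, so those terms vanish and the infimum is $0$.

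The main obstacle is precisely the ceiling issue highlighted above: because $\truncsup$ simultaneously truncates and takes a supremum, one cannot simply write $\sup_n(nw\land1)$ without first manufacturing a function equal to $1$ exactly on the target set, and no such constant is generable here. Supplying the ramp $\trunc(z^+)$ as that ceiling is the crux; this is the same device (a ready-made function equal to $1$ on the relevant set) used as $\ind_Y$ in Lemma~\ref{l:sigmaG}, so the present lemma may be read as the base case that builds such a ceiling from scratch. Everything else is a routine pointwise verification, and the hypothesis $\lambda>0$ is what makes $z=\frac{1}{\lambda}x$ a legitimate rescaling and keeps the thresholds $\lambda_n$ strictly positive.
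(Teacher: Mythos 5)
Your proof is correct and follows essentially the same route as the paper's: rescale by $\frac{1}{\lambda}$ to reduce to threshold $1$, realise the strict indicator as a $\truncsup$ of the multiples $n\left(z-\trunc(z)\right)=n(z-1)^+$ against a generated ceiling, and then obtain $\ind_{{\;\cdot\;}\geq\lambda}$ as $\bigcurlywedge^0$ of strict indicators with thresholds increasing to $\lambda$ (the paper takes $0<q_0<q_1<\cdots\to\lambda$, you take $\lambda_n=\lambda\frac{n+1}{n+2}$; same device). The only genuine difference is the ceiling: the paper asserts $\ind_{f>1}=\truncsup_{n\in\omega}^{\overline{f}}\,n\left(f-\overline{f}\right)$, using $\overline{f}=f\land 1$ itself, whereas you use $r=\trunc(z^+)$. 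Your choice is in fact the sharper one: for $f<0$ the paper's right-hand side evaluates to $\sup_{n\in\omega}\{0\land f\}=f$ rather than $0$, so the printed identity only holds for $f\geq 0$, while your nonnegative ceiling makes the identity valid on all of $\R$. This glitch is harmless in the paper's application (in Lemma~\ref{l:simple} the indicator operation is composed with a function $g\geq 0$), but your formula is the one that proves the lemma as literally stated, and your justification of the auxiliary steps -- the identity $z-(z\land 1)=(z-1)^+$, the unavailability of the constant $1$ among these generators, and the two-region pointwise checks -- is sound.
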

\begin{proof}
	Computation shows $\ind_{f >1}=\truncsup_{n\in\omega}^{\overline{f}}n(f-\overline{f})$.
	Moreover, $\ind_{f >\lambda}=\ind_{\frac{1}{\lambda} f>1}$.
	Finally, let $0< q_0< q_1<\cdots$ be a sequence of elements of $\R$ such that $q_n\to \lambda$.
	Then $\ind_{f \geq\lambda} = \bigcurlywedge\limits_{n\in \omega}^{0}\ind_{f >q_n}$.
\end{proof}
\begin{lemma}\label{l:simple}
	Let $S \seq \R^\Omega$, let $g\in \langle S\rangle$, $A\in \sigma(S)$, $\lambda\in \R^+$ be such that $\lambda\ind_A\leq g$.
	Then $\lambda\ind_A\in \langle S\rangle$.
\end{lemma}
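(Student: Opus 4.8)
The plan is to reduce the statement to the two preceding lemmas by exhibiting a suitable ``ambient'' set $Y$ containing $A$ whose characteristic function already lies in $\langle S\rangle$. First I would dispose of the degenerate case $\lambda = 0$: here $\lambda\ind_A = 0 \in \langle S\rangle$, since the constant $0$ is one of the generating operations. So from now on I assume $\lambda > 0$, i.e.\ $\lambda \in \R^+\setminus\{0\}$, which is precisely the hypothesis under which Lemma~\ref{l: than lambda} is available.

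The key observation is that the pointwise inequality $\lambda\ind_A \leq g$ forces $A$ to sit inside the super-level set of $g$ at height $\lambda$. Indeed, for $x \in A$ we have $\lambda = \lambda\ind_A(x) \leq g(x)$, so $A \seq Y \df \{x \in \Omega \mid g(x)\geq \lambda\} = g^{-1}([\lambda,+\infty))$. Next I would show $\ind_Y \in \langle S\rangle$: since $g \in \langle S\rangle$ and, by Lemma~\ref{l: than lambda}, the unary operation $\ind_{{\;\cdot\;}\geq\lambda}$ is generated by $0$, $+$, $\lor$, $\lambda({\;\cdot\;})$, $\truncsup$ and $\trunc$, the composite $\ind_{g\geq\lambda} = \ind_Y$ again belongs to $\langle S\rangle$; this uses that $\langle S\rangle$ is closed under the generating operations and hence, by a routine induction on the construction of a generated operation, under any operation generated by them.

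At this point the hypotheses of Lemma~\ref{l:sigmaG} are met with $K = A$: we have $A \in \sigma(S)$, $A \seq Y \seq \Omega$, and $\ind_Y \in \langle S\rangle$. The lemma then yields $\ind_A \in \langle S\rangle$. Finally, closure of $\langle S\rangle$ under the scalar multiplication $\lambda({\;\cdot\;})$ gives $\lambda\ind_A \in \langle S\rangle$, completing the argument.

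I do not expect any serious obstacle here, as the result is essentially a packaging of Lemmas~\ref{l: than lambda} and~\ref{l:sigmaG}. The only point requiring care is the choice of the ambient set $Y$: it must contain $A$ (which is where the hypothesis $\lambda\ind_A \leq g$ is used) and simultaneously have a characteristic function in $\langle S\rangle$ (which is where we need $Y$ to be a super-level set of the already-available function $g$, so that Lemma~\ref{l: than lambda} applies). Passing from the strict to the non-strict super-level set is harmless, since Lemma~\ref{l: than lambda} supplies $\ind_{{\;\cdot\;}\geq\lambda}$ directly.
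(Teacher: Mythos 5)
Your proposal is correct and follows essentially the same route as the paper's proof: dispose of $\lambda=0$ via the constant $0$, note that $\lambda\ind_A\leq g$ forces $A\seq\{x\in\Omega\mid g(x)\geq\lambda\}$, obtain $\ind_{g\geq\lambda}\in\langle S\rangle$ from Lemma~\ref{l: than lambda}, and conclude with Lemma~\ref{l:sigmaG} and closure under scalar multiplication. The only difference is that you spell out the routine induction showing $\langle S\rangle$ is closed under generated operations, which the paper leaves implicit.
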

\begin{proof}
	We have $0\in \langle S\rangle$, hence the thesis is immediate for $\lambda=0$.
	Suppose $\lambda>0$.
	Then $A\seq \{x \in \Omega\mid  g(x)\geq \lambda\}$.
	By Lemma~\ref{l: than lambda}, $\ind_{\{x \in \Omega\mid  g(x)\geq \lambda\}}=\ind_{g\geq \lambda}\in \langle S\rangle$.
	By Lemma~\ref{l:sigmaG}, $\ind_A\in \langle S\rangle$, hence $\lambda\ind_A\in \langle S\rangle$.
\end{proof}
\begin{lemma}\label{l:f under g}
	Let $S\seq\R^\Omega$, let $g\in \langle S\rangle$ and let $f\in \R^\Omega$ be $\sigma(S)$-measurable and such that $\lvert f\rvert\leq g$.
	Then $f\in \langle S\rangle$.
\end{lemma}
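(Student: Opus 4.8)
The plan is to realise $f$ as a countable supremum of scaled indicator functions, each of which lies in $\langle S\rangle$ by Lemma~\ref{l:simple}, and then to turn this pointwise supremum into a single application of $\truncsup$, using $g$ as the truncating argument.

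First I would reduce to the case $f\geq 0$. Writing $f=f^+-f^-$ with $f^+\df f\lor 0$ and $f^-\df -(f\land 0)$, both summands are $\sigma(S)$-measurable (measurability is preserved by $\lor$ and $\land$ with constants), nonnegative, and bounded above by $\lvert f\rvert\leq g$. Once the nonnegative case is settled we obtain $f^+,f^-\in\langle S\rangle$, and hence $f=f^++(-1)f^-\in\langle S\rangle$, since $\langle S\rangle$ is closed under $+$ and under scalar multiplication. So assume henceforth $f\geq 0$; note that then $g\geq f\geq 0$.

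Next I would fix, for each dyadic rational $q>0$, the set $\{x\in\Omega\mid f(x)\geq q\}$, which belongs to $\sigma(S)$ because $f$ is $\sigma(S)$-measurable. Since $q\,\ind_{\{f\geq q\}}\leq f\leq g$ pointwise and $g\in\langle S\rangle$, Lemma~\ref{l:simple} gives $q\,\ind_{\{f\geq q\}}\in\langle S\rangle$. The pointwise identity I would then establish is
\[
	f=\sup\{q\,\ind_{\{f\geq q\}}\mid q\text{ a positive dyadic rational}\},
\]
which holds because at any point $x$ the nonzero terms are exactly the values $q\leq f(x)$, and the dyadic rationals are dense in $\R$; at points where $f(x)=0$ every term vanishes and the supremum is $0$.

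Finally I would enumerate the positive dyadic rationals as $(q_n)_{n\in\omega}$ and set $x_n\df q_n\,\ind_{\{f\geq q_n\}}\in\langle S\rangle$. Because every $x_n\leq g$, the truncation built into $\truncsup$ has no effect, so $\truncsup_{n\in\omega}^{g}x_n=\sup_{n\in\omega}\{x_n\land g\}=\sup_{n\in\omega}x_n=f$, whence $f\in\langle S\rangle$. The one genuinely load-bearing point—and the reason the hypothesis $\lvert f\rvert\leq g$ with $g\in\langle S\rangle$ is present—is precisely this last step: $\truncsup$ yields only \emph{truncated} suprema $\sup_{n}\{x_n\land y\}$, so without a dominating element of $\langle S\rangle$ available to play the role of $y$ one could not recover the honest supremum $f$. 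Everything else is a routine check of the pointwise identity and of the measurability and domination of $f^+$ and $f^-$.
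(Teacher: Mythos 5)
Your proof is correct and follows essentially the same route as the paper's: reduce to $f\geq 0$ via $f=f^+-f^-$, produce a countable family of scaled indicator functions lying in $\langle S\rangle$ by Lemma~\ref{l:simple}, each dominated by $g$, and then recover $f$ as $\truncsup_{n\in\omega}^{g}$ of that family, which is exactly where the hypothesis $g\in\langle S\rangle$ is needed. The only divergence is in the choice of approximating family: you use $q\,\ind_{\{f\geq q\}}$ over positive dyadic rationals, while the paper invokes Rudin's increasing sequence of $\sigma(S)$-measurable simple functions; your variant is marginally more self-contained, since it applies Lemma~\ref{l:simple} literally to single scaled indicators, whereas the paper's application of that lemma to simple functions tacitly decomposes each $s_n$ into a finite sum of scaled indicators and uses closure of $\langle S\rangle$ under $+$.
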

\begin{proof}
	First, we prove the statement for $f\geq0$.	Given that $f$ is  positive and $\sigma(S)$-measurable, $f$ is the supremum in $\R^\Omega$ of a positive increasing sequence $(s_n)_{n\in\omega}$ of $\sigma(S)$-measurable simple functions (see \cite{Rudin}, Theorem~1.17).
	By Lemma~\ref{l:simple}, $s_n\in \langle S\rangle$ for every $n\in \omega$.
	Hence
	\[
		f=\sup_{n\in \omega }s_n=\sup_{n\in \omega }s_n\land g=\truncsup_{n\in\omega}^g s_n\in \langle S\rangle.
	\]
	For $f$ not necessarily positive, the previous part of the proof shows that $f^+$ and $f^-$ belong to $\langle S\rangle$.
	Then $f=f^+-f^-\in \langle S\rangle$.
\end{proof}
\begin{lemma}\label{l:sup is mbl}
	Let $(\Omega,\mathcal{F})$ be a measurable space, and, for each $n\in \omega$, let $f_n\colon \Omega\to \R$ be a measurable function.
	If, for every $x\in \Omega$, $\sup_{n\in \omega} f_n(x)\in \R$, then $\sup f_n\colon \Omega\to \R$ is measurable.
	Analogously, if, for every $x\in \Omega$, $\inf_{n\in \omega} f_n(x)\in \R$, then the function $\inf_{n\in \omega} f_n\colon \Omega\to \R$ is measurable.
\end{lemma}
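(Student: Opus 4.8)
The plan is to reduce measurability of $\sup_{n\in\omega} f_n$ to checking the preimages of the generating intervals $(a,+\infty)$, $a\in\R$, which generate $\mathcal{B}_\R$. Since each $f_n$ takes values in $\R$ and, by hypothesis, the pointwise supremum is finite at every $x\in\Omega$, the function $\sup_{n\in\omega} f_n$ is a genuine map $\Omega\to\R$; it thus suffices to verify that its preimage of each such interval lies in $\mathcal{F}$.

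First I would establish, for every $a\in\R$, the set identity
\[
	\left(\sup_{n\in\omega} f_n\right)^{-1}((a,+\infty)) = \bigcup_{n\in\omega} f_n^{-1}((a,+\infty)).
\]
The inclusion from right to left is immediate, since $\sup_{n\in\omega} f_n(x)\geq f_n(x)$ for each $n$. For the reverse inclusion, if $\sup_{n\in\omega} f_n(x)>a$, then by the defining property of the supremum some $f_n(x)$ must exceed $a$, placing $x$ in the right-hand union.

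Each set $f_n^{-1}((a,+\infty))$ belongs to $\mathcal{F}$ because $f_n$ is measurable and $(a,+\infty)$ is Borel; a countable union of $\mathcal{F}$-sets is again in $\mathcal{F}$. Hence $\left(\sup_{n\in\omega} f_n\right)^{-1}((a,+\infty))\in\mathcal{F}$ for every $a\in\R$, and since the intervals $(a,+\infty)$ generate $\mathcal{B}_\R$, the function $\sup_{n\in\omega} f_n$ is measurable.

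For the infimum I would simply observe that each $-f_n$ is measurable, that $\inf_{n\in\omega} f_n=-\sup_{n\in\omega}(-f_n)$, and that finiteness of $\inf_{n\in\omega} f_n(x)$ is equivalent to finiteness of $\sup_{n\in\omega}(-f_n(x))$; the first part then applies, and composing with the (continuous, hence measurable) negation map $\R\to\R$ yields measurability of $\inf_{n\in\omega} f_n$. There is no genuine obstacle here: this is the classical fact that countable suprema and infima of real-valued measurable functions are measurable, the only point worth flagging being that the finiteness hypothesis is exactly what keeps us among $\R$-valued functions rather than forcing a passage to the extended reals.
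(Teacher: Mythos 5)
Your proof is correct: the set identity $\left(\sup_{n\in\omega} f_n\right)^{-1}((a,+\infty)) = \bigcup_{n\in\omega} f_n^{-1}((a,+\infty))$, the generation of $\mathcal{B}_\R$ by the rays $(a,+\infty)$, and the reduction of the infimum case via $\inf_{n\in\omega} f_n = -\sup_{n\in\omega}(-f_n)$ are all sound, and you rightly note that the finiteness hypothesis is what keeps the functions $\R$-valued. The paper simply cites Theorem~1.14 of Rudin for this lemma, and your argument is exactly the standard proof of that cited result, so there is nothing substantively different to compare.
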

\begin{proof}
	By \cite{Rudin}, Theorem~1.14.
\end{proof}
\begin{lemma}\label{l:preserve integrability}
	The operations $0,+,\lor, \lambda(\;\cdot\;)$ (for each $\lambda \in \R$),  $\truncsup$ and $\trunc$ preserve integrability.
\end{lemma}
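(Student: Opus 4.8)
The plan is to verify, for each of the listed primitive operations, that it satisfies the sufficient condition for preserving integrability established earlier, namely condition (2) of Theorem~\ref{t:MAIN}: the operation is $\Cyl$-measurable and is pointwise bounded in absolute value by a finite nonnegative linear combination of the absolute values of its arguments. Since each of these operations has countable arity, $\Cyl$-measurability coincides with Borel measurability, so the measurability half will be routine in each case. The arithmetic-bound half is where the genuine content lies, and it is only mildly nontrivial for the two lattice-type operations $\truncsup$ and $\trunc$.

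First I would dispatch the easy cases. The constant $0$ has $\lvert 0\rvert\le 0$ and is trivially measurable. Addition satisfies $\lvert x+y\rvert\le \lvert x\rvert + \lvert y\rvert$, the binary supremum satisfies $\lvert x\lor y\rvert\le \lvert x\rvert + \lvert y\rvert$, and scalar multiplication satisfies $\lvert\lambda x\rvert = \lvert\lambda\rvert\,\lvert x\rvert$; all three are continuous, hence Borel measurable. For the infinitary operation $\truncsup_{n\in\omega}^y x_n = \sup_{n}\{x_n\land y\}$, I would first note the pointwise bound: $x_n\land y\le y\le \lvert y\rvert$ for every $n$, and $x_n\land y\le x_n\le \lvert x_n\rvert$, so in fact $\sup_n\{x_n\land y\}\le \lvert y\rvert$ and likewise the supremum is bounded below by $-\lvert y\rvert$ (since each $x_n\land y \geq -\lvert y\rvert$ whenever $y\geq 0$; more carefully one takes $\lvert\truncsup\rvert\le \lvert y\rvert$, which already exhibits the required linear bound with a single coefficient $\lambda_y=1$ and all other $\lambda_j=0$). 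For measurability of $\truncsup$, I would invoke Lemma~\ref{l:sup is mbl}: each coordinate map $(y,x_0,x_1,\dots)\mapsto x_n\land y$ is continuous hence Borel, and the pointwise supremum lands in $\R$ by the bound just established, so the supremum is Borel measurable.

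The truncation $\trunc$, given by $x\mapsto x\land 1$, needs a slightly different treatment because $\trunc$ as an operation on all of $\R$ is defined through $x\land 1$ (its domain here being implicitly extended from $G^+$), and one must produce the linear bound carefully: for $x\ge 0$ we have $0\le x\land 1\le x$, and for $x<0$ we have $x\land 1 = x$, so in every case $\lvert x\land 1\rvert\le \lvert x\rvert$, giving the bound with coefficient $1$. Continuity of $x\mapsto x\land 1$ gives Borel measurability. With all the bounds and measurability facts in hand, I would conclude by citing Theorem~\ref{t:MAIN}: each operation satisfies condition (2), hence each preserves integrability.

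The main obstacle I anticipate is not any single hard estimate but rather the bookkeeping around $\truncsup$: one must be sure that the supremum is genuinely real-valued pointwise (so that Lemma~\ref{l:sup is mbl} applies and so that $\truncsup$ is a well-defined operation $\R^\omega\to\R$), and that the linear bound $\lvert\truncsup_{n}^y x_n\rvert\le \lvert y\rvert$ is correct in all sign cases for $y$ and the $x_n$. Once that bound is pinned down, the rest is a direct appeal to the already-proved characterisation, so the lemma follows without further difficulty.
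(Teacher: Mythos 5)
Your overall strategy---verify condition (2) of Theorem~\ref{t:MAIN} for each primitive operation and then invoke that theorem---is legitimate and not circular (Theorem~\ref{t:MAIN} is established before and independently of this lemma); it is essentially a repackaging of the paper's direct argument, which instead applies Lemma~\ref{l:implies lp} to functions in $\lpmu$. Your treatment of $0$, $+$, $\lor$, $\lambda(\;\cdot\;)$ and $\trunc$ is correct. But the bound you claim for $\truncsup$ is false, and it is precisely the step you flagged as delicate. You assert $\lvert \truncsup_{n\in\omega}^{y} x_n\rvert \le \lvert y\rvert$, justified by ``each $x_n\land y \geq -\lvert y\rvert$ whenever $y\geq 0$''. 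Counterexample: take $y=0$ and $x_n=-1$ for all $n$; then $\truncsup_{n\in\omega}^{y} x_n=\sup_{n}\{(-1)\land 0\}=-1$, whereas $\lvert y\rvert=0$. The point is that $\sup_{n}\{x_n\land y\}$ is indeed bounded \emph{above} by $y$, but its only lower bound expressible in the arguments is $x_0\land y$, which can be arbitrarily negative no matter how small $\lvert y\rvert$ is. So condition (2) of Theorem~\ref{t:MAIN} is not verified for $\truncsup$ as written, and this is the one operation for which the lemma has real content.

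The repair is immediate, and it is what the paper does: from $x_0\land y \leq \truncsup_{n\in\omega}^{y} x_n \leq y$ one gets $\lvert \truncsup_{n\in\omega}^{y} x_n\rvert \leq \lvert y\rvert + \lvert x_0\rvert$, a linear bound with the two coefficients $\lambda_{y}=\lambda_{x_0}=1$ and all others zero, which is all that Theorem~\ref{t:MAIN}(2) requires. The measurability half of your argument for $\truncsup$ is fine: the supremum is real-valued because the set $\{x_n\land y\mid n\in\omega\}$ is nonempty and bounded above by $y$, each map $(y,x_0,x_1,\dots)\mapsto x_n\land y$ is continuous, and Lemma~\ref{l:sup is mbl} applies; since the arity is countable, cylinder and Borel measurability coincide. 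With the corrected bound, your proof goes through and matches the paper's in substance.
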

\begin{proof}
	The operations $0,+,\lor,\lambda(\;\cdot\;)$ (for each $\lambda \in \R$) and $\trunc$ preserve integrability.
	Moreover, $\truncsup^{g}\limits_{n\in \omega }f_n=\sup_{n\in \omega }\{f_n\land g\}$ and therefore, by Lemma~\ref{l:sup is mbl}, $\truncsup$ preserves measurability.
	The constant function $0$ is always integrable, therefore $0$ preserves integrability.
	By (3) in Lemma~\ref{l:implies lp}, $+$ preserves integrability.
	The operation $\lvert \;\cdot\;\rvert$ is immediately seen to preserve integrability.
	Since, for every $f,g$ functions, $\lvert f\lor g\rvert\leq \lvert f\rvert +\lvert g\rvert$,  then $\lor$ preserves integrability by (4) in Lemma~\ref{l:implies lp}.
	We have $\truncsup_{n\in \omega}^g f_n=\sup_{n\in \omega}\{f_n \land g\}$, and therefore $f_0\land g\leq  \truncsup_{n\in \omega}^g f_n \leq g$.
	Hence, $\lvert  \truncsup_{n\in \omega}^g f_n\rvert\leq\lvert g\rvert + \lvert f_0\rvert$.
	Thus, $\truncsup$ preserves integrability.
	Finally, $\lvert \overline{f}\rvert\leq \lvert f\rvert$, and therefore $\overline{\;\cdot\;}$ preserve integrability, by (4) in Lemma~\ref{l:implies lp}.
\end{proof}
\begin{proof}[Proof of Theorem~\ref{t:generation}]
	The operations $0$, $+$, $\lor$, $\lambda ({\;\cdot\;})$ (for each $\lambda \in \R$),  $\truncsup$ and $\trunc$ preserve integrability by Lemma~\ref{l:preserve integrability}.
	Moreover, by definition, the class of integrability-preserving operations is closed under every integrability-preserving operation and contains the projection functions.
	Therefore, every operation generated by $0$, $+$, $\lor$, $\lambda ({\;\cdot\;})$ (for each $\lambda \in \R$),  $\truncsup$ and $\trunc$ preserves integrability.
	
	To prove the converse, we use Theorem~\ref{t:MAIN}.
	Let $J$ be a finite subset of $I$, and let $(\lambda_j)_{j \in J}$ be a $J$-tuple of nonnegative real numbers.
	Then $\sum_{j \in J} \lambda_j \lvert \pi_{j}\rvert \in \langle\{\pi_i\mid i\in I \}\rangle$.
	Let $\tau$ be $\Cyl\left(\R^I\right)$-measurable and such that for every $v\in \R^I$ we have $\lvert\tau(v)\rvert\leq \sum_{j \in J} \lambda_j \lvert v_j\rvert$, i.e.,  $\lvert\tau\rvert\leq \sum_{j \in J} \lambda_j \lvert \pi_{j}\rvert$.
	Note that $\Cyl\left(\R^I\right)=\sigma(\{\pi_i\mid i\in I\})$, by definition.
	Then $\tau\in \langle\{\pi_i\mid i\in I \}\rangle$, by Lemma~\ref{l:f under g}.
	Therefore, $\tau$ is generated by $0$, $+$, $\lor$,  $\lambda ({\;\cdot\;})$ (for each $\lambda \in \R$),  $\truncsup$, $\trunc$.
\end{proof}
It is worth recalling that, in the proof of Theorem~\ref{t:generation}, the role of the truncation operation $\overline{\;\cdot\;}$ lies in Lemma~\ref{l: than lambda}.
\begin{proof}[Proof of Theorem~\ref{t:generation finite}]
	Note that the operations $0$, $+$, $\lor$, $\lambda ({\;\cdot\;})$ (for each $\lambda \in \R$),  $\truncsup$ and $1$ preserve integrability over finite measure spaces.
	Moreover, by definition, the class of the  operations that preserve integrability over finite measure spaces is closed under every integrability-preserving operation and contains the projection functions.
	Therefore, every operation generated by $0$, $+$, $\lor$, $\lambda ({\;\cdot\;})$ (for each $\lambda \in \R$),  $\truncsup$ and $1$ preserves integrability over every finite measure space.
	
	To prove the converse, we use Theorem~\ref{t:MAIN finite}.
	Note that the truncation is generated by $\lor$, $-1(\;\cdot\;)$ (i.e., scalar multiplication by $-1$), and $1$; indeed, $\overline{f}=f\land 1=-((-f)\lor(-1 ))$.
	Let $J$ be a finite subset of $I$, let $(\lambda_j)_{j \in J}$ be a $J$-tuple of nonnegative real numbers, and let $k \in \R^+$.
	Then $k + \sum_{j \in J} \lambda_j \lvert \pi_{j}\rvert \in \langle\{\pi_i\mid i\in I \} \cup \{ k \}\rangle$.
	Let $\tau$ be $\Cyl\left(\R^I\right)$-measurable and such that for every $v\in \R^I$ we have $\lvert\tau(v)\rvert\leq k + \sum_{j \in J} \lambda_j \lvert v_j\rvert$, i.e.,  $\lvert\tau\rvert\leq k + \sum_{j \in J} \lambda_j \lvert \pi_{j}\rvert$.
	Note that $\Cyl\left(\R^I\right)=\sigma(\{\pi_i\mid i\in I\})=\sigma(\{\pi_i\mid i\in I\}\cup\{1\})$, by definition.
	Then $\tau\in \langle\{\pi_i\mid i\in I \}\cup \{1\}\rangle$, by Lemma~\ref{l:f under g}.
	Therefore, $\tau$ is generated by $0$, $+$, $\lor$, $\lambda -$ (for each $\lambda \in \R$),  $\truncsup$, $1$.
\end{proof}
%

%&&&&&&&&&&&&&&&&&&&&&&&&&&&&&&&&&&&&&&&&&&&&&&&&&&&&&&&&&&&&&&&&&&&&&&&&&&&&&&&&&&&
%PART
\part{Truncated Riesz spaces and weak units}\label{part:variety}

%%%%%%%%%%%%%%%%%%%%%%%%%%%%%%%%%%%%%%%%%%%%%%%%%%%%%%%%%%%%%%%%%%%%%%%%%%%%%%%%%%%%
%SECTION
\section{The operation $\truncsup$}\label{s:truncsup}

We now investigate the operation $\truncsup$, defined on $\R$ in Section~\ref{s:generation}, for more general lattices.
Given a Dedekind $\sigma$-complete (not necessarily bounded) lattice $B$ we write $\truncsup$ for the operation on $B$ of countably infinite arity defined as
\[
	\truncsup(g,f_0,f_1,\dots)\df\sup_{n\in \omega}\{f_n\land g \}.
\]
We adopt the notation 
\[
	\truncsup_{n\in\omega}^g f_n\df \truncsup(g,f_0,f_1,\dots).
\]
\begin{proposition}\label{p:if ded then a}
	If $B$ is a Dedekind $\sigma$-complete lattice, the following properties hold for every $g,h\in B$, $(f_n)_{n\in \omega}\seq B$.
	\begin{enumerate}[label = {\rm (TS\arabic*)}, ref = TS\arabic*]
		\item \label{i:TS1} $\truncsup\limits_{n \in \omega  }^g f_n=\truncsup\limits_{n \in \omega  }^g (f_n\land g)$.
		\item \label{i:TS2} $\truncsup\limits_{n\in \omega }^g f_n=(f_0\land g)\lor\left(\truncsup\limits_{n\in \omega\setminus \{0\}}^g f_n\right)$.
		\item \label{i:TS3} $\truncsup\limits_{n \in \omega  }^g (f_n\land h)\leq h$.
	\end{enumerate}
\end{proposition}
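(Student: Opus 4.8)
The plan is to unfold every occurrence of $\truncsup$ according to its definition $\truncsup_{n\in\omega}^g f_n\df\sup_{n\in\omega}\{f_n\land g\}$ and then argue purely at the level of suprema in the lattice $B$. The one point that must be monitored throughout is existence: since $B$ is only assumed Dedekind $\sigma$-complete, a nonempty countable family admits a supremum precisely when it is bounded above. In each of the three items the relevant family $\{f_n\land g\}_{n\in\omega}$ (or a subfamily, or a variant) is bounded above by $g$, because $f_n\land g\leq g$ for every $n$; hence all the suprema written below do exist.

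For \eqref{i:TS1} I would invoke idempotence and associativity of the meet: for each $n$ we have $(f_n\land g)\land g=f_n\land g$, so the families $\{f_n\land g\}_{n\in\omega}$ and $\{(f_n\land g)\land g\}_{n\in\omega}$ agree term by term and therefore share the same supremum. This gives $\truncsup_{n\in\omega}^g f_n=\truncsup_{n\in\omega}^g(f_n\land g)$ immediately. For \eqref{i:TS3} I would note that each term satisfies $(f_n\land h)\land g\leq f_n\land h\leq h$, so $h$ is an upper bound of $\{(f_n\land h)\land g\}_{n\in\omega}$; as the supremum is the least upper bound, $\truncsup_{n\in\omega}^g(f_n\land h)=\sup_{n\in\omega}\{(f_n\land h)\land g\}\leq h$.

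The only step requiring a genuine (though still elementary) argument is \eqref{i:TS2}, which amounts to splitting off the first term of a countable supremum. Writing $a\df f_0\land g$ and $s\df\truncsup_{n\in\omega\setminus\{0\}}^g f_n=\sup_{n\in\omega\setminus\{0\}}\{f_n\land g\}$ (which exists, being bounded above by $g$), I would check that $a\lor s$ is the least upper bound of $\{f_n\land g\}_{n\in\omega}$. It is an upper bound since $f_0\land g=a\leq a\lor s$ and, for $n\geq 1$, $f_n\land g\leq s\leq a\lor s$. Conversely, any upper bound $u$ of $\{f_n\land g\}_{n\in\omega}$ satisfies $u\geq a$ and dominates every term with $n\geq 1$, hence $u\geq s$ by minimality of $s$, so $u\geq a\lor s$. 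Therefore $\truncsup_{n\in\omega}^g f_n=\sup_{n\in\omega}\{f_n\land g\}=a\lor s=(f_0\land g)\lor(\truncsup_{n\in\omega\setminus\{0\}}^g f_n)$.

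I expect no serious obstacle: the whole proposition reduces to order-theoretic manipulations, and the only care needed is the systematic verification, via boundedness by $g$, that every supremum invoked actually exists in the Dedekind $\sigma$-complete lattice $B$.
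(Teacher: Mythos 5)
Your proof is correct, and it is precisely the ``straightforward verification'' that the paper leaves to the reader: unfolding the definition of $\truncsup$, checking that each supremum exists because the family is bounded above by $g$, and verifying (\ref{i:TS1})--(\ref{i:TS3}) by elementary order-theoretic arguments. The attention you pay to existence of the suprema under mere Dedekind $\sigma$-completeness is exactly the right point to monitor, and nothing further is needed.
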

\begin{proof}
	Straightforward verification.
\end{proof}
\noindent Conversely, we have the following.
\begin{proposition}\label{p:if a then Ded}
	If $B$ is a lattice endowed with an operation $\truncsup$ of countably  infinite arity 	which satisfies \eqref{i:TS1}, \eqref{i:TS2} and \eqref{i:TS3}, then $B$ is Dedekind $\sigma$-complete and $\truncsup_{n\in\omega}^g f_n=\sup_{n\in \omega}\{f_n\land g \}.$
\end{proposition}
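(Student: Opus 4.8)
The plan is to establish both conclusions at once by proving, \emph{directly from the three axioms}, that for every $g$ and every sequence $(f_n)_{n\in\omega}\seq B$ the element $s\df\truncsup_{n\in\omega}^g f_n$ is the least upper bound of the set $\{f_n\land g\mid n\in\omega\}$. The key point is that I will verify the two defining properties of a supremum for $s$ \emph{without} presupposing that any supremum exists; this simultaneously yields the displayed formula and, as an immediate consequence, Dedekind $\sigma$-completeness.

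First I would show that $s$ is an upper bound of $\{f_n\land g\mid n\in\omega\}$. Applying \eqref{i:TS2} once gives $s=(f_0\land g)\lor\truncsup_{n\in\omega\setminus\{0\}}^g f_n$, so $f_0\land g\leq s$. Reading $\truncsup_{n\in\omega\setminus\{0\}}^g f_n$ as $\truncsup$ applied to the shifted sequence $(f_1,f_2,\dots)$, I may reapply \eqref{i:TS2} to it, and a finite induction on $k$ yields $s=(f_0\land g)\lor\dots\lor(f_k\land g)\lor\truncsup_{n>k}^g f_n$ for every $k$. In particular $f_k\land g\leq s$ for all $k\in\omega$.

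Next I would show $s$ is the \emph{least} upper bound. Let $t$ be any upper bound, so $f_n\land g\leq t$ for all $n$. By \eqref{i:TS1} we have $s=\truncsup_{n\in\omega}^g(f_n\land g)$; since $f_n\land g\leq t$ gives $(f_n\land g)\land t=f_n\land g$, substituting the latter into the sequence leaves $s$ unchanged, so $s=\truncsup_{n\in\omega}^g\bigl((f_n\land g)\land t\bigr)$. Now \eqref{i:TS3}, applied with sequence $(f_n\land g)_{n}$ and element $t$, gives exactly $s\leq t$. Hence $s=\sup_{n\in\omega}\{f_n\land g\}$, which is the asserted identity. For Dedekind $\sigma$-completeness, let $A\seq B$ be nonempty, countable, and bounded above by some $h$; enumerate $A=\{a_n\mid n\in\omega\}$ (with repetitions if $A$ is finite). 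Since $a_n\leq h$ we have $a_n\land h=a_n$, so by the identity just proved $\sup A=\sup_{n}\{a_n\land h\}=\truncsup_{n\in\omega}^h a_n$ exists in $B$.

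The only delicate point is the reindexing used in the upper-bound step: one must interpret $\truncsup_{n\in\omega\setminus\{0\}}^g f_n$ as $\truncsup$ evaluated on the shift $(f_1,f_2,\dots)$ so that \eqref{i:TS2} is genuinely reapplicable, since the axioms do not grant permutation-invariance of $\truncsup$ a priori (invariance only emerges \emph{after} one knows $s$ is a supremum). Beyond this bookkeeping everything is a direct verification, with the three axioms playing complementary roles: \eqref{i:TS2} produces the upper bound, \eqref{i:TS1} rewrites $s$ in truncated form, and \eqref{i:TS3} forces minimality.
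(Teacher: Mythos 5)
Your proposal is correct and follows essentially the same route as the paper's proof: an induction on \eqref{i:TS2} to get that $\truncsup_{n\in\omega}^g f_n$ is an upper bound of $\{f_n\land g\}_{n\in\omega}$, then \eqref{i:TS1} followed by \eqref{i:TS3} (after absorbing an arbitrary upper bound into the meet) to get minimality, and finally $\sigma$-completeness by applying the resulting identity with the given bound as the truncating element. The reindexing caveat you flag is handled implicitly in the paper's induction, and your enumeration-with-repetitions remark is just the same bookkeeping.
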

\begin{proof}
	By induction on $k\in \omega$, \eqref{i:TS2} entails
	\[
		\truncsup_{n\in \omega}^g f_n=(f_0\land g)\lor\dots\lor (f_k\land g) \lor\left(\truncsup_{n\geq k+1 }^g f_{n}\right).
	\]
	Thus $f_k\land g\leq (f_0\land g)\lor\dots\lor (f_k\land g) \lor\left(\truncsup_{n\geq k+ 1 }^g f_{n}\right) = \truncsup_{n\in \omega}^g f_n$.
	Thus, $\truncsup_{n \in \omega }^g f_n$ is an upper bound of $(f_k\land g)_{k\in\omega  }$.
	Suppose now that $f_n\land g\leq h$ for every $n\in \omega  $.
	Then
	\[
		\truncsup_{n\in \omega  }^g f_n\stackrel{\text{\eqref{i:TS1}}}{=}
		\truncsup_{n\in \omega }^g (f_n\land g) \stackrel{ f_n\land g\leq h}{=}
		\truncsup_{n\in \omega  }^g (f_n\land g\land h)\stackrel{\text{\eqref{i:TS3}}}{\leq}
		h.
	\]
	This shows $\truncsup_{n\in \omega}^g f_n=\sup_{n \in \omega} \{f_n\land g \}$.	To prove that $B$ is Dedekind $\sigma$-complete, let $(f_n)_{n\in \omega }\subseteq B$ and $g\in B$ be such that $f_n\leq g$ for all $n\in \omega$.
	Then 
	\[
		\truncsup_{n \in \omega}^g f_n = 
		\sup_{n \in\omega}\{f_n\land g\} \stackrel{f_n\leq g}{=} 
		\sup_{n \in \omega} f_n.
	\]
\end{proof}
A map between two partially ordered sets is \emph{$\sigma$-continuous} if it preserves all existing countable suprema.
\begin{proposition}\label{p:sigma-cont}
	Let $\varphi\colon B\to C$ be a lattice morphism  between two Dedekind $\sigma$-complete lattices.
	Then $\varphi$ is $\sigma$-continuous if, and only if, $\varphi$ preserves $\truncsup$.
\end{proposition}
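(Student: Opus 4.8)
The plan is to prove the two directions separately, exploiting the characterisation of $\truncsup$ established in Proposition~\ref{p:if a then Ded}, namely that $\truncsup_{n\in\omega}^g f_n = \sup_{n\in\omega}\{f_n\land g\}$ in any Dedekind $\sigma$-complete lattice.

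First I would handle the easier forward direction, assuming $\varphi$ is $\sigma$-continuous. Given $g\in B$ and $(f_n)_{n\in\omega}\seq B$, the family $(f_n\land g)_{n\in\omega}$ is bounded above by $g$, so its supremum exists in $B$ and equals $\truncsup_{n\in\omega}^g f_n$. Since $\varphi$ is a lattice morphism, $\varphi(f_n\land g)=\varphi(f_n)\land\varphi(g)$, and since $\varphi$ preserves this existing countable supremum, I would compute
\[
	\varphi\left(\truncsup_{n\in\omega}^g f_n\right) = \varphi\left(\sup_{n\in\omega}\{f_n\land g\}\right) = \sup_{n\in\omega}\{\varphi(f_n)\land\varphi(g)\} = \truncsup_{n\in\omega}^{\varphi(g)}\varphi(f_n),
\]
which is exactly preservation of $\truncsup$.

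For the converse, I would assume $\varphi$ preserves $\truncsup$ and show it preserves every existing countable supremum. The key maneuver is to reduce an arbitrary bounded countable supremum to the form that $\truncsup$ computes. Suppose $(f_n)_{n\in\omega}\seq B$ admits a supremum $s=\sup_{n\in\omega} f_n$ in $B$. Then $s$ is an upper bound, so $f_n\land s = f_n$ for each $n$, and hence $s = \sup_{n\in\omega}\{f_n\land s\} = \truncsup_{n\in\omega}^s f_n$. Applying $\varphi$ and using preservation of $\truncsup$ together with $\varphi(f_n\land s)=\varphi(f_n)\land\varphi(s)$, I obtain $\varphi(s)=\sup_{n\in\omega}\{\varphi(f_n)\land\varphi(s)\}$. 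Since $\varphi$ is a lattice morphism and $f_n\leq s$ gives $\varphi(f_n)\leq\varphi(s)$, each term $\varphi(f_n)\land\varphi(s)$ collapses to $\varphi(f_n)$, yielding $\varphi(s)=\sup_{n\in\omega}\varphi(f_n)$ (this supremum being computed via the already-established $\truncsup$ formula in $C$). That is precisely $\sigma$-continuity.

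The main subtlety to watch is a bookkeeping point rather than a genuine obstacle: one must be careful that the suprema in question genuinely exist in the respective lattices before invoking the $\truncsup$ identity, and that the reduction $f_n\land s=f_n$ is valid only because $s$ is an upper bound of the family. The lattice-morphism hypothesis is doing quiet but essential work throughout, since it is what lets meets and the order be transported across $\varphi$; without it the collapse $\varphi(f_n)\land\varphi(s)=\varphi(f_n)$ would fail. Everything else is a direct application of Proposition~\ref{p:if a then Ded}.
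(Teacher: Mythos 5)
Your proof is correct and follows essentially the same route as the paper's: both directions reduce to the identity $\truncsup_{n\in\omega}^g f_n=\sup_{n\in\omega}\{f_n\land g\}$, combined with the lattice-morphism property and the observation that $f_n\land s=f_n$ when $s$ is an upper bound. The only difference is cosmetic bookkeeping in how the intermediate equalities are grouped, so nothing further is needed.
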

\begin{proof}
	First, suppose $\varphi$ preserves $\truncsup$.
	Let $(f_n)_{n\in \omega }\subseteq B$ and $f=\sup_{n \in \omega} f_n$.
	Then
	\begin{equation*}
		\begin{aligned}
			\varphi \left(\sup_{n\in \omega}f_n \right)	& = \varphi \left(\sup_{n\in \omega}\{f_n\land f\}\right) =	& \ \ \ \ \ \ (\text{because } f_n\leq f) \\
														& = \varphi \left(\truncsup_{n \in \omega}^f f_n\right) = 	& \ \ \ \ \ \ \\
														& = \truncsup_{n \in \omega}^{\varphi(f)} \varphi(f_n) =	& \ \ \ \ \ \ (\text{because } \varphi \text{ preserves }\truncsup) \\
														& = \sup_{n \in \omega} \{\varphi(f_n) \land \varphi(f)\} =	& \ \ \ \ \ \ \\
														& = \sup_{n \in \omega} \varphi(f_n \land f) =				& \ \ \ \ \ \ (\text{because } \varphi \text{ preserves }\land) \\
														& = \sup_{n \in \omega} \varphi(f_n).						& \ \ \ \ \ \ (\text{because } f_n\leq f)
		\end{aligned}
	\end{equation*}
	Therefore, $\varphi$ is $\sigma$-continuous.
	
	For the converse implication, suppose that $\varphi$ is $\sigma$-continuous.
	Let $(f_n)_{n\in \omega  }\subseteq B$ and $g\in B$.	
	Then
	\begin{equation*}
		\begin{aligned}
			\varphi \left(\truncsup_{n \in \omega}^g f_n\right)	& = \varphi\left(\sup_{n\in \omega} \{f_n\land g\}\right) =	& \\
																& = \sup_{n\in \omega} \varphi(f_n\land g) =				& \text{(because $\varphi$ preserves count.\ sups)} \\
																& = \sup_{n\in \omega} \{\varphi(f_n)\land \varphi(g)\} =	& \text{(because $\varphi$ preserves $\land$)} \\
																& = \truncsup_{n\in \omega}^{\varphi(g)} \varphi(f_n).		&
		\end{aligned}
	\end{equation*}
	Hence, $\varphi$ preserves $\truncsup$.
\end{proof}
\begin{remark}\label{r:Dedekind is variety}
	Propositions~\ref{p:if ded then a},~\ref{p:if a then Ded} and~\ref{p:sigma-cont} show that, whenever  $\mathcal{V}$ is a variety with a lattice reduct, then its subcategory of Dedekind $\sigma$-complete objects, with $\sigma$-continuous morphisms, is a variety which has, as primitive operations, the operations of $\mathcal{V}$ together with $\truncsup$, and, as axioms, the axioms of $\mathcal{V}$ together with \eqref{i:TS1}, \eqref{i:TS2} and \eqref{i:TS3}.
\end{remark}
%

%%%%%%%%%%%%%%%%%%%%%%%%%%%%%%%%%%%%%%%%%%%%%%%%%%%%%%%%%%%%%%%%%%%%%%%%%%%%%%%%%%%%
%SECTION
\section{Truncated $\ell$-groups}\label{s:defn truncated}

We assume familiarity with the basic theory of $\ell$-groups.
All needed background can be found, for example, in the standard reference \cite{BKW}.
In \cite{Ball1}, R.\ N.\ Ball defines a truncated $\ell$-group as an abelian divisible $\ell$-group that is endowed with a function $\overline{\;\cdot\;}\colon G^+\to G^+$, called \emph{truncation}, which has the following properties for all $f,g\in G^+$.
\begin{enumerate}[label = (B\arabic*), ref = B\arabic*]
	\item \label{i:B1} $f\land \overline{g}\leq \overline{f}\leq f$.
	\item \label{i:B2} If $\overline{f}=0$, then $f=0$.
	\item \label{i:B3} If $nf =\overline{nf}$ for every $n\in \omega$, then $f=0$.
\end{enumerate}
In this paper, we do not assume divisibility.
The truncation $\overline{\;\cdot\;}$ may be extended to an operation on $G$, by setting $\overline{f}=\overline{f^+}-f^-$.
Here, as is standard, we set $f^+\df f\lor 0$, and $f^-\df-(f\land 0)$.
Then, Ball's definition may be reformulated as follows.

\begin{definition}
	A \emph{truncated $\ell$-group} is an abelian $\ell$-group that is endowed with a unary operation $\overline{\;\cdot\;}\colon G\to G$, called \emph{truncation}, which has the following properties.
	\begin{enumerate} [label = {\rm (T\arabic*)}, ref = T\arabic*]
		\item \label{i:T1} For all $f\in G$, we have $\overline{f}=\overline{f^+}-f^-$.
		\item \label{i:T2} For all $f\in G^+$, we have $\overline{f}\in G^+$.
		\item \label{i:T3} For all $f, g\in G^+$, we have $f\land \overline{g}\leq \overline{f}\leq f$.
		\item \label{i:T4} For all $f\in G^+$, if $\overline{f}=0$, then $f=0$.
		\item \label{i:T5} For all $f\in G^+$, if $nf =\overline{nf}$ for every $n\in \omega$, then $f=0$.	\qed
	\end{enumerate}
\end{definition}
Axiom \eqref{i:T2} ensures that $\overline{\;\cdot\;}$ may be restricted to an operation on $G^+$.
Axiom \eqref{i:T1} gives the one-to-one correspondence with Ball's definition.
Axioms \eqref{i:T3}, \eqref{i:T4}, \eqref{i:T5} correspond, respectively, to Axioms \eqref{i:B1}, \eqref{i:B2}, \eqref{i:B3}.
An $\ell$-homomorphism $\varphi$ between truncated $\ell$-groups preserves $\overline{\;\cdot \;}$ if, and only if, $\varphi$ preserves $\overline{\;\cdot\;}$ over positive elements; indeed, if $\varphi$ preserves $\overline{\;\cdot\;}$ over positive elements, then, for $f\in G$, $\varphi(\overline{f})=\varphi(\overline{f^+}-f^-)=\varphi(\overline{f^+})-\varphi(f^-)=\overline{\varphi(f^+)}-\varphi(f^-)=\overline{\varphi(f)^+}-\varphi(f)^-=\overline{\varphi(f)}$.
This ensures that the equivalence with Ball's definition also holds for morphisms.

Note that \eqref{i:T1}, \eqref{i:T2} and \eqref{i:T3} are (essentially) equational axioms. This is evident for \eqref{i:T1}; \eqref{i:T2} can be written as $\forall f\  \overline{f^+} \land 0 = 0$; \eqref{i:T3} is the conjunction of the two equations $\forall f, g\  f^+\land \overline{g^+} \lor \overline{f^+} = \overline{f^+}$ and $\forall f\ \overline{f^+} \lor f^+= f^+$.
The axioms \eqref{i:T4} and \eqref{i:T5} cannot be expressed in such equational terms.
However, as we shall see, this becomes possible when we add the hypothesis of Dedekind $\sigma$-completeness.

It is well-known that a Dedekind $\sigma$-complete $\ell$-group is archimedean and thus abelian.
Let $G$ be a Dedekind $\sigma$-complete $\ell$-group, endowed with a unary operation $\trunc$.
We denote by (T4$'$) and (T5$'$) the following properties, which may or may not hold in $G$.
\begin{enumerate}[label = {\rm (T\arabic*$'$)}, start = 4, ref = T\arabic*$'$]
	\item \label{i:T4prime} For all $f\in G^+$, we have $f=\truncsup_{n\in \omega}^fn\overline{f}$.
	\item \label{i:T5prime} For all $f\in G^+$, we have $f =\truncsup_{n\in \omega}^{f}(nf-\overline{nf})$.
\end{enumerate}
Note that \eqref{i:T4prime} and \eqref{i:T5prime}, are (essentially) equational axioms: indeed, \eqref{i:T4prime} is equivalent to $\forall f \ \, f^+=\truncsup_{n\in \omega}^{f^+}n\overline{f^+}$, and \eqref{i:T5prime} is equivalent to $\forall f\ f^+ =\truncsup_{n\in \omega}^{f^+}(nf^+-\overline{nf^+})$.

Our aim in this section, met in Propositions~\ref{p:T4 and T5 then '},~\ref{p:B4} and~\ref{p:B5}, is to show that, for a Dedekind $\sigma$-complete $\ell$-group endowed with a unary operation $\trunc$ which satisfies \eqref{i:T1}, \eqref{i:T2} and \eqref{i:T3}, the Axioms \eqref{i:T4} and \eqref{i:T5} may be equivalently replaced by the equational axioms \eqref{i:T4prime} and \eqref{i:T5prime}.
This will show the axioms of Dedekind $\sigma$-complete truncated $\ell$-groups to be equational.
\begin{proposition}\label{p:T4 and T5 then '}
	Let  $G$ be an abelian $\ell$-group endowed with a unary operation $\trunc$.
	Then \eqref{i:T4prime} implies \eqref{i:T4}, and \eqref{i:T5prime} implies \eqref{i:T5}.
\end{proposition}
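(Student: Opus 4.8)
The plan is to read both implications directly off the defining identities, using the fact that, by the very meaning of the operation $\truncsup$ (Section~\ref{s:truncsup}),
\[
	\truncsup_{n\in\omega}^g f_n = \sup_{n\in\omega}\{f_n\land g\}.
\]
The single observation driving both arguments is that the hypothesis of the unprimed axiom forces every term appearing inside the truncated supremum of the primed axiom to be $0$, whence the supremum — and therefore $f$ itself — collapses to $0$.

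For \eqref{i:T4prime} $\Rightarrow$ \eqref{i:T4}, I would take $f\in G^+$ with $\overline{f}=0$ and instantiate \eqref{i:T4prime} at this $f$. Since $\overline{f}=0$, we have $n\overline{f}=0$ for every $n\in\omega$, and hence $n\overline{f}\land f = 0\land f = 0$, using $f\geq 0$. Consequently
\[
	f = \truncsup_{n\in\omega}^f n\overline{f} = \sup_{n\in\omega}\{n\overline{f}\land f\} = \sup_{n\in\omega}\{0\} = 0,
\]
which is exactly \eqref{i:T4}.

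For \eqref{i:T5prime} $\Rightarrow$ \eqref{i:T5} I would argue in the same fashion: take $f\in G^+$ with $nf=\overline{nf}$ for every $n\in\omega$, so that $nf-\overline{nf}=0$ for every $n$. Instantiating \eqref{i:T5prime} at $f$ and using $(nf-\overline{nf})\land f = 0\land f = 0$ gives
\[
	f = \truncsup_{n\in\omega}^f (nf-\overline{nf}) = \sup_{n\in\omega}\{(nf-\overline{nf})\land f\} = 0,
\]
which is \eqref{i:T5}. There is essentially no obstacle here; the only two points requiring any care are that $0\land f = 0$ because $f\in G^+$, and that the supremum of the constantly-zero family exists and equals $0$ with no completeness assumption — so both implications in fact hold for an arbitrary abelian $\ell$-group endowed with a truncation, not merely for Dedekind $\sigma$-complete ones.
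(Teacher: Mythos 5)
Your proof is correct and is essentially the paper's own argument: both instantiate \eqref{i:T4prime} (resp.\ \eqref{i:T5prime}) at the given $f$, observe that every term inside $\truncsup$ vanishes, and conclude $f=\truncsup_{n\in\omega}^{f}0=0$. Your added remarks --- that $0\land f=0$ requires $f\in G^+$, and that no completeness is needed since the supremum of the zero family exists trivially --- merely make explicit what the paper leaves implicit.
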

\begin{proof} 
	Suppose \eqref{i:T4prime}.
	Let $f\in G^+$ be such that $\overline{f}=0$.
	By \eqref{i:T4prime}, $f=\truncsup_{n\in\omega}^f n\overline{f}=\truncsup_{n\in\omega}^f 0=0$.
	Hence, \eqref{i:T4} holds.
	Suppose \eqref{i:T5prime}.
	Let $f\in G^+$ be such that $nf=\overline{nf}$ for every $n\in \omega$.
	By \eqref{i:T5prime}, $f=\truncsup_{n\in \omega}^{f}\left(nf-\overline{nf}\right)=\truncsup_{n\in \omega}^{f}0=0$.
	Hence \eqref{i:T5} holds.
\end{proof}
We shall use the following standard distributivity result.
\begin{lemma}\label{l:distriutivity a la BKW}
	Let $G$ be an $\ell$-group, $I$ a set and $(x_i)_{i\in I}\subseteq G$.
	If $\sup_{i\in I}x_i$ exists, then, for every $a\in G$, $\sup_{i\in I} \{a\land x_i \}$ exists and
	\[
		a\land \left(\sup_{i\in I} x_i\right)=\sup_{i\in I}\{a\land x_i \}.
	\]
\end{lemma}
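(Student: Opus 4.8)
The plan is to set $s\df\sup_{i\in I}x_i$ and to verify directly that $a\land s$ is the \emph{least} upper bound of the family $(a\land x_i)_{i\in I}$. That $a\land s$ is an upper bound is immediate: from $x_i\leq s$ and monotonicity of $\land$ we get $a\land x_i\leq a\land s$ for every $i\in I$. The real content lies in showing it is the least such bound. Here the purely lattice-theoretic structure is insufficient, since a lattice need not be infinitely distributive; I would therefore exploit the group operation.

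The key identity I would use is the standard $\ell$-group relation $a+x=(a\lor x)+(a\land x)$, valid for all $a,x\in G$, which rearranges to $x=(a\lor x)+(a\land x)-a$ and to $a\land s=a+s-(a\lor s)$. Let $z$ be an arbitrary upper bound of $(a\land x_i)_{i\in I}$. Combining $a\lor x_i\leq a\lor s$ (which follows from $x_i\leq s$) with the hypothesis $a\land x_i\leq z$, and using that addition is monotone in each argument, I would obtain
\[
	x_i=(a\lor x_i)+(a\land x_i)-a\leq (a\lor s)+z-a
\]
for every $i\in I$. Since the right-hand side is independent of $i$, it is an upper bound of $(x_i)_{i\in I}$, whence $s\leq (a\lor s)+z-a$ by definition of $s$ as the supremum. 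Adding $a-(a\lor s)$ to both sides and invoking the identity once more in the form $a\land s=a+s-(a\lor s)$ yields $a\land s\leq z$. As $z$ was arbitrary, this shows that $a\land s$ lies below every upper bound, so $\sup_{i\in I}\{a\land x_i\}$ exists and equals $a\land s$.

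The main obstacle is conceptual rather than computational: one must resist trying to argue inside the lattice reduct alone, where the statement can fail, and instead route the argument through the translation $x\mapsto x-a$, an order-automorphism that preserves all existing suprema. The identity $a+x=(a\lor x)+(a\land x)$ is precisely the bridge that converts information about $\land$ under suprema into information about $+$, for which the distributive step is trivial.
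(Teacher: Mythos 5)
Your argument is sound and complete for \emph{abelian} $\ell$-groups, and it is in fact the classical textbook argument; note that the paper offers no proof at all for this lemma, it simply cites Proposition~6.1.2 of \cite{BKW}, so a self-contained proof like yours is a genuinely different (and more informative) route. Within the abelian setting every step checks out: $a\land s$ is an upper bound by monotonicity of $\land$; the identity $a+x=(a\lor x)+(a\land x)$ converts an arbitrary upper bound $z$ of $(a\land x_i)_{i\in I}$ into the bound $x_i=(a\lor x_i)+(a\land x_i)-a\leq (a\lor s)+z-a$; and the rearrangement $a\land s=a+s-(a\lor s)$ then yields $a\land s\leq z$.

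There is, however, one genuine gap relative to the statement as given: the lemma is asserted for an \emph{arbitrary} $\ell$-group, and in this paper ``$\ell$-group'' does not mean abelian (the paper is explicit about abelianness where it holds, e.g.\ ``a Dedekind $\sigma$-complete $\ell$-group is archimedean and thus abelian''). Your key identity $a+x=(a\lor x)+(a\land x)$ is \emph{not} ``valid for all $a,x\in G$'' in general: in a non-abelian $\ell$-group it is equivalent to $x+(a\land x)=(a\land x)+x$, which can fail (for instance in the $\ell$-group of order-preserving bijections of $\R$ under composition). The repair is cheap. In any $\ell$-group one has $a\land x=a-(a\lor x)+x$ and dually $a\lor x=a-(a\land x)+x$ (apply the order-reversing bijection $t\mapsto a-t+x$ to the pair $\{a,x\}$), whence $x=(a\land x)-a+(a\lor x)$. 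Your argument then goes through verbatim, using only that left and right translations are order-preserving: $x_i=(a\land x_i)-a+(a\lor x_i)\leq z-a+(a\lor s)$ for all $i$, so $s\leq z-a+(a\lor s)$, so $s-(a\lor s)+a\leq z$, and $s-(s\lor a)+a=s\land a$ by the same identity with the two arguments exchanged. Since every application of the lemma in the paper concerns abelian structures (Dedekind $\sigma$-complete $\ell$-groups and Riesz spaces), your proof covers all of the paper's uses; but as a proof of the lemma as stated it needs this adjustment.
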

\begin{proof}
	See Proposition~6.1.2 in \cite{BKW}.
\end{proof}
\begin{lemma}\label{l:sum distrib}
	Let $G$ be a Dedekind $\sigma$-complete $\ell$-group, let $g\in G$, $h\in G^+$ and $(f_n)_{n\in \omega}\seq G $.
	Then	
	\[
		\truncsup_{n\in \omega}^g(f_n+h)=\left(\left(\truncsup^g_{n\in \omega}f_n\right)+h\right)\land g.
	\]
\end{lemma}
\begin{proof}
	\begin{equation*}
		\begin{aligned}
			\truncsup_{n\in \omega}^g(f_n+h)	& = \sup_{n \in \omega}\{(f_n+h)\land g \} =							& \ \ \ \ \ \ \\
												& = \sup_{n \in \omega}\{(f_n+h)\land (g+h) \land g\} =					& \ \ \ \ \ \ (\text{because } h \geq 0) \\
												& = \sup_{n \in \omega}\{(f_n+h)\land (g+h) \}\land g =					& \ \ \ \ \ \ \text{(by Lemma~\ref{l:distriutivity a la BKW})} \\
												& = \sup_{n \in \omega}\{(f_n\land g)+h \}\land g = 					& \ \ \ \ \ \ \\
												& = \left(\sup_{n \in \omega}\{f_n\land g \}+h\right)\land g=			& \ \ \ \ \ \ \\
												& = \left(\left(\truncsup^g_{n\in \omega}f_n\right)+h\right)\land g.	& \ \ \ \ \ \ 
		\end{aligned}
	\end{equation*}
\end{proof}
\begin{proposition}\label{p:B4}
	Let $G$ be a Dedekind $\sigma$-complete $\ell$-group endowed with a unary operation $\trunc$ such that \eqref{i:T2}, \eqref{i:T3} and \eqref{i:T4} hold.
	Then \eqref{i:T4prime} holds, i.e., for all $f\in G^+$, 
	\[
		f = \truncsup_{n\in \omega}^{f} n \overline{f}.
	\]
\end{proposition}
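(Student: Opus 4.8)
The plan is to prove the equality by showing the difference of the two sides vanishes, via Axiom~\eqref{i:T4}. First I would set $s \df \truncsup_{n\in \omega}^{f} n\overline{f} = \sup_{n\in \omega}\{n\overline{f}\land f\}$; this supremum exists because the sequence is bounded above by $f$ and $G$ is Dedekind $\sigma$-complete. Since each term is $\leq f$ we have $s\leq f$, and $s\geq 0$ because the $n=0$ term is $0$. Putting $g\df f - s\in G^+$, it then suffices to prove $g=0$, and by \eqref{i:T4} this reduces to establishing $\overline{g}=0$.

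The crucial step is to extract the disjointness relation $\overline{f}\land g = 0$. Here I would exploit that, since $\overline{f}\geq 0$ by \eqref{i:T2}, the sequence $(n\overline{f}\land f)_{n\in \omega}$ is increasing, so shifting the index does not change the supremum:
\[
	s = \sup_{n\in \omega}\{(n+1)\overline{f}\land f\} = \truncsup_{n\in \omega}^{f}(n\overline{f}+\overline{f}).
\]
Applying Lemma~\ref{l:sum distrib} with $h=\overline{f}$ and $f_n = n\overline{f}$ then gives the self-referential identity
\[
	s = \left(\left(\truncsup_{n\in \omega}^{f} n\overline{f}\right)+\overline{f}\right)\land f = (s+\overline{f})\land f.
\]
Subtracting $s$ and using the $\ell$-group identity $(a\land b)-c=(a-c)\land(b-c)$ yields $0 = \overline{f}\land(f-s)=\overline{f}\land g$.

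It remains to convert this disjointness into $\overline{g}=0$. By \eqref{i:T3} one has $\overline{g}\leq g\leq f$ (the last inequality since $s\geq 0$), whence $f\land\overline{g}=\overline{g}$; applying the left-hand inequality of \eqref{i:T3} to the pair $(f,g)$ gives $\overline{g}=f\land\overline{g}\leq\overline{f}$. Combining with $\overline{g}\leq g$ yields $\overline{g}\leq \overline{f}\land g = 0$, and since $\overline{g}\geq 0$ by \eqref{i:T2} we get $\overline{g}=0$. Finally \eqref{i:T4} forces $g=0$, i.e.\ $s=f$.

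The step I expect to be the main obstacle is producing the disjointness $\overline{f}\land g=0$: a direct attempt to bound $\overline{g}$ has nothing to grip on, and the idea that makes things work is the index shift fed into Lemma~\ref{l:sum distrib}, which turns the definition of $s$ into the fixed-point identity $s=(s+\overline{f})\land f$. Once that is available, the descent to $\overline{g}=0$ through \eqref{i:T3} and the closing appeal to \eqref{i:T4} are routine.
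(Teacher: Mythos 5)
Your proof is correct and follows essentially the same route as the paper's: the index shift justified by $\overline{f}\geq 0$, Lemma~\ref{l:sum distrib} to obtain the fixed-point identity $s=(s+\overline{f})\land f$, subtraction to get $\overline{f}\land(f-s)=0$, and then \eqref{i:T3} and \eqref{i:T4} to conclude. The only (harmless) difference is that the paper establishes the full equality $\overline{f}\land(f-s)=\overline{f-s}$, whereas you prove just the one inequality $\overline{g}\leq\overline{f}\land g$ needed to force $\overline{g}=0$.
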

\begin{proof}
	By \eqref{i:T2}, $\overline{f}\in G^+$.
	Therefore $0\overline{f}\leq 1\overline{f}\leq 2 \overline{f}\leq 3\overline{f}\leq \dots$.
	Hence,
	\begin{equation*}
		\begin{aligned}
			\truncsup_{n \in \omega}^f n \overline{f}	& = \truncsup_{n \in \omega\setminus \{0\}}^f n\overline{f} = 									& \ \ \ \ \ \ \\
														& = \truncsup_{n \in \omega}^f (n + 1) \overline{f} = 											& \ \ \ \ \ \ \\
														& = \truncsup_{n \in \omega}^f \left(n \overline{f} + \overline{f}\right) =						& \ \ \ \ \ \ \\
														& = \left(\left(\truncsup_{n\in \omega}^f n\overline{f} \right) + \overline{f} \right) \land f.	& \ \ \ \ \ \ \text{(by Lemma~\ref{l:sum distrib})}
		\end{aligned}
	\end{equation*}
	Therefore, setting $b\df\truncsup_{n\in \omega}^f n\overline{f}$, we have
	\[
		0=\left(\left(b+\overline{f}\right)\land f\right)-b=\overline{f}\land \left(f- b\right)=\overline{f-b},
	\]
	where the last equality holds because, by \eqref{i:T3}, we have $\overline{f}\land \left(f- b\right)\leq\overline{f-b}$ and, for the opposite inequality, we have $\overline{f-b}\leq f-b$ and $\overline{f-b}=\overline{f-b}\land f\leq \overline{f}$.
	
	By \eqref{i:T4}, since $\overline{f-b}=0$, we have $f- b=0$, i.e., $f=\truncsup_{n\in \omega}^{f} n\overline{f}$.
\end{proof}
\begin{lemma}\label{l:inequality}
	Let $G$ be a Dedekind $\sigma$-complete $\ell$-group endowed with a unary operation $\trunc$ such that \eqref{i:T2} and \eqref{i:T3} holds.
	Let $a,b\in G^+$.
	Then
	\[
		\overline{a+b}\leq \overline{a}+\overline{b}.
	\]
\end{lemma}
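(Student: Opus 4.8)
The plan is to derive the inequality from \eqref{i:T2} and \eqref{i:T3} alone; Dedekind $\sigma$-completeness will play no role. I will use three facts: the two halves of \eqref{i:T3}, namely $\overline{f}\leq f$ and $f\land\overline{g}\leq\overline{f}$ for $f,g\in G^+$, together with $\overline{f}\geq 0$ from \eqref{i:T2}. The overall approach is a two-step reduction. First I would prove the auxiliary inequality $\overline{a+b}\leq a+\overline{b}$, and then I would bootstrap from it to the claim $\overline{a+b}\leq\overline{a}+\overline{b}$. Both steps rest on the same elementary device: to show that an element $x$ satisfies $x\leq\overline{f}$, it suffices to check $x\leq f$ and $x\leq\overline{g}$ for a well-chosen $g\in G^+$, since then $x\leq f\land\overline{g}\leq\overline{f}$ by \eqref{i:T3}.

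For the first step I would apply this device with $x\df\overline{a+b}-a$, taking $f\df b$ and $g\df a+b$. On one hand $\overline{a+b}\leq a+b$ by \eqref{i:T3}, so $x\leq b=f$; on the other hand $a\geq 0$ gives $x\leq\overline{a+b}=\overline{g}$. Hence $x\leq b\land\overline{a+b}\leq\overline{b}$ by \eqref{i:T3}, which rearranges to the auxiliary bound $\overline{a+b}\leq a+\overline{b}$.

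The second step is perfectly parallel, now with $x\df\overline{a+b}-\overline{b}$, $f\df a$ and $g\df a+b$. Here $\overline{b}\geq 0$ (by \eqref{i:T2}) yields $x\leq\overline{a+b}=\overline{g}$, while the auxiliary inequality just obtained gives $\overline{a+b}\leq a+\overline{b}$, i.e.\ $x\leq a=f$. Therefore $x\leq a\land\overline{a+b}\leq\overline{a}$ by \eqref{i:T3}, which is exactly $\overline{a+b}\leq\overline{a}+\overline{b}$.

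I expect no serious obstacle once the sandwich device is in hand, as everything then reduces to routine order arithmetic in an $\ell$-group. The only genuinely non-obvious point — and the thing I would want to pin down first — is the decision to prove the asymmetric intermediate bound $\overline{a+b}\leq a+\overline{b}$ before attacking the symmetric-looking target, since a direct attempt to bound $\overline{a+b}$ by $\overline{a}+\overline{b}$ does not immediately expose a meet of the form $a\land\overline{a+b}$ to which \eqref{i:T3} can be applied.
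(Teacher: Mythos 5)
Your proof is correct, and it reaches the conclusion by a route that differs in structure from the paper's. The paper's proof is a single computation: it bounds $\overline{a+b}$ above by the triple meet $\left[(a+b)\land\left(a+\overline{(a+b)}\right)\right]\land\left[\overline{(a+b)}+\left(b\land\overline{(a+b)}\right)\right]$, then invokes the $\ell$-group distributivity law $x+(y\land z)=(x+y)\land(x+z)$ twice to rewrite this as $\left(a\land\overline{(a+b)}\right)+\left(b\land\overline{(a+b)}\right)$, and finishes with \eqref{i:T3}. You instead bootstrap through the asymmetric intermediate inequality $\overline{a+b}\leq a+\overline{b}$, and your sandwich device needs only translation-invariance of the order and the fact that the meet is a greatest lower bound --- distributivity of $+$ over $\land$ is never invoked. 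The underlying ingredients are identical in both arguments (the bound $\overline{a+b}\leq a+b$, positivity of $a$ and of $\overline{b}$ via \eqref{i:T2}, and the two \eqref{i:T3} estimates $a\land\overline{(a+b)}\leq\overline{a}$ and $b\land\overline{(a+b)}\leq\overline{b}$), so the proofs are close cousins; what yours buys is a more elementary toolkit, a reusable intermediate fact, and a transparent accounting of where each hypothesis enters, at the cost of two passes instead of the paper's one-line chain. Both arguments, as you note, make no use of Dedekind $\sigma$-completeness.
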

\begin{proof}
	By \eqref{i:T3}, $\overline{a+b}\leq a+b$.
	By \eqref{i:T2}, $\overline{a+b}\geq0$, thus $b\land \overline{(a+b)}\geq 0$, and therefore $\overline{a+b}\leq \overline{a+b}+\left(b\land \overline{(a+b)}\right)$.
	Hence,
	\begin{equation*}
		\begin{aligned}
			\overline{a+b}
				& \leq	\left[(a + b) \land \left(a + \overline{(a + b)}\right)\right] \land \left[\overline{(a + b)} + \left(b \land \overline{(a + b)}\right)\right] =	& \ \ \ \ \ \ \\
				& =		\left[a + \left(b \land \overline{(a + b)}\right)\right] \land \left[\overline{(a + b)} + \left(b \land\overline{(a + b)}\right)\right] =	 		& \ \ \ \ \ \ \\
				& =		\left(a \land \overline{(a + b)}\right) + \left(b \land \overline{(a + b)}\right) \leq																& \ \ \ \ \ \ \\
				& \leq	\overline{a} + \overline{b}.																														& \ \ \ \ \ \ \text{(by \eqref{i:T3})}
		\end{aligned}
	\end{equation*}
\end{proof}
\begin{lemma}\label{l:pseudobasic properties 2}
	Let $G$ be an abelian $\ell$-group endowed with a unary operation $\trunc$ such that \eqref{i:T3} holds.
	Then, for all $a,b\in G^+$, if \(a\leq b\), then \(a-\overline{a}\leq b-\overline{b}\).
\end{lemma}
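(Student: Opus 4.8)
The plan is to reduce the claimed inequality $a - \overline{a} \le b - \overline{b}$ to a short chain of order manipulations, all fuelled by a single well-chosen instance of \eqref{i:T3} together with the elementary abelian $\ell$-group identity $x - (x \land y) = (x - y)^+$, where $(\,\cdot\,)^+ = (\,\cdot\,) \lor 0$.

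First I would extract the two consequences of the truncation axiom that are actually needed. Applying \eqref{i:T3} with $f = a$ and $g = b$ yields $a \land \overline{b} \le \overline{a}$; and the general fact $\overline{b} \le b$ (the right half of \eqref{i:T3}) gives $b - \overline{b} \ge 0$. These are the only two appeals to the truncation structure in the whole argument; everything else is pure $\ell$-group arithmetic, valid in any abelian $\ell$-group.

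Then I would run the estimate
\[
a - \overline{a} \;\le\; a - (a \land \overline{b}) \;=\; (a - \overline{b})^+ \;\le\; (b - \overline{b})^+ \;=\; b - \overline{b}.
\]
The first inequality is immediate from $a \land \overline{b} \le \overline{a}$, by negating and adding $a$. The first equality is the identity $x - (x \land y) = (x - y)^+$ with $x = a$ and $y = \overline{b}$, which itself follows from $-(x \land y) = (-x)\lor(-y)$ and distributing the translation $x + (\,\cdot\,)$ over $\lor$, giving $0 \lor (x-y)$. The second inequality uses $a \le b$, whence $a - \overline{b} \le b - \overline{b}$, together with the order-preservation of the positive part. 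The final equality holds because $b - \overline{b} \ge 0$, so its positive part equals itself.

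I do not anticipate any real obstacle here. The only genuine content is selecting the correct instance of \eqref{i:T3}, namely $a \land \overline{b} \le \overline{a}$ (and not the symmetric-looking $b \land \overline{a} \le \overline{b}$), and recalling the identity $x - (x \land y) = (x - y)^+$. The sole point requiring a moment's care is the monotonicity of $(\,\cdot\,)^+$, i.e.\ that $x \le y$ implies $x \lor 0 \le y \lor 0$, which is standard.
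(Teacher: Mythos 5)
Your proof is correct and is essentially the paper's own argument seen in mirror image: the paper establishes the equivalent negated inequality via the chain $\overline{b}-b\leq(\overline{b}-a)\land 0=(\overline{b}\land a)-a\leq\overline{a}-a$, using exactly the same two instances of \eqref{i:T3} that you use ($a\land\overline{b}\leq\overline{a}$ and $\overline{b}\leq b$) and the same distributivity of translation over the lattice operations that underlies your identity $x-(x\land y)=(x-y)^+$. There is no gap; the two write-ups differ only in working with meets against $0$ versus positive parts.
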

\begin{proof}
	Since $a\leq b$, we have $\overline{b}-b\leq\overline{b}-a$.
	By \eqref{i:T3}, $\overline{b}-b\leq 0$.
	Hence, 
	\begin{equation*}
		\begin{aligned}
			\overline{b} - b	& \leq	\left(\overline{b} - a\right) \land 0	=	& \ \ \ \ \ \ \\
								& =		\left(\overline{b} \land a\right) - a \leq	& \ \ \ \ \ \ \text{(because $+$ distributes over $\land$)}\\
								& \leq	\overline{a} - a.							& \ \ \ \ \ \ \text{(by \eqref{i:T3})}
		\end{aligned}
	\end{equation*}
\end{proof}
\begin{proposition}\label{p:B5}
	Let $G$ be a Dedekind $\sigma$-complete $\ell$-group endowed with a unary operation $\trunc$ such that \eqref{i:T2}, \eqref{i:T3} and \eqref{i:T5} hold.
	Then \eqref{i:T5prime} holds, i.e.,  for all $f\in G^+$,
	\[
		f = \truncsup_{n \in \omega}^f \left(nf - \overline{nf}\right).
	\]
\end{proposition}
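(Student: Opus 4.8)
The plan is to set $b\df\truncsup_{n\in\omega}^f (nf-\overline{nf})$, which by the definition of $\truncsup$ in a Dedekind $\sigma$-complete lattice equals $\sup_{n\in\omega}\{(nf-\overline{nf})\land f\}$; this supremum exists, and it lies between $0$ and $f$ (the $n=0$ term is $0$, and every term is $\leq f$), so $c\df f-b$ satisfies $0\leq c\leq f$. The statement is then equivalent to $c=0$. Since $b\leq f$ is immediate, the whole content is the reverse inequality, i.e. $c=0$, and I would aim to establish it via Axiom~\eqref{i:T5}, that is, by showing $nc=\overline{nc}$ for every $n\in\omega$.

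The key estimate comes from comparing $b$ with each individual truncated term. For $n\geq 1$, from $b\geq (nf-\overline{nf})\land f$ together with the standard $\ell$-group identity $a-(x\land y)=(a-x)\lor(a-y)$ one obtains
\[
	c=f-b\leq f-\left((nf-\overline{nf})\land f\right)=(\overline{nf}-(n-1)f)\lor 0 .
\]
Assuming $c\neq 0$, hence $c>0$, this forces $\overline{nf}-(n-1)f\geq c>0$ for every $n\geq 1$, so that $\overline{nf}\geq (n-1)f+c$. Using $0\leq c\leq f$, which gives $(n-1)f\geq (n-1)c$, I would then deduce the crucial pointwise domination
\[
	\overline{nf}\geq (n-1)c+c=nc \qquad (n\geq 1),
\]
so each multiple $nc$ is bounded above by the truncated element $\overline{nf}$.

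To conclude, I would feed this into Axiom~\eqref{i:T3}: from $nc\leq\overline{nf}$ we get $nc=nc\land\overline{nf}\leq\overline{nc}$, while $\overline{nc}\leq nc$ holds always, so $\overline{nc}=nc$ for all $n\geq 1$ (and trivially for $n=0$). Axiom~\eqref{i:T5} then yields $c=0$, contradicting $c>0$; hence $c=0$ and $f=\truncsup_{n\in\omega}^f(nf-\overline{nf})$. The main obstacle, and the reason this is not a routine imitation of Proposition~\ref{p:B4}, is that the self-referential fixed-point argument used there collapses in the present setting: the naive analogue only produces $(f-\overline{f})\land c=0$, which is vacuous whenever $f=\overline{f}$ and therefore cannot detect the Archimedean phenomenon at stake. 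The genuine work is to inject the strength of~\eqref{i:T5} through the domination $\overline{nf}\geq nc$, which is exactly what turns the informal statement ``the overflow $nf-\overline{nf}$ eventually saturates $f$'' into the hypothesis of~\eqref{i:T5}; notably, this route bypasses Lemma~\ref{l:sum distrib} and the fixed-point equation altogether.
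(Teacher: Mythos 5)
Your reduction to \eqref{i:T5} is the right target, and your opening computation is fine: from $b\geq (nf-\overline{nf})\land f$ and the identity $a-(x\land y)=(a-x)\lor(a-y)$ you correctly get $c=f-b\leq\left(\overline{nf}-(n-1)f\right)\lor 0$. The gap is the very next step: ``assuming $c>0$, this forces $\overline{nf}-(n-1)f\geq c$''. In an $\ell$-group that is not totally ordered, $0<c\leq x\lor 0$ does \emph{not} imply $c\leq x$. Take $G=\R^2$ with the coordinatewise order and truncation $\overline{g}\df g\land(1,1)$ on $G^+$ (a Dedekind $\sigma$-complete truncated $\ell$-group, so squarely within the scope of the proposition), and let $c=(1,0)$, $x=(2,-5)$: then $c>0$ and $c\leq x\lor 0=(2,0)$, yet $c\not\leq x$. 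All that follows from $0\leq c\leq x\lor 0$ by distributivity is $c=(c\land x)\lor 0$, which is far weaker than $c\leq x$. Since your entire route to the hypothesis of \eqref{i:T5} (the domination $nc\leq\overline{nf}$, whence $nc=\overline{nc}$) rests on this inference, the proof does not go through, and nothing specific to the particular elements $c=f-b$ and $\overline{nf}-(n-1)f$ is offered that would rescue it.

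The deeper issue is that the term-by-term lower bound $b\geq(nf-\overline{nf})\land f$ is too lossy, and this is exactly why the paper does \emph{not} bypass Lemma~\ref{l:sum distrib}: there one proves the genuine fixed-point \emph{equality} $b=\left(b+kf-\overline{kf}\right)\land f$, by shifting the index ($n\mapsto n+k$), applying subadditivity of truncation (Lemma~\ref{l:inequality}), and then Lemma~\ref{l:sum distrib}. Subtracting $b$ yields the disjointness $\left(kf-\overline{kf}\right)\land c=0$ for every $k$, which is the correct replacement for your domination claim. Lemma~\ref{l:pseudobasic properties 2} then gives $0\leq kc-\overline{kc}\leq kf-\overline{kf}$, so $\left(kc-\overline{kc}\right)\land c=0$; disjointness propagates to multiples to give $\left(kc-\overline{kc}\right)\land kc=0$, and \eqref{i:T2} turns this into $kc=\overline{kc}$, at which point \eqref{i:T5} finishes. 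Incidentally, your closing remark underestimates this strategy: carried out with the shift trick as in the paper, the Proposition~\ref{p:B4}-style argument does not collapse to the vacuous $(f-\overline{f})\land c=0$, but delivers $(kf-\overline{kf})\land c=0$ for all $k$, which is precisely the Archimedean information you were trying to reach.
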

\begin{proof}
	Let $k\in \omega$.
	By \eqref{i:T3} we have $0\leq kf-\overline{kf}$.
	We have 
	\begin{equation*}
		\begin{aligned}
			\truncsup_{n \in \omega}^f \left(nf - \overline{nf}\right)	
				& \geq	\truncsup_{n \in \omega \setminus \{0, \dots, k - 1\}}^f \left(nf - \overline{nf}\right) =				& \ \ \ \ \ \ \\
				& = \truncsup_{n\in\omega}^f \left((n + k) f - \overline{(n + k) f}\right) \geq									& \ \ \ \ \ \ \\
			 	& \geq	\truncsup_{n\in\omega}^f \left(nf - \overline{nf} + kf - \overline{kf}\right) \geq						& \ \ \ \ \ \ \text{(by Lemma~\ref{l:inequality})} \\
				& =		\left(\left(\truncsup_{n\in\omega}^f (nf - \overline{nf})\right) + kf - \overline{kf}\right) \land f.	& \ \ \ \ \ \ \text{(by Lemma~\ref{l:sum distrib})}	
		\end{aligned}
	\end{equation*}
	The opposite inequality is immediate.
	Therefore, setting $b\df \truncsup_{n\in\omega}^f\left(nf-\overline{nf}\right)$, we have $b=\left(b+kf-\overline{kf}\right)\land f$, which implies $0=\left(\left(b+kf-\overline{kf}\right)\land f\right)-b=\left(kf-\overline{kf}\right)\land (f-b)$.
	We set $a\df f-b$.
	We have $0\leq a\leq f$, because $0\leq b\leq f$.
	By \eqref{i:T3} and Lemma~\ref{l:pseudobasic properties 2}, $0\leq ka-\overline{ka}\leq kf-\overline{kf}$.
	Therefore, $0=\left(ka-\overline{ka}\right)\land a$.
	It is elementary that, in any abelian group, $x\land y=0$ implies $(nx)\land y=0$ for each $n\in \omega$.
	Therefore, $0=\left(ka-\overline{ka}\right)\land ka\stackrel{\text{\eqref{i:T2}}}{=}\left(ka-\overline{ka}\right)$.
	Hence, $ka=\overline{ka}$.
	Since $k$ is arbitrary, by \eqref{i:T5} we infer $a=0$, i.e.\ $f -\truncsup_{n\in\omega}^f\left(nf-\overline{nf}\right)=0$.
\end{proof}
To sum up, Propositions~\ref{p:T4 and T5 then '},~\ref{p:B4} and~\ref{p:B5} show that, for Dedekind $\sigma$-complete $\ell$-groups endowed with a unary operation $\trunc$, Axioms (\ref{i:T1}-\ref{i:T5}) are equivalent to Axioms (\ref{i:T1}-\ref{i:T3}) together with Axioms \eqref{i:T4prime} and \eqref{i:T5prime}.

We denote by $\Gt$ the category whose objects are Dedekind $\sigma$-complete truncated $\ell$-groups, and whose morphisms are $\sigma$-continuous $\ell$-homomorphisms that preserve $\overline{\;\cdot \;}$.
Since Axioms \eqref{i:T1}, \eqref{i:T2}, \eqref{i:T3}, \eqref{i:T4prime} and \eqref{i:T5prime} are equational, $\Gt$ is a variety, whose operations are the operations of $\ell$-groups, together with $\overline{\;\cdot\;}$ and $\truncsup$, and whose axioms are the axioms of $\ell$-groups, together with the following ones.
\begin{enumerate}[label = {\rm (TS\arabic*)}, ref = TS\arabic*]
	\item $\truncsup\limits_{n \in \omega  }^g f_n=\truncsup\limits_{n \in \omega  }^g (f_n\land g)$.
	\item  $\truncsup\limits_{n\in \omega }^g f_n=(f_0\land g)\lor\left(\truncsup\limits_{n\in \omega\setminus \{0\}}^g f_n\right)$.
	\item $\truncsup\limits_{n \in \omega  }^g (f_n\land h)\leq h$.
\end{enumerate}
\begin{enumerate}[label = {\rm (T\arabic*)}, ref = T\arabic*]
	\item For all $f\in G$, we have $\overline{f}=\overline{f^+}-f^-$.
	\item For all $f\in G^+$, we have $\overline{f}\in G^+$.
	\item For all $f, g\in G^+$, we have $f\land \overline{g}\leq \overline{f}\leq f$.
\end{enumerate}
\begin{enumerate}[label = {\rm (T\arabic*$'$)}, start = 4, ref = T\arabic*$'$]
	\item For all $f\in G^+$, we have $f=\truncsup\limits_{n\in \omega}^fn\overline{f}$.
	\item For all $f\in G^+$, we have $f =\truncsup\limits_{n\in \omega}^{f}\left(nf-\overline{nf}\right)$.
\end{enumerate}
%

%%%%%%%%%%%%%%%%%%%%%%%%%%%%%%%%%%%%%%%%%%%%%%%%%%%%%%%%%%%%%%%%%%%%%%%%%%%%%%%%%%%%
%SECTION
\section{The Loomis-Sikorski Theorem for truncated $\ell$-groups}\label{s:LS}

\begin{definition}
	Given a set $X$, a \emph{$\sigma$-ideal of subsets of $X$} is a set $\mathcal{I}$ of subsets of $X$ such that the following conditions hold.
	\begin{enumerate}
		\item $\emptyset\in\mathcal{I}$.
		\item $B\in\mathcal{I}, A\subseteq B\Rightarrow A\in\mathcal{I}$.
		\item $(A_n)_{n\in \omega  }\subseteq \mathcal{I}\Rightarrow \bigcup_{n\in \omega  }A_n\in\mathcal{I}$.\qed
	\end{enumerate}
\end{definition}
If $\mathcal{I}$ is a $\sigma$-ideal of subsets of $X$, we say that a property $P$ \emph{holds for $\mathcal{I}$-almost every $x\in X$} if $\{x\in X\mid P$ does not hold for $x \}\in \mathcal{I}$.
A $\sigma$-ideal $\mathcal{I}$ of subsets of $X$ induces on $\R^X$ an equivalence relation $\sim$, defined by $f\sim g$ if, and only if, $f(x)=g(x)$ for $\mathcal{I}$-almost every $x\in X$.
We write $\RXI$ for the quotient $\frac{\R^X}{\sim}$.
Every operation $\tau$ of countable arity on $\R$ induces an operation $\tilde{\tau}$ on $\RXI$, by setting $\tilde{\tau}\left(\left([f_i]_{\mathcal{I}}\right)_{i\in I}\right)\df [g]_{\mathcal{I}}$, where $g(x)=\tau\left(\left(f_i(x)\right)_{i\in I}\right)$.
The assumption that $\mathcal{I}$ is closed under countable unions guarantees that this definition is well posed.
Therefore, by Remark~\ref{r:Dedekind is variety}, $\RXI$ is a Dedekind $\sigma$-complete truncated $\ell$-group.

The aim of this section is to prove the following theorem.
\begin{theorem}[Loomis-Sikorski Theorem for truncated $\ell$-groups]\label{t:LS gr}
	Let $G$ be a Dedekind $\sigma$-complete truncated $\ell$-group.
	Then there exist a set $X$, a  $\sigma$-ideal $\mathcal{I}$ of subsets of $X$ and an injective $\sigma$-continuous $\ell$-homomorphism $\iota\colon G\hookrightarrow \RXI$ such that, for every $f\in G$, $\iota\left(\overline{f}\right)=\iota(f)\land [1]_\mathcal{I}$.
\end{theorem}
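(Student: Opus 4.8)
The plan is to realise $G$ concretely as an $\ell$-group of functions modulo a $\sigma$-ideal, building on a classical Loomis--Sikorski representation for the underlying Dedekind $\sigma$-complete $\ell$-group and then showing that the extra structure (the truncation $\trunc$) is transported correctly. Since the paper cites a Loomis--Sikorski theorem for Riesz spaces (Theorem~\ref{t:Nepalese}), the natural strategy is to reduce the truncated case to that known result. Concretely, I would first invoke the representation for the $\ell$-group reduct of $G$ to obtain a set $X$, a $\sigma$-ideal $\mathcal{I}$ of subsets of $X$, and an injective $\sigma$-continuous $\ell$-homomorphism $\iota\colon G\hookrightarrow \RXI$. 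The point is that such a representation typically arises from a set $X$ of $\sigma$-continuous $\ell$-homomorphisms (``$\R$-valued points'') $x\colon G\to \R$, with $\iota(f)(x)\df x(f)$, and with $\mathcal{I}$ the $\sigma$-ideal of sets on which the representation is degenerate. The key is to choose $X$ large enough to separate points of $G$.

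\emph{The main obstacle} is proving the truncation identity $\iota(\overline{f})=\iota(f)\land[1]_\mathcal{I}$, i.e.\ that $\trunc$ is sent to pointwise truncation at the constant $1$. For this I would need each representing point $x\in X$ to be not merely an $\ell$-homomorphism but a \emph{truncation-preserving} one, in the sense that $x(\overline{f})=x(f)\land 1$ for all $f\in G^+$. So the heart of the proof is a separation result: for every $0\neq f\in G$ there exists a $\sigma$-continuous $\ell$-homomorphism $x\colon G\to\R$ that preserves $\trunc$ in this normalised way and with $x(f)\neq 0$. I expect this to be where Axioms \eqref{i:T4prime} and \eqref{i:T5prime}—the equational reformulations of \eqref{i:T4} and \eqref{i:T5} established in Propositions~\ref{p:B4} and~\ref{p:B5}—do the real work. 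Intuitively, \eqref{i:T5prime} (via $f=\truncsup_{n\in\omega}^f(nf-\overline{nf})$) controls the ``unbounded part'' of $f$ and forces a point to see a genuine real value rather than $+\infty$, while \eqref{i:T4prime} (via $f=\truncsup_{n\in\omega}^fn\overline{f}$) prevents the truncation from collapsing nonzero elements, so that $\overline{f}$ behaves on points exactly like a cutoff at the scalar $1$.

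\emph{The steps I would carry out}, in order, are as follows. First, I would assemble the set $X$ of all $\sigma$-continuous $\ell$-homomorphisms $x\colon G\to\R$ satisfying $x(\overline f)=x(f)\land 1$, define $\iota(f)(x)\df x(f)$, and let $\mathcal{I}$ consist of those subsets of $X$ that are negligible for the representation (for instance, subsets $N\subseteq X$ such that no $f\in G$ is ``supported'' on $N$, or equivalently the sets on which $\iota$ carries no information); the $\sigma$-ideal axioms follow from $\sigma$-continuity of each $x$. Second, I would verify that $\iota$ is a $\sigma$-continuous $\ell$-homomorphism: the $\ell$-operations are computed pointwise, so this is routine, and $\sigma$-continuity passes from the pointwise definition through Proposition~\ref{p:sigma-cont} (it suffices to check preservation of $\truncsup$). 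Third, the truncation identity $\iota(\overline f)=\iota(f)\land[1]_\mathcal{I}$ holds \emph{pointwise} by the very defining property of the points $x\in X$, and hence holds in the quotient. Fourth—and this is the crux—I would prove injectivity of $\iota$, equivalently the separation statement above: given $0\neq f$, I must produce a point $x\in X$ with $x(f)\neq 0$. Here I would pass to a suitable prime or maximal $\sigma$-ideal $J$ of $G$ with $f\notin J$, form the quotient $G/J$, argue that it embeds $\sigma$-continuously and truncation-preservingly into $\R$ (a Dedekind $\sigma$-complete totally ordered truncated $\ell$-group is, by \eqref{i:T5prime} and the archimedean property, isomorphic to a truncated $\ell$-subgroup of $\R$ with $\overline{\;\cdot\;}=\min(\cdot,1)$), and take $x$ to be the composite $G\twoheadrightarrow G/J\hookrightarrow\R$. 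Verifying that this composite lands in $X$—that is, that it is $\sigma$-continuous and sends $\trunc$ to $\min(\cdot,1)$—is exactly the delicate point, and is where Axioms \eqref{i:T4prime} and \eqref{i:T5prime} must be used to pin down the normalisation of the truncation on the one-dimensional quotient. Once injectivity is established, the theorem follows.
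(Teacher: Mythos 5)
Your construction breaks down at exactly the step you call the crux, and the failure is not a matter of delicacy but a fundamental obstruction: a Dedekind $\sigma$-complete truncated $\ell$-group need not admit \emph{any} nonzero $\sigma$-continuous $\ell$-homomorphism to $\R$, so no choice of $X$ built from such points can separate the elements of $G$. Concretely, let $G$ be the space of Borel functions on $[0,1]$ modulo Lebesgue-almost-everywhere equality, with truncation $\overline{f}=f\land 1$; this is a Dedekind $\sigma$-complete truncated Riesz space (cf.\ Remark~\ref{r:weak is trunc}). Suppose $x\colon G\to\R$ is a $\sigma$-continuous $\ell$-homomorphism with $x(\overline{f})=x(f)\land 1$. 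Since $\overline{2\cdot 1}=1$ in $G$, we get $x(1)=2x(1)\land 1$, hence $x(1)\in\{0,1\}$. If $x(1)=0$, then for $f\in G^+$ Axiom \eqref{i:T4prime} and $\sigma$-continuity give $x(f)=\sup_{n\in\omega}\{x(f)\land n\left(x(f)\land x(1)\right)\}=0$, so $x=0$. If $x(1)=1$, then $\nu(A)\df x([\ind_A])$ is a countably additive $\{0,1\}$-valued probability measure on the Borel sets of $[0,1]$ which vanishes on Lebesgue-null sets (countable additivity uses $\sigma$-continuity of $x$; two-valuedness uses $[\ind_A]\land[\ind_{[0,1]\setminus A}]=0$). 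Following the nested dyadic intervals of $\nu$-measure $1$ down to their intersection produces a point $p$ with $\nu(\{p\})=1$, contradicting that $\nu$ kills null sets. So the only point in your $X$ is the zero map, $\iota$ cannot be injective, and the same obstruction destroys your fallback: a maximal (or prime) $\sigma$-ideal $J$ with $G/J$ embedding $\sigma$-continuously into $\R$ is precisely the kernel of such a point, so it need not exist either. This is the very reason Loomis--Sikorski-type theorems represent algebras as quotients $\RXI$ rather than as algebras of honest functions: even in the classical Boolean and Yosida-type proofs the points used are \emph{not} $\sigma$-continuous, and $\sigma$-continuity of the representation is recovered only afterwards by quotienting by the $\sigma$-ideal of meager sets. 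Demanding $\sigma$-continuity of the points themselves, as your plan does, is what cannot be met.

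For contrast, the paper's proof never uses points. It first treats countably generated $G$: the cited Riesz-space Loomis--Sikorski theorem (Theorem~\ref{t:Nepalese}, via Corollary~\ref{c:Nepalese groups}) gives an embedding that ignores the truncation; Lemma~\ref{l:the big step} then produces a weak unit $u$ as the supremum of the countably many elements $\iota\left(\overline{f_n}\right)$, where the $f_n$ enumerate finite sums of absolute values of generators (this is where countable generation and Axioms \eqref{i:T4prime}, \eqref{i:T5prime} do the real work), and Lemma~\ref{l:may be rescaled to 1} rescales $u$ to $[1]_\mathcal{I}$. From the countably generated case the paper deduces that $\R$ generates the variety $\Gt$ (Theorem~\ref{t:R generates}), and then handles arbitrary $G$ via the HSP theorem: $G$ is a quotient $\psi\colon H\twoheadrightarrow G$ of a $\Gt$-subalgebra $H\seq\R^X$, and $\ker\psi$ is converted into a $\sigma$-ideal $\mathcal{I}$ of subsets of $X$ for which the induced map $G\to\RXI$ is injective, by the computation \eqref{e:k,n}. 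If you wanted to salvage a point-based argument, you would have to allow non-$\sigma$-continuous points and a meager-type $\sigma$-ideal; that is a genuinely different route, and for truncated $\ell$-groups it is not the one the paper takes.
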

We will give a proof that is rather self-contained, with the main exception of the use of Theorem~\ref{t:Nepalese} below.
Anyway, we believe that a shorter (but not self-contained) way to prove Theorem~\ref{t:LS gr} above (even in the less restrictive hypothesis that $G$ is an archimedean truncated $\ell$-group) may be the following.
First, show that the divisible hull $G^d$ of $G$ admits a truncation that extends the truncation of $G$.
Then, embed $G^d$ in $\frac{\R^X}{\mathcal{I}}$ via Theorem~5.3.6.(1) in \cite{Ball1}.
Finally, using arguments similar to those in Theorem~6.2 in \cite{Mundici}, show that this embedding preserves all countable suprema.
\begin{theorem}[Loomis-Sikorski Theorem for Riesz spaces]\label{t:Nepalese}
	Let $G$ be a Dedekind $\sigma$-complete Riesz space.
	Then there exist a set $X$, a $\sigma$-ideal $\mathcal{I}$ of subsets of $X$ and an injective $\sigma$-continuous Riesz morphism $\iota\colon G\hookrightarrow \RXI$.
\end{theorem}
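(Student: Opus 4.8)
The plan is to prove the Loomis--Sikorski theorem for Riesz spaces by combining an \emph{order-theoretic representation} of $G$ as a space of continuous functions with the classical device of quotienting by the $\sigma$-ideal of meager sets. Concretely, I would take $X$ to be the Stone space of the (complete) Boolean algebra of bands of $G$; since $G$ is Dedekind $\sigma$-complete it is archimedean, and the Maeda--Ogasawara--Vulikh representation then yields an injective Riesz homomorphism of $G$ onto an \emph{order-dense} Riesz subspace of $C_\infty(X)$, the space of continuous functions $X\to[-\infty,+\infty]$ that are finite off a nowhere dense set. Here $X$ is extremally disconnected, compact and Hausdorff, hence a Baire space. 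I would then set $\mathcal{I}$ to be the $\sigma$-ideal of meager subsets of $X$, and define $\iota\colon G\to\RXI$ by $\iota(g)\df[\hat g]_{\mathcal{I}}$, where $\hat g\in C_\infty(X)$ is the function representing $g$ (redefined arbitrarily, say as $0$, on the nowhere dense set where it takes infinite values). Because that infinity set is nowhere dense and hence meager, $[\hat g]_{\mathcal{I}}$ is a well-defined element of $\RXI$.

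Next I would check the algebraic and injectivity properties, which are routine. The operations $+$, $\lambda(\;\cdot\;)$, $\lor$ and $\land$ are computed pointwise in $C_\infty(X)$ off the meager infinity sets, so $\iota$ is a Riesz homomorphism into $\RXI$ (using that the quotient operations on $\RXI$ are the pointwise ones, exactly as recalled before the statement). For injectivity, if $g\neq 0$ then $\hat g$ is nonzero, hence finite and nonzero on some nonempty open set $U\seq X$ (the finite points being dense); since $X$ is a Baire space, $U$ is non-meager, so $\{\,x\mid \hat g(x)\neq 0\,\}\notin\mathcal{I}$ and therefore $\iota(g)\neq 0$.

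The crux is $\sigma$-continuity, i.e.\ showing $\iota\left(\sup_{n}g_n\right)=\sup_{n}\iota(g_n)$ whenever the countable supremum $g\df\sup_{n}g_n$ exists in $G$. First I would reduce to an \emph{increasing} sequence by passing to the finite partial suprema $h_k\df g_0\lor\dots\lor g_k$, which are represented by genuine pointwise suprema and so contribute no defect. Since the representation is order dense into a Dedekind complete space, it preserves all suprema existing in $G$, so $\widehat{g}=\sup_{n}\hat g_n$ computed in $C_\infty(X)$, the latter being the least continuous majorant of the $\hat g_n$. The key lemma is that on an extremally disconnected space this continuous supremum agrees with the \emph{pointwise} supremum of an increasing sequence of continuous functions off a nowhere dense set; taking a countable union, the total defect set is meager. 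Finally, since $\mathcal{I}$ is a $\sigma$-ideal the quotient map $\R^X\to\RXI$ preserves countable suprema (if $h\geq f_n$ off $N_n\in\mathcal{I}$ then $h\geq\sup_n f_n$ off $\bigcup_n N_n\in\mathcal{I}$), so the pointwise supremum of the $\hat g_n$ represents $\sup_n\iota(g_n)$. Combining, $\iota(g)=[\widehat{g}]_{\mathcal{I}}=\big[\textstyle\sup_n \hat g_n\,(\text{pointwise})\big]_{\mathcal{I}}=\sup_n\iota(g_n)$.

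I expect the main obstacle to be precisely the representation step together with the nowhere-density of the defect sets; the role of Dedekind $\sigma$-completeness is to guarantee that the relevant countable suprema exist in $G$ and that their images stay inside $C_\infty(X)$. A secondary subtlety is that $G$ need carry no unit (e.g.\ $L^p$ of a non-atomic measure admits no nonzero Riesz homomorphism to $\R$), which is why the Yosida/maximal-ideal route fails and one must build $X$ from the band structure instead; this also makes clear that the construction specialises, via the Boolean algebra of components, to the classical Loomis--Sikorski theorem for $\sigma$-complete Boolean algebras, with $X$ the Stone space and $\mathcal{I}$ the meager ideal. A fully self-contained treatment could either develop the Maeda--Ogasawara representation from scratch or, as the paper does, assemble the result from the cited sources.
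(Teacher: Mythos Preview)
The paper does not prove this theorem: immediately after the statement it reads ``For a proof of Theorem~\ref{t:Nepalese} see \cite{Nepalese}, or \cite{LoomisRevisited} and \cite{SmallRieszSpaces}.'' It is invoked as a black box from the literature, and the paper's own contributions (the truncated and weak-unit versions) are built on top of it. You already note this in your final sentence.

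Your outline via the Maeda--Ogasawara--Vulikh representation on the Stone space of the band algebra, with $\mathcal{I}$ the meager ideal, is the standard route to results of this type and is in the spirit of the cited sources. The reduction to increasing sequences, the use of order density to transport suprema into $C_\infty(X)$, and the Baire-category argument for injectivity are all correct. One small imprecision: for a single increasing sequence in $C_\infty(X)$ the set where the order supremum differs from the pointwise supremum is in general only \emph{meager} (one gets it as a countable union, over rational thresholds, of nowhere dense sets), not nowhere dense; but meager is exactly what you need, so the argument goes through unchanged.
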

For a proof of Theorem~\ref{t:Nepalese} see \cite{Nepalese}, or \cite{LoomisRevisited} and \cite{SmallRieszSpaces}.
\begin{corollary}[Loomis-Sikorski Theorem for $\ell$-groups]\label{c:Nepalese groups}
	Let $G$ be a Dedekind $\sigma$-complete $\ell$-group.
	Then there exist a set $X$, a  $\sigma$-ideal $\mathcal{I}$ of subsets of $X$ and an injective $\sigma$-continuous $\ell$-homomorphism $\iota\colon G\hookrightarrow \RXI$.
\end{corollary}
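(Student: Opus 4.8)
The plan is to reduce the statement to the Riesz-space version, Theorem~\ref{t:Nepalese}, by first embedding $G$ into a Dedekind $\sigma$-complete Riesz space through an injective $\sigma$-continuous $\ell$-homomorphism, and then composing with the embedding supplied by that theorem. Concretely, if I can produce a Dedekind $\sigma$-complete Riesz space $H$ together with an injective $\sigma$-continuous $\ell$-homomorphism $j\colon G\to H$, then Theorem~\ref{t:Nepalese} applied to $H$ yields a set $X$, a $\sigma$-ideal $\mathcal{I}$ of subsets of $X$, and an injective $\sigma$-continuous Riesz morphism $\iota_H\colon H\hookrightarrow \RXI$. Since a Riesz morphism is in particular an $\ell$-homomorphism, the composite $\iota\df\iota_H\circ j$ is then an injective $\sigma$-continuous $\ell$-homomorphism $G\hookrightarrow\RXI$, as required. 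Thus everything rests on the construction of $H$ and of $j$.

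For the construction of $H$, recall that a Dedekind $\sigma$-complete $\ell$-group is archimedean. First I pass to the divisible hull $G^d$, the archimedean divisible (hence $\Q$-vector-lattice) $\ell$-group in which each element of $G$ becomes divisible by every positive integer; the canonical map $G\hookrightarrow G^d$ is an injective $\ell$-homomorphism. Next I take the Dedekind completion $\widehat{G^d}$ of $G^d$ (see \cite{BKW}): it is a Dedekind complete $\ell$-group containing $G^d$ as an order-dense $\ell$-subgroup, and it remains archimedean and divisible. A Dedekind complete, archimedean, divisible $\ell$-group is automatically a Riesz space: real scalar multiplication can be defined, for $\lambda\in\R$ and $g\in\widehat{G^d}$ with $g\geq 0$, by $\lambda g\df\sup\{qg\mid q\in\Q,\ 0\leq q\leq\lambda\}$, the supremum existing by Dedekind completeness. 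Hence $H\df\widehat{G^d}$ is a Dedekind complete, in particular Dedekind $\sigma$-complete, Riesz space, and $j\colon G\hookrightarrow G^d\hookrightarrow\widehat{G^d}$ is an injective $\ell$-homomorphism.

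It remains to check that $j$ is $\sigma$-continuous, i.e.\ that it preserves every existing countable supremum. The second arrow $G^d\hookrightarrow\widehat{G^d}$ preserves all existing suprema and infima, this being the defining feature of the Dedekind completion (order-density). The crux is therefore the first arrow $G\hookrightarrow G^d$. Unwinding the definition of $G^d$, any upper bound of a family $(a_k)_{k\in\omega}\seq G$ in $G^d$ has the form $g/m$ with $g\in G$ and $m\in\N$, and $a_k\leq g/m$ is equivalent to $ma_k\leq g$; hence $s=\sup_G a_k$ will remain a supremum in $G^d$ as soon as $\sup_G(ma_k)=ms$, that is, as soon as multiplication by each positive integer $m$ preserves the existing countable supremum $s$. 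This is the one genuinely non-formal point, and I expect it to be the main obstacle.

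I would isolate it as a lemma: in a Dedekind $\sigma$-complete $\ell$-group, if $(b_k)_{k\in\omega}$ satisfies $b_k\geq 0$ and $\inf_k b_k=0$, then $\inf_k(mb_k)=0$ for every $m\in\N$. Since multiplication by $m$ commutes with finite infima (being a lattice homomorphism), we have $\inf_k(mb_k)=\inf_k\bigl(m\bigwedge_{i\leq k}b_i\bigr)$, so we may assume $(b_k)$ decreasing. One then argues by induction on $m$: for $k\geq l$ one has $mb_k=b_k+(m-1)b_k\leq b_k+(m-1)b_l$, whence $\inf_k(mb_k)\leq(m-1)b_l+\inf_{k\geq l}b_k=(m-1)b_l$ for every $l$, and therefore $\inf_k(mb_k)\leq\inf_l\bigl((m-1)b_l\bigr)=0$ by the inductive hypothesis. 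Applying this lemma to $b_k\df s-a_k$ (so that $b_k\geq 0$ and $\inf_k b_k=0$) gives $\inf_k(ms-ma_k)=0$, i.e.\ $\sup_G(ma_k)=ms$; consequently any $g\in G$ with $ma_k\leq g$ for all $k$ satisfies $ms\leq g$, which is exactly what is needed to conclude $s=\sup_{G^d}a_k$ and hence the $\sigma$-continuity of $G\hookrightarrow G^d$. With all three arrows injective, $\ell$-homomorphic, and $\sigma$-continuous, the composite $\iota$ has the asserted properties.
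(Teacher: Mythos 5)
Your proof is correct, and its top-level shape coincides with the paper's: embed $G$ into a Dedekind $\sigma$-complete Riesz space by an injective $\sigma$-continuous $\ell$-homomorphism, apply Theorem~\ref{t:Nepalese} to that Riesz space, and compose. The difference is in how the embedding is obtained. The paper's proof consists of citing \cite{EmbeddingGroupInRiesz} for the existence of a Dedekind $\sigma$-complete Riesz space $H$ and an injective $\ell$-morphism $G\hookrightarrow H$ preserving every existing supremum, and then composing; you instead construct $H$ explicitly as the Dedekind completion $\widehat{G^d}$ of the divisible hull, and you supply the one genuinely nontrivial ingredient---that $G\hookrightarrow G^d$ preserves countable suprema---by a correct lemma: in a Dedekind $\sigma$-complete $\ell$-group, $\inf_k b_k=0$ with $b_k\geq 0$ implies $\inf_k(mb_k)=0$, proved by induction on $m$ after reducing to decreasing sequences. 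That lemma and its use are sound: multiplication by $m$ is indeed a lattice homomorphism of an abelian $\ell$-group (so the reduction to decreasing sequences is legitimate), the translation $b_k\df s-a_k$ is the right one, and every element of $G^d$ has the form $g/m$, so your argument does show $\sup_{G^d}a_k=s$. What the paper's route buys is brevity, at the price of deferring the substance to the literature; your route is self-contained except for two facts you assert without verification, both standard: that the Dedekind completion of a divisible archimedean $\ell$-group is again divisible (this needs that multiplication by $n$ preserves directed suprema, which follows from sup-preservation of addition), and that the formula $\lambda g=\sup\{qg\mid q\in\Q,\ 0\leq q\leq\lambda\}$ really satisfies the Riesz space axioms (archimedeanity is used here, e.g.\ to get $\inf\{\epsilon g\mid \epsilon\in\Q,\ \epsilon>0\}=0$). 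In effect your proposal inlines a proof of the result the paper merely cites, and it additionally makes visible that only preservation of \emph{countable} suprema is needed, rather than the preservation of all existing suprema that the citation provides.
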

\begin{proof}
	There exist a Dedekind $\sigma$-complete Riesz space $H$ and an injective $\ell$-morphism $\varphi\colon G\hookrightarrow H$ that preserves every existing supremum; see, e.g., \cite{EmbeddingGroupInRiesz}.
	Applying Theorem~\ref{t:Nepalese} to the Dedekind $\sigma$-complete Riesz space $H$, we obtain an injective $\sigma$-continuous Riesz morphism $\varphi'\colon H\hookrightarrow \RXI$.
	The composition $\iota=\varphi'\circ\varphi\colon G\hookrightarrow \RXI$ is an injective $\sigma$-continuous $\ell$-morphism, since both $\varphi$ and $\varphi'$ are injective $\sigma$-continuous $\ell$-morphisms.
\end{proof}
Our strategy to prove Theorem~\ref{t:LS gr} is the following.
Lemma~\ref{l:L-S countably} will prove Theorem~\ref{t:LS gr} for countably generated algebras.
This will imply that $\R$ generates the variety of Dedekind $\sigma$-complete truncated $\ell$-groups, and from this fact Theorem~\ref{t:LS gr} is derived.
\begin{lemma}\label{l:bounded}
	Let $G$ be a Dedekind $\sigma$-complete truncated $\ell$-group generated by a subset $S\seq G$.
	Then, for every $g\in G$, there exist $s_0,\dots,s_{n-1}\in S$ such that 
	\[
		\lvert g \rvert \leq \lvert s_0 \rvert + \cdots + \lvert s_{n-1} \rvert.
	\]
\end{lemma}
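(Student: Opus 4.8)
The plan is to introduce the set
\[
	T\df \left\{g\in G \mid \exists\, s_0,\dots,s_{n-1}\in S \text{ with } \lvert g\rvert\leq \lvert s_0\rvert+\cdots+\lvert s_{n-1}\rvert \right\}
\]
and to prove that $T$ is all of $G$. Clearly $S\seq T$ (take $n=1$ and $s_0=s$, with the empty sum read as $0$ to cover $g=0$), so it suffices to check that $T$ is closed under all the fundamental operations of the variety $\Gt$, namely $0$, $+$, $-$, $\lor$, $\land$, $\trunc$ and $\truncsup$. Once this is done, $T$ is a subalgebra containing the generating set $S$, and therefore $T=G$. Throughout I would exploit the fact that each of these operations is dominated, in absolute value, by a finite sum of the absolute values of its arguments, so that concatenating the witnessing lists produces a new witness.

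For the finitary operations this is routine $\ell$-group arithmetic: $\lvert 0\rvert=0$, $\lvert -g\rvert=\lvert g\rvert$, and $\lvert g+h\rvert\leq \lvert g\rvert+\lvert h\rvert$, while the standard inequality $\lvert g\lor h\rvert\leq \lvert g\rvert+\lvert h\rvert$ together with $g\land h=-((-g)\lor(-h))$ disposes of $\lor$ and $\land$. For the truncation I would use \eqref{i:T1} to reduce to the single inequality $\lvert \overline{g}\rvert\leq \lvert g\rvert$: by \eqref{i:T2} and \eqref{i:T3} the elements $\overline{g^+}$ and $g^-$ are disjoint positive elements with $\overline{g^+}\leq g^+$, so $\lvert \overline{g}\rvert=\lvert \overline{g^+}-g^-\rvert=\overline{g^+}+g^-\leq g^++g^-=\lvert g\rvert$. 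Hence each finitary operation keeps $T$ invariant.

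The main point — and the only place where the infinitary operation could in principle spoil domination by a \emph{finite} sum — is closure under $\truncsup$. The key observation is that, although $b\df\truncsup_{n\in\omega}^g f_n$ depends on the entire sequence $(f_n)_{n\in\omega}$, the axioms confine it to a window determined by only two of its arguments: \eqref{i:TS1} and \eqref{i:TS3} give $b\leq g$, whereas \eqref{i:TS2} gives $b\geq f_0\land g$. Combining these with $\lvert f_0\land g\rvert\leq \lvert f_0\rvert+\lvert g\rvert$ yields
\[
	\lvert b\rvert\leq \lvert g\rvert+\lvert f_0\rvert,
\]
so that $g,f_0\in T$ forces $b\in T$ regardless of $f_1,f_2,\dots$. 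This is precisely why an operation of countably infinite arity nonetheless preserves the finite-sum bound. With all closure properties established, $T=G$, which is the assertion of the lemma.
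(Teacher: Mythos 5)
Your proof is correct and follows essentially the same route as the paper's: both define the same set $T$ of elements dominated by finite sums $\lvert s_0\rvert+\cdots+\lvert s_{n-1}\rvert$ with $s_i\in S$, show that $T$ contains $S$ and is closed under all operations of $\Gt$, and rest on the same two key facts, namely $\lvert\overline{g}\rvert\leq\lvert g\rvert$ and the sandwich $f_0\land g\leq \truncsup_{n\in\omega}^{g}f_n\leq g$. The only cosmetic differences are that the paper invokes the standard fact that $T$ is a convex $\ell$-subgroup rather than checking the $\ell$-group operations one by one, and bounds the truncated supremum by $\lvert g\rvert\lor\lvert f_0\rvert$ instead of $\lvert g\rvert+\lvert f_0\rvert$; both bounds suffice.
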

\begin{proof}
	Let $T\df \{h\in G\mid \exists s_0,\dots,s_{n-1}\in G:\lvert h\rvert\leq \lvert s_0\rvert + \cdots + \lvert s_{n-1}\rvert\}$.
	It is immediately seen that $S\seq T$.
	It is standard that $T$ is a convex $\ell$-subgroup of $G$.
	Moreover, for every $g\in G$, and every $(f_n)_{n\in \omega}\seq G$, we have the following.
	\begin{enumerate}
		\item \label{i:first} $\truncsup_{n\in \omega}^g f_n=\sup_{n\in \omega}\{f_n \land g\}$, and therefore $f_0\land g\leq  \truncsup_{n\in \omega}^g f_n \leq g$.
		Hence, 
		\begin{equation*}
			\begin{split}
				\lvert  \truncsup_{n\in \omega}^g f_n\rvert	& =		\left(\truncsup_{n\in \omega}^g f_n\right)\lor \left(-\truncsup_{n\in \omega}^g f_n\right) \leq\\
															& \leq	g\lor [- (f_0\land g)]\leq\\
															& \leq 	g\lor [(-f_0)\lor (-g)] \leq \\
															& \leq \lvert g\rvert \lor \lvert f_0\rvert.
			\end{split}
		\end{equation*}
		\item \label{i:second} $\lvert \overline{g}\rvert =\lvert \overline{g^+}-g^-\rvert\leq \lvert \overline{g^+}\rvert+\lvert g^-\rvert\stackrel{\text{\eqref{i:T2}}}{=}\overline{g^+}+g^-\stackrel{\text{\eqref{i:T3}}}{\leq} g^++ g^-=\lvert g\rvert.$
	\end{enumerate}
	Since $T$ is a convex $\ell$-subgroup of $G$, \eqref{i:first} and \eqref{i:second} imply that $T$ is closed under $\truncsup$ and $\trunc$.
\end{proof}
\begin{lemma}\label{l:admits sup}
	Let $X$ be a set, and $\mathcal{I}$ a $\sigma$-ideal of subsets of $X$.
	Let $({g}_n)_{n\in \omega}$ be a sequence of functions from $X$ to $\R$.
	Suppose that, for $\mathcal{I}$-almost every $x\in X$, $\sup_{n\in \omega  } {g}_n(x)\in \R$.
	Then the set $\{[{g}_n]_\mathcal{I}\mid n\in \omega\}$ admits a supremum in $\RXI$.
\end{lemma}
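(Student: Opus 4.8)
The plan is to exhibit an explicit representative of the supremum, obtained from the pointwise supremum by correcting it on an $\mathcal{I}$-negligible set so as to land in $\R$. Throughout I use that the order on $\RXI$ is the one inherited from the pointwise order, i.e.\ $[f]_\mathcal{I}\leq [g]_\mathcal{I}$ if, and only if, $f(x)\leq g(x)$ for $\mathcal{I}$-almost every $x\in X$; this is well defined precisely because $\mathcal{I}$ is closed under the finite unions needed to change representatives, and it agrees with the lattice order since $[f]_\mathcal{I}\lor[g]_\mathcal{I}=[f\lor g]_\mathcal{I}$.

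First I would set $E\df\{x\in X\mid \sup_{n\in\omega}g_n(x)=+\infty\}$. Since each $g_n(x)$ is real, the pointwise supremum is bounded below by $g_0(x)$ and hence lies in $\R\cup\{+\infty\}$; thus $E$ is exactly the set on which it fails to be a real number, and by hypothesis $E\in\mathcal{I}$. I then define $h\colon X\to\R$ by $h(x)\df\sup_{n\in\omega}g_n(x)$ for $x\notin E$ and $h(x)\df 0$ for $x\in E$. This is a genuine $\R$-valued function, and the claim is that $[h]_\mathcal{I}=\sup_{n\in\omega}[g_n]_\mathcal{I}$ in $\RXI$.

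It then remains to verify the two defining properties of the supremum. For the upper-bound property, for each fixed $n$ one has $g_n(x)\leq h(x)$ for every $x\notin E$, so the inequality holds $\mathcal{I}$-almost everywhere and therefore $[g_n]_\mathcal{I}\leq[h]_\mathcal{I}$. For the least-upper-bound property, let $[k]_\mathcal{I}$ be any upper bound of $\{[g_n]_\mathcal{I}\mid n\in\omega\}$; then each $N_n\df\{x\in X\mid g_n(x)>k(x)\}$ lies in $\mathcal{I}$, and putting $N\df E\cup\bigcup_{n\in\omega}N_n$ one checks that for $x\notin N$ one has $h(x)=\sup_{n}g_n(x)\leq k(x)$, whence $h\leq k$ off $N$ and so $[h]_\mathcal{I}\leq[k]_\mathcal{I}$.

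The single step that genuinely uses the $\sigma$-ideal hypothesis, rather than merely an ideal, is the assertion $N=E\cup\bigcup_{n\in\omega}N_n\in\mathcal{I}$ in the last paragraph: it is the closure of $\mathcal{I}$ under countable unions that lets me absorb the countably many exceptional sets $N_n$ (together with $E$) into a single negligible set. I do not expect any real obstacle beyond this piece of bookkeeping; everything else is a direct verification.
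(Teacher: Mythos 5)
Your proof is correct and follows essentially the same route as the paper's: the paper also takes the pointwise supremum off an exceptional set $A\in\mathcal{I}$, redefines it arbitrarily (real-valued) on $A$, and declares the class of the resulting function to be the supremum. The only difference is that the paper asserts the supremum property without verification, whereas you spell out the upper-bound and least-upper-bound checks (including the one place where closure of $\mathcal{I}$ under countable unions is genuinely needed), which is a faithful filling-in of the omitted details rather than a different argument.
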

\begin{proof}
	Let $A\in \mathcal{I}$ be such that, for every $x\in X\setminus A$,  $\sup_{n\in \omega  } {g}_n(x)\in \R$.
	Let $v\colon X\to \R$ be any function such that, for every  $x\in X\setminus A$, $v(x)=\sup_{n\in \omega  } {g}_n(x)$.
	Then $[v]_{\mathcal{I}}$ is the supremum of $\{[{g}_n]_\mathcal{I}\mid n\in \omega\}$ in $\RXI$.
\end{proof}
\begin{lemma}\label{l:doppio bigvee}
	Let $G$ be a Dedekind $\sigma$-complete truncated $\ell$-group, let $f\in G^+$ and let $(f_i)_{i\in \omega}\seq G^+$.
	Then
	\[
		f = \truncsup_{i\in\omega}^f \left(if - \truncsup_{k \in \omega}^{if} \overline{f_k}\right).
	\]
\end{lemma}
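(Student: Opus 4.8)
The plan is to prove the two inequalities separately, as the right-hand side is a truncated supremum capped at $f$. The inequality
\[
	\truncsup_{i\in\omega}^f \left(if - \truncsup_{k \in \omega}^{if} \overline{f_k}\right) \leq f
\]
will be immediate: since $G$ is Dedekind $\sigma$-complete, $\truncsup_{i\in\omega}^f b_i = \sup_{i\in\omega}\{b_i \land f\}$ for any family $(b_i)$, and every term $b_i\land f$ is bounded above by $f$ (alternatively one may invoke \eqref{i:TS1} followed by \eqref{i:TS3}). So all the content lies in the reverse inequality, and the key idea is that the whole statement will collapse onto axiom \eqref{i:T5prime}, namely the identity $f = \truncsup_{i\in\omega}^f (if - \overline{if})$.

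The crucial step is a termwise comparison supplied by \eqref{i:T3}. For all $i,k\in\omega$ one has $if\in G^+$ (as $f\in G^+$) and $\overline{f_k}\in G^+$ (by \eqref{i:T2}), so \eqref{i:T3}, applied with $if$ in place of $f$ and $f_k$ in place of $g$, gives
\[
	(if)\land \overline{f_k}\leq \overline{if}.
\]
Taking the supremum over $k$ — which is exactly the inner truncated supremum, since $\truncsup_{k\in\omega}^{if}\overline{f_k}=\sup_{k\in\omega}\{\overline{f_k}\land (if)\}$ — and using that $\overline{if}$ bounds all these terms, I obtain
\[
	\truncsup_{k\in\omega}^{if}\overline{f_k}\leq \overline{if}.
\]

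From here the argument is routine. Subtracting from $if$ reverses the inequality, so $if - \truncsup_{k\in\omega}^{if}\overline{f_k}\geq if - \overline{if}$ for every $i$; meeting with $f$ and using the monotonicity of the truncated supremum in its arguments (clear from $\truncsup_{i}^f b_i=\sup_i\{b_i\land f\}$) yields
\[
	\truncsup_{i\in\omega}^f \left(if - \truncsup_{k \in \omega}^{if} \overline{f_k}\right) \geq \truncsup_{i\in\omega}^f \left(if - \overline{if}\right) = f,
\]
where the final equality is precisely \eqref{i:T5prime}. Combining the two inequalities proves the lemma.

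The main obstacle is spotting that the inner truncated supremum of the truncations $\overline{f_k}$, capped at $if$, is dominated by the single truncation $\overline{if}$; once this \eqref{i:T3}-based sandwich is in place, the family $(f_k)$ drops out and the identity reduces to axiom \eqref{i:T5prime}. I expect everything else — the two bounding inequalities and the monotonicity of $\truncsup$ — to be entirely mechanical.
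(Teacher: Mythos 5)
Your proof is correct and takes essentially the same route as the paper's: the key step in both is the \eqref{i:T3}-based bound $\truncsup_{k\in\omega}^{if}\overline{f_k}\leq \overline{if}$, followed by monotonicity of $\truncsup$ and an appeal to \eqref{i:T5prime}. In fact you state the trivial direction with the correct orientation ($\truncsup_{i\in\omega}^f(\cdots)\leq f$), whereas the paper's opening sentence writes this inequality reversed, which is evidently a typo.
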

\begin{proof}
	Trivially, $f\leq\truncsup_{i\in\omega}^f\left(if-\truncsup_{k\in\omega}^{if}\overline{f_k}\right)$.
	We prove the opposite inequality.
	By \eqref{i:T3}, for every $k\in \omega$, we have $\overline{f_k}\land (if)\leq \overline{if}$, and therefore we have $\truncsup_{k\in\omega}^{if}\overline{f_k}=\sup_{i\in \omega }\left\{\overline{f_k}\land (if)\right\}\leq \overline{if}.$
	Hence, $if-\truncsup_{k\in\omega}^{if}\overline{f_k}\geq if-\overline{if}$.
	Therefore, we have $\truncsup_{i\in\omega}^f\left(if-\truncsup_{k\in\omega}^{if}\overline{f_k}\right)\geq \truncsup_{i\in\omega}^f\left(if-\overline{if}\right)\stackrel{\text{\eqref{i:T5prime}}}{=}f$.
\end{proof}
\begin{lemma}\label{l:T1}
	Let $G$ be an abelian $\ell$-group, let $a\in G$ and let $u\in G^+$.
	Then, $(a^+\land u)-a^-=a\land u$.
\end{lemma}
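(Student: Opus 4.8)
The plan is to reduce the claimed identity to the disjointness of $a^+$ and $a^-$, using throughout that in an abelian $\ell$-group addition distributes over the lattice operations, i.e.\ $(x\land y)+c=(x+c)\land(y+c)$ (see \cite{BKW}). First I would translate the left-hand side by $-a^-$: since $a=a^+-a^-$, this gives
\[
	(a^+\land u)-a^- = (a^+-a^-)\land(u-a^-) = a\land(u-a^-).
\]
Thus it suffices to prove $a\land(u-a^-)=a\land u$, which I would do by establishing the two inequalities separately.

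The inequality $a\land(u-a^-)\leq a\land u$ is immediate, since $a^-\in G^+$ forces $u-a^-\leq u$ and hence $a\land(u-a^-)\leq a\land u$. For the reverse inequality I would note that $a\land u\leq a$ already, so it remains only to check $a\land u\leq u-a^-$, equivalently $(a\land u)+a^-\leq u$. Translating once more by $a^-$ and using $a+a^-=a^+$, this rewrites as
\[
	a^+\land(u+a^-)\leq u.
\]

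This last inequality is where the real content lies, and I expect it to be the main obstacle. The plan is to extract it from the disjointness $a^+\land a^-=0$: setting $b\df \bigl(a^+\land(u+a^-)\bigr)-u$, one has on the one hand $b\leq (u+a^-)-u=a^-$, and on the other hand $b\leq a^+-u\leq a^+$ (using $u\in G^+$); hence $b\leq a^+\land a^-=0$, which is exactly $a^+\land(u+a^-)\leq u$. Combining the two inequalities gives $a\land(u-a^-)=a\land u$, and therefore $(a^+\land u)-a^-=a\land u$. Everything apart from this short disjointness argument is routine manipulation with the translation-invariance of $\land$ and $\lor$, so I would keep the focus on that step.
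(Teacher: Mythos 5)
Your proof is correct, and it diverges from the paper's after the first step. The paper's argument is a single equational chain: after the same initial translation $(a^+\land u)-a^-=(a^+-a^-)\land(u-a^-)=a\land(u-a^-)$, it rewrites $-a^-$ as $a\land 0$, distributes once more to get $u+(a\land 0)=(u+a)\land u$, and then absorbs the middle term, $a\land(u+a)\land u=a\land u$, since $u\geq 0$ forces $a\leq u+a$. You instead establish $a\land(u-a^-)=a\land u$ by proving the two inequalities separately, reducing the nontrivial one via translation to $a^+\land(u+a^-)\leq u$ and settling that by the disjointness $a^+\land a^-=0$. Both routes are valid and use only standard facts about abelian $\ell$-groups. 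The paper's version is shorter and stays purely equational (which fits its varietal setting, where identities are the currency); it needs nothing beyond $a=a^+-a^-$, the identity $-a^-=a\land 0$, distributivity of $+$ over $\land$, and $u\geq 0$. Your version is slightly longer but isolates exactly where the order-theoretic content sits: everything is routine translation-invariance except the single inequality $a^+\land(u+a^-)\leq u$, which you correctly trace back to the disjointness of the positive and negative parts. One small remark: the disjointness fact $a^+\land a^-=0$ is itself usually derived by the same kind of distributivity manipulation the paper uses directly, so in terms of logical economy the two proofs are close to equivalent; yours just packages that manipulation inside a quoted standard lemma rather than redoing it inline.
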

\begin{proof}
	$(a^+\land u)-a^-=(a^+-a^-)\land (u-a^-)=a\land (u+(a\land 0))=a\land (u+a)\land u=a\land u.$
\end{proof}
\begin{lemma}\label{l:the big step}
	Let $G$ be a countably generated Dedekind $\sigma$-complete truncated $\ell$-group.
	Then there exist a set $X$, a  $\sigma$-ideal $\mathcal{I}$ of subsets of $X$, an injective $\sigma$-continuous $\ell$-homomorphism $\iota\colon G\hookrightarrow \RXI$ and an element $u\in \RXI$ such that, for every $f\in G$, 
	\[
		\iota\left(\overline{f}\right)=\iota(f)\land u.
	\]
\end{lemma}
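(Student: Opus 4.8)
The plan is to start from the Loomis--Sikorski embedding for $\ell$-groups (Corollary~\ref{c:Nepalese groups}) and then manufacture $u$ as a countable supremum of truncated generators. First I would fix a countable generating set of $G$ and let $D$ be the $\ell$-subgroup it generates; since $\ell$-group operations are finitary, $D$ is countable, and I write $D^+\df D\cap G^+$. Corollary~\ref{c:Nepalese groups} yields an injective $\sigma$-continuous $\ell$-homomorphism $\iota\colon G\hookrightarrow\RXI$. For each $d\in D^+$ I fix representatives $g_d,h_d\colon X\to\R$ of $\iota(d)$ and $\iota(\overline d)$ with $0\le h_d\le g_d$ holding $\mathcal{I}$-almost everywhere (possible since $0\le\overline d\le d$ by \eqref{i:T2} and \eqref{i:T3}), choosing $g_{nd}\df n g_d$, and I set $U(x)\df\sup_{d\in D^+}h_d(x)\in[0,+\infty]$.

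The candidate is $u\df\sup_{d\in D^+}\iota(\overline d)$, so the crux is to show $U<+\infty$ $\mathcal{I}$-almost everywhere, which is exactly what makes this supremum a genuine element of $\RXI$. For each fixed $e\in D^+$, axiom \eqref{i:T3} gives $\overline d\land e\le\overline e$ for all $d$, so $w_e\df\sup_{d\in D^+}(\overline d\land e)$ exists in $G$ and is $\le\overline e$; pushing this through $\iota$ (via $\sigma$-continuity and Lemma~\ref{l:admits sup}) shows that $\iota(w_e)$ is represented by the pointwise supremum $\sup_{d}\min(h_d,g_e)=\min(U,g_e)$, and $w_e\le\overline e$ yields $\min(U(x),g_e(x))\le h_e(x)$ almost everywhere. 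Now on $B\df\{U=+\infty\}$ I pick $d$ with $g_d(x)>0$; taking $e=nd$ gives $n g_d(x)=\min(U(x),g_{nd}(x))\le h_{nd}(x)\le g_{nd}(x)=n g_d(x)$, hence $h_{nd}=g_{nd}$ at $x$ for every $n$. Feeding this into \eqref{i:T5prime}, namely $d=\truncsup_{n\in\omega}^d(nd-\overline{nd})$, forces $\iota(d)(x)=\sup_n\min\bigl(g_{nd}(x)-h_{nd}(x),g_d(x)\bigr)=0$, contradicting $g_d(x)>0$. Covering $B$ by the countably many sets on which a single $d\in D^+$ witnesses this dichotomy, each such set lies in $\mathcal{I}$, so $B\in\mathcal{I}$ and $u$ is a well-defined element of $\RXI$ with $u\ge 0$.

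With $u$ in hand I would verify $\iota(\overline g)=\iota(g)\land u$ first for $g\in G^+$. For the inequality $\le$: by Lemma~\ref{l:bounded} the element $g$ is bounded by a finite sum of absolute values of generators, hence $g\le d^*$ for some $d^*\in D^+$, and monotonicity of truncation (if $0\le g\le d^*$ then $\overline g=d^*\land\overline g\le\overline{d^*}$ by \eqref{i:T3}) gives $\iota(\overline g)\le\iota(\overline{d^*})\le u$; together with $\iota(\overline g)\le\iota(g)$ this is $\iota(\overline g)\le\iota(g)\land u$. For $\ge$: distributivity (Lemma~\ref{l:distriutivity a la BKW}) gives $\iota(g)\land u=\sup_{d}\bigl(\iota(g)\land\iota(\overline d)\bigr)=\sup_d\iota(g\land\overline d)\le\iota(\overline g)$, using $g\land\overline d\le\overline g$ from \eqref{i:T3}. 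Finally I pass to arbitrary $f\in G$ using the extension axiom \eqref{i:T1} together with Lemma~\ref{l:T1}: $\iota(\overline f)=\bigl(\iota(f)^+\land u\bigr)-\iota(f)^-=\iota(f)\land u$. I expect the main obstacle to be the almost-everywhere finiteness of $U$, i.e.\ that the ``clipping level'' $u$ is real-valued; this is precisely where Ball's archimedean axiom \eqref{i:T5}/\eqref{i:T5prime} is indispensable, whereas everything else reduces to \eqref{i:T3}, distributivity, and the boundedness supplied by Lemma~\ref{l:bounded}.
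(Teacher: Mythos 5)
Your proof is correct and follows essentially the same route as the paper's: the embedding from Corollary~\ref{c:Nepalese groups}, a candidate $u$ given by the countable supremum of the truncations of a countable cofinal family (your $D^+$ plays the role of the paper's set $F$ of finite sums $\lvert s_0\rvert+\cdots+\lvert s_{n-1}\rvert$), almost-everywhere finiteness of that supremum forced by \eqref{i:T3} together with \eqref{i:T5prime}, existence of $u$ via Lemma~\ref{l:admits sup}, and the same final verification via Lemma~\ref{l:bounded}, Lemma~\ref{l:distriutivity a la BKW}, \eqref{i:T1} and Lemma~\ref{l:T1}. The only divergence is packaging: where the paper isolates the finiteness mechanism in Lemma~\ref{l:doppio bigvee}, you recover exactly the same content by combining the \eqref{i:T3}-based bound $\sup_{d}(\overline{d}\land e)\leq \overline{e}$ with \eqref{i:T5prime} pointwise on the exceptional set.
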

\begin{proof}
	By Corollary~\ref{c:Nepalese groups}, there exist a set $X$, a  $\sigma$-ideal $\mathcal{I}$ of subsets of $X$ and an injective $\sigma$-continuous $\ell$-homomorphism $\iota\colon G\hookrightarrow \RXI$.
	
	Let $S$ be a countable generating set of $G$ and let $F\df\{ \lvert s_0\rvert + \cdots + \lvert s_{n-1}\rvert\mid s_0,\dots,s_{n-1}\in S\}$.
	Let us enumerate $F$ as $F=\{f_0,f_1,f_2,\dots \}$.
	We shall prove that the set $\left\{\iota\left(\overline{f_n}\right)\mid n\in \omega \right\}$, admits a supremum $u\in \RXI$ that satisfies the statement of the lemma.
	
	By Lemma~\ref{l:doppio bigvee}, for each $n\in \omega$, we have $\overline{f_n}=\truncsup_{i\in\omega}^{\overline{f_n}}\left(i\overline{f_n}-\truncsup_{k\in \omega}^{i\overline{f_n}}\overline{f_k}\right)$.	Since $\iota$ is a $\sigma$-continuous $\ell$-homomorphism, using Proposition~\ref{p:sigma-cont}, we have the following.
	\begin{enumerate}
		\item\label{i:1} For each $n\in \omega$, $\iota(f_n)=\truncsup_{i\in\omega}^{\iota(f_n)}\left(i\iota(f_n)-\truncsup_{k\in \omega}^{i\iota(f_n)}\iota\left(\overline{f_k}\right)\right).$
	\end{enumerate}
	For every $n\in \omega$, let $g_n\in \R^X$ be such that $\left[{g}_n\right]_{\mathcal{I}}=\iota\left(\overline{f_n}\right)$.
	Then, by \eqref{i:1}, for $\mathcal{I}$-almost every $x\in X$, the following conditions hold.
	\begin{enumerate}[label = (\arabic*$'$), ref = \arabic*$'$]
		\item \label{i:1prime} For each $n\in\omega$, $g_n(x)=\truncsup_{i\in\omega}^{g_n(x)}\left(ig_n(x)-\truncsup_{k\in \omega}^{ig_n(x)}{g}_k(x)\right).$
	\end{enumerate}
	Let $x$ be such that \eqref{i:1prime} hold.
	Suppose by way of contradiction that $\sup_{n\in \omega  }{g}_n(x)=\infty$.
	Then there exists $n\in \omega$ such that ${g}_n(x)>0$.
	Therefore, we have $g_n(x)=\truncsup_{i\in\omega}^{g_n(x)}\left(ig_n(x)-\truncsup_{k\in \omega}^{ig_n(x)}{g}_k(x)\right)>0$, which implies that there exists $i\in \omega$ such that $ig_n(x)-\truncsup_{k\in \omega}^{ig_n(x)}{g}_k(x)>0$.
	Thus, $\truncsup_{k\in \omega}^{ig_n(x)}{g}_k(x)<ig_n(x)$.
	But $\sup_{n\in \omega  }{g}_n(x)=\infty$ implies $\truncsup_{k\in \omega}^{ig_n(x)}{g}_k(x)=ig_n(x)$, a contradiction.
	Therefore, $\sup_{n\in \omega  }{g}_n(x)\in \R$ holds for each $x\in X$ satisfying (1$'$), and thus for $\mathcal{I}$-almost every $x\in X$.
	By Lemma~\ref{l:admits sup}, the set $\left\{[{g}_n]_\mathcal{I}\mid n\in \omega\right\}=\left\{\iota\left(\overline{f_n}\right)\mid n\in \omega \right\}$ admits a supremum $u$.
	
	Let $f\in G^+$.
	Then,
	\[
		\iota(f)\land u=\iota(f)\land \sup_{n\in \omega  }\iota\left(\overline{f_n}\right)\stackrel{\text{Lem.~\ref{l:distriutivity a la BKW}}}{=}\sup_{n\in \omega  }\left\{\iota(f)\land \iota\left(\overline{f_n}\right) \right\}=\sup_{n\in \omega  }\left\{\iota\left(f\land \overline{f_n}\right) \right\}\stackrel{\text{\eqref{i:T3}}}{\leq}\iota\left(\overline{f}\right).
	\]
	For the opposite inequality, by Lemma~\ref{l:bounded}  there exists $m\in \omega$ such that $\overline{f}\leq f_m$.
	Then $\overline{f}=\overline{f}\land f_m\stackrel{\text{\eqref{i:T3}}}{\leq}\overline{f_m}$.
	Therefore $\iota\left(\overline{f}\right)\leq \iota\left(\overline{f_m}\right)\leq u$, and moreover $\iota\left(\overline{f}\right)\leq \iota(f)$ by \eqref{i:T3}.
	Thus, $\iota\left(\overline{f}\right)\leq \iota(f)\land u$.
	For an arbitrary $f\in G$, $\overline{f}=\overline{f^+}-f^-$ by \eqref{i:T1}, hence $\iota\left(\overline{f}\right)=\iota\left(\overline{f^+}\right)-\iota\left(f^-\right)=\left(\iota\left(f^+\right)\land u\right)-\iota\left(f^-\right)\stackrel{\text{Lem.\ }\ref{l:T1}}{=}\iota(f)\land u$.
\end{proof}
Let $G$ be a Dedekind $\sigma$-complete $\ell$-group, let $H\seq G$, and let $u\in G$.
We say that $u$ is a \emph{weak unit for $H$} if $u\geq 0$ and, for every $h\in H$,
\[
	\lvert h\rvert=\truncsup_{n\in \omega}^{\lvert h\rvert}n(\lvert h\rvert\land u).
\]
\begin{remark}
	We will see in Lemma~\ref{l:equation for weak unit} that a weak unit for $G$ in the sense above is the same as a weak unit of $G$ in the usual sense.
\end{remark}
\begin{lemma}\label{l:may be rescaled to 1}
	Let $Y$ be a set, $\mathcal{J}$ a $\sigma$-ideal of subsets of $Y$, $H\seq \frac{\R^Y}{\mathcal{J}}$ an $\ell$-subgroup, and $u\in \frac{\R^Y}{\mathcal{J}}$ a weak unit for $H$.
	Then, there exists a set $X$, a $\sigma$-ideal $\mathcal{I}$ of subsets of $X$, and a $\sigma$-continuous $\ell$-homomorphism $\psi\colon \frac{\R^Y}{\mathcal{J}}\to \RXI$ such that the restriction of $\psi$ to $H$ is injective and $\psi(u)=[1]_\mathcal{I}$.
\end{lemma}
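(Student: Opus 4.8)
The plan is to realise $\psi$ as pointwise division by a representative of $u$, carried out on the set where that representative is strictly positive. First I would fix a representative $\tilde u\colon Y\to\R$ of $u$ with $\tilde u\geq 0$ everywhere; this is possible since $u\geq 0$, so replacing any representative by its positive part alters it only on a $\mathcal{J}$-null set. Set $X\df\{y\in Y\mid \tilde u(y)>0\}$ and $\mathcal{I}\df\{A\cap X\mid A\in\mathcal{J}\}$; a routine verification shows $\mathcal{I}$ is a $\sigma$-ideal of subsets of $X$. I would then define $\psi\colon\frac{\R^Y}{\mathcal{J}}\to\RXI$ by $\psi([f]_{\mathcal{J}})\df[\,y\mapsto f(y)/\tilde u(y)\,]_{\mathcal{I}}$, the chosen representative being the function $X\to\R$, $y\mapsto f(y)/\tilde u(y)$, which is well defined pointwise because $\tilde u(y)>0$ on $X$. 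Since for each $y\in X$ the rescaling $t\mapsto t/\tilde u(y)$ is an $\ell$-isomorphism of $\R$, and restriction $\R^Y\to\R^X$ is an $\ell$-homomorphism, the assignment $f\mapsto(f(y)/\tilde u(y))_{y\in X}$ preserves $+$, $\lor$ and $\land$ pointwise; well-definedness on the quotient follows from $A\in\mathcal{J}\Rightarrow A\cap X\in\mathcal{I}$. Thus $\psi$ is an $\ell$-homomorphism, and $\psi(u)=[1]_{\mathcal{I}}$ because $\tilde u(y)/\tilde u(y)=1$ for every $y\in X$.

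For $\sigma$-continuity I would use that, in a quotient of $\R^Z$ by a $\sigma$-ideal, a countable family admits a supremum precisely when its pointwise supremum is finite almost everywhere, in which case the supremum is represented by that pointwise supremum: this is the content of Lemma~\ref{l:admits sup} together with the observation that any upper bound dominates the pointwise supremum off a null set. Consequently, if $[f]=\sup_n[f_n]$ in $\frac{\R^Y}{\mathcal{J}}$, then $f=\sup_n f_n$ holds $\mathcal{J}$-almost everywhere, and dividing by the strictly positive $\tilde u$ on $X$ preserves this pointwise supremum; hence $\psi([f])=\sup_n\psi([f_n])$, and $\psi$ is $\sigma$-continuous. (Alternatively, one may invoke Proposition~\ref{p:sigma-cont} after the same pointwise computation shows that $\psi$ preserves $\truncsup$.)

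The crux is the injectivity of $\psi|_H$, and this is exactly where the weak-unit hypothesis enters. Suppose $h\in H$ satisfies $\psi([h]_{\mathcal{J}})=0$. Since $\tilde u>0$ on $X$, this already forces $h=0$ for $\mathcal{J}$-almost every $y\in X$; it remains to control $h$ on $Y\setminus X=\{\tilde u=0\}$. Spelling out the defining identity $\lvert h\rvert=\truncsup_{n\in\omega}^{\lvert h\rvert}n(\lvert h\rvert\land u)$ of a weak unit pointwise on representatives $\tilde h$ of $h$ and $\tilde u$ of $u$, at $\mathcal{J}$-almost every $y$ with $\tilde u(y)=0$ we have $\lvert\tilde h(y)\rvert\land\tilde u(y)=\lvert\tilde h(y)\rvert\land 0=0$, so the right-hand side collapses to $0$ and forces $\tilde h(y)=0$. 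Hence $h=0$ for $\mathcal{J}$-almost every $y\in Y\setminus X$ as well, so $[h]_{\mathcal{J}}=0$; as $H$ is an $\ell$-subgroup it suffices to trivialise the kernel, whence $\psi|_H$ is injective.

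I expect the main obstacle to be precisely this last step: extracting, from the single algebraic identity witnessing that $u$ is a weak unit, the pointwise vanishing of $h$ on the zero set of $\tilde u$. The one technical point requiring care is the bookkeeping of the several $\mathcal{J}$-null exceptional sets — one arising from $\psi([h])=0$, and one from the almost-everywhere validity of the weak-unit identity — and the remark that their union is again $\mathcal{J}$-null, so that the two partial conclusions combine to give $[h]_{\mathcal{J}}=0$.
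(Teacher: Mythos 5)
Your proof is correct and follows essentially the same route as the paper: the same support set $X=\{\tilde u>0\}$, the same $\sigma$-ideal $\mathcal{I}=\{A\cap X\mid A\in\mathcal{J}\}$, and the same use of the weak-unit identity, evaluated pointwise at points where the representative of $u$ vanishes, to obtain injectivity on $H$. The only difference is packaging: the paper factors $\psi$ as a restriction map $\rho$ (obtained from the universal property of the quotient, hence automatically a $\sigma$-continuous morphism) followed by the rescaling isomorphism $\eta$ induced by $f\mapsto f/v$, whereas you define the division map in one step and verify the $\ell$-homomorphism property and $\sigma$-continuity by hand, the latter via the pointwise description of countable suprema in quotients by $\sigma$-ideals (Lemma~\ref{l:admits sup} plus its converse, which you correctly justify).
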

\begin{proof}
	Let $v\in \R^Y$ be such that $[v]_{\mathcal{J}}=u$.
	Since $u\geq 0$, we may choose $v\geq 0$.
	Let $X\df\{y\in Y\mid v(y)>0 \}$.
	Let $\mathcal{I}\df \{J\cap X\mid J\in \mathcal{J} \}=\{J\in\mathcal{J}\mid J\seq X \}$.
	Let $(\;\cdot\;)_{\mid X}\colon \R^Y\to \R^X$ be the restriction map that sends $f\in \R^Y$ to $f_{\mid X}\in \R^X$, where $f_{\mid X}(x)=f(x)$ for each $x\in X$.
	Write $[\;\cdot\;]_\mathcal{J}\colon \R^Y\twoheadrightarrow \frac{\R^Y}{\mathcal{J}}$ for the natural quotient map, and similarly for $[\;\cdot\;]_\mathcal{I}\colon \R^X\twoheadrightarrow \frac{\R^X}{\mathcal{I}}$.
	Since $\ker{[\;\cdot\;]_\mathcal{J}}\subseteq \ker{\left([\;\cdot\;]_\mathcal{I}\circ (\;\cdot\;)_{\mid X}\right)}$, by the universal property of the quotient there exists a unique $\sigma$-continuous $\ell$-homomorphism $\rho\colon \frac{\R^Y}{\mathcal{J}}\to \RXI$ such that the following diagram commutes.
	\[ 
		\begin{tikzpicture}[node distance=1.7 cm, auto]
			\node (11) {$\R^Y$};
			\node (12) [right of=11]{$\R^X$};
			\node (21) [below of=11]{$\frac{\R^Y}{\mathcal{J}}$};
			\node (22) [right of=21]{$\RXI$};
			
			{\draw[->>] (11) to node {$(\;\cdot\;)_{\mid X}$}  (12);}
			{\draw[->>] (11) to node [swap]{$[\;\cdot\;]_\mathcal{J}$}  (21);}
			{\draw[->>] (12) to node {$[\;\cdot\;]_\mathcal{I}$}  (22);}
			{\draw[->, dashed] (21) to node {$\rho$}  (22);}
		\end{tikzpicture}
	\]
	We claim that the restriction of $\rho$ to $H$ is injective.
	Indeed, let $h\in H^+$ be  such that $\rho(h)=0$.
	Let $g\in \R^Y$ be such that $[g]_{\mathcal{J}}=h$.
	Since $h\geq 0$, we may choose $g\geq 0$.
	We have that $[g_{\mid X}]_{\mathcal{I}}=0$.
	Therefore, for $\mathcal{I}$-almost every $x\in X$, $g(x)=0$.
	Therefore, for $\mathcal{J}$-almost every $y\in Y$, $g(y)=0$ or $y\in Y\setminus X$, i.e., $g(y)=0$ or $v(y)=0$.
	Since
	$ h=\truncsup_{n\in \omega}^{ h}n( h\land u)$, we have  
	$g(y)=\truncsup_{n\in \omega}^{ g(y)}n( g(y)\land v(y))$ for $\mathcal{J}$-almost every $y\in Y$.
	Therefore, for $\mathcal{J}$-almost every $y\in Y$, if $v(y)=0$, then $ g(y)=\truncsup_{n\in \omega}^{ g(y)}n( g(y)\land 0)=\truncsup_{n\in \omega}^{ g(y)}0=0$, i.e.\ $g(y)=0$.
	Hence, for $\mathcal{J}$-almost every $y\in Y$, $g(y)=0$.
	Thus, $h=0$.
	
	For every $\lambda\in \R^+\setminus\{0\}$, the function $\lambda(\;\cdot \;)\colon \R\to \R$ which maps $x$ to $\lambda x$ is an isomorphism of Dedekind $\sigma$-complete $\ell$-groups.
	Indeed, its inverse is the map $\frac
	{1}{\lambda}(\;\cdot \;)$.
	Then, the map $m\colon \R^X\to\R^X$ which maps $f$ to the function $m(f)$ defined by $(m(f))(x)=\frac{1}{v(x)}f(x)$ is an isomorphism of Dedekind $\sigma$-complete $\ell$-groups; indeed, its inverse is $m^{-1}\colon \R^X\to \R^X$ defined by $(m^{-1}(g))(x)=v(x)g(x)$.
	For every $f,g\in \R^X$, $[f]_\mathcal{I}=[g]_\mathcal{I}$ if, and only if, $[m(f)]_\mathcal{I}=[m(g)]_\mathcal{I}$.
	Hence, $\ker{[\;\cdot\;]_\mathcal{J}}=\ker{([\;\cdot\;]_\mathcal{J}\circ m)}$.
	Therefore, there exists an isomorphism $\eta\colon\RXI\xrightarrow{\sim}\RXI$ of Dedekind $\sigma$-complete $\ell$-groups which makes the following diagram commute.
	\[ 
		\begin{tikzpicture}[node distance=1.7 cm, auto]
			\node (11) {$\R^X$};
			\node (12) [right of=11]{$\R^X$};
			\node (21) [below of=11]{$\RXI$};
			\node (22) [right of=21]{$\RXI$};

			{\draw[->>] (11) to node [swap]{$[\;\cdot\;]_\mathcal{I}$}  (21);}
			{\draw[->>] (12) to node {$[\;\cdot\;]_\mathcal{I}$}  (22);}
			{\draw[->] (11) to node {$m$}  (12);}
			{\draw (11) to node [swap] {$\sim$}  (12);}
			{\draw[->, dashed] (21) to node {$\eta$}  (22);}
			{\draw (21) to node [swap]{$\sim$}  (22);}
		\end{tikzpicture}
	\]
	We have the following commutative diagram.
	\[ 
		\begin{tikzpicture}[node distance=1.7 cm, auto]
			\node (11) {$\R^Y$};
			\node (12) [right of=11]{$\R^X$};
			\node (21) [below of=11]{$\frac{\R^Y}{\mathcal{J}}$};
			\node (22) [right of=21]{$\RXI$};
			
			\node (13) [right of=12]{$\R^X$};
			\node (23) [right of=22]{$\RXI$};
			
			{\draw[->>] (11) to node {$(\;\cdot\;)_{\mid X}$}  (12);}
			{\draw[->>] (11) to node [swap]{$[\;\cdot\;]_\mathcal{J}$}  (21);}
			{\draw[->>] (12) to node {$[\;\cdot\;]_\mathcal{I}$}  (22);}
			{\draw[->] (21) to node {$\rho$}  (22);}
			
			{\draw[->>] (13) to node {$[\;\cdot\;]_\mathcal{I}$}  (23);}
			{\draw[->] (12) to node {$m$}  (13);}
			{\draw (12) to node [swap] {$\sim$}  (13);}
			{\draw[->] (22) to node {$\eta$}  (23);}
			{\draw (22) to node [swap]{$\sim$}  (23);}
		\end{tikzpicture}
	\]
	We set $\psi\df\eta\circ\rho$.
	Note that $m(v_{\mid X})\in \R^X$ is the function constantly equal to $1$: indeed, $m(v_{\mid X})(x)=\frac{1}{v(x)}v_{X}(x)=1$.
	Thus, $\psi(u)=\eta(\rho(u))=\eta(\rho([v]_{\mathcal{J}}))=[m(v_{X})]_{\mathcal{I}}=[1]_\mathcal{I}$.
	Since the restriction of $\rho$ to $H$ is injective, and $\eta$ is bijective, the restriction of $\psi$ to $H$ is injective.
\end{proof}
\begin{lemma}\label{l:L-S countably}
	Let $G$ be a countably generated Dedekind $\sigma$-complete truncated $\ell$-group.
	Then there exist a set $X$, a  $\sigma$-ideal $\mathcal{I}$ of subsets of $X$ and an injective $\sigma$-continuous $\ell$-homomorphism $\iota\colon G\hookrightarrow \RXI$ such that, for every $f\in G$, $\iota(\overline{f})=\iota(f)\land [1]_\mathcal{I}$.
\end{lemma}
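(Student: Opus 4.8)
The plan is to derive this from the two preceding lemmas by composition: Lemma~\ref{l:the big step} already represents $G$ so that the truncation becomes meet with \emph{some} positive element $u$, and Lemma~\ref{l:may be rescaled to 1} lets us rescale $u$ to the constant function $1$. First I would apply Lemma~\ref{l:the big step} to obtain a set $Y$, a $\sigma$-ideal $\mathcal{J}$ of subsets of $Y$, an injective $\sigma$-continuous $\ell$-homomorphism $\iota_0\colon G\hookrightarrow \frac{\R^Y}{\mathcal{J}}$ and an element $u\in \frac{\R^Y}{\mathcal{J}}$ such that $\iota_0(\overline{f})=\iota_0(f)\land u$ for every $f\in G$. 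I set $H\df\iota_0(G)$, which is an $\ell$-subgroup of $\frac{\R^Y}{\mathcal{J}}$.

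The one step with genuine content is to check that $u$ is a weak unit for $H$ in the sense defined above. Taking $f=0$, and using $\overline{0}=0$ (which follows from \eqref{i:T2} and \eqref{i:T3}), gives $0=\iota_0(\overline{0})=\iota_0(0)\land u=0\land u$, so $u\geq 0$. For the supremum condition, let $h\in H$, say $h=\iota_0(g)$; then $\lvert h\rvert=\iota_0(\lvert g\rvert)$, so it suffices to treat $f\df\lvert g\rvert\in G^+$. Using $\iota_0(f)\land u=\iota_0(\overline{f})$, that $\iota_0$ is a group homomorphism, and that $\iota_0$ preserves $\truncsup$ by Proposition~\ref{p:sigma-cont}, I compute
\[
	\truncsup_{n\in\omega}^{\iota_0(f)} n\left(\iota_0(f)\land u\right)=\truncsup_{n\in\omega}^{\iota_0(f)}\iota_0\left(n\overline{f}\right)=\iota_0\left(\truncsup_{n\in\omega}^{f}n\overline{f}\right)\stackrel{\text{\eqref{i:T4prime}}}{=}\iota_0(f).
\]
Hence $\lvert h\rvert=\truncsup_{n\in\omega}^{\lvert h\rvert}n(\lvert h\rvert\land u)$ for every $h\in H$, so $u$ is a weak unit for $H$.

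Finally I would invoke Lemma~\ref{l:may be rescaled to 1} with the data $(Y,\mathcal{J},H,u)$ to obtain a set $X$, a $\sigma$-ideal $\mathcal{I}$ of subsets of $X$, and a $\sigma$-continuous $\ell$-homomorphism $\psi\colon \frac{\R^Y}{\mathcal{J}}\to \RXI$ whose restriction to $H$ is injective and which satisfies $\psi(u)=[1]_\mathcal{I}$. Setting $\iota\df\psi\circ\iota_0$ then yields an $\ell$-homomorphism that is injective (since $\iota_0$ is injective and the restriction of $\psi$ to $H=\iota_0(G)$ is injective) and $\sigma$-continuous (as a composition of $\sigma$-continuous maps). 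Since $\psi$ preserves $\land$, for every $f\in G$ we get
\[
	\iota(\overline{f})=\psi(\iota_0(\overline{f}))=\psi(\iota_0(f)\land u)=\psi(\iota_0(f))\land\psi(u)=\iota(f)\land [1]_\mathcal{I},
\]
which is exactly the required conclusion. The main obstacle is the weak-unit verification in the middle paragraph, which is where Axiom~\eqref{i:T4prime} and the $\sigma$-continuity of $\iota_0$ are used; the rest is a routine stringing-together of the two lemmas.
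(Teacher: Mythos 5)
Your proposal is correct and follows essentially the same route as the paper: apply Lemma~\ref{l:the big step}, verify via \eqref{i:T4prime} and $\sigma$-continuity that $u$ is a weak unit for the image $H=\iota_0(G)$, then rescale with Lemma~\ref{l:may be rescaled to 1} and compose. The only cosmetic difference is that you run the weak-unit computation from the $\truncsup$ expression back to $\iota_0(f)$, whereas the paper applies \eqref{i:T4prime} inside $G$ and pushes it forward through $\varphi$; the content is identical.
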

\begin{proof}
	By Lemma~\ref{l:the big step}, there exist a set $Y$, a  $\sigma$-ideal $\mathcal{J}$ of subsets of $Y$, an injective $\sigma$-continuous $\ell$-homomorphism $\varphi\colon G\hookrightarrow \frac{\R^Y}{\mathcal{J}}$ and an element $u\in \frac{\R^Y}{\mathcal{J}}$ such that, for every $f\in G$, 
	\[
	\varphi\left(\overline{f}\right)=\varphi(f)\land u.
	\]
	First, $0\leq \varphi(\overline{0})=0\land u$, hence $u\geq0$.
	Since, for all $f\in G$, $\lvert f\rvert=\truncsup_{n\in \omega}^{\lvert f\rvert}n{\overline{\lvert f\rvert}}$ by \eqref{i:T4prime}, we have $\lvert \varphi(f)\rvert=\truncsup_{n\in \omega}^{\lvert \varphi(f)\rvert}n(\lvert \varphi(f)\rvert \land u)$.
	Therefore, setting $H$ equal to the image of $G$, $u$ is a weak unit for $H$.
	By Lemma~\ref{l:may be rescaled to 1}, there exist a set $X$, a $\sigma$-ideal $\mathcal{I}$ of subsets of $X$, and a $\sigma$-continuous $\ell$-homomorphism $\psi\colon \frac{\R^Y}{\mathcal{J}}\to \RXI$ such that the restriction of $\psi$ to $H$ is injective and $\psi(u)=[1]_\mathcal{I}$.
	The function $\iota\df \psi\circ \varphi$ has the required properties.
\end{proof}
\begin{theorem}\label{t:R generates}
	The variety $\Gt$ of Dedekind $\sigma$-complete truncated $\ell$-groups is generated by $\R$.
\end{theorem}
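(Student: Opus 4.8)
The plan is to read ``$\R$ generates $\Gt$'' as $\Gt$ being the smallest variety containing $\R$, and to split the claim into its two inclusions. One of them is free: $\R$ is an object of $\Gt$ and $\Gt$ is a variety, hence closed under subalgebras, homomorphic images and products, so the variety generated by $\R$ is contained in $\Gt$. For the reverse inclusion I would use the infinitary version of Birkhoff's theorem (see \cite{Slominski}), by which the variety generated by $\R$ is exactly the class of models of $\mathrm{Id}(\R)$, the set of identities satisfied by $\R$. Thus it suffices to fix an identity $s\approx t$ that holds in $\R$ and prove that it holds in every object of $\Gt$.

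The first step I would carry out is a reduction to countably generated algebras. The signature of $\Gt$ consists of the finitary operations of $\ell$-groups, the unary $\trunc$, and the single operation $\truncsup$ of countably infinite arity; consequently every term is built from at most countably many variables, and the identity $s\approx t$ involves only countably many of them, which I may index by $\omega$. Suppose, towards a contradiction, that $s\approx t$ fails in some $G\in\Gt$ under an assignment $(a_n)_{n\in\omega}$ of its variables, and let $G'$ be the subalgebra of $G$ generated by $\{a_n\mid n\in\omega\}$. Being a subalgebra in the variety $\Gt$, $G'$ is again a Dedekind $\sigma$-complete truncated $\ell$-group, and its operations --- in particular $\truncsup$, which on $G'$ still computes the relevant countable suprema --- are the restrictions of those of $G$; hence $s\approx t$ still fails in $G'$ under $(a_n)_{n\in\omega}$. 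So $G'$ is a countably generated object of $\Gt$ in which $s\approx t$ fails.

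Finally I would invoke Lemma~\ref{l:L-S countably}: there is an injective $\sigma$-continuous $\ell$-homomorphism $\iota\colon G'\hookrightarrow\RXI$ with $\iota(\overline{f})=\iota(f)\land[1]_{\mathcal I}$ for all $f$, that is, a monomorphism of $\Gt$. The algebra $\RXI$ is a homomorphic image of the power $\R^X$, so it satisfies every identity valid in $\R$, in particular $s\approx t$. Since identities pass to subalgebras and $\iota$ embeds $G'$ into $\RXI$, the identity $s\approx t$ holds in $G'$ as well, contradicting the previous step. Hence no such $G$ exists, every identity of $\R$ holds throughout $\Gt$, and $\Gt$ coincides with the variety generated by $\R$. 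The step that requires the most care is the reduction to countably generated subalgebras: it is precisely the countable arity of the only infinitary primitive operation $\truncsup$ that forces each identity to depend on countably many variables and guarantees that the subalgebra generated by a countable set --- on which $\truncsup$ restricts and therefore reproduces the same suprema --- is a genuine Dedekind $\sigma$-complete subobject to which Lemma~\ref{l:L-S countably} applies.
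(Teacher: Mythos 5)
Your proof is correct and takes essentially the same route as the paper's: both exploit the countable arity of the signature to reduce a failing identity to a countably generated algebra $G'$, embed $G'$ into $\RXI$ via Lemma~\ref{l:L-S countably}, and conclude with the HSP/Birkhoff theorem for infinitary varieties from \cite{Slominski}. You merely make explicit what the paper leaves compressed, namely the trivial inclusion $\mathrm{HSP}(\R)\seq\Gt$ and the transfer of identities through powers, quotients and subalgebras.
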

\begin{proof}
	Let $G$ be a Dedekind $\sigma$-complete truncated $\ell$-group.
	Suppose that an equation $\tau=\rho$ (in the language of Dedekind $\sigma$-complete truncated $\ell$-groups) does not hold in $G$.
	Since $\tau$ and $\rho$ have countably many arguments, the equation $\tau=\rho$ does not hold in a countably generated Dedekind $\sigma$-complete truncated $\ell$-group $G'$.
	By Lemma~\ref{l:L-S countably}, $\tau=\rho$ does not hold in $\R$.
	The statement follows by the HSP Theorem for (infinitary) varieties (see Theorem~(9.1) in \cite{Slominski}).
\end{proof}
\begin{proof}[Proof of Theorem~\ref{t:LS gr}]
	Since the variety of Dedekind $\sigma$-complete truncated $\ell$-groups is generated by $\R$, there exists a set $X$, a $\Gt$-subalgebra $H\seq \R^X$, and a surjective morphism $\psi\colon H\to G$ of Dedekind $\sigma$-complete truncated $\ell$-groups.
	Let 
	\[
		\mathcal{I}\df\{A\seq X\mid \exists (f_n)_{n\in \omega}\seq \ker{\psi}: \forall a\in A\ \exists n\in\omega \text{ s.t.\ }f_n(a)\neq 0 \}.
	\]
	Note that $\mathcal{I}$ is a $\sigma$-ideal of subsets of $X$.
	Therefore we have the projection map $\R^X\to\RXI$ which is a morphism of Dedekind $\sigma$-complete truncated $\ell$-groups.
	If $f\in \ker{\psi}$, then $f(x)=0$ for $\mathcal{I}$-almost every $x\in X$.
	In other words, if $f\in \ker{\psi}$, then $[f]_{\mathcal{I}}=0$.
	For the universal property of quotients, there exists a morphism  $\iota\colon G\to \frac{R^X}{\mathcal{I}}$ of Dedekind $\sigma$-complete truncated $\ell$-groups such that the following diagram commutes.
	\[ 
		\begin{tikzpicture}[node distance=1.7 cm, auto]
			
			\node (00) {H};
			\node (01) [right of=00] {$\R^X$};
			\node (10) [below of=00]{$G$};
			\node (11) [right of=10]{$\RXI$};
			
			{\draw[right hook->] (00) to node {$\iota$}  (01);}
			{\draw[->>] (00) [swap]to node {$\psi$}  (10);}
			{\draw[->>] (01) to node {$[\;\cdot\;]_\mathcal{I}$}  (11);}
			{\draw[->] [dashed](10) to node {$\iota$}  (11);}
			
		\end{tikzpicture}
	\]
	Let $f\in H$ be such that $\iota(\psi(f))=[f]_{\mathcal{I}}=0$.
	Then there exists a set $A\in \mathcal{I}$ such that $f(x)=0$ for every $x\in X\setminus A$.
	Since $A\in \mathcal{I}$, there exists a sequence $(f_n)_{n\in \omega}$ of elements of $\ker{\psi}$ such that, for every $a \in A$, there exists $n \in \omega$ such that $f_n(a) \neq 0$.
	Let us show
	\begin{equation}\label{e:k,n}
		\lvert f\rvert =\truncsup_{n,k\in \omega}^{\lvert f\rvert}k\lvert f_n\rvert.
	\end{equation}
	Equation \eqref{e:k,n} holds if, and only if, for every $a\in X$, $\lvert f(a)\rvert =\truncsup_{n,k\in \omega}^{\lvert f(a)\rvert}k\lvert f_n(a)\rvert$.
	If $a\notin A$, then both sides equal $0$.
	If $a\in A$, then there exists $m\in \omega$ such that $f_m(a)\neq 0$, and therefore $\truncsup_{n,k\in \omega}^{\lvert f(a)\rvert}k\lvert f_n(a)\rvert\geq \truncsup_{k\in \omega}^{\lvert f(a)\rvert}k\lvert f_m(a)\rvert=\lvert f(a)\rvert$.
	Since the opposite inequality is trivial, \eqref{e:k,n} is shown.
	By \eqref{e:k,n}, 
	\[
		\lvert \psi(f)\rvert =\truncsup_{n,k\in \omega}^{\lvert \psi(f)\rvert}k\lvert \psi(f_n)\rvert\stackrel{f_n\in \ker{\psi}}{=}\truncsup_{n,k\in \omega}^{\lvert \psi(f)\rvert}0=0.
	\]
	Therefore $\psi(f)=0$, and thus $f\in \ker{\psi}$.
	This implies that $\iota$ is injective.
\end{proof}
%

%%%%%%%%%%%%%%%%%%%%%%%%%%%%%%%%%%%%%%%%%%%%%%%%%%%%%%%%%%%%%%%%%%%%%%%%%%%%%%%%%%%%
%SECTION
\section{$\R$ generates Dedekind $\sigma$-complete truncated Riesz spaces}\label{s:truncated are variety}

\begin{theorem}[Loomis-Sikorski for truncated Riesz spaces]\label{t:LS RS}
	Let $G$ be a  Dedekind $\sigma$-complete truncated Riesz space.
	Then there exist a set $X$, a  $\sigma$-ideal $\mathcal{I}$ of subsets of $X$, and an injective $\sigma$-continuous Riesz morphism $\iota\colon G\hookrightarrow \RXI$ such that, for every $f\in G$, $\iota\left(\overline{f}\right)=\iota(f)\land [1]_\mathcal{I}$.
\end{theorem}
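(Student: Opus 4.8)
The plan is to transcribe the proof of Theorem~\ref{t:LS gr} into the Riesz-space setting, replacing the $\ell$-group Loomis--Sikorski theorem (Corollary~\ref{c:Nepalese groups}) by its Riesz-space counterpart (Theorem~\ref{t:Nepalese}) at every step, and to verify that all the auxiliary constructions are compatible with scalar multiplication. Concretely, I would first establish the Riesz analogue of Lemma~\ref{l:L-S countably}: for a countably generated Dedekind $\sigma$-complete truncated Riesz space $G$ there exist a set $X$, a $\sigma$-ideal $\mathcal{I}$ of subsets of $X$, and an injective $\sigma$-continuous \emph{Riesz} morphism $\iota\colon G \hookrightarrow \RXI$ with $\iota(\overline{f}) = \iota(f) \land [1]_\mathcal{I}$. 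From this I would deduce, exactly as in Theorem~\ref{t:R generates}, that $\R$ generates the variety of Dedekind $\sigma$-complete truncated Riesz spaces, and finally I would derive Theorem~\ref{t:LS RS} from this generation statement by repeating the argument in the proof of Theorem~\ref{t:LS gr} verbatim. Throughout, the truncation identities \eqref{i:T4prime} and \eqref{i:T5prime} remain available, since a Dedekind $\sigma$-complete truncated Riesz space is in particular a Dedekind $\sigma$-complete truncated $\ell$-group.

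For the countably generated case I would reprove Lemmas~\ref{l:the big step} and~\ref{l:may be rescaled to 1} in the Riesz category. In the analogue of Lemma~\ref{l:the big step}, the initial embedding is now supplied by Theorem~\ref{t:Nepalese}, so it is already a $\sigma$-continuous Riesz morphism; the weak unit $u \df \sup_{n} \iota(\overline{f_n})$ is then built exactly as before, its existence resting only on $\sigma$-continuity and the lattice/group identities (Lemma~\ref{l:doppio bigvee}, \eqref{i:T3}, \eqref{i:T5prime}), so no new scalar-multiplication check is needed there. The one genuine adjustment is in the boundedness Lemma~\ref{l:bounded}: since the generating set must now be closed under scalar multiplication, the family of dominated elements has to allow a positive scalar factor. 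I would set $T \df \{h \in G \mid \exists \lambda \in \R^+,\ s_0,\dots,s_{n-1}\in S \text{ with } \lvert h\rvert \leq \lambda(\lvert s_0\rvert + \cdots + \lvert s_{n-1}\rvert)\}$, check that $T$ is a convex Riesz subspace closed under $\truncsup$ and $\trunc$ (the arguments of Lemma~\ref{l:bounded} being unaffected), and conclude $T = G$. The countable family $F \df \{q(\lvert s_0\rvert + \cdots + \lvert s_{n-1}\rvert) \mid q \in \Q,\, q \geq 0,\ s_0,\dots,s_{n-1}\in S\}$ still dominates every element of $G$, since any real scalar is bounded by a rational one, which is all the supremum construction requires.

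The heart of the matter---and the only place where scalar multiplication really intervenes---is the analogue of Lemma~\ref{l:may be rescaled to 1}. Here one passes to $X = \{v > 0\}$ and rescales by the weak unit via the pointwise multiplication map $m\colon \R^X \to \R^X$ given by $(m(f))(x) = \frac{1}{v(x)} f(x)$. The key observation is that, because $\frac{1}{v(x)} > 0$, the map $m$ commutes with scalar multiplication (that is, $m(\lambda f) = \lambda\, m(f)$) as well as with $+$ and $\land$, so it is an \emph{isomorphism of Dedekind $\sigma$-complete truncated Riesz spaces}, not merely of $\ell$-groups; consequently the induced maps $\rho$, $\eta$, and $\psi \df \eta \circ \rho$ are all Riesz morphisms. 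With these observations the proofs go through unchanged, yielding a $\sigma$-continuous Riesz morphism $\psi$ whose restriction to the image of $G$ is injective and satisfies $\psi(u) = [1]_\mathcal{I}$.

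Finally, the deduction of generation and of Theorem~\ref{t:LS RS} is purely formal. If a Riesz-space equation $\tau = \rho$ fails in some $G$, it fails in a countably generated subalgebra, hence---via the injective Riesz embedding just constructed---in some $\RXI$, hence at some point $x \in X$, hence in $\R$; by the HSP theorem for infinitary varieties, $\R$ generates. Then, as in the proof of Theorem~\ref{t:LS gr}, writing $G$ as a quotient $\psi\colon H \twoheadrightarrow G$ of a subalgebra $H \seq \R^X$ and setting $\mathcal{I} \df \{A \seq X \mid \exists (f_n)_{n\in\omega}\seq \ker{\psi}\ \forall a \in A\ \exists n\in\omega\ f_n(a)\neq 0\}$, the induced map $\iota\colon G \hookrightarrow \RXI$ is an injective $\sigma$-continuous Riesz morphism; and since truncation in the subalgebra $H \seq \R^X$ is the pointwise operation $f \mapsto f \land \mathbf{1}$, one obtains $\iota(\overline{f}) = \iota(f) \land [1]_\mathcal{I}$ for free. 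I expect the main obstacle to be bookkeeping rather than conceptual: one must confirm that every map occurring in the transcription is a Riesz morphism, the only substantive point being the positivity of $\frac{1}{v}$, which is exactly what makes the rescaling map $m$ scalar-linear.
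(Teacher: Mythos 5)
Your proposal is correct, but it takes a genuinely different route from the paper. The paper disposes of Theorem~\ref{t:LS RS} in two lines: it applies the already-established $\ell$-group statement (Theorem~\ref{t:LS gr}) to the underlying truncated $\ell$-group of $G$, obtaining an injective $\sigma$-continuous $\ell$-homomorphism $\iota$ with $\iota\left(\overline{f}\right)=\iota(f)\land [1]_\mathcal{I}$, and then upgrades $\iota$ to a Riesz morphism for free: $\RXI$ is Dedekind $\sigma$-complete, hence archimedean, and an additive lattice homomorphism into an archimedean Riesz space is automatically $\R$-homogeneous (Corollary~11.53 in \cite{Handana}). You instead transcribe the whole proof of Theorem~\ref{t:LS gr} into the Riesz category: re-proving Lemmas~\ref{l:bounded},~\ref{l:the big step} and~\ref{l:may be rescaled to 1} with Theorem~\ref{t:Nepalese} as input, deducing that $\R$ generates $\RSt$, and then repeating the quotient argument. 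This is sound; your scalar adjustments are the right checks, and in fact the modification of Lemma~\ref{l:bounded} is not even necessary, since $\lvert\lambda\rvert\left(\lvert s_0\rvert+\cdots+\lvert s_{n-1}\rvert\right)\leq \lceil \lvert\lambda\rvert\rceil \left(\lvert s_0\rvert+\cdots+\lvert s_{n-1}\rvert\right)$ is again a finite sum of generators, so the paper's set $T$ is already closed under scalar multiplication; your observation that the rescaling map $m$ is a Riesz isomorphism is the other substantive point, and it is correct. What your route buys: it stays inside the Riesz category throughout (where the input Theorem~\ref{t:Nepalese} already lives), avoids the appeal to the automatic-linearity result, and obtains Theorem~\ref{t:variety is truncated} as an intermediate step rather than as a corollary, reversing the paper's logical order. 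What it costs: an essentially complete duplication of the machinery of Section~\ref{s:LS}, which the paper's short reduction makes unnecessary.
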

\begin{proof}
	By Theorem~\ref{t:LS gr}, there exist a set $X$, a  $\sigma$-ideal $\mathcal{I}$ of subsets of $X$, and an injective $\sigma$-continuous $\ell$-homomorphism $\iota\colon G\hookrightarrow \RXI$ such that, for every $f\in G$, $\iota\left(\overline{f}\right)=\iota(f)\land [1]_\mathcal{I}$.
	Since $\RXI$ is Dedekind $\sigma$-complete, it is archimedean; by Corollary~11.53 in \cite{Handana}, $\iota$ is a Riesz morphism.
\end{proof}
We denote by $\RSt$ the variety of Dedekind $\sigma$-complete truncated Riesz spaces, whose primitive operations are $0$, $+$, $\lor$, $\lambda ({\;\cdot\;})$ (for each $\lambda \in \R$),  $\truncsup$, and $\trunc$, and whose axioms are the axioms of Riesz spaces, together with \eqref{i:TS1}, \eqref{i:TS2}, \eqref{i:TS3}, \eqref{i:T1}, \eqref{i:T2}, \eqref{i:T3}, \eqref{i:T4prime} and \eqref{i:T5prime}.

We can now obtain the first main result of Part~\ref{part:variety}, as a consequence of Theorem~\ref{t:LS RS}.
\begin{theorem}\label{t:variety is truncated}
	The variety $\RSt$ of Dedekind $\sigma$-complete truncated Riesz spaces is generated by $\R$.
\end{theorem}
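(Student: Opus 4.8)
The plan is to follow exactly the pattern of the proof of Theorem~\ref{t:R generates}, simply substituting the Loomis--Sikorski theorem for truncated $\ell$-groups by its Riesz-space counterpart, Theorem~\ref{t:LS RS}, which has just been established. Concretely, I would appeal to the HSP Theorem for infinitary varieties (Theorem~(9.1) in \cite{Slominski}): to show that $\R$ generates $\RSt$ it suffices to prove that every equation in the signature of $\RSt$ that fails in some $G\in\RSt$ already fails in $\R$.

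So suppose an equation $\tau=\rho$ fails in a Dedekind $\sigma$-complete truncated Riesz space $G$. As in Theorem~\ref{t:R generates}, the primitive operations of $\RSt$ have arity at most $\aleph_0$, so $\tau$ and $\rho$ involve only countably many variables; the failure is thus witnessed by an assignment of these variables to countably many elements of $G$, and hence $\tau=\rho$ already fails in the sub-$\RSt$-algebra $G'\seq G$ generated by those elements. This $G'$ is a countably generated object of $\RSt$. I would then apply Theorem~\ref{t:LS RS} to $G'$ to obtain a set $X$, a $\sigma$-ideal $\mathcal{I}$ of subsets of $X$, and an injective $\sigma$-continuous Riesz morphism $\iota\colon G'\hookrightarrow\RXI$ satisfying $\iota(\overline{f})=\iota(f)\land[1]_\mathcal{I}$ for all $f\in G'$. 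The key point is that this last identity expresses precisely that $\iota$ preserves the truncation operation $\trunc$ of $\RSt$: in $\RXI$ the truncation is computed pointwise, and so $\widetilde{\trunc}([f]_\mathcal{I})=[f]_\mathcal{I}\land[1]_\mathcal{I}$. Combined with $\sigma$-continuity, which by Proposition~\ref{p:sigma-cont} amounts to preservation of $\truncsup$, this makes $\iota$ an injective morphism of $\RSt$. Since an injective homomorphism reflects the failure of an equation, $\tau=\rho$ fails in $\RXI$ as well.

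It then remains to descend from $\RXI$ to $\R$. Here I would use that the quotient map $[\,\cdot\,]_\mathcal{I}\colon\R^X\twoheadrightarrow\RXI$ is a surjective morphism of $\RSt$ (well defined because $\mathcal{I}$ is closed under countable unions, as recorded before Remark~\ref{r:Dedekind is variety}), so that $\RXI$ is a homomorphic image of $\R^X$; and that $\R^X$, equipped with its coordinatewise operations, is the product in $\RSt$ of $X$-many copies of $\R$. As equations are preserved by products and by homomorphic images, any equation holding in $\R$ holds in $\R^X$ and hence in $\RXI$. Contrapositively, since $\tau=\rho$ fails in $\RXI$, it must fail in $\R$, which is exactly what the HSP Theorem requires.

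I expect no genuine obstacle: the substantive content is entirely carried by Theorem~\ref{t:LS RS}, and what remains is routine HSP bookkeeping. The only points deserving a line of care are the verification that the Loomis--Sikorski embedding is a morphism in the \emph{full} signature of $\RSt$ --- i.e.\ that $\iota(\overline{f})=\iota(f)\land[1]_\mathcal{I}$ genuinely encodes preservation of $\trunc$ --- and the observation that $\RXI$ lies in $\mathrm{HSP}(\R)$ as a homomorphic image of a power of $\R$.
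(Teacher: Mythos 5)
Your proof is correct, but it is more roundabout than the paper's. The paper's proof of Theorem~\ref{t:variety is truncated} is a direct application of Theorem~\ref{t:LS RS} to an \emph{arbitrary} $G\in\RSt$: the embedding $\iota\colon G\hookrightarrow\RXI$ (a Riesz morphism which is $\sigma$-continuous and satisfies $\iota(\overline{f})=\iota(f)\land[1]_\mathcal{I}$, hence an injective $\RSt$-morphism when $\RXI$ carries the structure induced from $\R$) exhibits $G$ as a subalgebra of a quotient of a power of $\R$, so $G$ lies in the variety generated by $\R$ with no further argument. Your route instead replays the proof pattern of Theorem~\ref{t:R generates}: reduction of a failing equation to a countably generated subalgebra, then the Loomis--Sikorski embedding, then the observation that $\RXI$ is a homomorphic image of the power $\R^X$, then the HSP Theorem. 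Every step of this is sound --- in particular you are right that $\iota(\overline{f})=\iota(f)\land[1]_\mathcal{I}$ is exactly preservation of the induced truncation, and that $\sigma$-continuity gives preservation of $\truncsup$ via Proposition~\ref{p:sigma-cont} --- but the reduction to countably generated algebras is superfluous here: it was needed in Theorem~\ref{t:R generates} only because, at that point of the development, the Loomis--Sikorski result available was Lemma~\ref{l:L-S countably}, which is restricted to countably generated algebras. Theorem~\ref{t:LS RS} carries no such restriction, so the per-equation bookkeeping and the countability detour can be dropped; what both proofs genuinely share, and what carries all the content, is the pair of facts that $G$ embeds $\RSt$-morphically into some $\RXI$ and that $\RXI$ lies in $\mathrm{HP}(\R)$.
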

\begin{proof}
	Let $G$ be a Dedekind $\sigma$-complete truncated Riesz space.
	By Theorem~\ref{t:LS RS}, there exist a set $X$, a  $\sigma$-ideal $\mathcal{I}$ of subsets of $X$, and an injective $\sigma$-continuous Riesz morphism $\iota\colon G\hookrightarrow \RXI$ such that, for every $f\in G$, $\iota\left(\overline{f}\right)=\iota(f)\land [1]_\mathcal{I}$.
	Regarding $\RXI$ as an object of $\RSt$ with the structure induced from $\R$, we conclude that $G$ is a subalgebra of a quotient of a power of $\R$.
\end{proof}
\begin{remark}\label{r:quasi-variety}
	From Theorem~7.4 in \cite{Abba}, it follows that $\R$ actually generates $\RSt$ as a quasi-variety, where quasi-equations are allowed to have countably many premises only.
\end{remark}
\begin{corollary}\label{c:main theorem 2}
	For any set $I$,
	\begin{equation*}
		\begin{split}
			\mathrm{F_t}(I) \df	\{	& f \colon \R^I \to \R \mid f \text{ is } \Cyl\left(\R^I\right) \text{-measurable and } \\
									& \exists J \seq I \text{ finite}, \exists(\lambda_j)_{j \in J} \seq \R^+: \lvert f \rvert \leq \sum_{j \in J} \lambda_{j} \lvert \pi_j\rvert \} = \\
								=\{	& f \colon \R^I \to \R \mid f \text{ preserves integrability}\}
		\end{split}
	\end{equation*}
	is the Dedekind $\sigma$-complete truncated Riesz space freely generated by the projections  $\pi_i\colon \R^I\to \R$ ($i\in I$).
\end{corollary}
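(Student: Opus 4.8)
The plan is to recognise $\mathrm{F_t}(I)$ as the $\RSt$-subalgebra of $\R^{\R^I}$ generated by the projections, and then to verify the universal property of the free algebra directly, the crux being a well-definedness argument powered by Theorem~\ref{t:variety is truncated}.

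First I would observe that the coincidence of the two descriptions of $\mathrm{F_t}(I)$ in the statement is exactly the equivalence of conditions (1) and (2) in Theorem~\ref{t:MAIN}, so it suffices to work with $\mathrm{F_t}(I)$ as the set of integrability-preserving operations $\R^I\to\R$. Since $\RSt$ is a variety, it is closed under products, so the power $\R^{\R^I}$, endowed with the pointwise operations $0$, $+$, $\lor$, $\lambda({\;\cdot\;})$, $\truncsup$ and $\trunc$, is an object of $\RSt$. Each projection $\pi_i$ preserves integrability and hence lies in $\mathrm{F_t}(I)$; moreover, by Theorem~\ref{t:generation}, $\mathrm{F_t}(I)$ is precisely the smallest subset of $\R^{\R^I}$ that contains the projections and is closed under these operations. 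Thus $\mathrm{F_t}(I)$ is the $\RSt$-subalgebra of $\R^{\R^I}$ generated by $\{\pi_i\mid i\in I\}$; in particular it is itself an object of $\RSt$, and every one of its elements is the value $t^{\R^{\R^I}}\!\left((\pi_i)_{i\in I}\right)$ of some (possibly infinitary) term $t$ in the variables $(x_i)_{i\in I}$.

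Next I would verify the universal property, which also yields existence of the free algebra. Uniqueness of any morphism extending $x_i\mapsto\pi_i$ is immediate, since the projections generate $\mathrm{F_t}(I)$. For existence, let $G$ be a Dedekind $\sigma$-complete truncated Riesz space and $(g_i)_{i\in I}$ a family in $G$. Given $f\in\mathrm{F_t}(I)$, pick a term $t$ with $f=t^{\R^{\R^I}}\!\left((\pi_i)_i\right)$ and set $\phi(f)\df t^{G}\!\left((g_i)_i\right)$. Because the operations on $\R^{\R^I}$ are pointwise and $\pi_i(v)=v_i$, the element $t^{\R^{\R^I}}\!\left((\pi_i)_i\right)$ is exactly the function $v\mapsto t^{\R}(v)$ on $\R^I$; hence two terms $s,t$ represent the same $f$ iff $s^{\R}$ and $t^{\R}$ agree at every $v\in\R^I$, that is, iff the equation $s=t$ holds in $\R$. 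By Theorem~\ref{t:variety is truncated}, $\R$ generates $\RSt$, so such an equation holds throughout $\RSt$ and in particular $s^{G}\!\left((g_i)_i\right)=t^{G}\!\left((g_i)_i\right)$. This is exactly what makes $\phi$ well defined; that $\phi$ is a $\RSt$-morphism with $\phi(\pi_i)=g_i$ is then read off from the definition of the term operations.

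I expect the only substantial point to be the well-definedness of $\phi$: it rests entirely on the implication ``$s^{\R}=t^{\R}$ pointwise on $\R^I$ $\Longrightarrow$ $\RSt\models s=t$'', i.e.\ on $\R$ generating the whole variety (Theorem~\ref{t:variety is truncated}), combined with the exact match, supplied by Theorem~\ref{t:generation}, between integrability-preserving operations and term functions over $\R$. The remaining verifications --- that $\R^{\R^I}$ is a $\RSt$-algebra, that $\phi$ preserves each operation, and uniqueness --- are routine consequences of working inside a variety and of the generation statement.
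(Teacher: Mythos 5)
Your proposal is correct and takes essentially the same approach as the paper: both identify $\mathrm{F_t}(I)$ with the $\RSt$-subalgebra of $\R^{\R^I}$ generated by the projections (via Theorems~\ref{t:MAIN} and~\ref{t:generation}) and then derive freeness from the fact that $\R$ generates $\RSt$ (Theorem~\ref{t:variety is truncated}). The only difference is that the paper invokes this last step as ``a standard result in general algebra'', whereas you inline the proof of that standard result, namely the term-evaluation and well-definedness argument showing that equations valid in $\R$ hold throughout the variety.
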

\begin{proof}
	By Theorem~\ref{t:variety is truncated}, the variety $\RSt$ of Dedekind $\sigma$-complete truncated Riesz spaces is generated by $\R$.
	Therefore, by a standard result in general algebra, the smallest $\RSt$-subalgebra $S$ of $\R^{\R^I}$ that contains the set of projection functions $\{\pi_i\colon \R^I\to \R\mid i\in I \}$ is freely generated by the projection functions.
	The set $S$ is the smallest subset of $\R^{\R^I}$ that contains, for each $i\in I$, the projection function $\pi_i\colon \R^I\to \R$, and which is closed under every primitive operation of $\RSt$.
	 By Theorem~\ref{t:generation finite}, $S$ consists precisely of all operations $\R^I\to \R$ that preserve integrability.
	An application of Theorem~\ref{t:MAIN} completes the proof.
\end{proof}
Write $\pi\colon I\to \mathrm{F_t}(I)$ for the function $\pi(i)=\pi_i$.
Then, Corollary~\ref{c:main theorem 2} asserts the following.
For any set $I$, for every Dedekind $\sigma$-complete truncated Riesz space $G$, for every function $f\colon I\to G$, there exists a unique $\sigma$-continuous truncation-preserving Riesz morphism $\varphi\colon \mathrm{F_t}(I)\to G$ such that the following diagram commutes.
\[
	\begin{tikzpicture}[node distance=1.7 cm, auto]
		\node (11) {$I$};
		\node (12) [right of=11]{$\mathrm{F_t}(I)$};
		\node (22) [below of=12]{$G$};
		
		{\draw[->] (11) to node {$\pi$}  (12);}
		{\draw[->] (11) to node [swap]{$\forall f$}  (22);}
		{\draw[->, dashed] (12) to node {$\exists!	\varphi$}  (22);}
	\end{tikzpicture}
\]
%

%%%%%%%%%%%%%%%%%%%%%%%%%%%%%%%%%%%%%%%%%%%%%%%%%%%%%%%%%%%%%%%%%%%%%%%%%%%%%%%%%%%%
%SECTION
\section{The Loomis-Sikorski Theorem for $\ell$-groups with weak unit}\label{s:LS weak}

An element $1$ of an abelian $\ell$-group $G$  is a \emph{weak unit} if $1	\geq 0$ and, for every $f\in G$, $f\land 1=0$ implies $f=0$.
\begin{remark}\label{r:weak is trunc}
	Let $G$ be an archimedean abelian $\ell$-group, and let $1$ be a weak unit.
	Then $f\mapsto f\land 1$ is  a truncation.
	Indeed, the following show that (\ref{i:T1}-\ref{i:T5}) hold.
	\begin{enumerate}
		\item $f\land u=(f^+\land u)-f^-$ by Lemma~\ref{l:T1}.
		\item For all $f\in G^+$, $f\land 1\in G^+$.
		\item For all $f,g\in G^+$, $f\land (g\land 1)=(f\land 1)\land g\leq f\land 1\leq f$.
		\item For all $f\in G^+$, if $f\land 1=0$, then $f=0$.
		\item For all $f\in G^+$, if $nf=(nf)\land 1$ for every $n\in \omega$, then $nf\leq 1$ for every $n\in \omega$.
	Then, since $G$ is archimedean, $f=0$.
	\end{enumerate}
\end{remark}
\begin{lemma}\label{l:equation for weak unit}
	Let $G$ be a  Dedekind $\sigma$-complete $\ell$-group $G$, and let $1\in G$.
	Then, $1$ is a weak unit if, and only if, the following conditions hold.
	\begin{enumerate}[label = {\rm(W\arabic*)}, ref = W\arabic*]
		\item \label{i:W1} $1\geq 0$.
		\item \label{i:W2} For all $f\in G^+$,
		\[
			f=\truncsup_{n\in \omega}^fn(f\land 1).
		\]
	\end{enumerate}
\end{lemma}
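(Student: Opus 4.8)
The plan is to prove the two implications of the biconditional separately. The main point to keep in mind is that condition \eqref{i:W2} is exactly the specialisation of the truncation axiom \eqref{i:T4prime} to the operation $f\mapsto f\land 1$, so one direction will reduce to results already proved, while the other is a short lattice computation.

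For the forward implication, suppose $1$ is a weak unit. Condition \eqref{i:W1} is immediate from the definition of weak unit. Since $G$ is Dedekind $\sigma$-complete it is archimedean, so Remark~\ref{r:weak is trunc} applies and tells us that the assignment $\overline{f}\df f\land 1$ (defined on $G^+$ and extended by \eqref{i:T1}) is a truncation; in particular it satisfies \eqref{i:T2}, \eqref{i:T3} and \eqref{i:T4}. Proposition~\ref{p:B4} then yields \eqref{i:T4prime}, namely $f=\truncsup_{n\in\omega}^f n\overline{f}$ for every $f\in G^+$. Substituting $\overline{f}=f\land 1$ gives precisely \eqref{i:W2}.

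For the converse, assume \eqref{i:W1} and \eqref{i:W2}, and let $f\in G$ satisfy $f\land 1=0$. The key observation is that $f\land 1\leq f$, whence $f\geq 0$, i.e.\ $f\in G^+$, so that \eqref{i:W2} is applicable. Since $f\land 1=0$ we then compute $f=\truncsup_{n\in\omega}^f n(f\land 1)=\truncsup_{n\in\omega}^f 0=\sup_{n\in\omega}\{0\land f\}=0$, where the final equality uses $f\geq 0$. Thus $f=0$, and together with \eqref{i:W1} this shows that $1$ is a weak unit.

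The only nontrivial content sits in the forward direction, and there it is entirely borrowed from Remark~\ref{r:weak is trunc} and Proposition~\ref{p:B4}; the converse is a direct computation once one notices that $f\land 1=0$ already forces $f$ to be positive. I therefore expect no real obstacle, the only care needed being to invoke the archimedean property (guaranteed by Dedekind $\sigma$-completeness) that Remark~\ref{r:weak is trunc} requires.
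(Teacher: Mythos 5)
Your proof is correct and follows essentially the same route as the paper: the forward direction via Remark~\ref{r:weak is trunc} and Proposition~\ref{p:B4}, and the converse by the direct computation $f=\truncsup_{n\in\omega}^f n(f\land 1)=\truncsup_{n\in\omega}^f 0=0$. Your explicit observation that $f\land 1=0$ forces $f\geq 0$ (so that \eqref{i:W2}, stated only for $f\in G^+$, is indeed applicable) is a detail the paper leaves implicit, and it is a worthwhile one to record.
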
 
\begin{proof}
	Since $G$ is Dedekind $\sigma$-complete, $G$ is archimedean.
	If $1$ is a weak unit, then $1\geq 0$ and, by Remark~\ref{r:weak is trunc} and Proposition~\ref{p:B4}, for all $f\in G^+$, $f=\truncsup_{n\in \omega}^fn(f\land 1)$.
	Conversely, suppose that \eqref{i:W1} and \eqref{i:W2} hold.
	If $f\land 1=0$, then $f=\truncsup_{n\in \omega}^fn(f\land 1)=\truncsup_{n\in \omega}^f 0=0$, and so $1$ is a weak unit.
\end{proof}
Note that, in the language of Dedekind $\sigma$-complete $\ell$-groups, axioms \eqref{i:W1} and \eqref{i:W2} are equational.
Indeed, \eqref{i:W1} corresponds to $1 \land 0 = 0$, and \eqref{i:W2} corresponds to $\forall f \ f^+ = \truncsup_{n\in \omega}^{f^+} n\left(f^+\land 1\right)$.
\begin{theorem}[Loomis-Sikorski Theorem for $\ell$-groups with weak unit]\label{t:LS gr weak}
	Let $G$ be a  Dedekind $\sigma$-complete $\ell$-group with weak unit $1$.
	Then there exist a set $X$, a  $\sigma$-ideal $\mathcal{I}$ of subsets of $X$, and an injective $\sigma$-continuous $\ell$-homomorphism $\iota\colon G\hookrightarrow \RXI$ such that $\iota(1)=[1]_\mathcal{I}$.
\end{theorem}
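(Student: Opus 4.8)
The plan is to reduce to the untruncated Loomis--Sikorski theorem for $\ell$-groups and then \emph{rescale} the image of the weak unit to the constant function $1$, following the same template as the proof of Lemma~\ref{l:L-S countably}. First I would invoke Corollary~\ref{c:Nepalese groups}: since $G$ is a Dedekind $\sigma$-complete $\ell$-group, there exist a set $Y$, a $\sigma$-ideal $\mathcal{J}$ of subsets of $Y$, and an injective $\sigma$-continuous $\ell$-homomorphism $\varphi\colon G\hookrightarrow \frac{\R^Y}{\mathcal{J}}$. This embedding carries no a priori information about the location of $\varphi(1)$, so the real work is to modify the codomain so that the weak unit lands on $[1]$.

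Next I would set $u\df\varphi(1)$ and let $H\df\varphi(G)$, an $\ell$-subgroup of $\frac{\R^Y}{\mathcal{J}}$, and verify that $u$ is a weak unit for $H$ in the technical sense introduced before Lemma~\ref{l:may be rescaled to 1}. Indeed $u\geq 0$, because $1\geq 0$ and $\varphi$ is an $\ell$-homomorphism. Moreover, since $1$ is a weak unit of $G$, Lemma~\ref{l:equation for weak unit} gives $f=\truncsup_{n\in\omega}^f n(f\land 1)$ for every $f\in G^+$; applying this to $\lvert f\rvert$ and pushing it through $\varphi$, which preserves $\land$, multiplication by $n$, and $\truncsup$ (the latter by Proposition~\ref{p:sigma-cont}, as $\varphi$ is $\sigma$-continuous), I obtain $\lvert\varphi(f)\rvert=\truncsup_{n\in\omega}^{\lvert\varphi(f)\rvert}n(\lvert\varphi(f)\rvert\land u)$ for every $f\in G$. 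This is precisely the defining condition for $u$ to be a weak unit for $H$.

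With this in hand I would apply Lemma~\ref{l:may be rescaled to 1} to the data $(Y,\mathcal{J},H,u)$, producing a set $X$, a $\sigma$-ideal $\mathcal{I}$ of subsets of $X$, and a $\sigma$-continuous $\ell$-homomorphism $\psi\colon\frac{\R^Y}{\mathcal{J}}\to\RXI$ whose restriction to $H$ is injective and which satisfies $\psi(u)=[1]_\mathcal{I}$. Then $\iota\df\psi\circ\varphi$ is the required map: it is a composite of $\sigma$-continuous $\ell$-homomorphisms, hence again one; it is injective because $\varphi$ is injective and $\psi$ is injective on $H=\varphi(G)$; and $\iota(1)=\psi(\varphi(1))=\psi(u)=[1]_\mathcal{I}$.

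The argument is an assembly of results already in hand, so there is no single heavy computation. The one point that requires care---and which I regard as the crux---is the transfer of the weak-unit equation across $\varphi$, where $\sigma$-continuity is exactly what licenses commuting $\varphi$ past the infinitary operation $\truncsup$; without it the passage from $(\ref{i:W2})$ in $G$ to the corresponding identity in $H$ would fail. Everything else, namely restricting to the support of a representative of $u$ and dividing by it, is already packaged inside Lemma~\ref{l:may be rescaled to 1}.
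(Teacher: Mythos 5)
Your proof is correct, and it differs from the paper's in a way worth recording. The paper converts the weak unit into a truncation ($f\mapsto f\land 1$, Remark~\ref{r:weak is trunc}) and then invokes the Loomis--Sikorski theorem for \emph{truncated} $\ell$-groups (Theorem~\ref{t:LS gr}), whose proof is the heaviest part of Section~\ref{s:LS} (it requires the countably generated case via Lemmas~\ref{l:the big step} and~\ref{l:L-S countably} and the generation theorem, Theorem~\ref{t:R generates}). You instead invoke only Corollary~\ref{c:Nepalese groups}, the untruncated Loomis--Sikorski theorem, which follows directly from the Riesz-space version. From there the two arguments coincide: exhibit $\varphi(1)$ as a weak unit for the image $H=\varphi(G)$ in the technical sense preceding Lemma~\ref{l:may be rescaled to 1}, rescale with that lemma, and compose. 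Your shortcut is available precisely because a weak unit, unlike a general truncation, is an \emph{element} of $G$: the entire difficulty overcome by Lemma~\ref{l:the big step} is the construction of a suitable $u$ in the codomain when no such element exists inside $G$, whereas here $u=\varphi(1)$ comes for free. Note also that the extra property furnished by Theorem~\ref{t:LS gr}, namely $\varphi(f\land 1)=\varphi(f)\land[1]_\mathcal{J}$, is not what makes the paper's weak-unit claim true: that claim rests on exactly the computation you spell out --- transferring \eqref{i:W2} through $\varphi$ via Lemma~\ref{l:equation for weak unit} and Proposition~\ref{p:sigma-cont}, where only the identity $\varphi(f\land 1)=\varphi(f)\land\varphi(1)$, automatic for any $\ell$-homomorphism, is needed. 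So your route has strictly smaller logical dependencies and makes explicit a verification the paper leaves implicit; what the paper's route buys is only brevity, since Theorem~\ref{t:LS gr} is already available at that point in the text.
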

\begin{proof}
	By Remark~\ref{r:weak is trunc}, $G$ is a Dedekind $\sigma$-complete truncated $\ell$-group, with the truncation given by $f\mapsto f\land 1$.
	Then, by Theorem~\ref{t:LS gr}, there exist a set $Y$, a  $\sigma$-ideal $\mathcal{J}$ of subsets of $Y$ and an injective $\sigma$-continuous $\ell$-homomorphism $\varphi\colon G\hookrightarrow \RXI$ such that, for every $f\in G$, $\varphi(f\land 1)=\varphi(f)\land [1]_\mathcal{J}$.
	The element $\varphi(1)$ is a weak unit for the image of $G$ under $\varphi$.
	Therefore, by Lemma~\ref{l:may be rescaled to 1}, there exists a set $X$, a $\sigma$-ideal $\mathcal{I}$ of subsets of $X$, and a $\sigma$-continuous $\ell$-homomorphism $\psi\colon \frac{\R^Y}{\mathcal{J}}\to \RXI$ such that the restriction of $\psi$ to $H$ is injective and $\psi(\varphi(1))=[1]_\mathcal{I}$.
	The function $\iota\df \psi\circ \varphi$ has the desired properties.
\end{proof}
\begin{corollary}
	The variety of Dedekind $\sigma$-complete $\ell$-groups with weak unit is generated by $\R$.
\end{corollary}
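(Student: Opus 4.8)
The plan is to mirror the proof of Theorem~\ref{t:R generates}, substituting Theorem~\ref{t:LS gr weak} for Lemma~\ref{l:L-S countably}. First I would record that the category of Dedekind $\sigma$-complete $\ell$-groups with weak unit genuinely is a variety: by Remark~\ref{r:Dedekind is variety} and Lemma~\ref{l:equation for weak unit}, taking as primitive operations the operations of $\ell$-groups together with $\truncsup$ and the constant $1$, and as axioms the $\ell$-group axioms together with \eqref{i:TS1}, \eqref{i:TS2}, \eqref{i:TS3}, \eqref{i:W1} and \eqref{i:W2}, all the axioms are equational; the morphisms are the $\sigma$-continuous $\ell$-homomorphisms preserving $1$.

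Next I would argue equationally. Suppose an equation $\tau=\rho$ in this signature fails in some Dedekind $\sigma$-complete $\ell$-group $G$ with weak unit $1$. Since $\tau$ and $\rho$ involve only countably many variables, the failure is already witnessed by an assignment valued in the subalgebra $G'$ of $G$ generated by those countably many elements together with $1$; this $G'$ is a countably generated Dedekind $\sigma$-complete $\ell$-group with weak unit, so Theorem~\ref{t:LS gr weak} applies and yields a set $X$, a $\sigma$-ideal $\mathcal{I}$ of subsets of $X$, and an injective $\sigma$-continuous $\ell$-homomorphism $\iota\colon G'\hookrightarrow \RXI$ with $\iota(1)=[1]_\mathcal{I}$ --- that is, an embedding of objects of the variety.

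From here I would transfer the failure to $\R$. Because $\iota$ is an injective morphism preserving all the primitive operations, $\tau=\rho$ fails in $\RXI$. But $\RXI$, with the operations induced coordinatewise from $\R$ and weak unit $[1]_\mathcal{I}$, is a homomorphic image of the power $\R^X$ under the quotient map $\R^X\twoheadrightarrow\RXI$, hence lies in the variety generated by $\R$; therefore every equation valid in $\R$ is valid in $\RXI$, and the failure of $\tau=\rho$ in $\RXI$ forces $\tau=\rho$ to fail in $\R$ as well. Invoking the HSP Theorem for infinitary varieties (Theorem~(9.1) in \cite{Slominski}) then gives that $\R$ generates the variety.

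I expect the only real subtlety --- though it is fully handled by the cited results --- to be the bookkeeping that $\RXI$ is honestly an object of this variety with weak unit $[1]_\mathcal{I}$, and that both $\iota$ and the quotient map $\R^X\twoheadrightarrow\RXI$ are morphisms of the variety (so that truncated suprema and the distinguished unit are preserved); this is exactly what legitimises transferring the failure of $\tau=\rho$ down to $\R$. Everything else is a direct adaptation of the proof of Theorem~\ref{t:R generates}.
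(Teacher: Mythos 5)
Your proof is correct, but it is organised differently from the paper's, whose entire argument is: given an arbitrary Dedekind $\sigma$-complete $\ell$-group $G$ with weak unit, Theorem~\ref{t:LS gr weak} embeds $G$ into $\RXI$ with the unit going to $[1]_\mathcal{I}$, so $G$ is a subalgebra of a quotient of a power of $\R$, and the conclusion is immediate from the closure of the variety generated by $\R$ under subalgebras, quotients and powers. The route you take --- cut down to a countably generated subalgebra, embed it, and transfer the failure of an equation to $\R$ via the HSP theorem --- is sound, and the bookkeeping you flag (that $\RXI$ is an algebra of the variety with unit $[1]_\mathcal{I}$, via Remark~\ref{r:Dedekind is variety} and Lemma~\ref{l:equation for weak unit}, and that both $\iota$ and the quotient map $\R^X\twoheadrightarrow\RXI$ are morphisms, via Proposition~\ref{p:sigma-cont} and the construction of the induced operations in Section~\ref{s:LS}) is indeed covered by the cited results. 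However, your reduction to countably generated algebras is superfluous here: that step is needed in the proof of Theorem~\ref{t:R generates} only because, at that stage of the paper, the Loomis-Sikorski theorem for truncated $\ell$-groups exists only in its countably generated form (Lemma~\ref{l:L-S countably}), the unrestricted version (Theorem~\ref{t:LS gr}) being deduced afterwards from Theorem~\ref{t:R generates} itself. In the weak-unit setting the unrestricted statement, Theorem~\ref{t:LS gr weak}, is already in hand, so one can apply it to $G$ directly and skip both the countability reduction and the equational detour. What your longer route buys is robustness --- it would still go through if one only had the countably generated case of the embedding theorem --- while the paper's route buys brevity by exploiting the full strength of Theorem~\ref{t:LS gr weak}.
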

\begin{proof}
	Let $G$ be a Dedekind $\sigma$-complete $\ell$-group with weak unit.
	By Theorem~\ref{t:LS gr weak}, $G$ is a subalgebra of a quotient of a power of $\R$.
\end{proof}
%

%%%%%%%%%%%%%%%%%%%%%%%%%%%%%%%%%%%%%%%%%%%%%%%%%%%%%%%%%%%%%%%%%%%%%%%%%%%%%%%%%%%%
%SECTION
\section{$\R$ generates Dedekind $\sigma$-complete Riesz spaces with weak unit}\label{s:weak generated R}

\begin{theorem}[Loomis-Sikorski for Riesz spaces with weak unit]\label{t:LS RS weak}
	Let $G$ be a  Dedekind $\sigma$-complete Riesz space with weak unit.
	Then there exist a set $X$, a  $\sigma$-ideal $\mathcal{I}$ of subsets of $X$, and an injective $\sigma$-continuous Riesz morphism $\iota\colon G\hookrightarrow \RXI$ such that $\iota(1)=[1]_{\mathcal{I}}$.
\end{theorem}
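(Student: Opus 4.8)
The plan is to mirror exactly the strategy used for the truncated case in Theorem~\ref{t:LS RS}: do all the genuine work at the level of $\ell$-groups and then upgrade the resulting $\ell$-homomorphism to a Riesz morphism for free, using that the codomain is archimedean. The point is that a Dedekind $\sigma$-complete Riesz space with weak unit $1$ is, after forgetting the scalar multiplication, a Dedekind $\sigma$-complete $\ell$-group with weak unit, so the heavy machinery is already available.

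Concretely, I would first invoke Theorem~\ref{t:LS gr weak}, the Loomis-Sikorski Theorem for $\ell$-groups with weak unit, applied to the $\ell$-group reduct of $G$. This produces a set $X$, a $\sigma$-ideal $\mathcal{I}$ of subsets of $X$, and an injective $\sigma$-continuous $\ell$-homomorphism $\iota\colon G\hookrightarrow \RXI$ satisfying $\iota(1)=[1]_\mathcal{I}$. All three of the properties we need of $\iota$ as a map of ordered groups -- injectivity, $\sigma$-continuity, and the normalisation $\iota(1)=[1]_\mathcal{I}$ -- are thus already in hand, and the construction of $X$ and $\mathcal{I}$ requires no further effort.

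The only remaining gap is that $\iota$, a priori, is merely an $\ell$-homomorphism, whereas the statement demands a \emph{Riesz} morphism, i.e.\ one that additionally preserves scalar multiplication by reals. To close this gap I would argue exactly as in the proof of Theorem~\ref{t:LS RS}: since $\RXI$ is Dedekind $\sigma$-complete it is archimedean, and any $\ell$-homomorphism with archimedean codomain between Riesz spaces is automatically $\R$-linear. This is precisely Corollary~11.53 in \cite{Handana}, so citing it upgrades $\iota$ to an injective $\sigma$-continuous Riesz morphism with $\iota(1)=[1]_\mathcal{I}$, completing the proof.

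I do not expect a genuine obstacle here, since the substantive content is entirely absorbed into Theorem~\ref{t:LS gr weak} and the cited automatic-linearity result. The one subtlety worth a sentence of care is verifying that the notion of weak unit used to apply Theorem~\ref{t:LS gr weak} agrees with the one in the Riesz-space hypothesis; but a weak unit of $G$ as a Riesz space is by definition a weak unit of its $\ell$-group reduct, so no reconciliation is needed and the invocation is legitimate.
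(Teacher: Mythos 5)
Your proof is correct and is essentially the paper's own argument: obtain the injective $\sigma$-continuous $\ell$-homomorphism $\iota$ with $\iota(1)=[1]_\mathcal{I}$ from the $\ell$-group-with-weak-unit version of Loomis-Sikorski, then upgrade it to a Riesz morphism using that $\RXI$ is Dedekind $\sigma$-complete, hence archimedean, via Corollary~11.53 in \cite{Handana}. (The paper's printed proof cites Theorem~\ref{t:LS RS}, but the conclusion it invokes -- an $\ell$-homomorphism with $\iota(1)=[1]_\mathcal{I}$ -- is exactly that of Theorem~\ref{t:LS gr weak}, which is the reference you correctly use.)
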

\begin{proof}
	By Theorem~\ref{t:LS RS}, there exist a set $X$, a  $\sigma$-ideal $\mathcal{I}$ of subsets of $X$ and an injective $\sigma$-continuous $\ell$-homomorphism $\iota\colon G\hookrightarrow \RXI$ such that, for every $f\in G$, $\iota(1)= [1]_{\mathcal{I}}$.
	Since $\RXI$ is Dedekind $\sigma$-complete, and thus archimedean, by Corollary~11.53 in \cite{Handana}, $\iota$ is a Riesz morphism.
\end{proof}
We denote by $\RSu$ the variety of Dedekind $\sigma$-complete Riesz spaces with weak unit, whose primitive operations are $0$, $+$, $\lor$, $\lambda ({\;\cdot\;})$ (for each $\lambda \in \R$),  $\truncsup$, and $1$, and whose axioms are the axioms of Riesz spaces, together with \eqref{i:TS1}, \eqref{i:TS2}, \eqref{i:TS3}, \eqref{i:W1}, \eqref{i:W2}.

As the second main result of Part~\ref{part:variety}, we now deduce a theorem that was already obtained in \cite{Abba}.
\begin{theorem}\label{t:variety is truncated weak}
	The variety $\RSu$ of Dedekind $\sigma$-complete Riesz spaces with weak unit is generated by $\R$.
\end{theorem}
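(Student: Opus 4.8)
The plan is to mirror, almost verbatim, the proof of Theorem~\ref{t:variety is truncated}, with the Loomis--Sikorski theorem for truncated Riesz spaces replaced by its weak-unit counterpart. All the analytic content has by now been absorbed into Theorem~\ref{t:LS RS weak}, so what remains is a short structural argument showing that the resulting embedding lands inside a quotient of a power of $\R$ and respects the full signature of $\RSu$.

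First I would take an arbitrary object $G$ of $\RSu$, that is, a Dedekind $\sigma$-complete Riesz space with weak unit $1$. By Theorem~\ref{t:LS RS weak}, there exist a set $X$, a $\sigma$-ideal $\mathcal{I}$ of subsets of $X$, and an injective $\sigma$-continuous Riesz morphism $\iota\colon G\hookrightarrow \RXI$ satisfying $\iota(1)=[1]_{\mathcal{I}}$.

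Next I would verify that $\iota$ is a morphism in the variety $\RSu$, i.e.\ that it preserves each primitive operation $0$, $+$, $\lor$, $\lambda({\;\cdot\;})$ (for each $\lambda\in\R$), $\truncsup$, and $1$. Preservation of the Riesz-space operations is exactly the assertion that $\iota$ is a Riesz morphism; preservation of $\truncsup$ follows from $\sigma$-continuity via Proposition~\ref{p:sigma-cont}; and preservation of the weak unit is precisely the equation $\iota(1)=[1]_{\mathcal{I}}$. Hence, regarding $\RXI$ as an object of $\RSu$ with the structure induced from $\R$, the map $\iota$ exhibits $G$ as (isomorphic to) a subalgebra of $\RXI$.

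Finally, since $\RXI=\frac{\R^X}{\mathcal{I}}$ is a quotient of the power $\R^X$ in $\RSu$, the algebra $G$ is a subalgebra of a quotient of a power of $\R$, and therefore lies in the variety generated by $\R$. As $G$ was arbitrary, $\R$ generates $\RSu$. I do not expect any real obstacle here: the only point requiring a moment's care is confirming that $\sigma$-continuity is what guarantees $\truncsup$ is preserved, and this is handled by Proposition~\ref{p:sigma-cont}.
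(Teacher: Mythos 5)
Your proposal is correct and follows exactly the paper's own route: apply the Loomis--Sikorski theorem for Riesz spaces with weak unit (Theorem~\ref{t:LS RS weak}) to embed an arbitrary $G$ into $\RXI$, regard $\RXI$ as an object of $\RSu$ with the structure induced from $\R$, and conclude that $G$ is a subalgebra of a quotient of a power of $\R$. The paper's proof is just a terser version of yours; the extra verification you include (that $\sigma$-continuity gives preservation of $\truncsup$ via Proposition~\ref{p:sigma-cont}, and that $\iota(1)=[1]_{\mathcal{I}}$ gives preservation of the unit) is left implicit there.
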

\begin{proof}
	Let $G$ be a Dedekind $\sigma$-complete truncated Riesz space.
	By Theorem~\ref{t:LS RS weak}, $G$ is a subalgebra of a quotient of a power of $\R$.
\end{proof}
\begin{remark}
	It was shown in \cite{Abba} that $\R$ actually generates $\RSu$ as a quasi-variety, in the sense of Remark~\ref{r:quasi-variety}.
\end{remark}
\begin{corollary}\label{c:main theorem 1}
	For any set $I$,
	\begin{equation*}
		\begin{split}
			\mathrm{F_u}(I)\df	\{	& f \colon \R^I \to \R \mid f \text{ is }\Cyl\left(\R^I\right) \text{-measurable and }\\
									& \exists J \seq I \text{ finite}, \exists(\lambda_j)_{j \in J} \seq \R^+, \exists k \in \R^+: \lvert f \rvert \leq k + \sum_{j \in J} \lambda_{j} \lvert \pi_j\rvert \} = \\
								=\{	& f \colon \R^I \to \R \mid f \text{ preserves integrability over finite measure spaces}\}
		\end{split}
	\end{equation*}
	is the Dedekind $\sigma$-complete Riesz space with weak unit freely generated by the elements $\{\pi_i \}_{i\in I}$, where, for $i\in I$,  $\pi_i\colon \R^I\to \R$ is the projection on the $i$-th coordinate.
\end{corollary}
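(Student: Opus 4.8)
The plan is to follow exactly the path used for Corollary~\ref{c:main theorem 2}, systematically replacing every ingredient concerning arbitrary measure spaces by its finite-measure counterpart. The starting point is Theorem~\ref{t:variety is truncated weak}, which asserts that $\R$ generates the variety $\RSu$ of Dedekind $\sigma$-complete Riesz spaces with weak unit; that is, $\RSu=\mathrm{HSP}(\R)$. Observe also that, by definition, the set $\mathrm{F_u}(I)$ in the statement is the measurability-plus-bound set, so the corollary contains two assertions: that $\mathrm{F_u}(I)$ coincides with the set of operations preserving integrability over finite measure spaces, and that it is the free object on the projections.

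First I would invoke the standard fact from (infinitary) universal algebra that, whenever a variety $\mathcal{V}$ is generated by a single algebra $A$, the free $\mathcal{V}$-algebra on a set $I$ can be realised concretely as the subalgebra of $A^{A^I}$ generated by the coordinate projections $\pi_i\colon A^I\to A$ (for $i\in I$). The justification is that two terms $s,t$ are identified in the free algebra precisely when $\mathcal{V}\models s=t$, which, since $\mathcal{V}=\mathrm{HSP}(A)$, holds exactly when $s$ and $t$ induce the same term function $A^I\to A$; hence the natural map from the free algebra into $A^{A^I}$ is injective with image the subalgebra generated by the projections. Applying this with $A=\R$ and $\mathcal{V}=\RSu$, the free object on $I$ is the smallest $\RSu$-subalgebra $S$ of $\R^{\R^I}$ containing the projections.

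It remains to identify $S$ explicitly. By definition, $S$ is the smallest subset of $\R^{\R^I}$ that contains each projection and is closed under all primitive operations of $\RSu$, namely $0$, $+$, $\lor$, $\lambda({\;\cdot\;})$ (for each $\lambda\in\R$), $\truncsup$ and $1$. By Theorem~\ref{t:generation finite}, the operations $\R^I\to\R$ generated by precisely this list are exactly those that preserve integrability over every finite measure space; hence $S$ is the set of all such operations, which is the second displayed description of $\mathrm{F_u}(I)$. Finally, Theorem~\ref{t:MAIN finite} rewrites ``preserves integrability over finite measure spaces'' as the conjunction of $\Cyl\left(\R^I\right)$-measurability with the existence of a finite $J\seq I$, nonnegative reals $(\lambda_j)_{j\in J}$ and $k\in\R^+$ such that $\lvert f\rvert\leq k+\sum_{j\in J}\lambda_j\lvert\pi_j\rvert$. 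This identifies $S$ with the measurability-plus-bound set $\mathrm{F_u}(I)$, establishing both the claimed equality of the two descriptions and the freeness.

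The step I expect to require the most care is the appeal to the universal-algebra fact in the infinitary setting: one must check that the characterisation of equality in the free algebra via term functions, and the resulting identification of the free object with the subalgebra generated by the projections, remain valid when primitive operations of countably infinite arity (here $\truncsup$) are allowed. This should follow from the general theory of infinitary varieties, in the framework of \cite{Slominski} already relied upon for the HSP theorem in the proof of Theorem~\ref{t:R generates}, but it is the one point at which the argument genuinely depends on working with infinitary rather than finitary algebras.
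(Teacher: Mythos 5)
Your proposal is correct and takes essentially the same route as the paper: the paper proves Corollary~\ref{c:main theorem 1} by declaring it analogous to Corollary~\ref{c:main theorem 2}, whose proof is exactly your chain --- generation of $\RSu$ by $\R$ (Theorem~\ref{t:variety is truncated weak}), the standard universal-algebra fact that the subalgebra of $\R^{\R^I}$ generated by the projections is then free, identification of that subalgebra via Theorem~\ref{t:generation finite}, and the explicit description via Theorem~\ref{t:MAIN finite}. Your closing caveat about the infinitary setting is well placed and is covered by the framework of \cite{Slominski} that the paper already invokes.
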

The proof is analogous to the proof of Corollary~\ref{c:main theorem 2}, and $\mathrm{F_u}(I)$ is characterised by a universal property analogous to the one that characterises $\mathrm{F_t}(I)$.

\appendix
%%%%%%%%%%%%%%%%%%%%%%%%%%%%%%%%%%%%%%%%%%%%%%%%%%%%%%%%%%%%%%%%%%%%%%%%%%%%%%%%%%%%
%SECTION
\section{Operations that preserve $\infty$-integrability}\label{s:infty}

In Section~\ref{s:lp} it has been shown that, for any $p\in [1,+\infty)$, a function $\tau\colon \R^I\to \R $  preserves $p$-integrability if, and only if, $\tau$ is $\Cyl\left(\R^I\right)$-measurable and there exist a finite subset of indices $J \seq I$ and nonnegative real numbers $(\lambda_j)_{j \in J}$ such that, for every $v\in \R^I$, we have $\lvert\tau(v)\rvert\leq \sum_{j \in J} \lambda_{j} \lvert v_j\rvert$.
Does the same hold for $p=\infty$? The answer is no.
Indeed, the function $(\;\cdot\;)^2\colon \R\to \R, x\mapsto x^2$ is an example of operation which preserves $\infty$-integrability but not $p$-integrability, for every $p\in [1,+\infty)$.
In Theorem~\ref{t:pres infty}, we will answer the following question.
\begin{question}\label{q:main infty}
	Which operations $\R^I\to \R$ preserve $\infty$-integrability?
\end{question} 
We will see that an operation $\R^I\to \R$ preserve $\infty$-integrability if, and only if, roughly speaking, it is measurable and it maps coordinatewise-bounded subsets of $\R^I$ onto bounded subsets of $\R$.
To make this precise, we introduce some definitions.

Given a measure space $(\Omega,\mathcal{F},\mu)$, we define $\mathcal{L}^\infty(\mu)$ as the set of $\mathcal{F}$-measurable functions from $\Omega$ to $\R$ that are bounded outside of a measurable set of null $\mu$-measure.
\begin{definition}
	Let $I$ be a set, $\tau\colon \R^I\to \R$.
	We say that $\tau$ \emph{preserves $\infty$-integrability} if for every measure space $(\Omega, \mathcal{F},\mu)$ and every family $(f_i)_{i\in I}\seq \mathcal{L}^\infty(\mu)$ we have  $\tau((f_i)_{i\in I})\in \mathcal{L}^\infty(\mu)$.\qed
\end{definition}
We can now state the answer to Question~\ref{q:main infty} precisely.
Let $I$ be a set and let $\tau\colon \R^I\to \R$ be a function.
Then $\tau$ preserves $\infty$-integrability if, and only if, $\tau$ is $\Cyl\left(\R^I\right)$-measurable and, for every $(M_i)_{i\in I}\seq \R^+$, the restriction of $\tau$ to $\prod_{i\in I}[-M_i,M_i]$ is bounded.
This will follow from Theorem~\ref{t:pres infty}.

%===================================================================================
%SUBSECTION
\subsection{Operations that preserve boundedness}

As a preliminary step, in Theorem~\ref{t:boundedness}, we characterise the operations which preserve boundedness.
\begin{definition}
	Let $I$ be a set, $\tau\colon \R^I\to \R$.
	We say that $\tau$ \emph{preserves boundedness} if for every set $\Omega$ and every family $(f_i)_{i\in I}$ of bounded functions $f_i\colon \Omega\to \R$, we have that $\tau((f_i)_{i\in I})\colon \Omega\to \R$ is also bounded.\qed
\end{definition}
\begin{theorem}\label{t:boundedness}
	Let $I$ be a set and $\tau\colon \R^I\to \R$.
	The following conditions are equivalent.
	\begin{enumerate}	
		\item  $\tau$ preserves boundedness.
		\item For every $(M_i)_{i\in I}\seq \R^+$, the restriction of $\tau$ to $\prod_{i\in I}[-M_i,M_i]$ is bounded.
	\end{enumerate}
\end{theorem}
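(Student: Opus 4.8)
The plan is to prove the two implications directly, using the coordinate projections as canonical bounded functions in one direction and the sup-norms of the given functions in the other. Neither direction requires any machinery beyond the definitions.

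First I would treat $(1)\Rightarrow(2)$. Fix a family $(M_i)_{i\in I}\seq \R^+$ and take as test set the box itself, $\Omega\df \prod_{i\in I}[-M_i,M_i]$. For each $i\in I$ let $f_i\colon \Omega\to\R$ be the restriction to $\Omega$ of the $i$-th coordinate projection; since $\lvert f_i(\omega)\rvert\leq M_i$ for every $\omega\in\Omega$, each $f_i$ is bounded. By the hypothesis that $\tau$ preserves boundedness, $\tau((f_i)_{i\in I})\colon\Omega\to\R$ is then bounded. The key observation is that for $\omega=(\omega_i)_{i\in I}\in\Omega$ we have $f_i(\omega)=\omega_i$, so $\tau((f_i)_{i\in I})(\omega)=\tau((\omega_i)_{i\in I})=\tau(\omega)$; that is, $\tau((f_i)_{i\in I})$ is \emph{literally} the restriction of $\tau$ to $\prod_{i\in I}[-M_i,M_i]$. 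Hence that restriction is bounded, which is exactly (2).

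For $(2)\Rightarrow(1)$, let $\Omega$ be a set and $(f_i)_{i\in I}$ a family of bounded functions $f_i\colon\Omega\to\R$. If $\Omega=\emptyset$ the conclusion is vacuous, so assume $\Omega\neq\emptyset$ and set $M_i\df\sup_{\omega\in\Omega}\lvert f_i(\omega)\rvert\in\R^+$, finite by boundedness of $f_i$. Then $(f_i(\omega))_{i\in I}\in\prod_{i\in I}[-M_i,M_i]$ for every $\omega\in\Omega$, so by (2) there is a bound $B$ with $\lvert\tau(v)\rvert\leq B$ for all $v\in\prod_{i\in I}[-M_i,M_i]$. In particular $\lvert\tau((f_i)_{i\in I})(\omega)\rvert=\lvert\tau((f_i(\omega))_{i\in I})\rvert\leq B$ for every $\omega\in\Omega$, so $\tau((f_i)_{i\in I})$ is bounded.

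The argument is elementary, and I do not anticipate a genuine obstacle; the only step requiring a little thought is the choice of test space in $(1)\Rightarrow(2)$. Taking $\Omega$ to be the box $\prod_{i\in I}[-M_i,M_i]$ and the projections as the witnessing bounded family is precisely what makes the restriction of $\tau$ appear as $\tau$ evaluated on a family of bounded functions. The minor edge case $\Omega=\emptyset$, where the sup-norms in the converse direction are undefined, must be dispatched separately, but there the conclusion holds vacuously.
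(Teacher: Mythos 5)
Your proof is correct and follows essentially the same route as the paper's: for $(1)\Rightarrow(2)$ the box $\prod_{i\in I}[-M_i,M_i]$ itself serves as the test space with the restricted projections as the bounded family, and for $(2)\Rightarrow(1)$ one lands the values $(f_i(\omega))_{i\in I}$ inside a suitable box. The only cosmetic difference is that you take $M_i$ to be the exact sup-norm (forcing the separate $\Omega=\emptyset$ case), while the paper merely picks any $M_i$ bounding the image of $f_i$, which avoids that edge case altogether.
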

\begin{proof}
	We prove [$(1)\Rightarrow (2)$].
	Fix $(M_i)_{i\in I}\seq \R^+$.
	Take $\Omega\df\prod_{i\in I}[-M_i,M_i]$ and, for every $i\in I$, let $f_i$ be the restriction of the projection function $\pi_i\colon \R^I\to \R$ to $\Omega$.
	Since $f_i$ maps $\Omega$ onto $[-M_i,M_i]$, $f_i$ is bounded.
	Thus $\tau((f_i)_{i\in I})$ is bounded, i.e., there exists $\tilde{M}$ such that for every $x\in \Omega$ we have $\tau((f_i(x))_{i\in I})\in [-\tilde{M}, \tilde{M}]$.
	Let $x\in \Omega$.
	Then $\tau(x)=\tau((\pi_i(x))_{i\in I})=\tau((f_i(x))_{i\in I})\in [-\tilde{M}, \tilde{M}]$.
	Thus (2) holds.
	
	We now prove [$(2)\Rightarrow (1)$].
	Let $\Omega$ be a set, and $(f_i)_{i\in I}$ be a family of bounded functions from $\Omega$ to $\R$.
	For each $i\in I$, let $M_i\in \R^+$ be such that the image of $f_i$ is contained in $[-M_i,M_i]$.
	Let $\tilde{M}$ be such that $\tau$ maps $\prod_{i\in I}[-M_i,M_i]$ onto a subset of $[-\tilde{M},\tilde{M}]$.
	Then, for each $x\in \Omega$, $\tau((f_i)_{i\in I})(x)= \tau((f_i(x))_{i\in I})\in [-\tilde{M},\tilde{M}]$.
\end{proof}
%

%===================================================================================
%SUBSECTION
\subsection{Operations that preserve $\infty$-integrability}

The following is the main theorem of this section.
\begin{theorem}\label{t:pres infty}
	Let $I$ be a set and let $\tau\colon \R^I\to \R$ be a function.
	The following conditions are equivalent.
	\begin{enumerate}
		\item $\tau$ preserves $\infty$-integrability.
		\item $\tau$ preserves measurability and boundedness.
		\item $\tau$ is $\Cyl\left(\R^I\right)$-measurable and, for every $(M_i)_{i\in I}\seq \R^+$, the restriction of $\tau$ to $\prod_{i\in I}[-M_i,M_i]$ is bounded.
	\end{enumerate}
\end{theorem}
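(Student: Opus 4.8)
The plan is to prove $(2)\Leftrightarrow(3)$ directly from the characterisation theorems already established, and then to close the argument by proving $(1)\Leftrightarrow(2)$. The first equivalence is immediate: Theorem~\ref{t:pres measurable} identifies ``$\tau$ preserves measurability'' with ``$\tau$ is $\Cyl\left(\R^I\right)$-measurable'', while Theorem~\ref{t:boundedness} identifies ``$\tau$ preserves boundedness'' with the condition that every restriction of $\tau$ to a box $\prod_{i\in I}[-M_i,M_i]$ be bounded. Conjoining these two facts yields $(2)\Leftrightarrow(3)$ with no further work, so the real content lies in relating $(1)$ to $(2)$.

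For $(1)\Rightarrow(2)$ I would treat measurability and boundedness separately. Measurability follows by a null-measure argument exactly as in Remark~\ref{r:null-meas}: for the null measure $\mu_0$ on any $(\Omega,\mathcal{F})$ the whole space $\Omega$ is $\mu_0$-null, so every $\mathcal{F}$-measurable function lies in $\mathcal{L}^\infty(\mu_0)$; hence preservation of $\infty$-integrability over $(\Omega,\mathcal{F},\mu_0)$ coincides with preservation of measurability over $(\Omega,\mathcal{F})$, and since $(1)$ holds over all measure spaces we recover preservation of measurability. For boundedness I argue contrapositively through Theorem~\ref{t:boundedness}: if some restriction of $\tau$ to a box $K\df\prod_{i\in I}[-M_i,M_i]$ is unbounded, pick a sequence $(v^n)_{n\in\omega}\seq K$ with $\sup_{n}\lvert\tau(v^n)\rvert=\infty$, and on $(\omega,\mathcal{P}(\omega),\mu)$ with $\mu$ the counting measure set $f_i(n)\df v^n_i$. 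Each $f_i$ is bounded by $M_i$, hence lies in $\mathcal{L}^\infty(\mu)$; but $\tau((f_i)_{i\in I})(n)=\tau(v^n)$ is unbounded, and since the only $\mu$-null set is $\emptyset$ this function is not in $\mathcal{L}^\infty(\mu)$, contradicting $(1)$.

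For $(2)\Rightarrow(1)$, let $(\Omega,\mathcal{F},\mu)$ be arbitrary and $(f_i)_{i\in I}\seq\mathcal{L}^\infty(\mu)$. Measurability of $\tau((f_i)_{i\in I})$ is immediate from preservation of measurability. The delicate point is boundedness off a null set. Here I would invoke Corollary~\ref{c:meas then count}: since $\tau$ preserves measurability it depends on a countable set of coordinates $J\seq I$. Each $f_j$ (for $j\in J$) is bounded by some $M_j$ outside a null set $N_j$, so $N\df\bigcup_{j\in J}N_j$ is again null, being a countable union, and $\lvert f_j\rvert\leq M_j$ on $\Omega\setminus N$ for every $j\in J$. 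Completing $(M_j)_{j\in J}$ to a family $(M_i)_{i\in I}$ arbitrarily and using that $\tau$ ignores the coordinates outside $J$ (so that each value $\tau((f_i(x))_{i\in I})$ is unchanged when the coordinates in $I\setminus J$ are replaced by $0$), Theorem~\ref{t:boundedness} supplies a uniform bound on $\tau((f_i)_{i\in I})$ over $\Omega\setminus N$. Thus $\tau((f_i)_{i\in I})\in\mathcal{L}^\infty(\mu)$.

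The main obstacle is precisely this last step: when $I$ is uncountable, the individual exceptional null sets $N_i$ need not have null union, so one cannot simply intersect over all $i\in I$. The resolution is that preservation of measurability forces $\tau$ to depend on only countably many coordinates, which is exactly what Corollary~\ref{c:meas then count} delivers; this reduces the offending union to a countable one and lets the boundedness estimate go through.
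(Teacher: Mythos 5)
Your proof is correct and follows essentially the same route as the paper: the null-measure argument for measurability, a contrapositive discrete-measure construction (via Theorem~\ref{t:boundedness}) for boundedness, and, for $(2)\Rightarrow(1)$, the reduction to countably many coordinates through Corollary~\ref{c:meas then count} with the unused coordinates replaced by $0$. The only cosmetic difference is that you place the counting measure on $\omega$ where the paper puts a finite discrete measure (with weights $2^{-n}$) on the box itself; both choices make the chosen sequence of points non-null and yield the same contradiction.
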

In order to prove Theorem~\ref{t:pres infty}, we need some lemmas.
\begin{lemma}\label{l:infty implies meas}
	Let $I$ be a set and let $\tau\colon \R^I\to \R$ be a function.
	If $\tau$ preserves $\infty$-integrability, then $\tau$ preserves measurability.
\end{lemma}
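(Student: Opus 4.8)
The plan is to imitate, for $p=\infty$, the reasoning by which Lemma~\ref{l:lp implies measbl} was derived from Remark~\ref{r:null-meas}. Everything will hinge on understanding what $\mathcal{L}^\infty$ becomes against the null measure.

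First I would fix a measurable space $(\Omega,\mathcal{F})$ and let $\mu_0$ denote the null measure on it, so that $\mu_0(A)=0$ for every $A\in\mathcal{F}$. The key claim is that $\mathcal{L}^\infty(\mu_0)$ coincides with the set of all $\mathcal{F}$-measurable functions from $\Omega$ to $\R$. One inclusion is immediate from the definition of $\mathcal{L}^\infty(\mu_0)$, whose elements are $\mathcal{F}$-measurable by fiat. For the converse, note that $\Omega$ itself is a measurable set with $\mu_0(\Omega)=0$; hence any $\mathcal{F}$-measurable $f$ is ``bounded outside the $\mu_0$-null set $\Omega$'' vacuously, since the complement $\Omega\setminus\Omega=\emptyset$, and therefore $f\in\mathcal{L}^\infty(\mu_0)$.

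From this identification it follows that, over the particular space $(\Omega,\mathcal{F},\mu_0)$, preservation of $\infty$-integrability is literally the same condition as preservation of measurability over $(\Omega,\mathcal{F})$: in both cases one demands that $\tau$ send families of $\mathcal{F}$-measurable functions to $\mathcal{F}$-measurable functions. Now, since $\tau$ preserves $\infty$-integrability over \emph{every} measure space, it does so in particular over $(\Omega,\mathcal{F},\mu_0)$; as $(\Omega,\mathcal{F})$ was an arbitrary measurable space, $\tau$ therefore preserves measurability over every measurable space, which is exactly the definition of preserving measurability.

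There is essentially no obstacle here: the whole content is the observation that under the null measure the entire space is null, so that $\mathcal{L}^\infty(\mu_0)$ degenerates to the set of measurable functions. I would simply record this as the $\infty$-analogue of Remark~\ref{r:null-meas}, inlining the one-line verification of the identity $\mathcal{L}^\infty(\mu_0)=\{f\colon\Omega\to\R \mid f \text{ is } \mathcal{F}\text{-measurable}\}$ rather than stating a separate remark.
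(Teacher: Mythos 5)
Your proposal is correct and follows exactly the paper's own argument: endow an arbitrary measurable space with the null measure $\mu_0$, observe that $\mathcal{L}^\infty(\mu_0)$ is precisely the set of all $\mathcal{F}$-measurable functions (since $\Omega$ itself is $\mu_0$-null, boundedness outside a null set holds vacuously), and conclude that preservation of $\infty$-integrability over $(\Omega,\mathcal{F},\mu_0)$ coincides with preservation of measurability over $(\Omega,\mathcal{F})$. The only difference is cosmetic: you spell out the vacuous-boundedness verification that the paper leaves implicit.
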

\begin{proof}
	Every measurable space $(\Omega, \mathcal{F})$ may be endowed with the null measure $\mu_0$: for each $A\in \mathcal{F}$, $\mu_0(A)=0$.
	Then $\mathcal{L}^\infty(\mu_0)$ is the set of $\mathcal{F}$-measurable functions from $\Omega$ to $\R$.
	Hence, preservation of $\infty$-integrability over  $(\Omega, \mathcal{F}, \mu_0)$ is equivalent to preservation of measurability over $(\Omega, \mathcal{F})$.
\end{proof}
\begin{lemma}\label{l:infty implies bound}
	Let $I$ be a set and let $\tau\colon \R^I\to \R$ be a function.
	If $\tau$ preserves $\infty$-integrability, then $\tau$ preserves boundedness.
\end{lemma}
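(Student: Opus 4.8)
The plan is to argue by contraposition, using Theorem~\ref{t:boundedness} to reduce preservation of boundedness to its coordinatewise formulation and then exhibiting one convenient measure space on which membership in $\mathcal{L}^\infty$ coincides exactly with boundedness. By Theorem~\ref{t:boundedness}, $\tau$ preserves boundedness if, and only if, for every $(M_i)_{i\in I}\seq \R^+$ the restriction of $\tau$ to $\prod_{i\in I}[-M_i,M_i]$ is bounded. So I would suppose that $\tau$ fails to preserve boundedness and, via Theorem~\ref{t:boundedness}, obtain a family $(M_i)_{i\in I}\seq \R^+$ together with a sequence $(x^n)_{n\in\omega}$ of points of $\prod_{i\in I}[-M_i,M_i]$ satisfying $\lvert\tau(x^n)\rvert > n$ for every $n\in\omega$; the goal is then to produce from this data a counterexample to preservation of $\infty$-integrability.

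The key idea is to work over the measure space $(\omega,\mathcal{P}(\omega),\mu)$, where $\mu$ is the counting measure. Since every nonempty subset of $\omega$ has strictly positive measure, the only $\mu$-null set is $\emptyset$; moreover every function on $(\omega,\mathcal{P}(\omega))$ is measurable. Consequently $\mathcal{L}^\infty(\mu)$ is precisely the set of bounded functions from $\omega$ to $\R$. For each $i\in I$ I would then define $f_i\colon\omega\to\R$ by $f_i(n)\df x^n_i$; because $\lvert x^n_i\rvert\leq M_i$ for every $n$, each $f_i$ is bounded, hence $f_i\in\mathcal{L}^\infty(\mu)$.

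Finally, I would compute $\tau((f_i)_{i\in I})(n)=\tau((x^n_i)_{i\in I})=\tau(x^n)$, so that $\lvert\tau((f_i)_{i\in I})(n)\rvert > n$ and $\tau((f_i)_{i\in I})$ is unbounded. Since the only null set is $\emptyset$, this forces $\tau((f_i)_{i\in I})\notin\mathcal{L}^\infty(\mu)$, contradicting the assumption that $\tau$ preserves $\infty$-integrability. I do not expect a serious obstacle here: the proof is essentially a dual of Lemma~\ref{l:l2}, with the counting measure playing the role that partitionable spaces play for finite $p$. The only points needing a little care are the translation of ``$\tau$ does not preserve boundedness'' into a coordinatewise-bounded sequence with $\lvert\tau(x^n)\rvert\to\infty$, and the observation that the counting measure collapses $\mathcal{L}^\infty(\mu)$ to the bounded functions, which is exactly what makes the constructed family a genuine witness outside $\mathcal{L}^\infty(\mu)$.
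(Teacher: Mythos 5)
Your proof is correct and takes essentially the same route as the paper: contraposition via Theorem~\ref{t:boundedness}, followed by a discrete measure space in which every point of the witness sequence carries strictly positive mass, so that no null set can absorb the unboundedness of $\tau$. The only difference is packaging --- the paper keeps $\Omega=\prod_{i\in I}[-M_i,M_i]$ with the finite measure determined by $\mu(\{\omega_n\})=2^{-n}$ and feeds $\tau$ the restricted projections (incidentally exhibiting the failure already over a \emph{finite} measure space), whereas you transport the data along $n\mapsto x^n$ to $(\omega,\mathcal{P}(\omega))$ with counting measure, which is immaterial for the lemma as stated.
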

\begin{proof}
	Let us suppose that $\tau$ does not preserve boundedness.
	By Theorem~\ref{t:boundedness}, there exists $(M_i)_{i\in I}\seq \R^+$ such that the restriction of $\tau$ to $\prod_{i\in I}[-M_i,M_i]$ is not bounded.
	Fix one such family $(M_i)_{i\in I}$ and let $\Omega\df \prod_{i\in I}[-M_i,M_i]$.
	Let $(\omega_n)_{n\in \omega}$ be a sequence in $\Omega$ such that  $\lvert \tau(\omega_0) \rvert < \lvert \tau(\omega_1)\rvert < \cdots$ and $\lvert \tau(\omega_n)\rvert \to \infty$ as $n\to \infty$.
	Consider on $(\Omega, \mathcal{P}(\Omega))$ the discrete measure $\mu$ such that $\mu(\{\omega_n\})=\frac{1}{2^n}$ for every $n\in \omega$ and $\mu(\Omega\setminus \{\omega_0,\omega_1,\dots\})=0$.
	Then $(\Omega,\mathcal{P}(\Omega),\mu)$ is a finite measure space.
	For $i\in I$, the restriction $(\pi_{i})_{\mid \Omega}$ of $\pi_i$ to $\Omega$ is bounded, since its image is $[-M_i,M_i]$.
	Moreover, $(\pi_{i})_{\mid \Omega}$ is $\mathcal{P}(\Omega)$-measurable.
	Therefore, $(\pi_{i})_{\mid \Omega}\in \mathcal{L}^\infty(\mu)$.
	 We have $\tau_{\mid \Omega}\notin  \mathcal{L}^\infty(\mu) $; indeed, let $A$ be a subset of $\Omega$ of null $\mu$-measure.
	Then $\omega_n\notin A$ for every $n\in \omega$.
	Therefore $\tau_{\mid \Omega}$ is not bounded outside of $A$.
\end{proof}
\begin{lemma}\label{l:meas and bound then infty}
	Let $I$ be a set and let $\tau\colon \R^I\to \R$ be a function.
	If $\tau$ preserves measurability and boundedness, then $\tau$ preserves $\infty$-integrability.
\end{lemma}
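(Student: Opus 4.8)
The plan is to take an arbitrary measure space $(\Omega,\mathcal{F},\mu)$ and an arbitrary family $(f_i)_{i\in I}\seq \mathcal{L}^\infty(\mu)$, and to show directly that $\tau((f_i)_{i\in I})\in \mathcal{L}^\infty(\mu)$. Measurability is immediate: each $f_i$ is $\mathcal{F}$-measurable and $\tau$ preserves measurability, so $\tau((f_i)_{i\in I})$ is $\mathcal{F}$-measurable. The substance of the proof is therefore to exhibit a single null set outside of which $\tau((f_i)_{i\in I})$ is bounded.

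For each $i\in I$, since $f_i\in \mathcal{L}^\infty(\mu)$ I would choose a null set $N_i\in \mathcal{F}$ and a constant $M_i\in \R^+$ with $\lvert f_i(x)\rvert\leq M_i$ for all $x\in \Omega\setminus N_i$. The main obstacle is that $I$ may be uncountable, so $\bigcup_{i\in I} N_i$ need not be null, and the naive argument collapses. This is exactly where preservation of \emph{measurability} does the essential work: by Corollary~\ref{c:meas then count}, $\tau$ depends on countably many coordinates, so I fix a countable $J\seq I$ such that $\tau$ depends only on $J$. Then $N\df \bigcup_{j\in J}N_j$ is a countable union of null sets, hence $\mu(N)=0$.

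Next I would combine boundedness with the dependence on $J$. For $i\notin J$, set $M_i\df 0$. By hypothesis $\tau$ preserves boundedness, so by Theorem~\ref{t:boundedness} the restriction of $\tau$ to $\prod_{i\in I}[-M_i,M_i]$ is bounded by some $\tilde{M}\in \R^+$. Now fix $x\in \Omega\setminus N$ and let $v^x\in \R^I$ be the point defined by $v^x_j=f_j(x)$ for $j\in J$ and $v^x_i=0$ for $i\notin J$. Since $x\notin N$, we have $\lvert f_j(x)\rvert\leq M_j$ for every $j\in J$, so $v^x\in \prod_{i\in I}[-M_i,M_i]$ and hence $\lvert \tau(v^x)\rvert\leq \tilde{M}$. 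Because $\tau$ depends only on $J$ and $v^x$ agrees with $(f_i(x))_{i\in I}$ on every coordinate in $J$, we obtain $\tau((f_i(x))_{i\in I})=\tau(v^x)$, whence $\lvert \tau((f_i(x))_{i\in I})\rvert\leq \tilde{M}$.

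This shows that $\tau((f_i)_{i\in I})$ is bounded by $\tilde{M}$ on $\Omega\setminus N$, with $\mu(N)=0$; together with its measurability this gives $\tau((f_i)_{i\in I})\in \mathcal{L}^\infty(\mu)$, as required. The only genuine difficulty is the uncountability of $I$, and it is dispatched cleanly by the reduction to a countable set of relevant coordinates furnished by Corollary~\ref{c:meas then count}.
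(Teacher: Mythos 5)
Your proof is correct and follows essentially the same route as the paper's: both reduce to a countable set $J$ of relevant coordinates via Corollary~\ref{c:meas then count}, take the null set to be the union of the countably many exceptional sets indexed by $J$, replace the coordinates outside $J$ by $0$, and then invoke preservation of boundedness, with measurability handled separately. The only cosmetic difference is that you pass through the interval characterisation of Theorem~\ref{t:boundedness} and argue pointwise, while the paper applies the definition of boundedness-preservation directly to the family of restricted functions $(\tilde{f_i})_{\mid \Omega\setminus A}$; the substance is identical.
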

\begin{proof}
	By Corollary~\ref{c:meas then count}, $\tau$ depends on a countable subset $J\seq I$.
	Let $(\Omega, \mathcal{F},\mu)$ be a finite measure space and consider a family $(f_i)_{i\in I}\seq \mathcal{L}^\infty(\mu)$.
	For each $j\in J$, let $A_j$ be a  measurable subset of $\Omega$ such that $\mu(A_j)=0$ and $f_j$ is bounded outside of $A_j$.
	Set $A\df \bigcup_{j\in J}A_{j}$.
	Then $\mu\left(A\right)=0$.
	For each $i\in I$, define $\tilde{f_i}$ as $f_i$ if $i\in J$, otherwise let $\tilde{f_i}$ be the function constantly equal to $0$.
	Since $\tau$ depends only on $J$, we have $\tau((f_i)_{i\in I})=\tau((\tilde{f_i})_{i\in I})$.For every $i\in I$, the restriction $(\tilde{f_i})_{\mid \Omega\setminus A}$ is bounded.
	We have $\tau((f_i)_{i\in I})_{\mid \Omega\setminus A}=\tau((\tilde{f_i})_{i\in I})_{\mid \Omega\setminus A}=\tau(((\tilde{f_i})_{\mid \Omega \setminus A})_{i\in I})$ is bounded since $\tau$ preserves boundedness and, for every $i\in I$, $(\tilde{f_i})_{\mid \Omega\setminus A}$ is bounded.
	Thus $ \tau((f_i)_{i\in I})$ is bounded outside of a set of null measure.
	Moreover, $ \tau((f_i)_{i\in I})$ is measurable because $\tau$ preserve measurability.
	Therefore $ \tau((f_i)_{i\in I})\in \mathcal{L}^\infty(\mu)$.
\end{proof}
\begin{proof}[Proof of Theorem~\ref{t:pres infty}]
	By Lemmas~\ref{l:infty implies meas} and \ref{l:infty implies bound}, we have $[(1)\Rightarrow(2)]$.
	Lemma~\ref{l:meas and bound then infty}, we have $[(2)\Rightarrow(1)]$.
	By Theorems~\ref{t:pres measurable} and \ref{t:boundedness}, we have $[(2)\Leftrightarrow(3)]$.	
\end{proof}
\begin{corollary}\label{c:p then infty}
	Let $I$ be a set and let $\tau\colon \R^I\to \R$ be a function.
	If $\tau$ preserves $p$-integrability for some $p\in [1,+\infty)$, then $\tau$ preserves $\infty$-integrability.
\end{corollary}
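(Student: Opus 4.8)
The plan is to combine the two explicit characterisations already established, namely Theorem~\ref{t:MAIN} (preservation of $p$-integrability) and Theorem~\ref{t:pres infty} (preservation of $\infty$-integrability), so that the corollary becomes a direct consequence. First I would apply Theorem~\ref{t:MAIN}: since $\tau$ preserves $p$-integrability for some $p\in[1,+\infty)$, it follows that $\tau$ is $\Cyl\left(\R^I\right)$-measurable and that there exist a finite subset of indices $J\seq I$ and nonnegative reals $(\lambda_j)_{j\in J}$ such that $\lvert\tau(v)\rvert\leq\sum_{j\in J}\lambda_j\lvert v_j\rvert$ for every $v\in\R^I$.

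Next, to conclude that $\tau$ preserves $\infty$-integrability, I would verify condition (3) of Theorem~\ref{t:pres infty}. Measurability is already in hand from the previous step, so it remains to check the boundedness condition. To that end, fix an arbitrary family $(M_i)_{i\in I}\seq\R^+$. For every $v\in\prod_{i\in I}[-M_i,M_i]$ we have $\lvert v_j\rvert\leq M_j$ for each $j\in J$, whence
\[
	\lvert\tau(v)\rvert\leq\sum_{j\in J}\lambda_j\lvert v_j\rvert\leq\sum_{j\in J}\lambda_j M_j.
\]
Because $J$ is finite, the right-hand side is a finite constant that does not depend on $v$; hence the restriction of $\tau$ to $\prod_{i\in I}[-M_i,M_i]$ is bounded. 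By the equivalence $(3)\Leftrightarrow(1)$ in Theorem~\ref{t:pres infty}, $\tau$ preserves $\infty$-integrability.

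There is essentially no obstacle here. The only point worth highlighting is that the \emph{finiteness} of the index set $J$ supplied by Theorem~\ref{t:MAIN} is exactly what converts the pointwise linear estimate into a \emph{uniform} bound on each coordinatewise-bounded box; this uniform bound is precisely the boundedness condition demanded by Theorem~\ref{t:pres infty}. Thus the statement follows immediately by chaining the two characterisation theorems, with no additional measure-theoretic input required.
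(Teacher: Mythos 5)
Your proof is correct and is essentially identical to the paper's own argument: both extract the $\Cyl\left(\R^I\right)$-measurability and the finite linear bound $\lvert\tau(v)\rvert\leq \sum_{j \in J} \lambda_{j} \lvert v_j\rvert$ from Theorem~\ref{t:MAIN}, deduce boundedness on each box $\prod_{i\in I}[-M_i,M_i]$ by the constant $\sum_{j \in J} \lambda_{j} M_j$, and conclude via condition (3) of Theorem~\ref{t:pres infty}. No discrepancies to report.
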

\begin{proof}
	By Theorem~\ref{t:MAIN}, $\tau$ is $\Cyl\left(\R^I\right)$-measurable and there exist a finite subset of indices $J \seq I$ and nonnegative real numbers $(\lambda_j)_{j \in J}$ such that, for every $v\in \R^I$, we have $\lvert\tau(v)\rvert\leq \sum_{j \in J} \lambda_{j} \lvert v_j\rvert$.
	Let $(M_i)_{i\in I}\seq \R^+$.
	Let $v\in \prod_{i\in I}[-M_i,M_i]$.
	Then $\lvert\tau(v)\rvert\leq \sum_{j \in J} \lambda_{j} \lvert v_j\rvert \leq \sum_{j \in J} \lambda_{j} M_j$.
	Thus, the restriction of $\tau$ to $\prod_{i\in I}[-M_i,M_i]$ is bounded.
	Therefore, by Theorem~\ref{t:pres infty}, $\tau$ preserves $\infty$-integrability.
\end{proof}
\begin{remark}
	The converse of Corollary~\ref{c:p then infty}, as mentioned at the beginning of this section, is not true, as shown by the function $(\;\cdot\;)^2\colon \R\to \R,\, x\mapsto x^2$.
\end{remark}
%

%%%%%%%%%%%%%%%%%%%%%%%%%%%%%%%%%%%%%%%%%%%%%%%%%%%%%%%%%%%%%%%%%%%%%%%%%%%%%%%%%%%%
%BIBLIOGRAPHY

\bibliography{Biblio}
\bibliographystyle{alpha}

\end{document}